\numberwithin{equation}{section}
\theoremstyle{plain}
\newtheorem{Th}{Theorem}[section]
\newtheorem{Lemma}[Th]{Lemma}
\newtheorem{Corollary}[Th]{Corollary}
\newtheorem{Proposition}[Th]{Proposition}
\theoremstyle{definition}
\newtheorem{Definition}[Th]{Definition}
\newtheorem{Remark}[Th]{Remark}
\newtheorem{?}[Th]{Problem}
\newcommand{\barC}{{\overline{C}}}
\begin{document}

\title{Clifford Prolate Spheroidal wave Functions}

\author[H. Baghal Ghaffari, J.A. Hogan, J.D. Lakey]{Hamed Baghal Ghaffari, Jeffrey A. Hogan, Joseph D. Lakey}




\keywords{Clifford Prolate Spheroidal Wave Functions, Spectral Concentrations
	\newline
	AMS Classification is $15A66$
}

\begin{abstract}
In the present paper, we introduce the multidimensional Clifford prolate spheroidal wave functions (CPSWFs) defined on the unit ball as eigenfunctions of a Clifford differential operator and provide a Galerkin method for their computation as linear combinations of Clifford-Legendre polynomials. We show that these functions are 
 eigenfunctions of the truncated Fourier transformation. Then we investigate the role of the CPSWFs in the spectral concentration problem associated with balls in the space and frequency domains, the behaviour of the eigenvalues of the time-frequency limiting operator and their spectral accumulation property.
\end{abstract}

\bigskip
\maketitle
\section{Introduction}

Prolate spheroidal wave functions (PSWFs) are special functions that have long been used in mathematical physics. They are real-valued functions on the line which arise when solving the Helmholtz equation by separation of variables in prolate spheroidal coordinates (playing the role of Legendre polynomials in spherical coordinates). In a beautiful series of papers published in the Bell Labs Technical Journal in the 1960s, \cite{slepian1961prolate} Slepian, Pollak and Landau observed that these same functions were the solutions of the spectral concentration problem which is of enormous importance in communications technologies. This observation allowed for the efficient computation of PSWFs and the eventual incorporation of their digital counterparts (the discrete PSWFs) in computer hardware. Analyticity and asymptotic properties of PSWFs, including numerical evaluations and applications to quadrature and  interpolation are investigated in
\cite{xiao2001prolate,beylkin2002generalized,boyd2003large,glaser2007fast,hogan2015frame,moore2004prolate,osipov2013prolate,schmutzhard2015numerical}. Other applications of PSWFs can be found in
\cite{thomson2007jackknifing,chen2008mimo,dilmaghani2003novel,gosse2013compressed,hu2014doa,hogan2010sampling,khare2003sampling,lindquist2008spatial,senay2009reconstruction}.

In higher dimensions, the computation of PSWFs is more problematic, and the differential equation from which they arise is singular, causing instabilities\cite{slepian1964prolate} \cite{lederman2017numerical}. Furthermore, the higher dimensional PSWFs, like the one-dimensional PWSFs, are real-valued. Here we seek natural multi-channel versions of the PSWFs of \cite{slepian1964prolate} with a view to applications in the treatment of multi-channel signals such as colour images and electromagnetic fields.

This paper is organized as follows. We will review the preliminaries about Clifford Analysis in second section. The third section will be about the definition of the multidimensional Clifford prolate spheroidal wave functions (CPSWFs). In section four, we will have a look at the Sturm-Liouville operator related to the radial part of the CPSWFs. Section five, will be about the proofs stating the CPSWFs are the eigenfuntions of the finite Fourier transformations. In section 6, we will investigate the essential features of the new multidimensional prolate spheroidal wave functions such as the decrease of eigenvalues. In the current section, we will talk about spectral concentration problem. Lastly, we see the spectrum accumulation properties CPSWFs in $m$ dimensions.

\section{Clifford analysis}
Let
$\mathbb{R}^{m}$
be 
$m$-dimensional 
euclidean space and let
$\{e_{1},e_{2},\dots ,e_{m}\}$
be an orthonormal basis for
$\mathbb{R}^{m}.$
We endow these vectors with the multiplicative properties
\begin{align*}
	e_{j}^{2}&=-1,\; \; j=1,\dots , m,\\
	e_{j}e_{i}&=-e_{i}e_{j}, \;\; i\neq j, \;\; i,j=1,\dots , m.
\end{align*}
For any subset
$A=\{j_{1},j_{2},\dots, j_{h}\}\subseteq \{1,\dots ,	m\}=Q_m,$ with $j_1<j_2<\cdots <j_h$
we consider the formal product
$e_{A}=e_{j_{1}}e_{j_{2}}\dots e_{j_{h}}.$
Moreover for the empty set
$\emptyset$
one puts
$e_{\emptyset}=1$ (the identity element). The Clifford algebra ${\mathbb R}_m$ is then the $2^m$-dimensional real associative algebra 
$${\mathbb R}_m=\bigg\{\sum\limits_{A\subset Q_m}\lambda_Ae_A:\, \lambda_A\in{\mathbb R}\bigg\}.$$
Similarly, the Clifford algebra ${\mathbb C}_m$ is the $2^m$-dimensional complex associative algebra
$${\mathbb C}_m=\bigg\{\sum\limits_{A\subset Q_m}\lambda_Ae_A:\, \lambda_A\in{\mathbb C}\bigg\}.$$
Every element $\lambda=\sum\limits_{A\subset Q_m}\lambda_Ae_A\in{\mathbb C}_m$ may be decomposed as  
$\lambda=\sum\limits_{k=0}^{m}[\lambda]_{k},$
where 
$[\lambda]_{k}=\sum\limits_{\vert A\vert=k}\lambda_{A}e_{A}$
is the so-called 
$k$-vector
part of 
$\lambda\, (k=0,1,\dots ,m).$

Denoting by 
$\mathbb{R}_{m}^{k}$
the subspace of all 
$k$-vectors
in
$\mathbb{R}_{m},$
i.e., the image of 
$\mathbb{R}_{m}$
under the projection operator 
$[\cdot]_{k},$
one has the multi-vector decomposition
$\mathbb{R}_{m}=\mathbb{R}_{m}^{0}\oplus \mathbb{R}_{m}^{1}\oplus\cdots \oplus \mathbb{R}_{m}^{m},$
leading  to the identification of
$\mathbb{R}$
with the subspace of real scalars
$\mathbb{R}_{m}^{0}$
and of
$\mathbb{R}^{m}$
with the subspace of real Clifford vectors 
$\mathbb{R}_{m}^{1}.$ The latter identification is achieved by identifying the point
$(x_{1},\dots,x_{m})\in{\mathbb R}^m$
with the Clifford number
$x=\sum\limits_{j=1}^{m}e_{j}x_{j}\in{\mathbb R}_m^1$.
The Clifford number 
$e_{M}=e_{1}e_{2}\cdots e_{m}$
is called the pseudoscalar; depending on the dimension 
$m,$
the pseudoscalar commutes or anti-commutes with the 
$k$-vectors
and squares to 
$\pm 1.$
The Clifford conjugation on ${\mathbb C}^m$is the conjugate linear mapping $\lambda\mapsto\bar\lambda$ of ${\mathbb C}_m$ to itself satisfying
\begin{align*}
	\overline{\lambda \mu}&=\bar\mu\bar{\lambda},\;\;\;\; \textnormal{for all}\;\lambda,\mu\in\mathbb{C}_{m},\\
	\overline{\lambda_{A}e_{A}}&=\overline{\lambda_{A}}\overline{e_{A}},\;\;\; \lambda_A\in\mathbb{C},\\
	\overline{e_{j}}&=-e_{j},\;\; j, \;\; j=1,\cdots , m.
\end{align*}
The Clifford conjugation leads to a Clifford inner product $\langle \cdot,\cdot\rangle$ and an associated norm $|\cdot |$ on 
$\mathbb{C}_{m}$
given respectively by
$$\langle \lambda,\mu\rangle=[\bar{\lambda}\mu]_{0}\;\;\;\textnormal{and}\;\;\; \vert\lambda\vert^{2}=[\bar{\lambda}\lambda]_{0}=\sum\limits_{A}\vert\lambda_{A}\vert^{2},$$
for $\lambda=\sum_{A\subset Q_M}\lambda_Ae_A\in{\mathbb C}_m$. 

The product of two vectors $x$, $y\in{\mathbb R}_m^1$ can be decomposed as the sum of a scalar and a 2-vector, also called a bivector:
$$xy=-\langle x, y\rangle +x\wedge y,$$
where
$\langle x,y\rangle=-\sum\limits_{j=1}^{m}x_{j}y_{j}\in \mathbb{R}^{0}_{m}$,
and,
$x\wedge y=\sum\limits_{i=1}^{m}\sum\limits_{j=i+1}^{m}e_{i}e_{j}(x_{j}y_{j}-x_{j}y_{i})\in\mathbb{R}^{2}_{m}$.
Note that the square of a vector variable 
$x$
is scalar-valued and equals the norm squared up to minus sign:
$$x^{2}=-\langle x,x\rangle=-\vert x\vert^{2}.$$
Clifford analysis offers a function theory which is a higher-dimensional analogue of the theory of holomorphic functions of one complex variable. The functions considered are defined in the Euclidean space 
$\mathbb{R}^{m}$
and take their values in the Clifford algebra 
$\mathbb{R}_{m}.$

The central notion in Clifford analysis is monogenicity, which is a multidimensional counterpart to that of holomorphy in the complex plane. 
\begin{Definition}
	A function 
	$f(x)=f(x_{1},\dots, x_{m})$
	defined and continuously
	differentiable in an open region of 
	$\mathbb{R}^{m}$
	and taking values in 
	$\mathbb{C}_{m}$
	is said to be left monogenic in that region if 
	$$\partial_{x}f=0,$$
	where
	$\partial_{x}=\sum_{j=1}^me_j\partial_{x_j}$
	is the Dirac operator 
	and 
	$\partial_{x_{j}}$
	is the partial differential operator
	$\frac{\partial}{\partial x_{j}}.$
	We also define the Euler differential operator by
	$$E=\sum_{j=1}^{m}x_{j}\partial_{x_{j}}.$$
	The Laplace operator is factorized by the Dirac operator as follows:
	\begin{equation}
	\Delta_{m}=-\partial_{x}^{2}.
	\end{equation}
\end{Definition}
The notion of right monogenicity is defined in a similar way by letting the Dirac operator act from the right. A ${\mathbb C}_m$-valued function 
$f(x)=\sum_{A\subset Q_m}f_A(x)e_A$ (where each $f_A$ takes complex values)
is left monogenic if and only if its Clifford conjugate 
$\bar{f}(x)=\sum_{A\subset Q_m}\overline{f_A}(a)\overline{e_A}$
is right monogenic. In fact, $\overline{\partial f}=-\overline{f}\partial$.
\begin{Definition}\label{definition3.2}
	A left, respectively right, monogenic homogeneous polynomial
	$P_{k}$
	of degree 
	$k\; (k\geq 0)$
	in
	$\mathbb{R}^{m}$
	is called a left, respectively right, solid inner spherical monogenic of order 
	$k.$
	The set of all left, respectively right, solid inner spherical monogenic of order 
	$k$
	will be denoted by
	$M_{l}^{+},$
	respectively
	$M_{r}^{+}.$
	It can be shown 
	\cite{delanghe2012clifford}
	that the dimension of 
	$M_{l}^{+}(k)$
	is given by 
	$$\dim M_{l}^{+}(k)=\frac{(m+k-2)!}{(m-2)!k!}.$$
	A left, respectively right, monogenic homogeneous function $Q_k$ of degree $-(k+m-11)$ in $R^m \setminus\{0\}$ is called a left, respectively right, solid outer spherical monogenic of order $k.$
\end{Definition}
\begin{Lemma}
	We can see that 
	$$M_{l}^{+}(k)\cap M_{l}^{-}(k)=\{0\}.$$
\end{Lemma}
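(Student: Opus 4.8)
The plan is to prove the lemma by a pure homogeneity–degree comparison. By Definition~\ref{definition3.2}, a nonzero element of $M_{l}^{+}(k)$ is a left monogenic homogeneous polynomial of degree $k\geq 0$ on $\mathbb{R}^{m}$, whereas a nonzero element of $M_{l}^{-}(k)$ is a left monogenic function on $\mathbb{R}^{m}\setminus\{0\}$ that is homogeneous of degree $-(k+m-1)$; since homogeneity of a fixed degree is a linear condition, every element of the span $M_{l}^{-}(k)$ is homogeneous of that same degree, or is zero. Thus the assertion reduces to the impossibility of a nonzero function being homogeneous of two different degrees simultaneously.

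Concretely, I would take $f\in M_{l}^{+}(k)\cap M_{l}^{-}(k)$ and compare its two homogeneity relations: for all $\lambda>0$ and all $x\in\mathbb{R}^{m}\setminus\{0\}$,
\[
\lambda^{k}f(x)=f(\lambda x)=\lambda^{-(k+m-1)}f(x),\qquad\text{so}\qquad \bigl(\lambda^{k}-\lambda^{-(k+m-1)}\bigr)f(x)=0 .
\]
Because $m\geq 2$ and $k\geq 0$ we have $k\geq 0>-1\geq -(k+m-1)$, so the exponents $k$ and $-(k+m-1)$ are distinct, and there is a $\lambda$ (for instance $\lambda=2$) with $\lambda^{k}\neq\lambda^{-(k+m-1)}$. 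Hence $f(x)=0$ for every $x\neq 0$, and since $f$ is also a polynomial, it is continuous at $0$, so $f\equiv 0$.

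There is essentially no obstacle here: the statement is a bookkeeping fact reflecting the grading of monogenic functions by homogeneity degree (inner solid spherical monogenics sit in nonnegative degrees, outer ones in strictly negative degrees once $m\geq 2$). The only points worth stating with care are that the two degrees $k$ and $-(k+m-1)$ never coincide in the relevant parameter range, and that a member of $M_{l}^{-}(k)$ really is homogeneous of degree $-(k+m-1)$. If one preferred a less elementary route, one could instead observe that an element of the intersection would be monogenic on $\mathbb{R}^{m}\setminus\{0\}$, extend (being a polynomial) to an entire monogenic function that decays at infinity, and then vanish by a Liouville-type theorem; but the degree comparison above is shorter and self-contained.
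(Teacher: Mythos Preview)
Your argument is correct. The paper itself states this lemma without proof, presenting it as an immediate consequence of the definitions (``We can see that\ldots''), so there is no explicit proof in the paper to compare against; your homogeneity-degree comparison is precisely the standard justification one would supply for such a statement, and it is sound as written (note that the paper's ``$-(k+m-11)$'' in Definition~\ref{definition3.2} is a typo for $-(k+m-1)$, which is the degree you correctly use).
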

\begin{Proposition}
	For any two monogenic, homogeneous functions
	$Y_{k}(x)$ 
	and 
	$Y_{l}(x),$
	we have that
	$$\langle Y_{k}(x),Y_{l}(x)\rangle_{L^{2}(s^{m-1})}=\delta_{k,l},$$
	where $Y_{k}(x),Y_{l}(x)\in M_{l}^{+}(k)\cup M_{l}^{-}(k).$
\end{Proposition}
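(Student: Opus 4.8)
The plan is to realise each of the spherical monogenics in the statement as an eigenfunction of the spherical (angular) Dirac operator
$$\Gamma_x=-\sum_{1\le i<j\le m}e_ie_j\,(x_i\partial_{x_j}-x_j\partial_{x_i}),$$
to verify that $\Gamma_x$ is symmetric for the Clifford inner product $\langle f,g\rangle_{L^2(S^{m-1})}=\int_{S^{m-1}}[\,\overline{f}\,g\,]_0\,dS$, and then to read the orthogonality off from the fact that the relevant eigenvalues are pairwise distinct. We understand the assertion as the orthogonality of spherical monogenics of distinct orders (more precisely, of distinct order or distinct type, inner versus outer); the value $1$ on the diagonal amounts to choosing normalised representatives.

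First I would recall the polar decomposition of the Dirac operator (see \cite{delanghe2012clifford}), $\partial_x=\omega\bigl(\partial_r+\tfrac1r\Gamma_x\bigr)$ with $\omega=x/|x|$ and $r=|x|$; left multiplication by $x=r\omega$ together with $\omega^2=-1$ yields, acting on functions, $x\partial_x=-(E+\Gamma_x)$, where $E$ is the Euler operator. If $Y_k\in M_l^+(k)$ then $\partial_xY_k=0$ and $EY_k=kY_k$, hence $\Gamma_xY_k=-k\,Y_k$; if $Y_k\in M_l^-(k)$ (the outer spherical monogenics of order $k$) then $\partial_xY_k=0$ and $EY_k=-(k+m-1)Y_k$ is its homogeneity degree, hence $\Gamma_xY_k=(k+m-1)\,Y_k$. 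Since the vector fields in $\Gamma_x$ are tangential to $S^{m-1}$, each restriction $Y_k|_{S^{m-1}}$ is a genuine eigenfunction of $\Gamma_x$ on the sphere, with eigenvalue $-k$ in the inner case and $k+m-1$ in the outer case.

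Next I would check that $\Gamma_x$ is symmetric for $\langle\cdot,\cdot\rangle_{L^2(S^{m-1})}$. Each operator $L_{ij}=x_i\partial_{x_j}-x_j\partial_{x_i}$ generates a rotation of $\mathbb{R}^m$, hence is tangent to $S^{m-1}$ and preserves its surface measure, and is therefore skew-symmetric on $L^2(S^{m-1})$ (for complex-valued functions one uses $\overline{L_{ij}f}=L_{ij}\overline{f}$); and since $\overline{e_ie_j}=\overline{e_j}\,\overline{e_i}=e_je_i=-e_ie_j$, multiplication by the constant bivector $e_ie_j$ is skew-symmetric for the pairing $(f,g)\mapsto\int_{S^{m-1}}[\,\overline{f}\,g\,]_0\,dS$ as well. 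As $e_ie_j$ commutes with $L_{ij}$, each product $e_ie_jL_{ij}$ is symmetric, and therefore so is the finite sum $\Gamma_x$. Consequently two eigenfunctions of $\Gamma_x$ belonging to different eigenvalues are orthogonal; and for $m\ge2$ the inner eigenvalues $\{-k:k\ge0\}$ are nonpositive whereas the outer eigenvalues $\{k+m-1:k\ge0\}$ are at least $m-1\ge1$, while within each family distinct orders give distinct values. Hence $\langle Y_k,Y_l\rangle_{L^2(S^{m-1})}=0$ whenever the two functions do not share the same order and type — in particular whenever $k\ne l$ — and normalising so that $\|Y_k\|_{L^2(S^{m-1})}=1$ supplies the remaining value $\delta_{k,k}=1$.

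I expect the symmetry of $\Gamma_x$ to be the only real obstacle: one must keep track of how the non-commuting Clifford factors $e_ie_j$ interact with the Clifford conjugation in the inner product, and justify the boundary-free integration by parts, which is legitimate precisely because each $L_{ij}$ is tangential to $S^{m-1}$. A more elementary but only partial alternative is Green's identity on the unit ball combined with the normal-derivative values $\partial_nY_k=k\,Y_k$ (inner) and $\partial_nY_k=-(k+m-1)Y_k$ (outer) on $S^{m-1}$: this settles the inner–inner and outer–outer cases at once, but it cannot separate an inner monogenic from an outer one of neighbouring order, which is exactly the situation the $\Gamma_x$-eigenfunction argument resolves.
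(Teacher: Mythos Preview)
The paper does not supply a proof of this proposition; it is stated without argument as part of the background material in Section~2, alongside other results drawn from \cite{delanghe2012clifford}. There is therefore nothing in the paper to compare your argument against directly.

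That said, your approach via the angular Dirac operator $\Gamma_x$ is the standard one in the Clifford-analytic literature and is correct. The identification of inner and outer spherical monogenics of order $k$ as eigenfunctions of $\Gamma_x$ with eigenvalues $-k$ and $k+m-1$ respectively follows from the decomposition $x\partial_x=-(E+\Gamma_x)$ exactly as you write, and since these eigenvalues are real and pairwise distinct (the inner ones are nonpositive, the outer ones are at least $m-1\ge 1$), the symmetry of $\Gamma_x$ for the scalar pairing $\int_{S^{m-1}}[\overline{f}g]_0\,dS$ gives the orthogonality. Your verification of that symmetry is also sound: left multiplication by the constant bivector $e_ie_j$ is skew because $\overline{e_ie_j}=-e_ie_j$, each $L_{ij}$ is skew because it generates a rotation preserving surface measure, and the two commute, so their product is symmetric. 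The remark at the end about the Green's-identity alternative --- and its inability to separate inner from outer monogenics --- is accurate and worth keeping; it highlights precisely why the $\Gamma_x$ argument is the cleaner route.
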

Therefore, we can define the monogenic functions
$$M_{l}(k):=M_{l}^{+}(k)\oplus M_{l}^{-}(k).$$
It's possible to see that if $P_{k}(x)\in M_{l}^{+}(k)$ then $Q_{k}=\dfrac{x}{\vert x\vert^{m}}P_{k}(\dfrac{x}{\vert x\vert^2})\in M_{l}^{-}(k).$

Below, we make some remarks which will be required in proving main theorems. We will need also the following lemma which is easy to obtain by direct calculation.
\begin{Lemma}\label{diracderivativelemma}
	For
	$P_{k}\in M_{l}^{+}(k)$
	and 
	$s\in \mathbb{N}$
	the following fundamental formula holds:
	\begin{equation*}
	\partial_{x}[x^{s}P_{k}]=
	\left\lbrace \begin{array}{l}
	-sx^{s-1}P_{k}\hspace*{3.55cm} \textnormal{for}\;s\;\textnormal{even},\\
	-(s+2k+m-1)x^{s-1}P_{k}\;\;\;\;\;\; \textnormal{for}\;s\;\textnormal{odd.}\\
	\end{array} \right.
	\end{equation*}
\end{Lemma}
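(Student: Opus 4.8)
The plan is to prove the identity by induction on $s$, with the whole argument resting on a single product rule for the Dirac operator. Since the $e_j$ do not commute, $\partial_x$ does not satisfy the naive Leibniz rule on $\mathbb{C}_m$-valued products; the correct substitute, obtained by expanding $\partial_x(xf)=\sum_j e_j\,\partial_{x_j}(xf)=\sum_j e_j\big(e_j f+x\,\partial_{x_j}f\big)$ and using $e_j^2=-1$ together with $e_jx+xe_j=-2x_j$, is
\[
\partial_x(xf)=-mf-2Ef-x\,\partial_x f ,
\]
valid for every continuously differentiable $\mathbb{C}_m$-valued $f$, with $E=\sum_{j=1}^m x_j\partial_{x_j}$ the Euler operator introduced above. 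I would establish this identity first and carefully, since it is the only place where the multiplication rules of $\mathbb{R}_m$ enter.

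Next I would feed $f=x^{s-1}P_k$ into this identity. Because $x^2=-|x|^2$ is scalar, $x^{s-1}P_k$ is homogeneous of degree $s-1+k$, so $E(x^{s-1}P_k)=(s-1+k)\,x^{s-1}P_k$, and the identity collapses to the recursion
\[
\partial_x(x^sP_k)=-(m+2k+2s-2)\,x^{s-1}P_k-x\,\partial_x(x^{s-1}P_k).
\]
The base cases are $\partial_x P_k=0$ (monogenicity of $P_k$) and, for $s=1$, $\partial_x(xP_k)=-(m+2k)P_k$, which is immediate from the identity since $EP_k=kP_k$.

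The induction then splits by the parity of $s$: an even $s$ draws on the formula already known for the odd index $s-1$, and an odd $s$ on the even index $s-1$. In each case one substitutes the inductive value of $\partial_x(x^{s-1}P_k)$, uses $x\cdot x^{s-2}=x^{s-1}$, and collects the scalar coefficient; the arithmetic gives $-(m+2k+2s-2)+(s+2k+m-2)=-s$ in the even case and $-(m+2k+2s-2)+(s-1)=-(s+2k+m-1)$ in the odd case, which are exactly the claimed eigenvalues.

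The one step requiring genuine care is the derivation of the product rule $\partial_x(xf)=-mf-2Ef-x\,\partial_x f$ — specifically pinning down the sign and the coefficient $2$ of the $Ef$ term, which trace back to the convention $e_ie_j+e_je_i=-2\delta_{ij}$ fixed in Section~2. Once that identity is in hand, the remainder is a routine parity-indexed induction involving only integer bookkeeping, so I do not anticipate further obstacles.
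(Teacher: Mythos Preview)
Your argument is correct. The product rule $\partial_x(xf)=-mf-2Ef-x\,\partial_x f$ is derived exactly as you outline (using $e_je_i=-e_ie_j-2\delta_{ij}$), the homogeneity of $x^{s-1}P_k$ gives the Euler term, and the parity-indexed induction closes cleanly with the arithmetic you recorded.

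The paper itself does not supply a proof here: it simply refers the reader to \cite{delanghe2012clifford}. What you have written is in fact the standard proof one finds in that reference, so your approach is not so much \emph{different} from the paper's as it is a spelled-out version of what the paper delegates to the literature. There is nothing to add or correct.
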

\begin{proof}
For the proof see \cite{delanghe2012clifford}.
\end{proof}
\begin{Definition}
	A real-valued polynomial
	$S_{k}$
	of degree
	$k$
	on
	$\mathbb{R}^{m}$
	satisfying
	$$\Delta_{m}S_{k}(x)=0,\;\;\;\;\; \textnormal{and,}\;\;\;\;\; S_{k}(tx)=t^{k}S_{k}(x)\quad (t>0),$$
	is called a solid spherical harmonic of degree $k$. The collection of solid spherical harmonics of degree $k$ on
	$\mathbb{R}^{m}$
	is denoted
	$\mathcal{H}(k)$
	(or $\mathcal{H}(m,k)$).
\end{Definition}
Since
$\partial^{2}_{x}=-\Delta_{m}$,
we have that
$$M_{l}^{+}(k)\subset \mathcal{H}(k),\;\;\;\;\; \textnormal{and,}\;\;\;\;\; M_{r}^{+}(k)\subset \mathcal{H}(k).$$
Let
$H_{(r)}$
be a unitary right Clifford-module, i.e. 
$(H_{(r)},+)$
is an abelian group
and a law of scalar multiplication
$(f,\lambda)\to f\lambda$
from
$H_{(r)}\times\mathbb{C}_{m}$
into
$H_{r}$
is defined such that for all
$\lambda,\mu\in\mathbb{C}_{m}$
and
$f,g\in H_{(r)}:$
\begin{align*}
	&(i)\;f(\lambda+\mu)=f\lambda+f\mu,\hspace*{10cm}\\
	&(ii)\;f(\lambda\mu)=(f\lambda)\mu,\\
	&(iii)\;(f+g)\lambda=f\lambda+g\lambda,\\
	&(iv)\;fe_{\emptyset}=f.
\end{align*}
Note that
$H_{(r)}$
becomes a complex vector space if
$\mathbb{C}$
is identified with
$\mathbb{C}e_{\emptyset}\subset\mathbb{C}_{m}.$
Then a function
$\langle\cdot,\cdot\rangle:H_{(r)}\times H_{(r)}\to\mathbb{C}_{m}$
is said to be an inner product on
$H_{(r)}$
if for all
$f,g,h\in H_{(r)}$
and
$\lambda\in\mathbb{C}_{m}:$
\begin{align*}
	&(i)\;\langle f,g\lambda+h\rangle=\langle f,g\rangle\lambda+\langle f,h\rangle,\hspace*{10cm}\\
	&(ii)\;\langle f,g\rangle=\overline{\langle g,f\rangle},\\
	&(iii)\;[\langle f,f\rangle]_{0}\geq 0 \;\; \textnormal{and}\;\; [\langle f,f\rangle]_{0}= 0 \;\;\textnormal{if and only if}\;\; f=0.
\end{align*}
From this
${\mathbb C}_m$-valued
inner product
$(\cdot,\cdot)$,
one can recover the complex inner product
$$( f,g) =[\langle f,g\rangle]_{0},$$
on
$H_{r}$. Putting for each
$f\in H_{(r)}$
\begin{equation}
\Vert f\Vert^{2}=( f,f) ,\label{ip norm}
\end{equation}
$\Vert \cdot\Vert$
becomes a norm on 
$H_{r}$
turning it into a normed right Clifford-module. 

Now, let
$H_{(r)}$
be a unitary right Clifford-module provided with an inner product
$\langle\cdot,\cdot\rangle.$
Then it is called a right Hilbert Clifford-module if
$H_{(r)}$
considered as a complex vector space provided with the complex inner product 
$(\cdot,\cdot)$
is a Hilbert space. 

We consider the ${\mathbb C}_m$-valued inner product of the functions 
$f,g:{\mathbb R}^m\to{\mathbb C}_m$
by
$$ \langle f,g\rangle=\int\limits_{\mathbb{R}^{m}}\overline{f(x)}g(x)\, dx,$$
where
$dx$
is Lebesgue measure on 
$\mathbb{R}^{m}$
and moreover the associated norm given by (\ref{ip norm}).
The right Clifford-module of ${\mathbb C}_m$-valued measurable functions on
$\mathbb{R}^{m}$
for which 
$\Vert f\Vert^{2}<\infty$
is a right Hilbert Clifford-module which we denote by 
$L^{2}(\mathbb{R}^{m},{\mathbb C}_m).$
Therefore, we obtain the right Hilbert Clifford-module of square integrable functions:
$L^{2}(\mathbb{R}^{m},{\mathbb C}_m)$ of functions $f:{\mathbb R}^m\to{\mathbb C}_m$ for which each component of $f$ is measurable and 
$$\Vert f\Vert_{2}=\left(\int\limits_{\mathbb{R}^{m}}\vert f(x)\vert^{2}\, dx\right)^{\frac{1}{2}}<\infty.$$

The standard tensorial multi-dimensional Fourier transform given by:
\begin{equation*}
\mathcal{F}f(\xi)=\int\limits_{\mathbb{R}^{m}}e^{-2\pi i\langle x,\xi\rangle}f(x)\, dx.
\end{equation*}
Now, we have two definitions related to the operators.
\begin{Definition}
Let $L^{2}(\mathbb{R}^{m},{\mathbb C}_m)$ be right Hilbert Clifford-module. For any $f,g\in L^{2}(\mathbb{R}^{m},{\mathbb C}_m),$ the operator $T$ is self-adjoint if 
$$\langle Tf,g\rangle=\langle f,Tg\rangle.$$
\end{Definition}

\begin{Definition}
Let $L^{2}(\mathbb{R}^{m},{\mathbb C}_m)$ be right Hilbert Clifford-module. The operator $T$ is compact, if $\{T(f_{n})\}$ has a convergent subsequence for every bounded sequence of $\{f_{n}\}\in L^{2}(\mathbb{R}^{m},{\mathbb C}_m)$.
\end{Definition}

Now we state some fundamental results without proof which will be useful later. 
\begin{Th}\label{planchereltheorem}
	The Fourier transform 
	$\mathcal{F}$
	is an isometry on the space of the square integrable functions, in other words, for all
	$f,g\in L_{2}(\mathbb{R}^{m},{\mathbb C}_m)$
	the Parseval formula holds:
	$$\langle f,g\rangle=\langle \mathcal{F}f,\mathcal{F}g\rangle.$$
	In particular, for each 
	$f\in L_{2}(\mathbb{R}^{m},{\mathbb C}_m)$
	one has $\Vert f\Vert_{2}=\Vert \mathcal{F}f\Vert_{2}$.
\end{Th}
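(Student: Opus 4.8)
The plan is to reduce this $\mathbb{C}_m$-valued statement to the classical (scalar) Plancherel theorem on $L^2(\mathbb{R}^m,\mathbb{C})$, exploiting the fact that $\mathcal{F}$ acts componentwise with respect to the basis $\{e_A:A\subset Q_m\}$ of $\mathbb{C}_m$. Writing $f=\sum_A f_Ae_A$ and $g=\sum_B g_Be_B$ with $f_A,g_B$ complex-valued, the scalar kernel $e^{-2\pi i\langle x,\xi\rangle}$ commutes with every $e_A$, so $\mathcal{F}f=\sum_A(\mathcal{F}f_A)e_A$ and similarly $\mathcal{F}g=\sum_B(\mathcal{F}g_B)e_B$; in particular $\mathcal{F}$ is right $\mathbb{C}_m$-linear and the module structure is simply carried along passively.

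First I would unfold the two inner products coefficientwise. Since Clifford conjugation is conjugate-linear with $\overline{\lambda_Ae_A}=\overline{\lambda_A}\,\overline{e_A}$ and scalars are central, one has $\overline{f(x)}g(x)=\sum_{A,B}\overline{f_A(x)}\,g_B(x)\,\overline{e_A}e_B$, hence
\[
\langle f,g\rangle=\sum_{A,B\subset Q_m}\Big(\int_{\mathbb{R}^m}\overline{f_A(x)}\,g_B(x)\,dx\Big)\,\overline{e_A}e_B,
\]
and applying the same computation to $\mathcal{F}f,\mathcal{F}g$,
\[
\langle \mathcal{F}f,\mathcal{F}g\rangle=\sum_{A,B\subset Q_m}\Big(\int_{\mathbb{R}^m}\overline{\mathcal{F}f_A(\xi)}\,\mathcal{F}g_B(\xi)\,d\xi\Big)\,\overline{e_A}e_B.
\]

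Next I would invoke the scalar result. For each fixed pair $(A,B)$ the functions $f_A,g_B$ lie in $L^2(\mathbb{R}^m,\mathbb{C})$, and the polarized form of the classical Plancherel theorem gives $\int_{\mathbb{R}^m}\overline{f_A}\,g_B=\int_{\mathbb{R}^m}\overline{\mathcal{F}f_A}\,\mathcal{F}g_B$ — proved in the usual way via density of Schwartz functions, the multiplication formula $\int\widehat{\varphi}\,\psi=\int\varphi\,\widehat{\psi}$, Fubini, and a limiting argument, and here simply cited. Since the two displayed sums then have identical scalar coefficients term by term, they coincide as elements of $\mathbb{C}_m$, which is precisely $\langle f,g\rangle=\langle \mathcal{F}f,\mathcal{F}g\rangle$. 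Specializing to $g=f$ and extracting the scalar part yields $\|f\|_2^2=[\langle f,f\rangle]_0=\sum_A\|f_A\|_2^2=\sum_A\|\mathcal{F}f_A\|_2^2=[\langle\mathcal{F}f,\mathcal{F}f\rangle]_0=\|\mathcal{F}f\|_2^2$, using $\|\cdot\|^2=[\langle\cdot,\cdot\rangle]_0$.

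I do not expect a genuine obstacle here: because the Fourier kernel is scalar, monogenicity and the Clifford multiplication play no role, and the entire analytic content is the classical Plancherel theorem. The one point requiring care is the bookkeeping — making sure the $\mathbb{C}_m$-valued identities are read coefficientwise against the products $\overline{e_A}e_B$, which is exactly what the first step records. An alternative, fully intrinsic route would mimic the scalar proof directly, using density of $\mathbb{C}_m$-valued Schwartz functions in $L^2(\mathbb{R}^m,\mathbb{C}_m)$ together with the Clifford Fourier inversion formula.
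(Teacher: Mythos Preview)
Your proposal is correct: the componentwise reduction to the scalar Plancherel theorem works exactly as you describe, since the kernel $e^{-2\pi i\langle x,\xi\rangle}$ is scalar and central. The paper does not actually give a proof of this statement --- it simply refers the reader to \cite{delanghe2012clifford} --- so there is no in-paper argument to compare against; your write-up is the standard way this is established and would be an acceptable substitute for the citation.
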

\begin{proof}
	See the proof in
	\cite{delanghe2012clifford}.
\end{proof}
\begin{Proposition}
	For any two monogenic, homogeneous polynomials 
	$Y_{k}$ of homogeneous degree $k$
	and 
	$Y_{\ell}$ of homogeneous degree $\ell$
	we have 
$$\langle Y_{k},Y_{l}\rangle_{L^{2}(S^{m-1})}:=\int\limits_{S^{m-1}}\overline{Y_k(\omega )}Y_\ell (\omega )\, d\omega=0$$
if $k\neq\ell$.
\end{Proposition}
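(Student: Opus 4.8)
The plan is to deduce the orthogonality from Green's second identity on the unit ball. First, since $\Delta_{m}=-\partial_{x}^{2}$, a left (or right) monogenic homogeneous polynomial of degree $k$ lies in $\mathcal{H}(k)$, so $\Delta_{m}Y_{k}=0$; because $\Delta_{m}$ is a real scalar operator it commutes with Clifford conjugation, whence $\Delta_{m}\overline{Y_{k}}=\overline{\Delta_{m}Y_{k}}=0$ as well, and $\overline{Y_{k}}$ is still homogeneous of degree $k$. Writing $x=r\omega$ with $r=|x|$ and $\omega\in S^{m-1}$, Euler's relation $Ef=kf$ for $f$ homogeneous of degree $k$ together with $E=r\,\partial_{r}$ shows that the outward normal derivative on $S^{m-1}$ satisfies $\partial_{r}Y_{\ell}|_{S^{m-1}}=\ell\,Y_{\ell}|_{S^{m-1}}$ and $\partial_{r}\overline{Y_{k}}|_{S^{m-1}}=k\,\overline{Y_{k}}|_{S^{m-1}}$.

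Next I would apply Green's second identity on the unit ball $B=\{x\in\mathbb{R}^{m}:|x|<1\}$. For scalar $C^{2}$ functions this is classical; applying it componentwise to the (complex-valued) Clifford components of $\overline{Y_{k}}$ and $Y_{\ell}$ and recombining with the constant basis products $e_{A}e_{B}$ — which is legitimate because $\Delta_{m}$ acts on components and the identity used is the symmetric one, so no derivative cross terms appear — yields
\[
\int_{B}\bigl(\overline{Y_{k}}\,\Delta_{m}Y_{\ell}-(\Delta_{m}\overline{Y_{k}})\,Y_{\ell}\bigr)\,dx=\int_{S^{m-1}}\bigl(\overline{Y_{k}}\,\partial_{r}Y_{\ell}-(\partial_{r}\overline{Y_{k}})\,Y_{\ell}\bigr)\,d\omega .
\]
The left-hand side is $0$ because $Y_{\ell}$ and $\overline{Y_{k}}$ are harmonic, while by the previous paragraph the boundary integrand equals $(\ell-k)\,\overline{Y_{k}(\omega)}\,Y_{\ell}(\omega)$. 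Hence $(\ell-k)\langle Y_{k},Y_{\ell}\rangle_{L^{2}(S^{m-1})}=0$, and since $k\neq\ell$ we get $\langle Y_{k},Y_{\ell}\rangle_{L^{2}(S^{m-1})}=0$, as claimed.

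I expect the only delicate point to be the clean lifting of the scalar Green identity to its Clifford-valued form — one must check that no derivative cross terms survive and that the constant units $e_{A}$ commute past $\Delta_{m}$ and $\partial_{r}$; the harmonicity input, the Euler/homogeneity computation of the normal derivative, and the boundary evaluation are all routine. An alternative that stays within Clifford analysis avoids Green's identity: decomposing $\partial_{x}=\omega(\partial_{r}+r^{-1}\Gamma)$ shows that a monogenic homogeneous polynomial of degree $k$ satisfies $\Gamma Y_{k}=-k\,Y_{k}$ on $S^{m-1}$, the spherical Dirac operator $\Gamma$ is self-adjoint for $\langle\cdot,\cdot\rangle_{L^{2}(S^{m-1})}$ because the rotation generators $L_{ij}=x_{i}\partial_{x_{j}}-x_{j}\partial_{x_{i}}$ are skew-adjoint on $S^{m-1}$ while $\overline{e_{i}e_{j}}=-e_{i}e_{j}$, and then eigenfunctions for the distinct eigenvalues $-k$ and $-\ell$ are orthogonal.
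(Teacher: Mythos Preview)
Your argument is correct. The paper does not actually prove this proposition: it is listed among ``some fundamental results without proof which will be useful later,'' so there is no paper proof to compare against. Your route via Green's second identity is the standard one for orthogonality of spherical harmonics of different degrees, and your observation that it lifts componentwise to the Clifford-valued setting---because $\Delta_{m}$ and $\partial_{r}$ are scalar operators commuting with the constant basis elements $e_{A}$, and the identity involves no first-order cross terms---is exactly the point that needs checking. The alternative via the spherical Dirac operator $\Gamma$ is also sound and is closer in spirit to the Clifford-analytic tools the paper uses elsewhere (e.g.\ the Clifford--Stokes theorem); note, however, that this second route as stated uses left monogenicity specifically to get $\Gamma Y_{k}=-kY_{k}$, whereas your Green's-identity argument only needs harmonicity and so covers left and right monogenic polynomials uniformly.
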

\begin{Th}(\textbf{Clifford-Stokes theorem})\label{Clifford_Stokes}
	Let
	$f,g\in C^{1}(\Omega ,{\mathbb C}_m)$.
Then for each compact orientable 
	$m-$dimensional manifold $C\subset\Omega$ with boundary
	$\partial C$,
	$$\int\limits_{\partial C}f(x)n(x)g(x)d\sigma(x)=\int\limits_{C}[(f(x)\partial_{x})g(x)+f(x)(\partial_{x}g(x))]dx.$$
	where 
	$n(x)$
	is the outward-pointing unit normal vector on
	$\partial C.$
\end{Th}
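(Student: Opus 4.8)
The plan is to reduce the identity to the classical divergence theorem applied componentwise in the finite-dimensional algebra ${\mathbb C}_m$, exploiting that the basis vectors $e_{j}$ are constant and sit in exactly the position needed to generate the two Dirac-operator terms on the right-hand side.

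First I would write the outward unit normal as a Clifford vector, $n(x)=\sum_{j=1}^{m}e_{j}n_{j}(x)$, where $(n_{1}(x),\dots,n_{m}(x))$ is the classical outward unit normal field on $\partial C$. Substituting this into the boundary integral and using linearity gives
$$\int_{\partial C}f(x)n(x)g(x)\,d\sigma(x)=\sum_{j=1}^{m}\int_{\partial C}\bigl(f(x)e_{j}g(x)\bigr)n_{j}(x)\,d\sigma(x).$$
Since $f,g\in C^{1}(\Omega,{\mathbb C}_m)$ and $e_{j}$ does not depend on $x$, each $V_{j}:=fe_{j}g$ is a ${\mathbb C}_m$-valued $C^{1}$ function on $C$. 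Decomposing $V_{j}=\sum_{A\subset Q_m}V_{j,A}e_{A}$, applying the scalar divergence theorem on the compact orientable manifold $C$ to each of the $2^{m}$ complex vector fields $(V_{1,A},\dots,V_{m,A})$, and recombining the $e_{A}$ components, I obtain
$$\sum_{j=1}^{m}\int_{\partial C}(fe_{j}g)n_{j}\,d\sigma=\int_{C}\sum_{j=1}^{m}\partial_{x_{j}}(fe_{j}g)\,dx.$$

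Next I would apply the Leibniz rule factor by factor: because $e_{j}$ is constant, $\partial_{x_{j}}(fe_{j}g)=(\partial_{x_{j}}f)e_{j}g+fe_{j}(\partial_{x_{j}}g)$. Summing over $j$ and using associativity of the Clifford product,
$$\sum_{j=1}^{m}\partial_{x_{j}}(fe_{j}g)=\Bigl(\sum_{j=1}^{m}(\partial_{x_{j}}f)e_{j}\Bigr)g+f\Bigl(\sum_{j=1}^{m}e_{j}(\partial_{x_{j}}g)\Bigr)=(f\partial_{x})g+f(\partial_{x}g),$$
which is precisely the integrand on the right-hand side; combining this with the previous display finishes the proof.

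The only genuine subtlety is the bookkeeping forced by non-commutativity: one must keep the constant vector $e_{j}$ wedged between $f$ and $g$ throughout the computation, since it is exactly this placement that turns $\sum_{j}\partial_{x_{j}}(fe_{j}g)$ into the right action $(f\partial_{x})g=\bigl(\sum_{j}(\partial_{x_{j}}f)e_{j}\bigr)g$ together with the left action $f(\partial_{x}g)=f\bigl(\sum_{j}e_{j}(\partial_{x_{j}}g)\bigr)$, rather than into a single Dirac term. The passage to the scalar theorem is harmless: since ${\mathbb C}_m$ is finite-dimensional, integration, differentiation and the divergence theorem all act coordinatewise.
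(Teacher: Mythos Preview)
Your argument is correct: writing $n(x)=\sum_j e_j n_j(x)$, pulling the constant $e_j$ between $f$ and $g$, applying the scalar divergence theorem componentwise in the $2^m$-dimensional algebra, and then using the Leibniz rule to reassemble the right and left Dirac actions is exactly the standard derivation, and you have handled the non-commutativity bookkeeping properly. The paper itself does not give a proof of this statement---it simply refers the reader to \cite{delanghe2012clifford}---so you have in fact supplied more than the paper does; your argument is the classical one and would be the expected content of that citation.
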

\begin{proof}
	For the proof, see
	\cite{delanghe2012clifford}.
\end{proof}

The following result may be obtained as a simple application of the Clifford-Stokes theorem.
\begin{Lemma}\label{property of two monogenic and x between}
	Let $f$, $g$ be defined on a neighbourhood $\Omega $ of the unit ball in ${\mathbb R}^m$ and suppose $f$ is right monogenic on $\Omega$ while $g$ is left monogenic on $\Omega$.
	Then
	\begin{equation}
	\int\limits_{S^{m-1}}f(\omega )\omega g(\omega )\, d\omega =0.
	\end{equation}
\end{Lemma}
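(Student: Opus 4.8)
The plan is to apply the Clifford--Stokes theorem (Theorem~\ref{Clifford_Stokes}) with the compact orientable manifold $C$ taken to be the closed unit ball $\overline{B}\subset{\mathbb R}^m$, whose boundary is the unit sphere $S^{m-1}$ with outward unit normal $n(\omega)=\omega$ for $\omega\in S^{m-1}$. First I would set up the integrand: the theorem states
$$\int\limits_{\partial C}f(x)n(x)g(x)\,d\sigma(x)=\int\limits_{C}\big[(f(x)\partial_x)g(x)+f(x)(\partial_x g(x))\big]\,dx.$$
With $C=\overline{B}$ and $n(\omega)=\omega$, the left-hand side becomes exactly $\int_{S^{m-1}}f(\omega)\omega g(\omega)\,d\omega$, which is the quantity we want to show vanishes. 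So it suffices to show the right-hand side is zero.

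Next I would kill the right-hand side term by term. By hypothesis $g$ is left monogenic on $\Omega\supset\overline{B}$, which by definition means $\partial_x g=0$ throughout $\Omega$, so the second term $f(x)(\partial_x g(x))$ vanishes identically. Similarly $f$ is right monogenic on $\Omega$, meaning $f\partial_x=0$ (the Dirac operator acting from the right), so the first term $(f(x)\partial_x)g(x)$ vanishes identically as well. Hence the volume integral is identically zero, and therefore $\int_{S^{m-1}}f(\omega)\omega g(\omega)\,d\omega=0$, as claimed.

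The only genuine point requiring a little care — and the one I would flag as the main (minor) obstacle — is the regularity and orientability bookkeeping: one must check that $f,g\in C^1(\Omega,{\mathbb C}_m)$ with $\Omega$ a neighbourhood of $\overline{B}$, so that the Clifford--Stokes theorem genuinely applies to $C=\overline{B}$, and that the boundary orientation is the standard one making $n(\omega)=\omega$ the outward normal. Both are immediate from the hypotheses of the lemma (the functions are defined and monogenic, hence $C^1$, on a neighbourhood of the unit ball) and from the standard orientation of $\overline{B}$, so there is no real difficulty. I would close by remarking that the same argument works verbatim with the roles of ``left'' and ``right'' interchanged, and more generally over any compact orientable domain on which $f$ is right monogenic and $g$ is left monogenic.
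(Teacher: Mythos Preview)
Your proof is correct and follows exactly the approach indicated in the paper, which simply states that the result ``may be obtained as a simple application of the Clifford--Stokes theorem'' without spelling out the details. You have supplied precisely those details: take $C=\overline{B}$ in Theorem~\ref{Clifford_Stokes}, observe that the outward normal on $S^{m-1}$ is $n(\omega)=\omega$, and use right monogenicity of $f$ and left monogenicity of $g$ to kill both terms in the volume integral.
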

Now here we introduce the representation for functions in $L^{2}(\mathbb{R}^{m},\mathbb{R}_{m})$. A proof can be found in \cite{delanghe2012clifford}
\begin{Th}\label{Representation_f_all_monogenics}
	For every function $f\in L^{2}(\mathbb{R}^{m},\mathbb{R}_{m}),$ we have the following representation 
	\begin{equation}\label{Representation_f_all_monogenics_equation}
	f(x)=\sum_{k=0}^{\infty}\sum_{l=1}^{d_{k}}\left[f_{k}^{(l)}(\vert x\vert)Y_{k}^{(l)}(x)+g_{k}^{(l)}(\vert x\vert)\frac{x}{\vert x\vert^{m}}Y_{k}^{(\ell )}(\frac{x}{\vert x\vert^2})\right],
	\end{equation}
	where $Y_{k}\in M_{l}^{+}(k)$ and the radial functions $f_k^{(\ell )}$ and $g_k^{(\ell )}$ satisfy
$$\int\limits_{0}^{\infty} r^{m+2k-1}\vert f_{k}^{(l)}(r)\vert^{2}dr<\infty,\quad\int\limits_{0}^{\infty} r^{1-m-2k}\vert g_{k}^{(l)}(r)\vert^{2}dr<\infty.$$
\end{Th}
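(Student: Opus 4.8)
The plan is to transfer the problem to the unit sphere, use the decomposition of $L^2(S^{m-1},{\mathbb C}_m)$ into spherical monogenics, and then reinsert the radial variable. First I would fix, for each $k\ge 0$, an $L^2(S^{m-1})$-orthonormal basis $\{Y_k^{(\ell)}\}_{\ell=1}^{d_k}$ of $M_l^+(k)$ (available by Gram--Schmidt together with the orthogonality relations for spherical monogenics recorded in the Propositions above, since $\dim M_l^+(k)=d_k$), and then verify that the system
$$\mathcal B=\big\{\,\omega\mapsto Y_k^{(\ell)}(\omega)\,\big\}_{k,\ell}\ \cup\ \big\{\,\omega\mapsto \omega\,Y_k^{(\ell)}(\omega)\,\big\}_{k,\ell}$$
is an orthonormal basis of the right Hilbert Clifford-module $L^2(S^{m-1},{\mathbb C}_m)$. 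Orthonormality splits into three cases: the inner--inner pairings are the Propositions; the inner--outer pairings $\int_{S^{m-1}}\overline{Y_k^{(\ell)}(\omega)}\,\omega\,Y_{k'}^{(\ell')}(\omega)\,d\omega$ vanish for \emph{all} indices by Lemma~\ref{property of two monogenic and x between}, because $\overline{Y_k^{(\ell)}}$ is right monogenic while $Y_{k'}^{(\ell')}$ is left monogenic; and the outer--outer pairings reduce to the inner--inner ones since $\overline\omega\,\omega=-\omega^2=|\omega|^2=1$ on $S^{m-1}$, which also gives the pointwise identity $|\omega Y(\omega)|=|Y(\omega)|$ so that the outer system keeps the same normalisation. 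For completeness of $\mathcal B$ I would invoke the Fischer decomposition of harmonic polynomials into inner spherical monogenics, ${\mathcal H}(k)=M_l^+(k)\oplus xM_l^+(k-1)$ \cite{delanghe2012clifford}: restricted to $S^{m-1}$ the summand $xM_l^+(k-1)$ becomes $\omega M_l^+(k-1)|_{S^{m-1}}=M_l^-(k-1)|_{S^{m-1}}$ via the identification $P_k\mapsto \frac{x}{|x|^m}P_k(x/|x|^2)$ noted in the excerpt, so summing over $k$ and using the classical completeness of spherical harmonics (applied componentwise, hence valid for ${\mathbb C}_m$-valued functions) yields $L^2(S^{m-1},{\mathbb C}_m)=\bigoplus_{k\ge0}M_l^+(k)|_{S^{m-1}}\oplus\bigoplus_{k\ge0}M_l^-(k)|_{S^{m-1}}$, to which $\mathcal B$ is adapted.

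Next I would reinsert the radial variable. For $f\in L^2({\mathbb R}^m,{\mathbb R}_m)\subset L^2({\mathbb R}^m,{\mathbb C}_m)$, Tonelli's theorem shows $f_r:=f(r\,\cdot\,)$ lies in $L^2(S^{m-1},{\mathbb C}_m)$ for a.e.\ $r>0$, so expanding $f_r$ in $\mathcal B$ gives, for a.e.\ $r$,
$$f(r\omega)=\sum_{k=0}^\infty\sum_{\ell=1}^{d_k}\Big[\,Y_k^{(\ell)}(\omega)\,\alpha_k^{(\ell)}(r)+\omega\,Y_k^{(\ell)}(\omega)\,\beta_k^{(\ell)}(r)\,\Big],$$
convergence in $L^2(S^{m-1},{\mathbb C}_m)$, where $\alpha_k^{(\ell)}(r)$ and $\beta_k^{(\ell)}(r)$ are the Clifford-module projection coefficients $\langle Y_k^{(\ell)},f_r\rangle_{L^2(S^{m-1})}$ and $\langle \omega Y_k^{(\ell)},f_r\rangle_{L^2(S^{m-1})}$, measurable in $r$ by Fubini. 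Using that $Y_k^{(\ell)}$ is homogeneous of degree $k$ one has $Y_k^{(\ell)}(x)=r^kY_k^{(\ell)}(\omega)$ and $\frac{x}{|x|^m}Y_k^{(\ell)}(x/|x|^2)=r^{1-m-k}\,\omega Y_k^{(\ell)}(\omega)$, so on renaming $f_k^{(\ell)}(r):=r^{-k}\alpha_k^{(\ell)}(r)$ and $g_k^{(\ell)}(r):=r^{\,m+k-1}\beta_k^{(\ell)}(r)$ (and, as written in the statement, placing these Clifford factors on the left) each bracket becomes exactly $f_k^{(\ell)}(|x|)Y_k^{(\ell)}(x)+g_k^{(\ell)}(|x|)\frac{x}{|x|^m}Y_k^{(\ell)}(x/|x|^2)$, which is (\ref{Representation_f_all_monogenics_equation}).

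Finally I would read off the radial integrability conditions and the $L^2({\mathbb R}^m)$-convergence of the full series from Parseval on the sphere and polar coordinates: since $\mathcal B$ is orthonormal,
$$\|f\|_2^2=\int_0^\infty\!\!\int_{S^{m-1}}|f(r\omega)|^2\,r^{m-1}\,d\omega\,dr=\sum_{k,\ell}\int_0^\infty r^{m-1}\big(|\alpha_k^{(\ell)}(r)|^2+|\beta_k^{(\ell)}(r)|^2\big)\,dr,$$
the interchange of sum and integral being legitimate by Tonelli. Substituting $\alpha_k^{(\ell)}(r)=r^k f_k^{(\ell)}(r)$ and $\beta_k^{(\ell)}(r)=r^{\,1-m-k}g_k^{(\ell)}(r)$ turns the two families of terms into $\int_0^\infty r^{m+2k-1}|f_k^{(\ell)}(r)|^2\,dr$ and $\int_0^\infty r^{\,1-m-2k}|g_k^{(\ell)}(r)|^2\,dr$; in particular each of these is finite, which is the asserted summability, and applying the same identity to the tails of the series shows it converges to $f$ in $L^2({\mathbb R}^m,{\mathbb C}_m)$, hence in $L^2({\mathbb R}^m,{\mathbb R}_m)$ once the $Y_k^{(\ell)}$ are taken with real coefficients and the coefficient functions split accordingly.

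The only genuinely non-elementary input is the Fischer/monogenic decomposition of spherical harmonics used for completeness in the first step; everything else is bookkeeping with homogeneity, polar coordinates and Tonelli--Fubini, together with the orthogonality already available from the Propositions and Lemma~\ref{property of two monogenic and x between}. The point requiring a little care is the almost-everywhere-in-$r$ nature of the spherical expansion and the resulting measurability of the radial coefficients, which the Fubini argument in the second step handles.
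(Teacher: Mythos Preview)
The paper does not give its own proof of this statement: it simply says ``A proof can be found in \cite{delanghe2012clifford}'' and states the theorem without argument. Your proposal therefore cannot be compared line-by-line against anything in the paper; what can be said is that your sketch is essentially the standard argument one finds in that reference, namely: build an orthonormal basis of $L^2(S^{m-1},{\mathbb C}_m)$ from inner and outer spherical monogenics via the Fischer decomposition ${\mathcal H}(k)=M_l^+(k)\oplus xM_l^+(k-1)$ and the completeness of spherical harmonics, expand $f(r\,\cdot\,)$ in that basis for almost every $r$, and then unwind the homogeneities and apply Parseval in polar coordinates to obtain the weighted radial $L^2$ conditions. The orthogonality bookkeeping (inner--inner from the Proposition, inner--outer from Lemma~\ref{property of two monogenic and x between}, outer--outer via $\overline{\omega}\omega=1$ on $S^{m-1}$) is correct, as is the homogeneity computation $\frac{x}{|x|^m}Y_k^{(\ell)}(x/|x|^2)=r^{1-m-k}\omega Y_k^{(\ell)}(\omega)$.

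One point deserves a sentence more of care than you give it: in a right Hilbert Clifford-module the expansion coefficients $\alpha_k^{(\ell)}(r)=\langle Y_k^{(\ell)},f_r\rangle$ naturally sit on the \emph{right}, whereas the statement (and your final display) writes the radial factors on the \emph{left}. Your parenthetical ``placing these Clifford factors on the left'' glosses over why this is legitimate; in general a Clifford scalar does not commute past $Y_k^{(\ell)}(x)$ or $xY_k^{(\ell)}(x)$, so the passage from right coefficients to left coefficients is not merely a renaming. The cleanest fix is either to leave the coefficients on the right throughout (which is how the result is usually stated in \cite{delanghe2012clifford}) or to observe that one may absorb the Clifford content of the coefficient into a different choice of angular basis element, but either way this step should be made explicit rather than waved through.
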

In what follows, $B(c)$ is the closed ball of radius $c> 0$ in $\mathbb{R}^{m}$.  

\begin{Lemma}\label{support_Lemma_f_summand}
	Let $f\in L^{2}(\mathbb{R}^{m},\mathbb{R}_{m}),$ supported on ${B(1)}.$ Then the radial functions $f_{k}^{(l)}(\vert x\vert)$ and $g_{k}^{(l)}(\vert x\vert)$ defined in Theorem \ref{Representation_f_all_monogenics} are supported at $[0,1].$
\end{Lemma}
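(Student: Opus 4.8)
The plan is to recover each radial coefficient $f_k^{(l)}$ and $g_k^{(l)}$ from $f$ by an explicit integration over spheres, and then to read off the support statement directly from the hypothesis that $f$ vanishes outside $B(1)$.

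First I would restrict the representation (\ref{Representation_f_all_monogenics_equation}) to the sphere of radius $r$. Writing $x=r\omega$ with $\omega\in S^{m-1}$ and using that $Y_k^{(l)}$ is homogeneous of degree $k$, one has $Y_k^{(l)}(r\omega)=r^kY_k^{(l)}(\omega)$ and, since $x/\vert x\vert^2=\omega/r$,
$$\frac{x}{\vert x\vert^m}\,Y_k^{(l)}\!\left(\frac{x}{\vert x\vert^2}\right)=r^{1-m-k}\,\omega\,Y_k^{(l)}(\omega),$$
so that
$$f(r\omega)=\sum_{k=0}^{\infty}\sum_{l=1}^{d_k}\Big[r^k f_k^{(l)}(r)\,Y_k^{(l)}(\omega)+r^{1-m-k}g_k^{(l)}(r)\,\omega\,Y_k^{(l)}(\omega)\Big].$$
By Fubini's theorem, $\omega\mapsto f(r\omega)$ lies in $L^2(S^{m-1},\mathbb{C}_m)$ for almost every $r>0$, and for such $r$ this series converges in $L^2(S^{m-1},\mathbb{C}_m)$, since the decomposition of Theorem \ref{Representation_f_all_monogenics} is orthogonal and adapted to the polar splitting $\mathbb{R}^m\cong(0,\infty)\times S^{m-1}$.

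Next I would pair $f(r\cdot)$, in the $L^2(S^{m-1})$ inner product, with $Y_k^{(l)}$ and with $\omega\,Y_k^{(l)}$. Because $Y_k^{(l)}\in M_l^+(k)$ is left monogenic, its Clifford conjugate $\overline{Y_k^{(l)}}$ is right monogenic, so Lemma \ref{property of two monogenic and x between} gives $\int_{S^{m-1}}\overline{Y_k^{(l)}(\omega)}\,\omega\,Y_{k'}^{(l')}(\omega)\,d\omega=0$ for all indices. Using this, the orthonormality of the spherical monogenics $\{Y_k^{(l)}\}$ recorded above, and the identities $\bar\omega=-\omega$ and $\omega^2=-1$ on $S^{m-1}$, all cross terms cancel and one is left, for a.e. $r>0$, with
$$f_k^{(l)}(r)=r^{-k}\!\int_{S^{m-1}}\overline{Y_k^{(l)}(\omega)}\,f(r\omega)\,d\omega,\qquad g_k^{(l)}(r)=r^{m+k-1}\!\int_{S^{m-1}}\overline{\omega\,Y_k^{(l)}(\omega)}\,f(r\omega)\,d\omega.$$
If $f$ is supported on $B(1)$, then $f(x)=0$ for a.e. $x$ with $\vert x\vert>1$, hence by Fubini $f(r\cdot)=0$ in $L^2(S^{m-1})$ for a.e. $r>1$; the two formulas then force $f_k^{(l)}(r)=g_k^{(l)}(r)=0$ for a.e. $r>1$, which is the assertion.

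The step I expect to be the main obstacle is the rigorous justification, in the first part, that the $L^2(\mathbb{R}^m)$-convergent series (\ref{Representation_f_all_monogenics_equation}) restricts, for a.e. $r$, to a series converging in $L^2(S^{m-1})$, so that the termwise pairing in the second part is legitimate. This is a standard Fubini-plus-orthogonality argument — the partial sums of (\ref{Representation_f_all_monogenics_equation}) are Cauchy in $L^2(\mathbb{R}^m)$, hence, along a subsequence, Cauchy in $L^2(S^{m-1})$ for a.e. $r$ — but it is the only point that genuinely needs care; everything else is bookkeeping with the monogenic orthogonality relations already recorded in the excerpt.
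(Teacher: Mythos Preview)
Your proof is correct, but you take a different route from the paper. The paper argues globally: since $f$ vanishes on $B(1)^c$, one has $0=\int_{B(1)^c}|f(x)|^2\,dx$; expanding the square of the series (\ref{Representation_f_all_monogenics_equation}), the cross terms vanish by Lemma~\ref{property of two monogenic and x between} and the orthonormality of the $Y_k^{(l)}$, leaving
\[
\sum_{k,l}\int_1^\infty r^{m+2k-1}|f_k^{(l)}(r)|^2\,dr+\sum_{k,l}\int_1^\infty r^{1-m-2k}|g_k^{(l)}(r)|^2\,dr=0,
\]
a sum of non-negative terms, whence each $f_k^{(l)}$ and $g_k^{(l)}$ vanishes a.e.\ on $(1,\infty)$.

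Your approach instead extracts each coefficient pointwise in $r$ by pairing $f(r\,\cdot)$ against $Y_k^{(l)}$ and $\omega Y_k^{(l)}$ on $S^{m-1}$. This has the merit of producing explicit inversion formulas for the radial coefficients, but as you rightly flag, it costs you the Fubini/subsequence argument to make sense of restricting the $L^2(\mathbb{R}^m)$ series to almost every sphere. The paper's norm computation sidesteps that delicacy entirely: the orthogonality is used only after integration in $r$, so one never needs the series to converge on individual spheres. Both arguments rest on the same orthogonality relations; the paper's is the shorter path here.
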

\begin{proof}
	Since $f$ is supported on ${B(1)}$, we have 
	\begin{align*}
	0&=\int\limits_{{B(1)}^{c}}\left| \sum_{k=0}^{\infty}\sum_{l=1}^{d_{k}}[f_{k}^{(l)}(\vert x\vert)Y_{k}^{(l)}(x)+g_{k}^{(l)}(\vert x\vert)\frac{x}{\vert x\vert^{m}}Y_{k}^{(l)}(\frac{x}{\vert x\vert^2})]\right|^{2} \,dx\\
	&=\int\limits_{{B(1)}^{c}}\sum_{k=0}^{\infty}\sum_{l=1}^{d_{k}}f_{k}^{(l)}(\vert x\vert)\overline{Y_{k}^{(l)}(x)}\sum_{k'=0}^{\infty}\sum_{l'=1}^{d_{k'}}f_{k'}^{(l')}(\vert x\vert)Y_{k'}^{(l')}(x)\,dx\\
	&+\int\limits_{{B(1)}^{c}}\sum_{k=0}^{\infty}\sum_{l=1}^{d_{k}}f_{k}^{(l)}(\vert x\vert)\overline{Y_{k}^{(l)}(x)}\sum_{k'=0}^{\infty}\sum_{l'=1}^{d_{k'}}g_{k'}^{(l')}(\vert x\vert)\frac{x}{\vert x\vert^{m}}Y_{k'}^{(l')}(\frac{x}{\vert x\vert^2})\,dx\\
	&+\int\limits_{{B(1)}^{c}}\sum_{k=0}^{\infty}\sum_{l=1}^{d_{k}}g_{k}^{(l)}(\vert x\vert)\overline{\frac{x}{\vert x\vert^{m}}Y_{k}^{(l)}(\frac{x}{\vert x\vert^2})}\sum_{k'=0}^{\infty}\sum_{l'=1}^{d_{k'}}f_{k'}^{(l')}(\vert x\vert)Y_{k'}^{(l')}(x)\,dx\\
	&+\int\limits_{{B(1)}^{c}}\sum_{k=0}^{\infty}\sum_{l=1}^{d_{k}}g_{k}^{(l)}(\vert x\vert)\overline{\frac{x}{\vert x\vert^{m}}Y_{k}^{(l)}(\frac{x}{\vert x\vert^2})}\sum_{k'=0}^{\infty}\sum_{l'=1}^{d_{k'}}g_{k'}^{(l')}(\vert x\vert)\frac{x}{\vert x\vert^{m}}Y_{k'}^{(l')}(\frac{x}{\vert x\vert^2})\,dx.
	\end{align*}
	The second and third integrals are zero due to the Lemma \ref{property of two monogenic and x between}. By the orthonormality of $\{ Y_{k}^{l}:\, k\geq 0,\ 1\leq\ell\leq d_k\}$ we have 
	$$\sum_{k=0}^{\infty}\sum_{l=1}^{d_{k}}\int\limits_{1}^{\infty} r^{m+2k-1}\vert f_{k}^{(l)}(r)\vert^{2}\, dr+\sum_{k=0}^{\infty}\sum_{l=1}^{d_{k}}\int\limits_{1}^{\infty} r^{1-m-2k}\vert g_{k}^{(l)}(r)\vert^{2}\, dr=0.$$
	Hence, $ f_{k}^{(l)}(r)=g_{k}^{(l)}(r)=0$ for $r>1$. This completes the proof.
\end{proof}
We now give relevant background on the Clifford-Legendre polynomials, which we use to build the CPSWFs. Given $\alpha >-1$ and $Y_k\in M_l^+(k)$, the Clifford-Legendre polynomial $C_{n,m}^\alpha (Y_k)$ is defined by
$$C_{n,m}^\alpha (Y_k)=(1+x^2)^{-\alpha}\partial_x^n[(1+x^2)^{\alpha +n}Y_k(x)].$$
Let ${\mathcal L}_\alpha$ be the Clifford differential operator
$${\mathcal L}_\alpha f=(1+x^2)\partial_x^2f-2(\alpha +1)x\partial_xf$$
which acts on $C^2({\mathbb R}^m,{\mathbb R}_m)$.
\begin{Th}(\textbf{Differential equation for the Gegenbauer polynomials})\label{differentialforClifford}
	For all
	$n,k\in\mathbb{N}$
	and
	$\alpha>-1$, the Clifford-Gegenbauer polynomial $C_{n,m}^{\alpha}(Y_{k})(x)$ is an eigenfunction of the differential operator ${\mathcal L}_\alpha$
	with real eigenvalue 
	$C(\alpha,n,m,k)$, i.e.,
	\begin{equation}\label{differential_equation_of_CL_Poly}
	{\mathcal L}_\alpha (C_{n,m}^\alpha (Y_k))=C(\alpha ,n,m,k)C_{n,m}^\alpha (Y_{k}),
	\end{equation}
	where
	$$C(\alpha,n,m,k)=\begin{cases}
	n(2\alpha+n+m+2k)&\text{ if $n$ is even,}\\
	(2\alpha+n+1)(n+m+2k-1)&\text{ if
		$n$ is odd,}
	\end{cases}$$
and $\{Y_{k}^{i}\}_{i=1}^{d_k}$ is any orthonormal basis for $M_l^+(k)$.	
\end{Th}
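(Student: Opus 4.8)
The plan is to deduce the eigenvalue equation from a factorization of $\mathcal{L}_\alpha$ together with ``raising'' and ``lowering'' identities for the Clifford--Gegenbauer polynomials, all of which ultimately come from Lemma \ref{diracderivativelemma}. The preliminary step is to record two weighted versions of that lemma: for $P_k\in M_l^+(k)$ and $\beta\in{\mathbb R}$, expand the scalar weight as $(1+x^2)^\beta=\sum_{j\geq 0}\binom{\beta}{j}x^{2j}$ (a convergent expansion on the open unit ball, since $x^2=-|x|^2$), differentiate term by term using Lemma \ref{diracderivativelemma} with $s=2j$ resp.\ $s=2j+1$, and resum; this yields
\begin{align*}
\partial_x\big[(1+x^2)^\beta P_k\big]&=-2\beta\,x\,(1+x^2)^{\beta-1}P_k,\\
\partial_x\big[x(1+x^2)^\beta P_k\big]&=-(2\beta+2k+m)(1+x^2)^\beta P_k+2\beta\,(1+x^2)^{\beta-1}P_k.
\end{align*}
Iterating these on $(1+x^2)^{\alpha+n}Y_k$ also shows that $C_{n,m}^\alpha(Y_k)$ is a genuine polynomial, lying in the span of $\{x^sY_k:\,0\leq s\leq n,\ s\equiv n\!\!\pmod 2\}$; this normal form will be used below.

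Two structural observations about the operator are then immediate. First, $\mathcal{L}_\alpha=\mathcal{A}_\alpha\partial_x$, where $\mathcal{A}_\alpha f:=(1+x^2)\partial_xf-2(\alpha+1)xf$. Second, the \emph{raising identity} $\mathcal{A}_\alpha C_{n-1,m}^{\alpha+1}(Y_k)=C_{n,m}^\alpha(Y_k)$: peeling one factor of $\partial_x$ off the Rodrigues definition and noting that, straight from the definition, $\partial_x^{\,n-1}[(1+x^2)^{\alpha+n}Y_k]=(1+x^2)^{\alpha+1}C_{n-1,m}^{\alpha+1}(Y_k)$ (because $(\alpha+1)+(n-1)=\alpha+n$), one differentiates the scalar factor $(1+x^2)^{\alpha+1}$ by the ordinary Leibniz rule and applies the first weighted identity above with $P_0=1$.

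The heart of the argument is a simultaneous induction on $n$ proving, for all $\alpha>-1$ and all $k$: $(\mathrm a_n)$ $\mathcal{L}_\alpha C_{n,m}^\alpha(Y_k)=C(\alpha,n,m,k)C_{n,m}^\alpha(Y_k)$; and $(\mathrm b_n)$ $\partial_xC_{n,m}^\alpha(Y_k)=C(\alpha,n,m,k)C_{n-1,m}^{\alpha+1}(Y_k)$ (for $n\geq1$). The base $(\mathrm a_0)$ is trivial since $C_{0,m}^\alpha(Y_k)=Y_k$ is monogenic, so both sides vanish. For the step, first derive $(\mathrm b_n)$ from $(\mathrm a_{n-1})$ at the parameter $\alpha+1$: using the elementary identity $\partial_x(x\,\cdot)+x\partial_x=-(2E+m)$ one computes $\partial_x\mathcal{A}_\alpha=(1+x^2)\partial_x^2+2\alpha x\partial_x+2(\alpha+1)(2E+m)$, so by the raising identity
$$\partial_xC_{n,m}^\alpha(Y_k)=\big[(1+x^2)\partial_x^2+2\alpha x\partial_x+2(\alpha+1)(2E+m)\big]C_{n-1,m}^{\alpha+1}(Y_k).$$
Now eliminate $(1+x^2)\partial_x^2C_{n-1,m}^{\alpha+1}(Y_k)$ using $(\mathrm a_{n-1})$ at parameter $\alpha+1$, and note that since $x^sP_k$ is homogeneous of degree $s+k$, Lemma \ref{diracderivativelemma} gives $[2x\partial_x+2E+m](x^sP_k)=(2k+m)x^sP_k$ when $s$ is even and $(2-2k-m)x^sP_k$ when $s$ is odd; hence, by the normal form from the first paragraph, $[2x\partial_x+2E+m]C_{n-1,m}^{\alpha+1}(Y_k)$ is a scalar multiple of $C_{n-1,m}^{\alpha+1}(Y_k)$. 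Collecting terms reduces $(\mathrm b_n)$ to the elementary identities $C(\alpha+1,n-1,m,k)+2(\alpha+1)(2k+m)=C(\alpha,n,m,k)$ for $n$ odd and $C(\alpha+1,n-1,m,k)+2(\alpha+1)(2-2k-m)=C(\alpha,n,m,k)$ for $n$ even, which one checks directly from the case formula. Finally $(\mathrm a_n)$ follows from $(\mathrm b_n)$ and the two structural observations: $\mathcal{L}_\alpha C_{n,m}^\alpha(Y_k)=\mathcal{A}_\alpha\partial_xC_{n,m}^\alpha(Y_k)=C(\alpha,n,m,k)\,\mathcal{A}_\alpha C_{n-1,m}^{\alpha+1}(Y_k)=C(\alpha,n,m,k)C_{n,m}^\alpha(Y_k)$. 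As $C(\alpha,n,m,k)$ is real by inspection, the reality assertion comes for free.

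I expect the main obstacle to be organisational rather than conceptual: extracting the two weighted Dirac identities and, especially, the precise ``$\mathrm{span}\{x^sY_k\}$'' normal form of $C_{n,m}^\alpha(Y_k)$ cleanly out of Lemma \ref{diracderivativelemma} takes care, because of the binomial series and the even/odd alternation, and the parity-split recursion for $C(\alpha,n,m,k)$ has to be verified in both cases. Once those are in hand, the factorization $\mathcal{L}_\alpha=\mathcal{A}_\alpha\partial_x$, the raising identity $\mathcal{A}_\alpha C_{n-1,m}^{\alpha+1}=C_{n,m}^\alpha$, and the fact that $\partial_x$ acts as a one-step lowering operator in this parameter-shifted family close the induction with no further input.
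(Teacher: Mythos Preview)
Your argument is correct. The paper does not actually prove this theorem: its entire proof reads ``See \cite{delanghe2012clifford}'', i.e.\ it defers to the monograph of Delanghe--Sommen--Sou\v{c}ek. What you have written is a self-contained derivation from first principles (the Rodrigues definition and Lemma~\ref{diracderivativelemma}), and every step checks out: the factorisation $\mathcal L_\alpha=\mathcal A_\alpha\partial_x$, the raising identity $\mathcal A_\alpha C_{n-1,m}^{\alpha+1}(Y_k)=C_{n,m}^\alpha(Y_k)$, the commutator formula $\partial_x\mathcal A_\alpha=(1+x^2)\partial_x^2+2\alpha x\partial_x+2(\alpha+1)(2E+m)$, the parity computation $[2x\partial_x+2E+m](x^sP_k)=(2k+m)x^sP_k$ resp.\ $(2-2k-m)x^sP_k$, and the two recursions for $C(\alpha,n,m,k)$ all verify by direct calculation. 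The only cosmetic point is that your weighted Dirac identities are derived on $|x|<1$ via the binomial series, whereas the end statement is a polynomial identity; you might add one sentence noting that the normal form (span of $\{x^sY_k\}$) follows cleanly from the raising identity by induction, which also makes the polynomial nature of $C_{n,m}^\alpha(Y_k)$ manifest and lets the eigenvalue equation extend to all $x$ automatically.

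Since the paper offers no argument of its own to compare against, there is nothing further to weigh; your proof would stand in for the citation.
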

\begin{proof}
	See
	\cite{delanghe2012clifford}.
\end{proof}
When $\alpha=0,$ the Clifford-Gegenbauer polynomials are known as the Clifford-Legendre polynomials and are eigenfunctions of the Clifford differential operator $L_0$:
$$L_{0}(C_{n,m}^0 (Y_{k}))=C(0,n,m,k)C_{n,m}^0 (Y_{k}),$$
where $L_0 f=(1+x^2)\partial_x^2f-2x\partial_xf.$

Let $Y_k\in M_l^+(k)$ be normalized in the sense that $\int\limits_{S^{m-1}}|Y_k(\omega )|^2\, d\omega =1$. Using a result from \cite{propertiesofcliffordlegendre}, we now define the normalized Clifford-Legendre polynomials $\barC_{n,m}^0(Y_k)$ by 
\begin{equation}
\barC_{n,m}^0(Y_k)=\frac{\sqrt{2k+2n+m}}{2^nn!}C_{n,m}^0(Y_k)\label{normalized C_L}.
\end{equation}

We now state some useful results from \cite{propertiesofcliffordlegendre} which we will use in this paper. \begin{Th}\label{thm: Bonnet formula}
	The Bonnet formula for the normalized Clifford-Legendre polynomials is as follows:
	\begin{enumerate}
		\item[(a)] When $n=2N$ is even,
\begin{equation}
x\barC_{2N,m}^0(Y_k)(x)=A_{N,k,m}\barC_{2N+1,m}^0(Y_k)(x)+B_{N,k,m}\barC_{2N-1,m}^0(Y_k)(x)\label{even Bonnet}
\end{equation}
		where
		\begin{align*}
		A_{N,k,m}&=\frac{-(\frac{m}{2}+N+k)\sqrt{m+4N+2k}}{(\frac{m}{2}+2N+k)\sqrt{m+4N+2k+2}},\\
		B_{N,k,m}&=\frac{N\sqrt{m+4N+2k}}{(\frac{m}{2}+2N+k)\sqrt{m+4N+2k-2}},
		\end{align*}
		\item[(b)] When $n=2N+1$ is odd
\begin{equation}
x\barC_{2N+1,n}^0(Y_k)(x)=A_{N,k,m}'\barC_{2N+2,m}^0(Y_k)(x)+B_{N,k,m}'\barC_{2N,m}^0(Y_k)(x),\label{odd Bonnet}
\end{equation}
		where
		\begin{align*}
		A_{N,k,m}'&=\frac{-(N+1)\sqrt{m+4N+2k+2}}{(\frac{m}{2}+2N+k+1)\sqrt{m+4N+2k+4}},\\
		B_{N,k,m}'&=\frac{(\frac{m}{2}+N+k)\sqrt{m+4N+2k+2}}{(\frac{m}{2}+2N+k+1)\sqrt{m+4N+2k}}.
		\end{align*}
	\end{enumerate}
\end{Th}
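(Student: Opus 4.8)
The plan is to prove the recurrences \eqref{even Bonnet} and \eqref{odd Bonnet} in three stages: first fix the algebraic form of both sides, then collapse the expansion of $x\,\barC_{n,m}^{0}(Y_{k})$ to three terms by an orthogonality argument, and finally pin down the three coefficients. For the first stage I would expand $(1+x^{2})^{n}=\sum_{j=0}^{n}\binom{n}{j}x^{2j}$ in the definition $C_{n,m}^{0}(Y_{k})=\partial_{x}^{n}[(1+x^{2})^{n}Y_{k}]$ and apply Lemma~\ref{diracderivativelemma} $n$ times to each summand $x^{2j}Y_{k}$: this gives $\partial_{x}^{n}[x^{2j}Y_{k}]=0$ when $2j<n$ and $\partial_{x}^{n}[x^{2j}Y_{k}]=\gamma_{n,j}\,x^{2j-n}Y_{k}$ otherwise, where $\gamma_{n,j}$ is an explicit product of the two kinds of Dirac-derivative eigenvalues in Lemma~\ref{diracderivativelemma}. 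Since $x^{2}=-|x|^{2}$ is a real scalar, it follows that $\barC_{2N,m}^{0}(Y_{k})=p_{N}(x^{2})Y_{k}$ and $\barC_{2N+1,m}^{0}(Y_{k})=x\,q_{N}(x^{2})Y_{k}$ for real polynomials $p_{N},q_{N}$ of degree exactly $N$ (the top coefficient being a nonzero product of Dirac-derivative eigenvalues). In particular $x\,\barC_{2N,m}^{0}(Y_{k})=x\,p_{N}(x^{2})Y_{k}$ has the same shape as the odd-index polynomials, and since $q_{0},\dots,q_{N}$ have distinct degrees they span the polynomials of degree $\le N$; as $x$ is pointwise invertible off the origin, there are \emph{unique real scalars} $c_{j}$ with $x\,\barC_{2N,m}^{0}(Y_{k})=\sum_{j=0}^{N}c_{j}\,\barC_{2j+1,m}^{0}(Y_{k})$, and symmetrically for $x\,\barC_{2N+1,m}^{0}(Y_{k})$.

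For the second stage I would use that the family $\{\barC_{n,m}^{0}(Y_{k})\}_{n\ge 0}$ (with $Y_{k}$ fixed and normalized) is orthogonal in $L^{2}(B(1),{\mathbb R}_{m})$. Opposite-parity pairs vanish: $\langle\barC_{2N,m}^{0}(Y_{k}),\barC_{2M+1,m}^{0}(Y_{k})\rangle$ reduces, after integrating in the angular variable and using homogeneity of $Y_{k}$, to a scalar multiple of $\int_{S^{m-1}}\overline{Y_{k}(\omega)}\,\omega\,Y_{k}(\omega)\,d\omega$, which is $0$ by Lemma~\ref{property of two monogenic and x between} ($\overline{Y_{k}}$ being right monogenic and $Y_{k}$ left monogenic). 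Same-parity pairs reduce, using $\int_{S^{m-1}}|Y_{k}(\omega)|^{2}\,d\omega=1$, to orthogonality of the scalar polynomials $p_{N}$ (resp. $q_{N}$) on $[0,1]$ against the power weight $u^{k+m/2-1}$ — the Jacobi-type orthogonality already implicit in the differential equation for ${\mathcal L}_{0}$ in Theorem~\ref{differentialforClifford} and recorded in \cite{propertiesofcliffordlegendre}. Moreover multiplication by $x$ is skew-adjoint, $\langle xf,g\rangle=-\langle f,xg\rangle$, because $\bar{x}=-x$. Pairing the expansion from the first stage against $\barC_{2j+1,m}^{0}(Y_{k})$ and using skew-adjointness, $c_{j}\,\|\barC_{2j+1,m}^{0}(Y_{k})\|^{2}=-\langle\barC_{2N,m}^{0}(Y_{k}),x\,\barC_{2j+1,m}^{0}(Y_{k})\rangle$; but $x\,\barC_{2j+1,m}^{0}(Y_{k})$ lies in the span of $\{\barC_{2i,m}^{0}(Y_{k}):0\le i\le j+1\}$, which is orthogonal to $\barC_{2N,m}^{0}(Y_{k})$ as soon as $j+1<N$. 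Hence $c_{j}=0$ for $j\le N-2$, leaving only $j=N$ and $j=N-1$; this is the shape of \eqref{even Bonnet}, and \eqref{odd Bonnet} follows in the same way.

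For the third stage, the top coefficient $A_{N,k,m}$ (and likewise $A_{N,k,m}'$) comes from matching leading terms: the coefficient of $x^{2N}Y_{k}$ in $\barC_{2N,m}^{0}(Y_{k})$ and of $x^{2N+1}Y_{k}$ in $\barC_{2N+1,m}^{0}(Y_{k})$ equals $\lambda_{n}:=\frac{\sqrt{2k+2n+m}}{2^{n}\,n!}\,\gamma_{n,n}$ for $n=2N$, resp. $n=2N+1$, and $\gamma_{n,n}$ telescopes once the $n$ successive Dirac derivatives are split into the factors coming from the steps with $s$ even (a shifted factorial $\sim 2^{N}(2N)!/N!$) and those from the steps with $s$ odd (a product over an arithmetic progression $2N+2k+m,\,2N+2k+m+2,\dots$). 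Then $A_{N,k,m}=\lambda_{2N}/\lambda_{2N+1}$ and $A_{N,k,m}'=\lambda_{2N+1}/\lambda_{2N+2}$, and simplifying — using $\tfrac{m}{2}+2N+k=\tfrac12(m+4N+2k)$ to pass between the two ways the parameters are written — these collapse to the stated expressions. For the lower coefficient, I would pair \eqref{even Bonnet} against $\barC_{2N-1,m}^{0}(Y_{k})$ and use skew-adjointness once more together with \eqref{odd Bonnet} for index $N-1$:
\begin{equation*}
B_{N,k,m}\,\|\barC_{2N-1,m}^{0}(Y_{k})\|^{2}=\langle x\,\barC_{2N,m}^{0}(Y_{k}),\barC_{2N-1,m}^{0}(Y_{k})\rangle=-A_{N-1,k,m}'\,\|\barC_{2N,m}^{0}(Y_{k})\|^{2}.
\end{equation*}
Since the normalization \eqref{normalized C_L} is chosen precisely so that $\|\barC_{n,m}^{0}(Y_{k})\|$ is independent of $n$ (checkable by iterated integration by parts against the Rodrigues formula via the Clifford--Stokes theorem, Theorem~\ref{Clifford_Stokes}, and recorded in \cite{propertiesofcliffordlegendre}), this yields $B_{N,k,m}=-A_{N-1,k,m}'$; replacing $N$ by $N-1$ in the $A'$-formula and simplifying gives the stated $B_{N,k,m}$, and symmetrically $B_{N,k,m}'=-A_{N,k,m}$. (If one prefers not to invoke the norm, $B_{N,k,m}$ can instead be read off by comparing the sub-leading coefficients of $p_{N}$ with those of $q_{N}$ and $q_{N-1}$, at the cost of a longer computation.)

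I expect the only genuine obstacle to be this last stage: evaluating the telescoping product $\gamma_{n,n}$ correctly with the even/odd case split of Lemma~\ref{diracderivativelemma}, carrying the numerous shifts by $2k+m$ through the simplification, and — if that route is chosen — confirming the $n$-independence of $\|\barC_{n,m}^{0}(Y_{k})\|$. The structural ingredients (the parity decomposition, the orthogonality collapse, and the skew-adjointness of multiplication by $x$) are all routine.
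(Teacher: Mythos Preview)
Your argument is sound. The paper does not actually prove this theorem: it is quoted from \cite{propertiesofcliffordlegendre} without proof, so there is no in-paper argument to compare against. What you have written is the standard three-term-recurrence proof for orthogonal polynomials, transported correctly to the Clifford setting via the skew-adjointness $\langle xf,g\rangle=-\langle f,xg\rangle$ and the parity decomposition $\barC_{2N,m}^{0}(Y_k)=p_N(x^2)Y_k$, $\barC_{2N+1,m}^{0}(Y_k)=xq_N(x^2)Y_k$; this is almost certainly the route taken in the cited reference as well.

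Two small remarks. First, your identity $B_{N,k,m}=-A'_{N-1,k,m}$ (and $B'_{N,k,m}=-A_{N,k,m}$) does indeed match the displayed formulas once one uses $\tfrac{m}{2}+2N+k=\tfrac12(m+4N+2k)$ to rewrite the denominators; both sides simplify to $\dfrac{2N}{\sqrt{m+4N+2k}\,\sqrt{m+4N+2k-2}}$, so this step is clean and not circular (you compute $A,A'$ from leading coefficients first, then feed $A'_{N-1}$ back to get $B_N$). Second, the ``$n$-independence of $\|\barC_{n,m}^{0}(Y_k)\|$'' you invoke is exactly the content of the normalization \eqref{normalized C_L}: the paper explicitly introduces $\barC_{n,m}^{0}(Y_k)$ as the $L^2(B(1))$-normalized polynomials via a result from \cite{propertiesofcliffordlegendre}, so you may simply take $\|\barC_{n,m}^{0}(Y_k)\|=1$ rather than rederive it. With those two points granted, the only remaining labor is the bookkeeping in $\gamma_{n,n}$, and you have identified that correctly as the sole place where care is needed.
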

\begin{Th}
	The Fourier transform of the restriction of the Clifford-Legendre polynomial
	$C_{n,m}^{0}(Y_{k})(x)$
	to the unit ball
	$B(1)$
	is given by
	\begin{equation}
	\mathcal{F}(C_{n,m}^{0}(Y_{k}))(\xi)=(-1)^{k}i^{n+k}2^{n}n!\xi^n\frac{J_{k+\frac{m}{2}+n}(2\pi \vert\xi\vert)}{\vert \xi\vert^{\frac{m}{2}+n+k}}Y_{k}(\xi).
	\end{equation}
\end{Th}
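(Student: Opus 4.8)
The plan is to reduce the identity, via $n$-fold integration by parts, to the classical Bochner--Hecke formula for the Fourier transform of a radial function times a solid spherical harmonic, followed by the evaluation of a single standard Bessel integral. Here $\chi_{B(1)}$ denotes the indicator of the unit ball and $\mathcal F(C_{n,m}^0(Y_k))$ is shorthand for $\mathcal F\big(C_{n,m}^0(Y_k)\,\chi_{B(1)}\big)$. Since $x^2=-|x|^2$, one has the polynomial identity $1+x^2=1-|x|^2$, so taking $\alpha=0$ in the definition gives $C_{n,m}^0(Y_k)(x)=\partial_x^n\big[(1-|x|^2)^nY_k(x)\big]$, and on $B(1)$ the weight $(1-|x|^2)^n$ is non-negative and vanishes to order $n$ on $S^{m-1}$. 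The first step is an elementary lemma: for $0\le j\le n-1$ the polynomial $\partial_x^{\,j}\big[(1-|x|^2)^nY_k(x)\big]$ is divisible by $(1-|x|^2)^{\,n-j}$, hence vanishes identically on $S^{m-1}$; this is proved by induction on $j$ using the Leibniz rule for the Dirac operator, since each application of $\partial_x$ either differentiates the factor $(1-|x|^2)^{n-j}$, lowering its power by one, or leaves it untouched.

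Next I would write
$$\mathcal F\big(C_{n,m}^0(Y_k)\,\chi_{B(1)}\big)(\xi)=\int_{B(1)}e^{-2\pi i\langle x,\xi\rangle}\,\partial_x^n\big[(1-|x|^2)^nY_k(x)\big]\,dx$$
and integrate by parts $n$ times --- equivalently, apply the Clifford--Stokes theorem (Theorem~\ref{Clifford_Stokes}) repeatedly on $C=B(1)$. Since $e^{-2\pi i\langle x,\xi\rangle}$ is scalar, one has $\mathcal F(\partial_x h)(\xi)=2\pi i\,\xi\,\mathcal F(h)(\xi)$ for compactly supported $h$, and at each stage the boundary integral over $S^{m-1}$ involves one of the polynomials $\partial_x^{\,j}[(1-|x|^2)^nY_k]$ with $j\le n-1$, which vanishes there by the lemma. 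This yields
$$\mathcal F\big(C_{n,m}^0(Y_k)\,\chi_{B(1)}\big)(\xi)=(2\pi i)^n\,\xi^n\int_{B(1)}e^{-2\pi i\langle x,\xi\rangle}(1-|x|^2)^nY_k(x)\,dx.$$

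To finish, note that $Y_k\in M_l^+(k)\subset\mathcal H(k)$ is a solid spherical harmonic of degree $k$ (each Clifford coefficient of $Y_k$ being a complex solid harmonic, since $\Delta_m Y_k=-\partial_x^2Y_k=0$). The Bochner--Hecke formula, applied componentwise and reassembled --- the scalar radial profile commuting with the Clifford coefficients --- gives
$$\int_{B(1)}e^{-2\pi i\langle x,\xi\rangle}(1-|x|^2)^nY_k(x)\,dx=(-i)^k\,\frac{Y_k(\xi)}{|\xi|^{\frac{m}{2}+k-1}}\;2\pi\int_0^1(1-r^2)^nJ_{\frac{m}{2}+k-1}(2\pi r|\xi|)\,r^{\frac{m}{2}+k}\,dr,$$
and the inner integral is Sonine's first finite integral, $\int_0^1(1-r^2)^nJ_\nu(ar)\,r^{\nu+1}\,dr=2^n\,n!\,a^{-(n+1)}J_{\nu+n+1}(a)$, evaluated at $\nu=\tfrac{m}{2}+k-1$ and $a=2\pi|\xi|$. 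Substituting and collecting the scalar constants, $(2\pi i)^n\cdot\dfrac{2^n n!}{(2\pi)^n}\cdot(-i)^k=(-1)^k\,i^{\,n+k}\,2^n n!$, produces exactly the asserted formula. The only delicate points are the justification of the boundary-free integration by parts (handled by the divisibility lemma) and keeping the normalisations of Bochner's formula and of Sonine's integral consistent with the $e^{-2\pi i\langle x,\xi\rangle}$ convention; as a check, the case $n=0$ reduces to Bochner's formula for $\chi_{B(1)}$ together with $\int_0^1 J_\nu(ar)\,r^{\nu+1}\,dr=a^{-1}J_{\nu+1}(a)$.
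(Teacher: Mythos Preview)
Your proof is correct. The paper does not supply its own proof of this theorem; it is quoted as a background result from \cite{propertiesofcliffordlegendre}, so there is no in-paper argument to compare against. That said, your route --- the Rodrigues representation $C_{n,m}^0(Y_k)=\partial_x^{\,n}[(1-|x|^2)^nY_k]$, the divisibility lemma to kill the boundary terms in the $n$-fold Clifford--Stokes integration by parts, Bochner--Hecke applied componentwise, and Sonine's first finite integral --- is exactly the standard and natural one, and your bookkeeping of the constants $(2\pi i)^n(-i)^k=(-1)^k i^{n+k}$ and of the side on which $\xi^n$ appears (on the left of $Y_k(\xi)$, coming from $\mathcal F(\partial_x h)=2\pi i\,\xi\,\mathcal F(h)$) is accurate.
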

\begin{Th}\label{Even and Odd Clifford Legendre}
	In dimension $m=2$, the normalised Clifford-Legendre polynomials satisfy
	$$\barC_{2N+1,2}^{0}(Y_{k})(x)=-e_{1}\barC_{2N,2}^{0}(Y_{k+1})(x).$$
	
\begin{Lemma}\label{partial_Cl_sum_of_Cl}
There are real constant $a_{n,m,j}^k$ such that
\begin{equation}
\partial_{x} C_{n,m}^0(Y_k)(x)=4n(n-1+k+\frac{m}{2})C_{n-1,m}^0(Y_k)(x)+\sum_{j=0}^{\lceil \frac{n}{2}\rceil-1}a_{n,m,j}^kC_{n-2-2j,m}^0(Y_k)(x).
\label{Dirac of CLP1}
\end{equation}
\end{Lemma}

\begin{proof} The proof is by induction on $n$. By direct computation, we find that 
$$\partial_{x} C_{1,m}^0(Y_k)(x)=(m+2k)Y_k(x)=(m+2k)C_0^0(Y_k)(x),$$ so that (\ref{Dirac of CLP1}) is verified when $n=0$. Suppose now that (\ref{Dirac of CLP1}) holds when $n=N$. By Corollary 3.10 from \cite{propertiesofcliffordlegendre} we have
\begin{equation}
\partial_{x} C_{n+1,m}^0(Y_k)(x)=4(n+1)[(n+k+\frac{m}{2})C_{n,m}^0(Y_k)(x)-n\partial_{x} C_{n-1,m}^0(Y_k)(x)].\label{Dirac of CLP2}
\end{equation}
With $n=2N+2,$ we have that
\begin{align}
\partial_{x} C_{2N+3,m}^0(Y_k)(x)&=4(2N+3)[(2N+2+k+\frac{m}{2})C_{2N+2,m}^0(Y_k)(x)\nonumber\\
&\hspace*{4.7cm}-(2N+2)\partial_{x} C_{2N+1,m}^0(Y_k)(x)]\nonumber\\
&=4(2N+3)[(2N+2+k+\frac{m}{2})C_{2N+2,m}^0(Y_k)(x)\nonumber\\
&\hspace*{4.7cm}-(2N+2)\sum_{j=0}^{N}a_{N,j}^kC_{2N-2j,m}^0(Y_k)(x)]\nonumber\\
&=\sum_{j=0}^{N+1}a_{N+1,j}^kC_{2N+2-2j,m}^0(Y_k)(x)\label{odd_version_partial_Cl}
\end{align}
where $a_{N+1,0}^k=4(2N+3)(2N+2+k+\frac{m}{2})$ and $a_{N+1,j}^k=-4(2N+3)(2N+2)a_{N,j}^k$ for $1\leq j\leq N-1)$. Now let's assume that $n=2N+1$. Then with the same process we will get 
\begin{align}\label{even_version_partial_Cl}
\partial_{x} C_{2N+2,m}^0(Y_k)(x)&=4(2N+2)(2N+1+k+\frac{m}{2})C_{2N+1,m}^0(Y_k)(x)\nonumber\\
&+\sum_{j=1}^{N}b_{N+1,m,j}^kC_{2N+1-2j,m}^0(Y_k)(x).
\end{align}
Now the combination \eqref{odd_version_partial_Cl} and \eqref{even_version_partial_Cl} will complete the proof.
\end{proof}
	
\end{Th}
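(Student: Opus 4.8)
The plan is to reduce the stated identity to an unnormalised one and to prove that by a direct Rodrigues-type computation, exploiting two features special to $m=2$. First I would record the structural fact that in dimension two $\dim M_l^+(k)=1$, so each space of inner spherical monogenics is generated by a single polynomial. I would exhibit this generator explicitly as $z^k$, where $z=-e_1x=x_1-e_1e_2x_2$; writing $\partial_x=e_1(\partial_{x_1}-e_1e_2\partial_{x_2})$ gives a one-line check that $\partial_x z=0$, and since $z$ commutes with the pseudoscalar $e_1e_2$ one gets $\partial_x z^k=0$ for all $k$. Because $|z|=|x|$, the normalisation $\int_{S^1}|Y_k(\omega)|^2\,d\omega=1$ forces $Y_k=\frac{1}{\sqrt{2\pi}}z^k$ (for a fixed sign convention), and from $x=e_1z$ I obtain the crucial ladder relation
\begin{equation*}
xY_k=e_1Y_{k+1},\qquad\text{equivalently}\qquad Y_{k+1}=-e_1xY_k.
\end{equation*}

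Next I would isolate the two commutation facts that make the noncommutative bookkeeping harmless: $x^2=-|x|^2$ is a real scalar, so $(1+x^2)^p$ is a scalar (radial) factor that commutes with $e_1$; and, since $\Delta_m=-\partial_x^2$, the operator $\partial_x^{2N}=(-\Delta_2)^N$ is a scalar differential operator, hence also commutes with left multiplication by the constant $e_1$. By contrast $\partial_x$ itself does not commute with $e_1$, so it is essential that only even powers of $\partial_x$ are moved past $e_1$.

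With these in hand I would prove the unnormalised identity
\begin{equation*}
C_{2N+1,2}^0(Y_k)=-2(2N+1)\,e_1\,C_{2N,2}^0(Y_{k+1}).
\end{equation*}
Starting from $C_{2N+1,2}^0(Y_k)=\partial_x^{2N}\bigl[\partial_x[(1+x^2)^{2N+1}Y_k]\bigr]$, I would peel off the innermost $\partial_x$: since $Y_k$ is monogenic and $\partial_x(1+x^2)^p=-2px(1+x^2)^{p-1}$, the derivation rule for a scalar prefactor gives $\partial_x[(1+x^2)^{2N+1}Y_k]=-2(2N+1)(1+x^2)^{2N}xY_k$. On the other side, substituting $Y_{k+1}=-e_1xY_k$ into $C_{2N,2}^0(Y_{k+1})=\partial_x^{2N}[(1+x^2)^{2N}Y_{k+1}]$ and pulling the constant $-e_1$ through the scalar operator $\partial_x^{2N}$ yields $C_{2N,2}^0(Y_{k+1})=-e_1\,\partial_x^{2N}[(1+x^2)^{2N}xY_k]$. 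Comparing the two displays gives the unnormalised identity at once.

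Finally I would restore the normalising constants from \eqref{normalized C_L}. The point is that the radicals match, $2k+2(2N+1)+2=2(k+1)+2(2N)+2=2k+4N+4$, and the ratio $2(2N+1)/\bigl(2^{2N+1}(2N+1)!\bigr)=1/\bigl(2^{2N}(2N)!\bigr)$ converts the factor $-2(2N+1)$ into exactly the quotient of the two normalisations, leaving $\barC_{2N+1,2}^0(Y_k)=-e_1\barC_{2N,2}^0(Y_{k+1})$. The only genuinely delicate step is the first one: pinning down the compatible normalised generators so that $xY_k=e_1Y_{k+1}$ holds with the correct constant, which is exactly where the one-dimensionality of $M_l^+(k)$ in dimension two is used; the remainder is careful tracking of which factors are scalar and therefore free to move.
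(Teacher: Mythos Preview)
Your argument for the $m=2$ identity $\barC_{2N+1,2}^{0}(Y_{k})=-e_{1}\barC_{2N,2}^{0}(Y_{k+1})$ is correct and clean: the explicit monogenic generator $z^k$ with $z=-e_1x$, the ladder relation $xY_k=e_1Y_{k+1}$, the observation that $\partial_x^{2N}=(-\Delta_2)^N$ commutes with left multiplication by $e_1$, and the normalisation bookkeeping all check out. This is a nice self-contained Rodrigues-formula proof.

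However, you have proved the wrong half of the block. The ``statement'' you were given is an oddly nested environment: a Theorem (the $m=2$ identity) whose body contains a separate Lemma (the expansion of $\partial_x C_{n,m}^0(Y_k)$ as a linear combination of lower-degree Clifford--Legendre polynomials) together with its proof. The paper does \emph{not} prove the $m=2$ identity here at all; that result is imported from the companion paper \cite{propertiesofcliffordlegendre}, as announced just before the theorem. The only proof actually supplied in the block is for the Lemma, and it proceeds by induction on $n$, using the recursion
\[
\partial_{x} C_{n+1,m}^0(Y_k)=4(n+1)\Bigl[(n+k+\tfrac{m}{2})C_{n,m}^0(Y_k)-n\,\partial_{x} C_{n-1,m}^0(Y_k)\Bigr]
\]
(quoted as Corollary~3.10 of \cite{propertiesofcliffordlegendre}) to push the inductive hypothesis from $n$ to $n+2$ in the even and odd cases separately.

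So your proposal, while a valid proof of the headline identity, does not address the Lemma at all, and it is the Lemma's proof that the paper presents. If you were asked to reproduce the paper's argument, you need to supply the induction for the expansion \eqref{Dirac of CLP1}; your Rodrigues computation does not touch that claim.
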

We now review relevant facts from the theory of Sturm-Liouville problems, proofs of which can be found in 

\cite{christensen2010functions,al2008sturm} 
\begin{Definition}\label{def_SL}
Let $p(x),\, q(x),$ and $r(x)$ be functions on $\mathbb{R}$ or a subinterval of $\mathbb{R};$ assume that the function
$p(x)$ is differentiable, and that the functions $p(x), q(x),$ and $r(x)$ are continuous. A differential equation that can be written on the form
\begin{equation}\label{SL_form}
[p(x)u']'+[q(x)+\lambda r(x)]u=0,
\end{equation}
for some parameter 
$\lambda\in \mathbb{R},$
is called a Sturm-Liouville differential equation and a solution $u$ of (\ref{SL_form}) is known as an eigenfunction of the Sturm-Liouville form.
\end{Definition}
\begin{Th}\label{firstSL}
Let $u_n$ and $u_m$ be real-valued eigenfunctions for a  Sturm-Liouville form defined in \eqref{SL_form} corresponding to different eigenvalues $\lambda_{n}, \,\lambda_{m}.$ Let $p(x)>0$ and $r(x)>0$ for all $x\in (a,b),$ and also let $p(a)=p(b)=0.$
Then $u_{n},$ and, $u_{m}.$ are orthogonal in $L_{r}(a,b)$, i.e., 
$$\int\limits_a^br(x)u_n(x)u_m(x)\, dx=0.$$
\end{Th}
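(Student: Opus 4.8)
The plan is to run the classical Lagrange/Abel argument for Sturm--Liouville orthogonality, adapted to the fact that here we only assume $p(a)=p(b)=0$ rather than full self-adjoint boundary conditions. First I would write down the two eigenfunction equations
\begin{equation*}
[p(x)u_n']'+[q(x)+\lambda_n r(x)]u_n=0,\qquad [p(x)u_m']'+[q(x)+\lambda_m r(x)]u_m=0,
\end{equation*}
multiply the first by $u_m$ and the second by $u_n$, and subtract. The terms involving $q(x)u_nu_m$ cancel, and the remaining left-hand side is the Lagrange bracket $u_m[pu_n']'-u_n[pu_m']'$, which is exactly $\frac{d}{dx}\bigl(p(x)(u_m u_n'-u_n u_m')\bigr)$ since the cross terms $pu_n'u_m'$ cancel. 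This yields
\begin{equation*}
\frac{d}{dx}\bigl(p(x)[u_m(x)u_n'(x)-u_n(x)u_m'(x)]\bigr)=(\lambda_m-\lambda_n)\,r(x)u_n(x)u_m(x).
\end{equation*}

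Next I would integrate this identity over $(a,b)$. The left side telescopes to $\bigl[p(x)(u_m u_n'-u_n u_m')\bigr]_a^b$, and because $p(a)=p(b)=0$ — and the $u_n,u_m$ together with their derivatives are bounded near the endpoints, being eigenfunctions of a regular-in-the-interior Sturm--Liouville form — this boundary term vanishes. Hence
\begin{equation*}
(\lambda_m-\lambda_n)\int_a^b r(x)u_n(x)u_m(x)\,dx=0.
\end{equation*}
Since $\lambda_n\neq\lambda_m$ by hypothesis, we may divide by $\lambda_m-\lambda_n$ to conclude $\int_a^b r(x)u_n(x)u_m(x)\,dx=0$, which is the asserted orthogonality in $L_r(a,b)$; the positivity of $r$ on $(a,b)$ is what makes $\langle f,g\rangle_r=\int_a^b r f g$ a genuine inner product, so ``orthogonal in $L_r(a,b)$'' is meaningful.

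The one genuinely delicate point — the main obstacle — is the vanishing of the boundary term $\bigl[p(u_mu_n'-u_nu_m')\bigr]_a^b$ when the endpoints are singular (here $p(a)=p(b)=0$ typically signals that $a,b$ are singular endpoints of the problem, as happens for Legendre-type operators on $[-1,1]$). Merely knowing $p\to 0$ is not by itself enough; one needs $p(x)(u_mu_n'-u_nu_m')\to 0$, i.e. control on how fast $u_n',u_m'$ can blow up relative to the rate at which $p$ degenerates. I would handle this by invoking the standard fact (from the references \cite{christensen2010functions,al2008sturm}) that the eigenfunctions under consideration are the ones that remain in the relevant weighted $L^2$ space and have finite Dirichlet energy $\int_a^b p|u'|^2<\infty$, which forces $p\,u'$ to be integrable and the Wronskian-type boundary term to vanish; alternatively, in every concrete case we apply this to (such as the radial Clifford--Legendre operator) $p$ vanishes only to first order and the eigenfunctions are polynomials in the relevant variable, so $p(u_mu_n'-u_nu_m')$ is a polynomial vanishing at $a$ and $b$ and the boundary evaluation is trivially zero. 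I would state the argument in the polynomial/bounded-derivative regime and remark that the general singular case follows from the limit-point/limit-circle theory cited.
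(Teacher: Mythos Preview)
Your argument is the standard Lagrange-identity proof and is correct in substance. Note, however, that the paper does not actually prove this theorem: it is stated as a background fact from Sturm--Liouville theory, with proofs deferred to the cited references \cite{christensen2010functions,al2008sturm}. So there is no ``paper's own proof'' to compare against---your write-up simply supplies what the paper chose to cite rather than reproduce, and it matches the classical argument one finds in those sources.
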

\begin{Th}\label{secondSL}
Let $p_i, q_i,\, i = 1, 2,$ be real-valued continuous functions on the interval $[a, b]$ and let
$$(p_{1}(x)y')'+q_{1}(x)y=0,$$
$$(p_{2}(x)y')'+q_{2}(x)y=0,$$
be two homogeneous linear second order differential equations in self-adjoint form with
$0<p_{2}(x)\leq p_{1}(x),$
and,
$q_{1}(x)\leq q_{2}(x).$
Let $u$ be a non-trivial solution of the first of these equations with successive roots at $z_1$ and $z_2$ and let $v$ be a non-trivial solution of second equation. Then, there exists an $x \in (z_1, z_2)$ such that $v(x) = 0.$
\end{Th}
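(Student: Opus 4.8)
My plan is to prove this statement (the Sturm--Picone comparison theorem) by contradiction, using the Picone identity. Assume the conclusion fails, so that the nontrivial solution $v$ of $(p_2y')'+q_2y=0$ has no zero in the open interval $(z_1,z_2)$; replacing $v$ by $-v$ if necessary we may take $v>0$ on $(z_1,z_2)$. The algebraic heart of the argument is the Picone identity: wherever $v\neq 0$,
\begin{equation*}
\frac{d}{dx}\!\left[\frac{u}{v}\bigl(p_1u'v-p_2v'u\bigr)\right]=(p_1-p_2)(u')^2+(q_2-q_1)u^2+p_2\Bigl(u'-\tfrac{v'}{v}u\Bigr)^{2}+u\bigl[(p_1u')'+q_1u\bigr]-\frac{u^2}{v}\bigl[(p_2v')'+q_2v\bigr],
\end{equation*}
which one checks by differentiating the left-hand side with the product and quotient rules and regrouping. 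Since $u$ solves the first equation and $v$ the second, the last two terms vanish, so on $(z_1,z_2)$ the expression $\Phi:=\tfrac{u}{v}(p_1u'v-p_2v'u)=p_1uu'-p_2u^2v'/v$ has derivative equal to the sum of the first three terms, each of which is $\geq 0$ because $p_1\geq p_2>0$ and $q_2\geq q_1$.

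The next step is to integrate this identity over $[z_1,z_2]$. The boundary values of $\Phi$ vanish: since $u(z_1)=u(z_2)=0$, the term $p_1uu'$ tends to $0$ at each endpoint, and for $p_2u^2v'/v$ one uses that a nontrivial solution of a linear second-order ODE has only simple zeros (by uniqueness of initial value problems), so that at an endpoint where $v$ vanishes $u^2/v=O(x-z_i)\to 0$ while $v'$ stays bounded; the same estimate shows the integrand is bounded near the endpoints, so the integral converges. We therefore obtain
\begin{equation*}
\int_{z_1}^{z_2}\Bigl[(p_1-p_2)(u')^2+(q_2-q_1)u^2+p_2\Bigl(u'-\tfrac{v'}{v}u\Bigr)^{2}\Bigr]\,dx=0,
\end{equation*}
and since the integrand is a sum of nonnegative continuous functions on $(z_1,z_2)$, each term vanishes identically there. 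From $p_2(u'-\tfrac{v'}{v}u)^2\equiv 0$ and $p_2>0$ we get $(u/v)'\equiv 0$, hence $u=cv$ on $(z_1,z_2)$ for some constant $c$.

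It remains to read off the contradiction. If $v(z_1)\neq 0$, then $0=u(z_1)=cv(z_1)$ forces $c=0$, so $u\equiv 0$ on $(z_1,z_2)$, contradicting that $z_1$ and $z_2$ are \emph{consecutive} zeros of the nontrivial solution $u$. The step I expect to require the most care is the remaining possibility $v(z_1)=0$ (or $v(z_2)=0$), where $u=cv$ is not by itself absurd; there one invokes the two other vanishing terms. Since $u\neq 0$ throughout $(z_1,z_2)$, the identity $(q_2-q_1)u^2\equiv 0$ gives $q_1\equiv q_2$ on $(z_1,z_2)$, while $(p_1-p_2)(u')^2\equiv 0$ gives $p_1\equiv p_2$ at every point where $u'\neq 0$, hence (as $u'$ has only isolated zeros) $p_1\equiv p_2$ on $[z_1,z_2]$ by continuity of $p_1-p_2\geq 0$. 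Thus the two equations coincide on $[z_1,z_2]$ and $v$ is a scalar multiple of $u$, sharing the same consecutive zeros $z_1,z_2$. This borderline configuration is exactly the one excluded by the standing non-degeneracy hypothesis of the Sturm--Picone theorem --- that the two equations are not identical, equivalently that $v$ is not proportional to $u$ --- which may be assumed here without loss of generality; under that proviso the case $v(z_1)=0$ cannot occur, and the proof is complete.
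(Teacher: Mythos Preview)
The paper does not supply its own proof of this result; it is stated as background with a reference to standard Sturm--Liouville texts (\cite{christensen2010functions,al2008sturm}). Your argument via the Picone identity is the classical route and is carried out correctly through the main steps: the identity is right, the boundary analysis (including the case where $v$ vanishes at an endpoint, using simplicity of zeros) is handled carefully, and the conclusion $u=cv$ on $(z_1,z_2)$ from the vanishing of the third term is sound.

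The only point that deserves sharpening is your final paragraph. When $v(z_1)=0$ you correctly deduce that $p_1\equiv p_2$, $q_1\equiv q_2$ on $[z_1,z_2]$ and that $v$ is a scalar multiple of $u$, so that $v$ vanishes precisely at $z_1$ and $z_2$ and not in the open interval. You then dismiss this as ``excluded by the standing non-degeneracy hypothesis \ldots\ which may be assumed here without loss of generality.'' That is not a loss-of-generality reduction; it is an extra hypothesis. As the theorem is literally stated in the paper (with non-strict inequalities $0<p_2\le p_1$, $q_1\le q_2$ and no exclusion of the case $v$ proportional to $u$), the open-interval conclusion fails in exactly this degenerate case, so what you have actually proved is the standard Sturm--Picone statement: either $v$ has a zero in $(z_1,z_2)$, or $v$ is a constant multiple of $u$ (equivalently, $v$ has a zero in $(z_1,z_2]$). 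That is the form found in the cited references, and it is all that is needed for the paper's applications. I would recommend you state this honestly rather than invoking a hypothesis the theorem does not contain.
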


\section{The multidimensional Clifford Prolate Spheroidal Wave Functions}
The goal of this section is to introduce the Clifford prolate spheroidal wave functions and to compute them using Boyd's algorithm \cite{boyd2005algorithm}.
\begin{Definition}
	Given $c\geq 0$, the Clifford differential operator 
	$L_{c}$
	acts of 
	$C^{2}(B(1),\mathbb{R}_{m})$
	as follows:
	\begin{equation}\label{Definition_of_Lc_operator}
	L_{c}f(x)=\partial_{x}((1-\vert x\vert^{2})\partial_{x}f(x))+4\pi^{2}c^{2}\vert x\vert^{2}f(x)
	\end{equation}
	where 
	$B(1)$
	is the unit ball in
	$\mathbb{R}^{m}$
	and 
	$\partial_{x}$
	is the Dirac Operator. We define the Clifford Prolate Spheroidal Wave Functions (CPSWFs) as the eigenfunctions of 
	$L_{c}.$
\end{Definition}
\begin{Proposition}\label{proposition3.3}
	The operator 
	$L_{c}$
	defined in 
	\eqref{Definition_of_Lc_operator}
	is self-adjoint.
\end{Proposition}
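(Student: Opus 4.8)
The plan is to work with the ${\mathbb C}_m$-valued inner product $\langle f,g\rangle=\int_{B(1)}\overline{f(x)}g(x)\,dx$ on $C^2(B(1),{\mathbb R}_m)$ and to verify $\langle L_cf,g\rangle=\langle f,L_cg\rangle$ by handling the two summands of $L_c$ separately. The zeroth-order term gives no trouble: $4\pi^2c^2|x|^2$ is a real scalar function, hence central in ${\mathbb R}_m$ and fixed by Clifford conjugation, so $\langle 4\pi^2c^2|x|^2f,g\rangle=\int_{B(1)}4\pi^2c^2|x|^2\,\overline{f}\,g\,dx=\langle f,4\pi^2c^2|x|^2g\rangle$. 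Everything therefore reduces to the second-order term; write $w(x)=1-|x|^2$, which vanishes identically on $\partial B(1)=S^{m-1}$.

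The key step is to show, by a single integration by parts, that
$$\langle\partial_x(w\,\partial_xf),g\rangle=\int_{B(1)}w(x)\,\overline{\partial_xf(x)}\;\partial_xg(x)\,dx,$$
and then to observe that the right-hand side, viewed as a form $Q(f,g)$ in the arguments $f$ and $g$, is conjugate-symmetric. For the first step I would use the conjugation rule $\overline{\partial_xh}=-\overline{h}\,\partial_x$ (with the Dirac operator acting from the right) to rewrite $\langle\partial_x(w\,\partial_xf),g\rangle=-\int_{B(1)}\big(\overline{w\,\partial_xf}\,\partial_x\big)g\,dx$, and then apply the Clifford--Stokes theorem (Theorem~\ref{Clifford_Stokes}) on $C=B(1)$ with left factor $\overline{w\,\partial_xf}$ and right factor $g$:
$$\int_{S^{m-1}}\overline{w\,\partial_xf}\;n\;g\,d\sigma=\int_{B(1)}\Big[\big(\overline{w\,\partial_xf}\,\partial_x\big)g+\overline{w\,\partial_xf}\,(\partial_xg)\Big]\,dx.$$
Since $w=0$ on $S^{m-1}$ the boundary integral vanishes, and since $w$ is a real scalar one may pull it out and use $\overline{w\,\partial_xf}=w\,\overline{\partial_xf}$, which delivers the displayed identity. (One could equally well continue with a second integration by parts to land on a manifestly symmetric form, but the conjugation shortcut avoids it.) For the conjugate-symmetry of $Q$, using $\overline{ab}=\bar b\bar a$, $\overline{\overline h}=h$ and again the centrality of $w$, one gets $\overline{Q(g,f)}=\int_{B(1)}w\,\overline{\partial_xf}\,\partial_xg\,dx=Q(f,g)$. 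Combining this with the conjugate-symmetry axiom $\langle f,h\rangle=\overline{\langle h,f\rangle}$ of the inner product yields $\langle f,\partial_x(w\,\partial_xg)\rangle=\overline{\langle\partial_x(w\,\partial_xg),f\rangle}=\overline{Q(g,f)}=Q(f,g)=\langle\partial_x(w\,\partial_xf),g\rangle$; adding back the zeroth-order term completes the proof.

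I expect the only delicate point to be keeping the bookkeeping straight: correctly applying $\overline{\partial_xh}=-\overline{h}\,\partial_x$ (which turns a left action of the Dirac operator into a right action) in tandem with the Clifford--Stokes identity, and making sure it is precisely the prefactor $1-|x|^2$, and not some residual term, that annihilates the surface integral. A secondary, purely technical remark is that Theorem~\ref{Clifford_Stokes} is stated for functions of class $C^1$ on an open neighbourhood of the ball; applying it to $f,g\in C^2(B(1),{\mathbb R}_m)$ is routine (extend slightly, or approximate) and does not affect the computation.
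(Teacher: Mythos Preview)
Your proof is correct and follows essentially the same route as the paper: both split off the scalar multiplication term $4\pi^2c^2|x|^2$, then handle $L_0$ via the Clifford--Stokes theorem, with the boundary terms vanishing because $1-|x|^2=0$ on $S^{m-1}$. The only cosmetic difference is that the paper applies Clifford--Stokes \emph{twice} to move the operator all the way from $g$ to $f$, whereas you apply it once to reach the symmetric form $Q(f,g)=\int_{B(1)}w\,\overline{\partial_x f}\,\partial_x g\,dx$ and then invoke conjugate-symmetry---exactly the shortcut you flag in your parenthetical remark.
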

\begin{proof} First note that $L_c=L_0+M_c$ where $M_cf(x)=4\pi^2c^2|x|^2f(x)$. Clearly $M_c$ is self-adjoint, so we need only show that $L_0$ is self-adjoint.
	An application of \ref{Clifford_Stokes} yields
	\begin{align*}
		\langle f,L_{0}g\rangle&=\int\limits_{B(1)}\overline{f(x)}[\partial_{x}((1-\vert x\vert^{2})\partial_{x}g(x)\, dx\\
		&= \int\limits_{\partial B(1)}\overline{f(x)}\frac{x}{\vert x\vert}(1-\vert x\vert^{2})(\partial_{x}g)(x)\, d\sigma(x)- \int\limits_{B(1)}(\overline{f}\partial_{x})(x)(1-\vert x\vert^{2})(\partial_{x}g)(x)\, dx\\
		&=-\int\limits_{B(1)}[((1-\vert x\vert^{2})(\overline{f}\partial_{x})](\partial_{x}g)(x)\,dx\\
&=-\int\limits_{\partial B(1)}(1-|x|^2)(\overline{f}\partial_x)(x)\frac{x}{|x|}g(x)\, d\sigma (x)+\int\limits_{B(1)}([(1-|x|^2)(\overline{f}\partial_x)]\partial_x)g(x)\, dx\\
&=\int\limits_{B(1)}\overline{\partial_x[(1-|x|^2)\partial_xf](x)}g(x)\, dx=\langle L_0f,g\rangle,
\end{align*}
%
%
%
	which completes the proof.
\end{proof}
We now outline an algorithm for the computation of the CPSWFs.

Recall that the  first Bonnet formula (\ref{even Bonnet}) of Theorem \ref{thm: Bonnet formula} takes the form
$$x\barC_{2i,m}^0(Y_{k}^{j})(x)=A_{i,k,m}\barC_{2i+1,m}^0(Y_{k}^{j})(x)+B_{i,k,m}\barC_{2i-1,m}^0(Y_{k}^{j})(x).$$
Multiplying both sides of this equation by $-x$ on both sides and applying the second Bonnet formula (\ref{odd Bonnet}) yields
\begin{equation}\label{twice_application_bonnet}
\vert x\vert^{2}\barC_{2i,m}^0(Y_{k}^{j})(x)=a_{i}\barC_{2i+2,m}^0(Y_{k}^{j})(x)+b_{i}\barC_{2i,m}^0(Y_{k}^{j})(x)+c_{i}\barC_{2i-2,m}^0(Y_{k}^{j})(x),
\end{equation}
where
$a_{i}=-A_{i,k,m}A'_{i,k,m}$
$b_{i}=-(A_{i,k,m}B'_{i,k,m}+A'_{i-1,k,m}B_{i,k,m}).\;$
$c_{i}=-B_{i,k,m}B'_{i-1,k,m}.$ 

By direct computation, it may be shown that if $X_k^e$ is the collection of functions of the form $F(|x|^2)Y_{k}^{j}(x)$ with $F\in C^\infty ([0,1],{\mathbb R})$ and $Y_{k}^{j}$ a spherical monogenic of degree $k$ on ${\mathbb R}^m$, then $X_k^e$ is invariant under $L_c$. In fact, a direct calculation, using the monogenicity and homogeneity of $Y_{k}^{j}$, gives
$$L_0[F(|x|^2)Y_{k}^{j}(x)]=[(-2m+(4+4k)|x|^2)F'(|x|^2)-4k|x|^2F''(|x|^2)]Y_{k}^{j}(x).$$
Similarly, if $X_k^o$ is the collection of functions of the form $xG(|x|^2)Y_{k}^{j}$ with $G\in C^\infty ([0,1],{\mathbb R})$ and $Y_{k}^{j}$ a spherical monogenic of degree $k$ on ${\mathbb R}^m$, then $X_k^0$ is also invariant under $L_c$. Hence, when searching for eigenfunctions of $L_c$, we may search within the spaces $X_k^e$ and $X_k^o$. Furthermore, the spaces $X_k^e$ and $X_\ell^o$ are orthogonal for all values of $k$ and $\ell$. The collection $\{\barC_{2i,m}^0(Y_{k}^{j})\}_{i=0}^\infty$ lies in $X_k^e$ while $\{\barC_{2i+1,m}^0(Y_{k}^{j})\}_{i=0}^\infty$ lies in $X_k^e$. We may therefore assume that eigenfunctions of $L_c$ will take one of two forms:
\begin{equation}
f_{N,k}(x)=\sum_{j=0}^\infty C_{2i,m}^0(Y_{k}^{j})(x)\alpha_{i,N,m}^{k};\quad g_{N,k}(x)=\sum_{i=0}^\infty C_{2i+1,m}^0(Y_{k}^{j})(x)\beta_{i,N,m}^k.\label{eigenfunction form}
\end{equation}
An application of  \eqref{twice_application_bonnet} yields
\begin{align}
L_{c}f_{N,k}(x)&=L_{c}\big(\sum_{i=0}^{\infty}\bar C_{2i,m}^0(Y_{k}^{j})(x)\alpha_{i,N,m}^{k}\big)\nonumber\hspace*{8.5cm}\\
&=\sum_{i=0}^{\infty}(L_{0}+4\pi^2c^2\vert x\vert^{2})\barC_{2i,m}^0(Y_{k}^{j})(x)\alpha_{i,N,m}^{k}\nonumber\\
&=\sum_{i=0}^{\infty}\big[ C(0,2i,m,k)\barC_{2i,m}^0(Y_{k}^{j})(x)+4\pi^2c^2\vert x\vert^{2}\barC_{2i,m}^0(Y_{k}^{j})(x)\big]\alpha_{i,N,m}^{k}\nonumber\\
&=\sum_{i=0}^{\infty}\big[4\pi^2c^2a_{i}\barC_{2i+2,m}^0(Y_{k}^{j})(x)+\big(C(0,2i,m,k)+4\pi^2c^2b_{i}\big)\barC_{2i,m}^0(Y_{k}^{j})(x)\nonumber\\
&\qquad\qquad +4\pi^2c^2c_{i}\barC_{2i-2,m}^0(Y_{k}^{j})(x)\big]\alpha_{i,N,m}^{k}\nonumber\\
&=\sum_{i=0}^{\infty}\barC_{2i,m}^0(Y_{k}^{j})(x)\big[ (4\pi^2c^2 a_{i-1})\alpha_{i-1,N,m}^{k}+\big(C(0,2i,m,k)+4\pi^2c^2b_{i}\big)\alpha_{i,N,m}^{k}\nonumber\\
&\qquad\qquad +(4\pi^2c^2c_{i+1})\alpha_{i+1,N,m}^{k}\big]\nonumber\\
&=\chi_{2N,m}^{k,c}f_{N,k}(x)=\chi_{2N,m}^{k,c}\sum_{i=0}^{\infty}\barC_{2i,m}^0(Y_{k}^{j})(x)\alpha_{i,N,m}^{k},
\end{align}
with the understanding that 
$\alpha_{i,N,m}^{k}=0$
if
$i<0$.
The orthogonality of the Clifford-Legendre polynomials enables us to equate the coefficients on both sides of this equation to obtain the following recurrence formula for $\{\alpha_{i,N,m}^{k}\}_{i=0}^{\infty}$:
$$(4\pi^2c^2 a_{i-1})\alpha_{i-1,N,m}+\big(C(0,2i,2,k)+4\pi^2c^2b_{i}-\chi_{2N,2}^{k,c}\big)\alpha_{i,N,m}+(4\pi^2c^2c_{i+1})\alpha_{i+1,N,m}^{k}=0.$$
This recurrence formula is true for all $i,N\geq 0$ so the problem reduces to finding the eigenvectors $\alpha_{i,N,m}^{k}$ and the eigenvalues $\chi_{2N,m}^{k,c}$ of the doubly-infinite matrix $M_{k,m}^{e}$ with the following entries:
\begin{eqnarray*}
	M_{k,m}^{e}(i,j)=\left\lbrace \begin{array}{ll}
		-4\pi^{2}c^{2}\frac{i(k+i+\frac{m}{2}-1)}{(k+2i+\frac{m}{2}-1)\sqrt{(k+2i+\frac{m}{2})(k+2i+\frac{m}{2}-2)}},\;\;\;\textnormal{if}\; i\geq 1,\; j=i-1,  \\
		4i(k+i+\frac{m}{2})+\frac{4\pi^2c^2}{(k+2i+\frac{m}{2})}\big[\frac{(k+i+\frac{m}{2})^2}{(k+2i+\frac{m}{2}+1)}+\frac{i^2}{(k+2i+\frac{m}{2}-1)}\big]\;\;\;\textnormal{if}\; i=j\geq 0,\\
		-4\pi^{2}c^{2}\frac{(i+1)(k+i+\frac{m}{2})}{(k+2i+\frac{m}{2}+1)\sqrt{(k+2i+\frac{m}{2}+2)(k+2i+\frac{m}{2})}},\;\;\;\textnormal{if}\; i\geq 0,\; j=i+1,  \\
		0\;\;\;\;\;\;\;\; else.
	\end{array} \right.
\end{eqnarray*}
We see directly that $M_{k,m}^{e}$ is symmetric and banded in the sense that $M_{k,m}^{e}(i,j)=0$ if $|i-j|>1$. The symmetry of $M_{k,m}^{e}$ is due to the self-adjointness of $L_c$.
With $g_{N,k}$ as in (\ref{eigenfunction form}), we find that if $g_{N,k}$ is an eigenfunction of $L_c$ with eigenvalue $\chi_{2N+1,m}^{k,c}$ then the vector $\beta_n^k$ with $i$-th entry $\beta_{i,N}^k$ is an eigenvector of the doubly-infinite matrix $M_{k,m}^{o}$ which has $(i,j)$-th entry
\begin{eqnarray*}
	M_{k,m}^{o}(i,j)=\left\lbrace \begin{array}{ll}
		-4\pi^{2}c^{2}\frac{i(k+i+\frac{m}{2})}{(k+2i+\frac{m}{2})\sqrt{(k+2i+\frac{m}{2}+1)(k+2i+\frac{m}{2}-1)}},\;\;\;\;\;\;\textnormal{if}\; i\geq 1,\; j=i-1,  \\
		4(i+1)(k+i+\frac{m}{2})+\frac{4\pi^2c^2}{(k+2i+\frac{m}{2}+1)}\big[\frac{(k+i+\frac{m}{2})^2}{(k+2i+\frac{m}{2})}+\frac{(i+1)^2}{(k+2i+\frac{m}{2}+2)}\big]\;\;\;\textnormal{if}\; i=j\geq 0,\\
		-4\pi^{2}c^{2}\frac{(i+1)(k+i+\frac{m}{2}+1)}{(k+2i+\frac{m}{2}+2)\sqrt{(k+2i+\frac{m}{2}+3)(k+2i+\frac{m}{2}+1)}},\;\;\;\textnormal{if}\; i\geq 0,\; j=i+1,  \\
		0\;\;\;\;\;\;\;\; else.
	\end{array} \right.
\end{eqnarray*}
\begin{Remark}\label{Relationship_Coefficients_CPSWFs}
We can check directly that the off-diagonal elements of $M_{k,m}^{e}$ and $M_{k,m}^{o}$ satisfy
\begin{align}
	M_{k+1,m}^{e}(i-1,i)&=M_{k+1,m}^{e}(i,i+1)\notag\\
	&=-4\pi^{2} c^{2}\dfrac{(i+1)(k+i+\frac{m}{2}+1)}{(k+2i+\frac{m}{2}+1)\sqrt{k+2i+\frac{m}{2}+3}\sqrt{k+2i+\frac{m}{2}+1}}\label{even-odd off-diagonal}\\
	&=M_{k,m}^{o}(i-1,i)=M_{k,m}^{o}(i,i+1).\notag
\end{align}
However, for the diagonal elements we have 
\begin{equation}
	M_{k,m}^{o}(i,i)-M_{k+1,m}^{e}(i,i)=4k+2m.\label{even-odd diagonal}
\end{equation}
In light of (\ref{even-odd off-diagonal}) and (\ref{even-odd diagonal}) we have
$$M_{k}^{o}=M_{k+1}^{e}+bI,$$
where 
$b=4k+2m$.
Therefore,  if 
$v$
is the eigenvector of matrix of $M_{k,m}^{e}$ with eigenvalue
$\lambda$
then
$v$
is also an eigenvector of $M_{k+1,m}^{o}$ with eigenvalue
$\lambda +b$.
\end{Remark}

The matrices $M_{k,m}^{e}$ and $M_{k,m}^{o}$ have eigenvectors $\alpha_N$ and $\beta_N$ $(N\geq 0)$ respectively, leading to  eigenfunctions $\psi_{2N,m}^{k,c}$ and $\psi_{2N+1,m}^{k,c}$ of $L_c$ arising from (\ref{eigenfunction form}):
\begin{equation}
\psi_{2N,m}^{k,c}(x)=\sum_{j=0}^\infty C_{2i,m}^0(Y_k)(x)\alpha_{i,N,m}^k;\quad \psi_{2N+1,m}^{k,c}(x)=\sum_{i=0}^\infty C_{2i+1,m}^0(Y_k)(x)\beta_{i,N,m}^k,\label{CPSWF def}
\end{equation}
where $(\alpha_N)_i=\alpha_{i,N,m}^{k}$ and $(\beta_N)_i=\beta_{i,N,m}^{k}$ are the $i$-th entries of the vectors $\alpha_N$ and $\beta_N$ respectively. The relationship between $M_{k,m}^{e}$ and $M_{k,m}^{o}$ outlined above leads to the following relationship between the even and off CPSWFs.
\begin{Th}\label{relation of two dim cl prolate with each other}
	Let	
	$\psi_{2N,2}^{k,c}(x),$
	and,
	$\psi_{2N+1,2}^{k,c}(x),$
	be $L^2$-normalised two-dimensional CPSWFs. Then we have
	\begin{equation}
	\psi_{2N+1,2}^{k,c}(x)=-e_{1}\psi_{2N,2}^{k+1,c}(x).
	\end{equation}
\end{Th}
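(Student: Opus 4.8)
The plan is to expand both sides in the normalized Clifford--Legendre polynomials of \eqref{normalized C_L} and match coefficients, exploiting the fact that in dimension $m=2$ each space $M_l^+(k)$ is one-dimensional, so the spherical monogenics $Y_k$ and $Y_{k+1}$ on the two sides are canonically related and Theorem \ref{Even and Odd Clifford Legendre} applies unambiguously. Rewriting \eqref{CPSWF def} in the normalized basis via \eqref{normalized C_L}, we have
$$\psi_{2N+1,2}^{k,c}(x)=\sum_{i=0}^\infty \barC_{2i+1,2}^0(Y_k)(x)\,\beta_{i,N,2}^k,\qquad \psi_{2N,2}^{k+1,c}(x)=\sum_{i=0}^\infty \barC_{2i,2}^0(Y_{k+1})(x)\,\alpha_{i,N,2}^{k+1},$$
where $\beta_N=(\beta_{i,N,2}^k)_i$ is the $N$-th $\ell^2$-normalized eigenvector of $M_{k,2}^o$ and $\alpha_N=(\alpha_{i,N,2}^{k+1})_i$ is the $N$-th $\ell^2$-normalized eigenvector of $M_{k+1,2}^e$.

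Next I would apply Theorem \ref{Even and Odd Clifford Legendre} term by term: since $\barC_{2i+1,2}^0(Y_k)(x)=-e_1\,\barC_{2i,2}^0(Y_{k+1})(x)$ and the real coefficients $\beta_{i,N,2}^k$ are central, left multiplication by $-e_1$ may be pulled outside the sum, giving $\psi_{2N+1,2}^{k,c}(x)=-e_1\sum_{i=0}^\infty \barC_{2i,2}^0(Y_{k+1})(x)\,\beta_{i,N,2}^k$. Comparing with the expansion of $\psi_{2N,2}^{k+1,c}$, it remains to prove $\beta_N=\alpha_N$ as real sequences. Here I would invoke Remark \ref{Relationship_Coefficients_CPSWFs}: the recurrence matrices satisfy $M_{k,2}^o=M_{k+1,2}^e+bI$ with $b=4k+4$, hence they have precisely the same eigenvectors (their eigenvalues being shifted by $b$). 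Since each of these matrices is tridiagonal with nowhere-vanishing off-diagonal entries, the three-term recurrence determines an eigenvector uniquely up to a real scalar once its eigenvalue is fixed; imposing $\ell^2$-normalization and the same sign convention on $\beta_N$ and $\alpha_N$ then gives $\beta_{i,N,2}^k=\alpha_{i,N,2}^{k+1}$ for every $i$, so that $\psi_{2N+1,2}^{k,c}=-e_1\psi_{2N,2}^{k+1,c}$.

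The identity is compatible with the normalization hypothesis because $e_1^2=-1$ forces $|-e_1f(x)|=|f(x)|$ pointwise, so left multiplication by $-e_1$ is an isometry of $L^2(B(1),{\mathbb R}_2)$ and carries a normalized function to a normalized one; equivalently, $-e_1\psi_{2N,2}^{k+1,c}$ lies in $X_k^o$ and is an $L_c$-eigenfunction with eigenvalue $\chi_{2N,2}^{k+1,c}+b=\chi_{2N+1,2}^{k,c}$. I expect the main obstacle to be the normalization bookkeeping: the CPSWFs in \eqref{CPSWF def} are written using the un-normalized polynomials $C_{n,2}^0$, whereas the matrices $M_{k,2}^e$, $M_{k,2}^o$ and Theorem \ref{Even and Odd Clifford Legendre} are stated for the normalized $\barC_{n,2}^0$, and one must check that passing between these rescalings introduces no $i$-dependent factor that would break the coefficient-by-coefficient identification $\beta_N=\alpha_N$. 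Once this is in hand, the rest is formal series manipulation together with the appeal to Remark \ref{Relationship_Coefficients_CPSWFs}.
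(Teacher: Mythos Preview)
Your proposal is correct and follows essentially the same approach as the paper: expand $\psi_{2N+1,2}^{k,c}$ in the normalized Clifford--Legendre polynomials, invoke Remark \ref{Relationship_Coefficients_CPSWFs} to identify $\beta_{i,N,2}^{k}=\alpha_{i,N,2}^{k+1}$, and apply Theorem \ref{Even and Odd Clifford Legendre} term by term. The paper's argument is the same two-line computation, only with less of the normalization and sign-convention bookkeeping you spell out.
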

\begin{proof} We recall equation (\ref{CPSWF def}) and the fact that 
	$\beta_{N,m}^k=\alpha_{N,m}^{k+1}$.
	An application of Theorem \ref{Even and Odd Clifford Legendre} yields
$$\psi_{2N+1,2}^{k,c}(x)=\sum_{i=0}^\infty \barC_{2i+1,2}^0(Y_k)(x)\beta_{i,N}^k=\sum_{i=0}^\infty -e_1\barC_{2i,2}^{0}(Y_{k+1})(x)\alpha_{i,N}^{k+1}
		=-e_{1}\psi_{2N,2}^{k+1,c}(x).$$
	This completes the proof.
\end{proof}
As we have seen, each even CPSWF has the form $\psi_{2N,2}^{k,c}(x)=R_N^k(|x|^2)Y_k(x)$ with $R_N^k$ a function defined on $[0,\infty )$ and known as the {\it radial part} of $\psi_{2N,2}^{k,c}$. Similarly, the odd CPSWFs have the form $\psi_{2N+1,2}^{k,c}(x)=xS(x)Y_k(x)$ and we define its radial part to be $|x|S(|x|^2)$. Here we plot the radial parts of the some CPSWFs. In Figures \ref{radialktwon}, we see the proficiency of Theorem \ref{relation of two dim cl prolate with each other}. 
\begin{figure}[h]\label{radialk_one_n_one_two}
	\centering
	\begin{tabular}{cc}
		\begin{minipage}[h]{0.5\textwidth}
			\includegraphics[width=5.5cm]{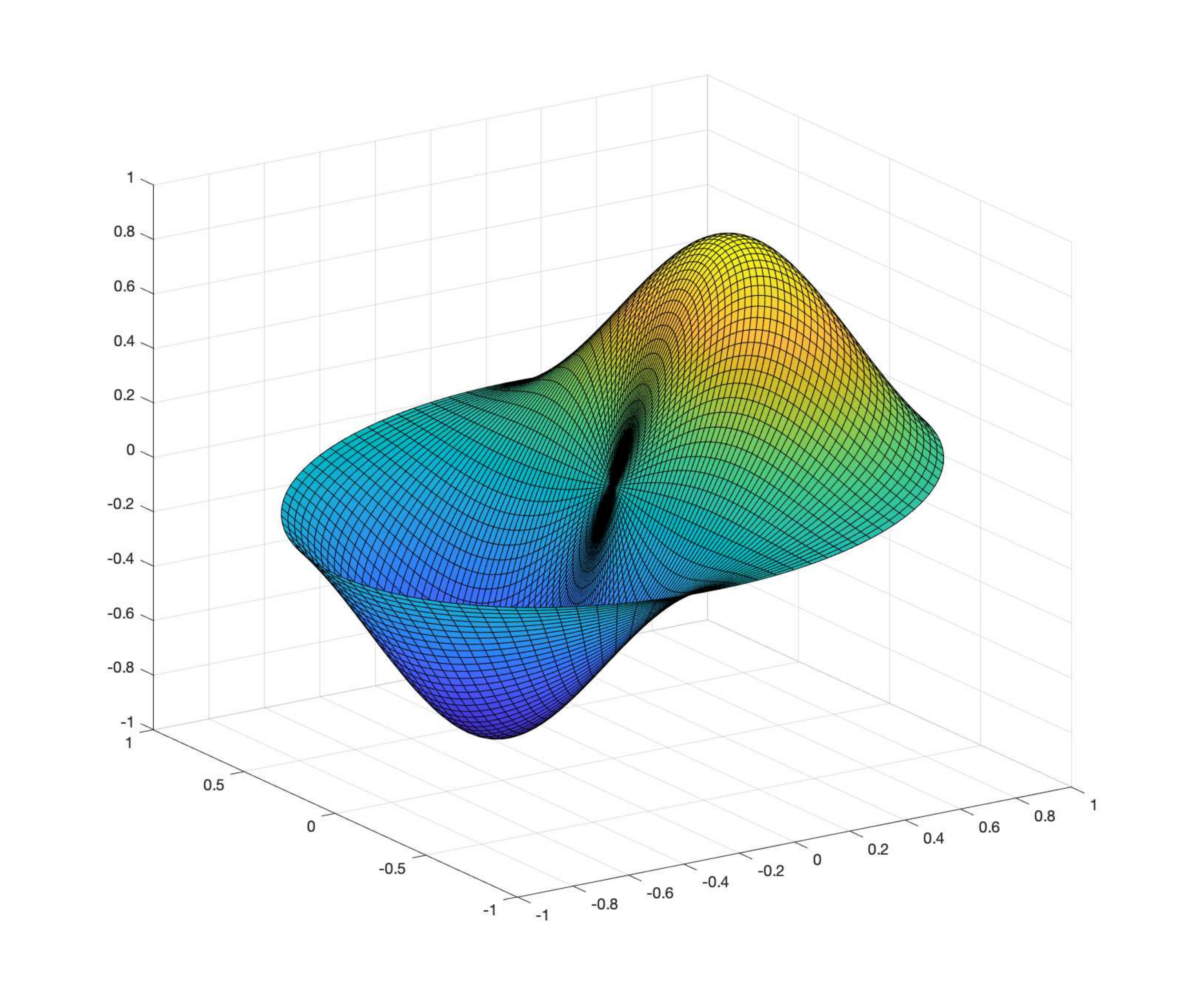}
		\end{minipage} &
		\begin{minipage}[h]{0.5\textwidth}
			\includegraphics[width=5.5cm]{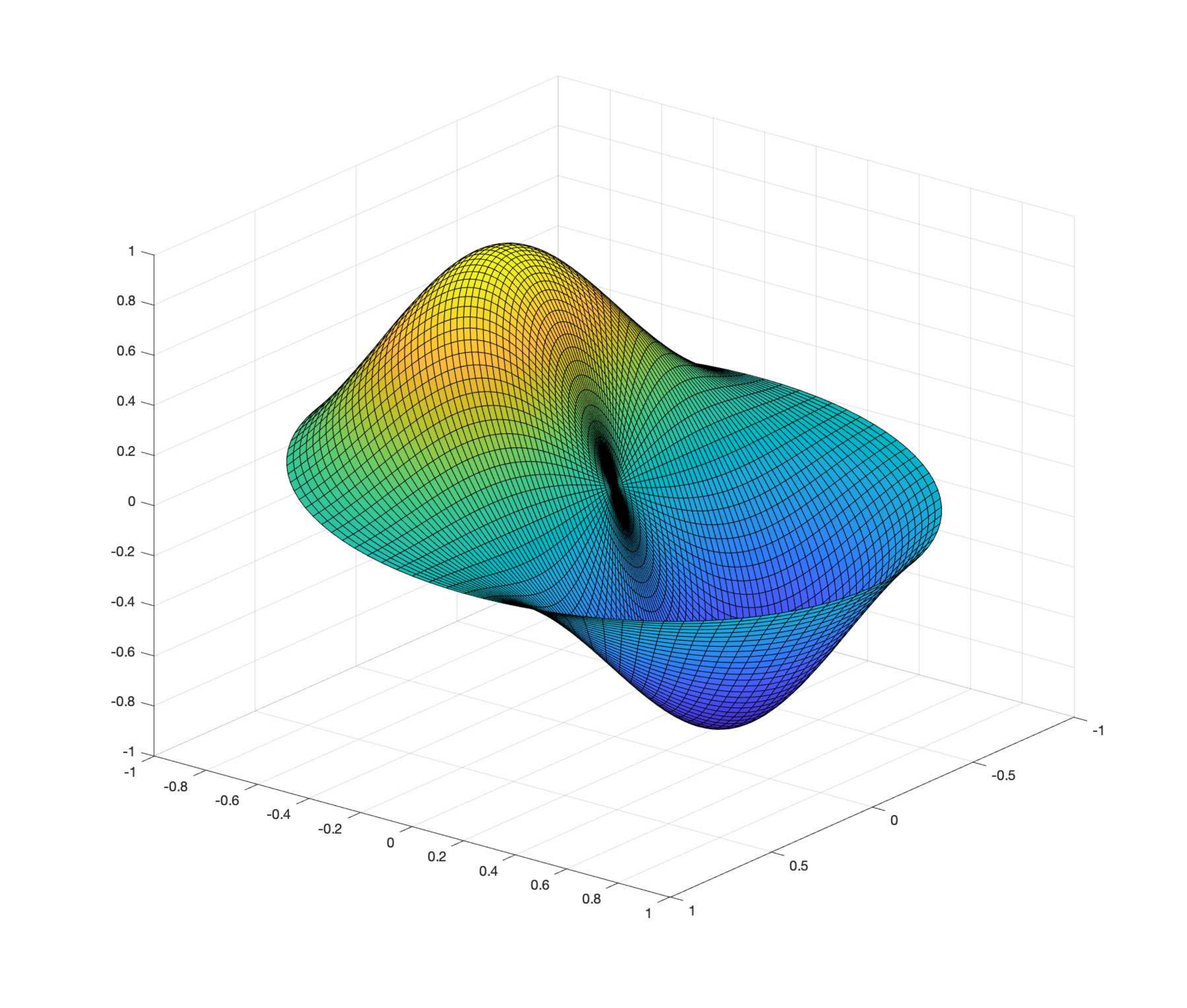}
		\end{minipage}\\
		\begin{minipage}[h]{0.5\textwidth}
			(a) Graph of $e_1$ part of $\psi_{0,2}^{1,1}$ on $B(1)$.
		\end{minipage} & 
		\begin{minipage}[h]{0.5\textwidth}
			(b) Graph of $e_2$ part of $\psi_{0,2}^{1,1}$ on $B(1)$.
		\end{minipage}\\
	\end{tabular}
	\caption{Plots of CPSWFs for $k=1,\; c=1,\; n=0$.}
\end{figure}

\begin{figure}[h]\label{radialktwon}
	\centering
	\begin{tabular}{cc}
		\begin{minipage}[h]{0.5\textwidth}
			\includegraphics[width=5.5cm]{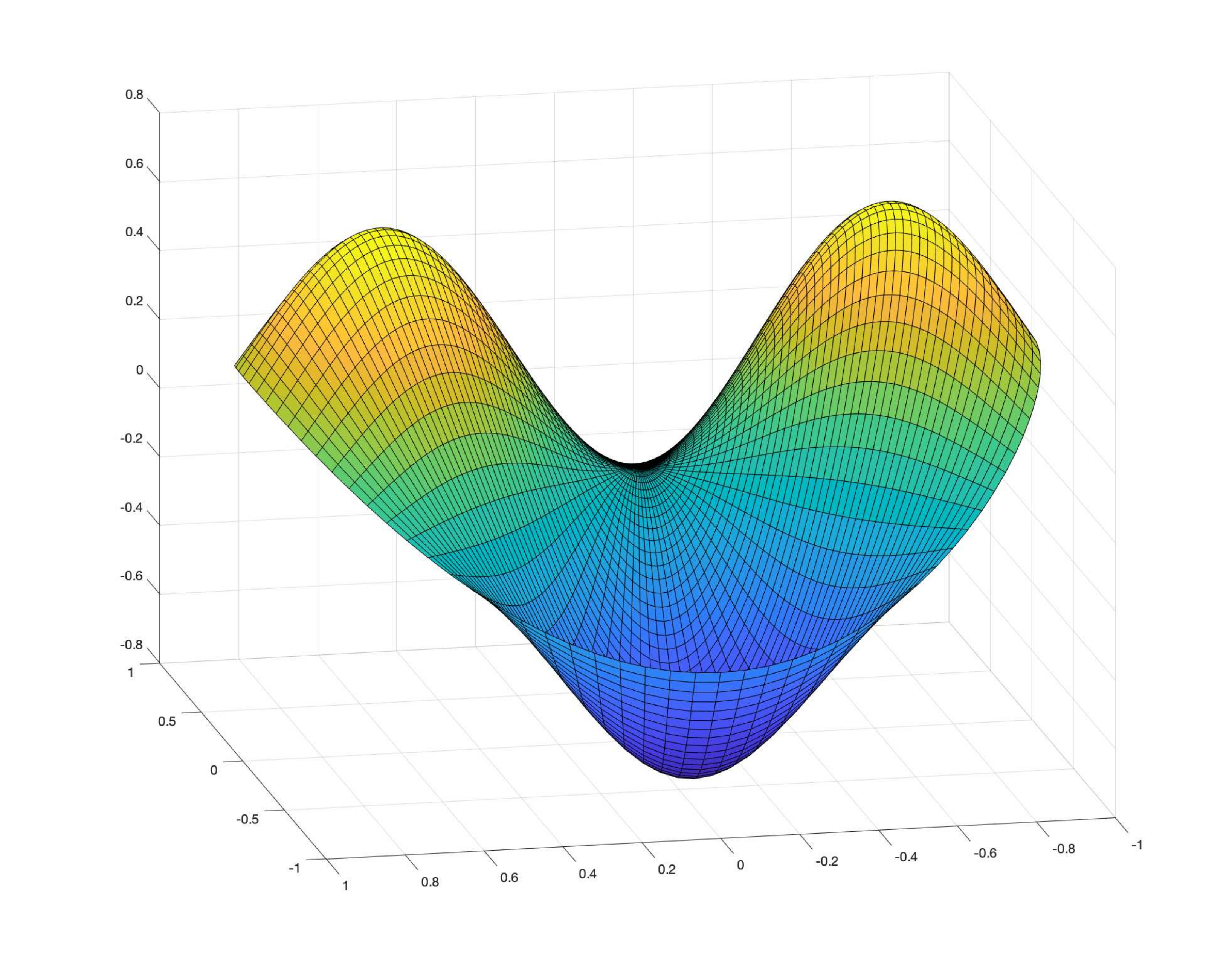}
		\end{minipage} &
		\begin{minipage}[h]{0.5\textwidth}
			\includegraphics[width=5.5cm]{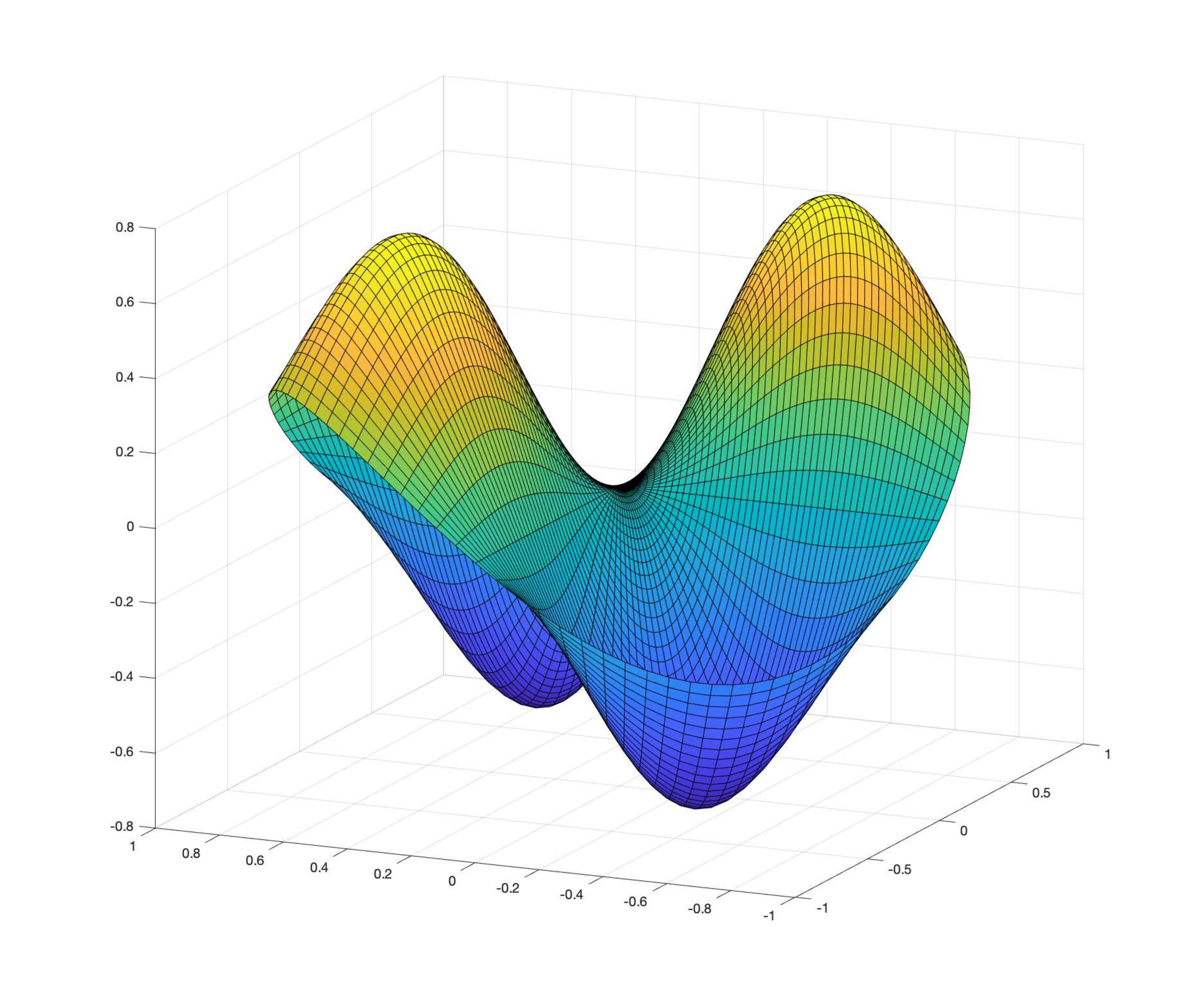}
		\end{minipage}\\
		\begin{minipage}[h]{0.45\textwidth}
			(a) Graph of $e_0$ part of $\psi_{1,2}^{1,1}$ on $B(1)$.
		\end{minipage} & 
		\begin{minipage}[h]{0.45\textwidth}
			(b) Graph of $e_{12}$ part of $\psi_{1,2}^{1,1}$ on $B(1)$.
		\end{minipage}\\
	\end{tabular}
	\caption{Plots of CPSWFs for $k=1,\; c=1,\; n=1$.}
	\label{Figure1}
\end{figure}
We end this section with an observation about the eigenvalues of the operator $L_c$.

\begin{Proposition} The eigenvalues of $L_c$ are positive real numbers.
\end{Proposition}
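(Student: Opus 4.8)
The plan is to deduce the result from the self-adjointness of $L_c$ (Proposition~\ref{proposition3.3}) together with a Dirichlet-type energy identity obtained by running the Clifford--Stokes integration by parts of that proof with both arguments equal. First, realness of the eigenvalues is essentially immediate: the eigenvalues $\chi_{2N,m}^{k,c}$ and $\chi_{2N+1,m}^{k,c}$ are eigenvalues of the real symmetric matrices $M_{k,m}^{e}$, $M_{k,m}^{o}$, and more generally self-adjointness of $L_c$ forces any eigenvalue to be real. So the substance of the statement is positivity, and the argument for it does not depend on the particular form of the eigenfunction.

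Next I would compute the quadratic form $[\langle L_c\psi,\psi\rangle]_0$ for an eigenfunction $\psi\neq 0$. Splitting $L_c=L_0+M_c$ with $M_c\psi(x)=4\pi^2c^2|x|^2\psi(x)$, the $M_c$-part contributes $4\pi^2c^2\int_{B(1)}|x|^2|\psi(x)|^2\,dx\ge 0$ outright. For the $L_0$-part I would repeat verbatim the computation in the proof of Proposition~\ref{proposition3.3} with $f=g=\psi$: the boundary integral over $\partial B(1)$ drops out because $1-|x|^2=0$ there, and substituting the Clifford-conjugation identity $\overline{\partial_x\psi}=-(\overline{\psi}\partial_x)$ converts the remaining bulk integral into
\[
[\langle L_0\psi,\psi\rangle]_0=\int_{B(1)}(1-|x|^2)\,|\partial_x\psi(x)|^2\,dx,
\]
which is $\ge 0$ since $1-|x|^2\ge 0$ on the unit ball. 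Adding the two contributions and comparing with $[\langle L_c\psi,\psi\rangle]_0=\lambda\|\psi\|^2$ (where $\lambda$ is the eigenvalue and $\|\psi\|^2>0$) yields $\lambda\ge 0$.

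Finally I would promote this to strict positivity in the regime $c>0$: if $\lambda=0$ then both nonnegative integrals above must vanish, and in particular $\int_{B(1)}|x|^2|\psi(x)|^2\,dx=0$ forces $\psi\equiv 0$ on $B(1)$, contradicting the assumption that $\psi$ is an eigenfunction. (For $c=0$ the bare operator $L_0$ does have $0$ as an eigenvalue — attained by any spherical monogenic $Y_k$, since $\partial_xY_k=0$ makes $L_0Y_k=0$ — so "positive" is the genuine $c>0$ assertion, with non-negativity holding unconditionally.) The only real care needed is bookkeeping: tracking the placement of the scalar factor $1-|x|^2$ through Clifford--Stokes and the sign produced by conjugating the Dirac operator, plus noting that the eigenfunctions are smooth up to $\partial B(1)$ so that the boundary term is legitimate and vanishes — all of which is already implicit in the proof of Proposition~\ref{proposition3.3}. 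This is the step I would watch most closely, though it is routine rather than deep.
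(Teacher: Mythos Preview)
Your proposal is correct and follows essentially the same route as the paper: split $L_c=L_0+M_c$, use Clifford--Stokes (with the boundary term killed by the factor $1-|x|^2$) to obtain the energy identity $[\langle L_0\psi,\psi\rangle]_0=\int_{B(1)}(1-|x|^2)|\partial_x\psi|^2\,dx\ge 0$, add the manifestly nonnegative $M_c$-contribution, and conclude $\lambda\ge 0$ by pairing against an eigenfunction. In fact you go slightly further than the paper's written proof, which stops at $\lambda\ge 0$; your observation that for $c>0$ the vanishing of $\int_{B(1)}|x|^2|\psi|^2\,dx$ forces $\psi\equiv 0$ upgrades this to the strict positivity actually claimed in the statement, and your remark that $c=0$ genuinely admits the eigenvalue $0$ (via any $Y_k$, consistent with $C(0,0,m,k)=0$) is a correct and useful caveat.
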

\begin{proof}
We have already seen that $L_c$ is self-adjoint. Furthermore, the eigenvalues of $L_c$ are the eigenvalues of the symmetric matrices $M_k^e$ and $M_k^o$, and are therefore real numbers. On the other hand, $L_c$ is a positive operator in the sense that 
$$\langle L_cf,f\rangle =[(L_cf,f)]_0\geq 0.$$
To see this, note that for each continuously differentiable ${\mathbb C}_m$-valued function defined on $\Omega\subset{\mathbb R}^m$ we have $\overline{\partial f}=-\overline{f}\partial$. Therefore, by the Clifford-Stokes theorem,
\begin{align*}
\langle L_0f,f \rangle&=\int\limits_{B(1)}\overline{\partial[(1-|x|^2)\partial f(x)]}f(x)\, dx\\
&=-\int\limits_{B(1)}(\overline{[(1-|x|^2)\partial f(x)]}\partial )f(x)\, dx\\
&=-\int\limits_{\partial B(1)}(1-|x|^2)\overline{\partial f(x)}\frac{x}{|x|}f(x)+\int\limits_{B(1)}(1-|x|^2)\overline{\partial f(x)}\partial f(x)\, dx\\
&=\int\limits_{B(1)}(1-|x|^2)\overline{\partial f(x)}\partial f(x)\, dx.
\end{align*}
Consequently,
\begin{align*}
\langle L_cf,f\rangle&=\int\limits_{B(1)}(1-|x|^2)[\overline{\partial f(x)}\partial f(x)]_0\, dx
+4\pi^2c^2\int\limits_{B(1)}|x|^2[\overline{f(x)}f(x)]_0\, dx\\
&=\int\limits_{B(1)}(1-|x|^2)|\partial f(x)|^2\, dx+4\pi^2\int\limits_{B(1)}|x|^2|f(x)|^2\, dx\geq 0.
\end{align*}
 If $\lambda $ is a (real) eigenvalue of $L_0$ associated with an eigenfunction $f$, then 
 $$\lambda\|f\|_2^2=\lambda\langle f,f\rangle =\langle \lambda f,f\rangle=\langle L_cf,f\rangle\geq 0,$$
 from which we conclude that $\lambda\geq 0$. 
\end{proof}

\section{The Sturm-Liouville Properties of the CPSWFs}
In \cite{propertiesofcliffordlegendre} it was shown that
$C^{0}_{2N,m}(Y_{k})(x)=P_{N,k,m}(\vert x\vert^{2})Y_{k}(x),$
where 
$P_{N,k,m}(\vert x\vert^{2})$ 
(the radial part of the Clifford-Legendre polynomial) is real-valued. From \eqref{differential_equation_of_CL_Poly} we have
$$L_{0}C_{n,m}^{0}(Y_{k})(x)=C(0,n,m,k)C_{n,m}^{0}(Y_{k})(x),$$
Putting  $R_{N,k,m}(s)=P_{N,k,m}(\vert x\vert^{2})$ with $\vert x\vert^{2}=\dfrac{s+1}{2}$ ($s\in (-1,1)$)
gives
$$S_0R_{N,k,m}=-\frac{C(0,2N,m,k)}{4}R_{N,k,m},$$
where $S_0$ is the differential operator
\begin{equation}\label{S_0 def}
S_0=(1-s^2)\frac{d^2}{ds^2}+\bigg[\bigg(k+\frac{m}{2}-1\bigg)-\bigg(\frac{m}{2}+k+1\bigg)s\bigg]\frac{d}{ds}.
\end{equation}
We have a similar result for eigenfunctions $\psi_{2N,m}^{k,c}(x)=P_{N,m}^{k,c}(|x|^2)Y_k(x)$ of the operator $L_c=L_0+M_c$, namely,
$$t(1-t)\frac{d^{2}}{dt^{2}}P_{N,m}^{k,c}(t)+\frac{1}{2}(m+2k-t(2+m+2k))\frac{d}{dt}P_{N,m}^{k,c}(t)-\pi^2c^2tP_{N,m}^{k,c}(t)=-\frac{\chi_{2N,m}^{k,c}}{4}P_{N,m}^{k,c}(t),$$
where $\vert x\vert^{2}=t$. Putting $\tilde{P}_{N,m}^{k,c}(s)=P_{N,m}^{k,c}(2t-1)$ $(s\in (-1,1))$, gives 
\begin{equation}
S_c\tilde P_{N,m}^{k,c}(s):=S_0\tilde P_{N,m}^{k,c}(s)-\frac{\pi^2c^2}{2}(s+1)\tilde P_{N,m}^{k,c}(s)=-\frac{\chi_{2N,m}^{k,c}}{4}\tilde P_{N,m}^{k,c}(s).\label{nonSL_for_CPSWFs}
\end{equation}
The operator determined by equation 
\eqref{nonSL_for_CPSWFs}
is not self-adjoint. In order to make it so, we multiply both sides by  $y=y(s)=(s+1)^{k+\frac{m}{2}-1}$ 
to obtain
$$\frac{d}{ds}\left(p(s)\frac{d}{ds}\tilde P_{N,m}^{k,c}\right)-\frac{\pi^2c^2}{2}(s+1)^{k+\frac{m}{2}}\tilde P_{N,m}^{k,c}(s)=-\frac{\chi_{2N,m}^{k,c}}{4}(s+1)^{k+\frac{m}{2}-1}\tilde P_{N,m}^{k,c}(s).$$
which is a Sturm-Liouville differential equation. From Definition \ref{def_SL} and Theorem \ref{firstSL} we can conclude the following theorem.

\begin{Th}\label{th: Tctildeoperator}
	The differential operator 
	$\tilde{T_{c}}$
	defined by
	\begin{equation}\label{Tctildeoperator}
	\tilde{T_{c}}f(s)=\big(4(s+1)^{k+\frac{m}{2}}(1-s)f'(s)\big)'-2\pi^2c^2(s+1)^{k+\frac{m}{2}}f(s)
	\end{equation}
	is self-adjoint and the differential equation (\ref{nonSL_for_CPSWFs}) can be written as 
	$$\tilde{T_{c}}\tilde P_{N,m}^{k,c}(s)+\chi_{2N,m}^{k,c}g(s)\tilde P_{N,m}^{k,c}(s)=0,$$
	where
	$g(s)=(s+1)^{k+\frac{m}{2}-1}.$
	This is a singular Sturm-Liouville equation, i.e.,
	with $p(s)=4(s+1)^{k+\frac{m}{2}}(1-s)=0$
	when 
	$s=\pm1.$ Therefore, the eigenvalues of
	$\tilde{T_{c}}$
	are distinct and may be labelled so that 
	$\chi_{0,m}^{k,c}<\chi_{2,m}^{k,c}<\dots,$ 
	and the eigenfunctions $\{\tilde P_{N,m}^{k,c}\}_{N=0}^\infty$ may be normalised so that 
	$$( \tilde P_{N,m}^{k,c},\tilde P_{N',m}^{k,c})_{g(s)}=\int\limits_{-1}^{1}\tilde P_{N,m}^{k,c}(s)\tilde P_{N',m}^{k,c}(s)g(s)\, ds=\delta_{N,N'}.$$
\end{Th}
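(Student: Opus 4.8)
The plan is to establish the theorem's assertions in order: that the equation displayed just above the statement is literally $\tilde{T_{c}}\tilde P_{N,m}^{k,c}+\chi_{2N,m}^{k,c}g\,\tilde P_{N,m}^{k,c}=0$; that $\tilde{T_{c}}$ is formally self-adjoint; that this is a singular Sturm--Liouville equation in the sense of Definition \ref{def_SL}; and that the spectral conclusions follow from the general theory, in particular Theorem \ref{firstSL}. First I would carry out the algebraic reduction: multiply \eqref{nonSL_for_CPSWFs} by $4y(s)=4(s+1)^{k+\frac{m}{2}-1}$ and set $p(s)=4(s+1)^{k+\frac{m}{2}}(1-s)$. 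A one-line differentiation gives $p'(s)=4(s+1)^{k+\frac{m}{2}-1}\big[(k+\frac{m}{2}-1)-(k+\frac{m}{2}+1)s\big]$, and since $(1-s^{2})(s+1)^{k+\frac{m}{2}-1}=(1-s)(s+1)^{k+\frac{m}{2}}$, comparison with the formula \eqref{S_0 def} for $S_0$ shows $4y(s)S_{0}f=p(s)f''+p'(s)f'=(p(s)f')'$. Rearranging \eqref{nonSL_for_CPSWFs} after this multiplication produces exactly $(4(s+1)^{k+\frac{m}{2}}(1-s)(\tilde P_{N,m}^{k,c})')'-2\pi^{2}c^{2}(s+1)^{k+\frac{m}{2}}\tilde P_{N,m}^{k,c}=-\chi_{2N,m}^{k,c}(s+1)^{k+\frac{m}{2}-1}\tilde P_{N,m}^{k,c}$, which is $\tilde{T_{c}}\tilde P_{N,m}^{k,c}+\chi_{2N,m}^{k,c}g\,\tilde P_{N,m}^{k,c}=0$ with $g(s)=(s+1)^{k+\frac{m}{2}-1}$. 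Its leading coefficient $p(s)=4(s+1)^{k+\frac{m}{2}}(1-s)$ vanishes at $s=1$ through the factor $1-s$ and at $s=-1$ because $k+\frac{m}{2}>0$, so the equation is singular.

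Self-adjointness of $\tilde{T_{c}}$ follows from Lagrange's identity: for real-valued $f$, $g$ smooth on $(-1,1)$,
\begin{equation*}
\int_{-1}^{1}\big[(\tilde{T_{c}}f)g-f(\tilde{T_{c}}g)\big]\,ds=\big[p(s)\big(f'(s)g(s)-f(s)g'(s)\big)\big]_{-1}^{1},
\end{equation*}
and the right-hand side vanishes because $p(\pm1)=0$ while the radial parts $\tilde P_{N,m}^{k,c}$ and their derivatives remain bounded up to $s=\pm1$. (Each $\tilde P_{N,m}^{k,c}$ solves on $(-1,1)$ a linear ODE that is regular in the interior with regular-singular points at $\pm1$; at each endpoint the indicial exponents are $0$ and a value that renders the second Frobenius solution unbounded, so the eigenfunction is the bounded one. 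This is the limit-circle/limit-circle case, and the ``bounded near $\pm1$'' boundary conditions make $\tilde{T_{c}}$ self-adjoint.)

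For the spectral statements I would apply Theorem \ref{firstSL} with $a=-1$, $b=1$, $p(s)=4(s+1)^{k+\frac{m}{2}}(1-s)>0$ and weight $r(s)=g(s)=(s+1)^{k+\frac{m}{2}-1}>0$ on $(-1,1)$, and $p(\pm1)=0$; this gives orthogonality in $L^{2}_{g}(-1,1)$ of eigenfunctions belonging to distinct eigenvalues. Distinctness and the increasing labelling I would read off from Section 3: the $\chi_{2N,m}^{k,c}$ are precisely the eigenvalues of the symmetric tridiagonal matrix $M_{k,m}^{e}$, whose off-diagonal entries are nonzero for $c>0$, so its spectrum is simple, and whose diagonal entries $4i(k+i+\frac{m}{2})$ tend to $+\infty$ while the off-diagonal entries stay bounded, so $M_{k,m}^{e}$ has compact resolvent and a discrete spectrum bounded below, enumerable as $\chi_{0,m}^{k,c}<\chi_{2,m}^{k,c}<\cdots$ (for $c=0$ the matrix is diagonal with strictly increasing entries); alternatively one cites the oscillation theory for singular Sturm--Liouville problems in \cite{christensen2010functions,al2008sturm}. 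Simplicity makes each eigenspace one-dimensional, so the orthogonality above is an orthogonal system, and passing to polar coordinates and substituting $t=|x|^{2}$, $s=2t-1$ identifies $(\tilde P_{N,m}^{k,c},\tilde P_{N,m}^{k,c})_{g}$ with a positive multiple of $\|\psi_{2N,m}^{k,c}\|^{2}$, which is finite and nonzero; hence each $\tilde P_{N,m}^{k,c}$ may be rescaled to unit $L^{2}_{g}$-norm, yielding $(\tilde P_{N,m}^{k,c},\tilde P_{N',m}^{k,c})_{g}=\delta_{N,N'}$.

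The step I expect to be the main obstacle is the rigorous control of the eigenfunctions at the singular endpoints $s=\pm1$: verifying that $\tilde P_{N,m}^{k,c}$ and $(\tilde P_{N,m}^{k,c})'$ are bounded there (or at least that $p\,(\tilde P_{N,m}^{k,c})'\to0$), which is exactly what makes the boundary term in Lagrange's identity vanish and what the hypotheses of Theorem \ref{firstSL} ultimately need. Everything else is a short computation or a reference to the Sturm--Liouville literature.
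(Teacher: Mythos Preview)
Your proposal is correct and follows the same approach as the paper, which simply states (in the sentence immediately preceding the theorem) that the result follows from Definition~\ref{def_SL} and Theorem~\ref{firstSL} after multiplying \eqref{nonSL_for_CPSWFs} by $(s+1)^{k+\frac{m}{2}-1}$. You supply considerably more detail than the paper does---the Lagrange identity for self-adjointness, the endpoint analysis, and the tridiagonal-matrix argument for simplicity of the spectrum of $M_{k,m}^{e}$---but the underlying strategy is identical.
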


\begin{Lemma}\label{orthogonality_of_even_and_even_CPSWFs}
For each integer $k\geq 0$, let $\{Y_k^i\}_{i=1}^{d_k}$ be an orthonormal basis for $M_l^+(k)$. The even eigenfunctions of the operator
$L_{c},$ i.e, $\{\psi_{2N,m}^{k,c,i}:\, k\geq 0,\ 1\leq i\leq d_k\}$ are orthogonal.
\end{Lemma}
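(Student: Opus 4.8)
The plan is to use that each even CPSWF factors as a real radial profile times a single spherical monogenic, and then to split the $L^2(B(1))$ pairing into an angular integral over $S^{m-1}$ (governed by orthogonality of spherical monogenics) and a radial integral (governed by the Sturm--Liouville orthogonality of Theorem \ref{th: Tctildeoperator}).

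First I would record, from \eqref{CPSWF def} together with the factorisation $C_{2j,m}^{0}(Y_{k}^{i})(x)=P_{j,k,m}(|x|^{2})Y_{k}^{i}(x)$ of Section 4, that
\[
\psi_{2N,m}^{k,c,i}(x)=\Big(\sum_{j=0}^{\infty}\alpha_{j,N,m}^{k}\,P_{j,k,m}(|x|^{2})\Big)\,Y_{k}^{i}(x)=:P_{N,m}^{k,c}(|x|^{2})\,Y_{k}^{i}(x),
\]
where the scalar radial profile $P_{N,m}^{k,c}$ is real-valued (the $\alpha_{j,N,m}^{k}$ being entries of an eigenvector of the real symmetric matrix $M_{k,m}^{e}$) and, crucially, depends only on $N$, $k$, $m$, $c$ and not on $i$, since the banded matrix $M_{k,m}^{e}$ whose eigenvectors are the $\alpha_{N}$ is built only from $k$, $m$, $c$.

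Next, passing to polar coordinates $x=r\omega$ and using the homogeneity $Y_{k}^{i}(r\omega)=r^{k}Y_{k}^{i}(\omega)$, I would obtain, for arbitrary admissible triples,
\[
\langle\psi_{2N,m}^{k,c,i},\psi_{2N',m}^{k',c,i'}\rangle=\Big(\int_{0}^{1}P_{N,m}^{k,c}(r^{2})P_{N',m}^{k',c}(r^{2})\,r^{k+k'+m-1}\,dr\Big)\int_{S^{m-1}}\overline{Y_{k}^{i}(\omega)}\,Y_{k'}^{i'}(\omega)\,d\omega,
\]
the scalar radial factor being pulled out of the Clifford-valued integral. If $k\neq k'$ the angular factor vanishes by the Proposition on orthogonality of monogenic homogeneous polynomials of distinct degrees; if $k=k'$ but $i\neq i'$ it vanishes by the assumed orthonormality of $\{Y_{k}^{i}\}_{i=1}^{d_{k}}$. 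In the remaining case $k=k'$, $i=i'$ the angular factor is $1$, and the substitutions $t=r^{2}$ (so $r^{2k+m-1}dr=\tfrac12 t^{k+m/2-1}dt$) and then $s=2t-1$ turn the radial factor into a positive constant multiple of $\int_{-1}^{1}\tilde P_{N,m}^{k,c}(s)\tilde P_{N',m}^{k,c}(s)(s+1)^{k+m/2-1}\,ds=(\tilde P_{N,m}^{k,c},\tilde P_{N',m}^{k,c})_{g(s)}$, which is $\delta_{N,N'}$ by Theorem \ref{th: Tctildeoperator} since $\tilde P_{N,m}^{k,c}$ and $\tilde P_{N',m}^{k,c}$ are real eigenfunctions of the self-adjoint operator $\tilde T_{c}$ with distinct eigenvalues whenever $N\neq N'$. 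Combining the cases shows the pairing vanishes unless $(N,k,i)=(N',k',i')$.

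I do not expect a real obstacle; the argument is essentially the separation-of-variables bookkeeping above. The point needing explicit care is the $i$-independence of the radial profile $P_{N,m}^{k,c}$ (equivalently, that $\psi_{2N,m}^{k,c,i}$ is genuinely one fixed radial function times $Y_{k}^{i}$), which one justifies by noting that $M_{k,m}^{e}$, and hence its eigenvectors $\alpha_{N}$, involve only $k$, $m$, $c$. A secondary, purely computational, point is tracking the Jacobians of the two changes of variables so that the weight matches exactly the Sturm--Liouville weight $g(s)=(s+1)^{k+m/2-1}$ of Theorem \ref{th: Tctildeoperator}; one should also note in passing that the interchange of the (convergent-in-$L^2$) series with the integrals is harmless.
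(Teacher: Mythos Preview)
Your proposal is correct and follows essentially the same approach as the paper: factor $\psi_{2N,m}^{k,c,i}(x)=P_{N,m}^{k,c}(|x|^{2})Y_{k}^{i}(x)$, pass to polar coordinates, use orthonormality of the $Y_{k}^{i}$ on $S^{m-1}$ to obtain $\delta_{kk'}\delta_{ii'}$, and then apply the changes of variable $t=r^{2}$, $s=2t-1$ to reduce the radial integral to the weighted $L^{2}$ orthogonality of Theorem~\ref{th: Tctildeoperator}. Your explicit remark that the radial profile is independent of $i$ is a useful clarification that the paper leaves implicit.
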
	
\begin{proof} Note that 
\begin{align*}
	\int\limits_{B(1)}\overline{\psi_{2N,m}^{k,c,i}(x)}\psi_{2N',m}^{k',c,i'}(x)\, dx
	&=\int\limits_{{B(1)}}P_{N,m}^{k,c}(\vert x\vert^{2})P_{N',m}^{k',c}(\vert x\vert^{2})\overline{Y_{k}^{i}(x)}Y_{k'}^{i'}(x)\, dx\\
	&=\int\limits_{0}^{1}P_{N,m}^{k,c}(r^{2})P_{N',m}^{k',c}(r^{2})r^{m+k+k'-1}\int\limits_{S^{m-1}}\overline{Y_{k}^{i}(\omega)}Y_{k'}^{i'}(\omega )\, d\omega\, dr\\
	&=\int\limits_{0}^{1}P_{N,m}^{k,c}(r^{2})P_{N',m}^{k,c}(r^{2})r^{m+2k-1}\,dr\delta_{kk'}\delta_{ii'}\\
	&=\frac{1}{2}\int\limits_{0}^{1}P_{N,m}^{k,c}(t)P_{N',m}^{k,c}(t)t^{k+\frac{m}{2}-1}\, \,dt\delta_{kk'}\delta_{ii'}\\
	&=\frac{1}{4}\int\limits_{-1}^{1}Q_{N,m}^{k,c}(s)Q_{N',m}^{k,c}(s)\left(\frac{s+1}{2}\right)^{k+\frac{m}{2}-1}\, ds\, \delta_{kk'}\delta_{ii'}\\
	&=\frac{1}{2^{k+\frac{m}{2}+1}}\delta_{N,N'}\delta_{kk'}\delta_{ii'}.
\end{align*}
\end{proof}
Here, we mention that we can obtain a Sturm-Liouville problem for the functions $P_{N,m}^{k,c}$ $(N\geq 0)$ without the change of variable.

\begin{Lemma}\label{even_radial_orthogonal_without_change_variable}
The radial part of $P_{N,m}^{k,c}(|x|^2)$ of the CPSWF $\psi_{2N,2}^{k,c,i}(x),$  satisfies
\begin{equation}
4t(1-t)\frac{d^{2}}{dt^{2}}P_{N,m}^{k,c}(t)+2(m+2k-t(2+m+2k))\frac{d}{dt}P_{N,m}^{k,c}(t)-4\pi^2c^2tP_{N,m}^{k,c}(t)+\chi_{2N,m}^{k,c}P_{N,m}^{k,c}(t)=0,\label{SL even}
\end{equation}
which becomes a  Sturm-Liouville differential equation after multiplying by  $g(t)=t^{k+\frac{m}{2}-1}.$ Therefore, $\{P_{N,m}^{k,c}\}_{N=0}^\infty$ may be normalised so that 
$$\int\limits_{0}^{1}P_{N,m}^{k,c}(t)P_{M,m}^{k,c}(t)t^{k+\frac{m}{2}-1}dt=(P_{N,m}^{k,c},P_{M,m}^{k,c})_{g(t)}=\delta_{MN}.$$
Furthermore, for each fixed integer $k\geq 0$, the collection $\{P_{N,m}^{k,c}:\, N\geq 0\}$ is complete in the weighted $L^2$ space $L^2([0,1], t^{k+\frac{m}{2}-1})$ of measurable functions $f:[0,1]\to{\mathbb R}$ for which $\int\limits_0^1|f(t)|^2t^{k+\frac{m}{2}-1}\, dt <\infty$.
\end{Lemma}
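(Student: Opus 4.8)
The plan is to derive \eqref{SL even} by a direct computation, put it in Sturm--Liouville form, read orthogonality off from Theorem \ref{firstSL}, and then obtain completeness by comparing $\{P_{N,m}^{k,c}\}_{N\ge 0}$ with the Clifford--Legendre system. First I would write $\psi_{2N,m}^{k,c}(x)=P_{N,m}^{k,c}(|x|^2)Y_k(x)$ and insert it into $L_c\psi_{2N,m}^{k,c}=\chi_{2N,m}^{k,c}\psi_{2N,m}^{k,c}$; using only the monogenicity and homogeneity of $Y_k$, this reduces (after cancelling $Y_k$ and setting $t=|x|^2$) to precisely the ordinary differential equation displayed just before \eqref{nonSL_for_CPSWFs}, and multiplying that equation by $4$ is \eqref{SL even}, with no change of variable. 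Next I would multiply \eqref{SL even} by $g(t)=t^{k+\frac m2-1}$ and verify, by computing $\tfrac{d}{dt}\big(4t^{k+\frac m2}(1-t)\big)$, that the second- and first-order terms combine into a single derivative, so that \eqref{SL even} becomes
$$\Big(p(t)\tfrac{d}{dt}P_{N,m}^{k,c}\Big)'+\big(q(t)+\chi_{2N,m}^{k,c}\,r(t)\big)P_{N,m}^{k,c}=0,\qquad p(t)=4t^{k+\frac m2}(1-t),\quad q(t)=-4\pi^2c^2t^{k+\frac m2},\quad r(t)=g(t).$$
Since $k+\tfrac m2\ge 1$, one has $p(0)=p(1)=0$ and $p(t),r(t)>0$ on $(0,1)$, so this is a singular Sturm--Liouville equation.

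For the orthonormality I would note that the eigenvalues $\chi_{2N,m}^{k,c}$ are distinct: the substitution $s=2t-1$ turns the above into \eqref{nonSL_for_CPSWFs}, so they are the eigenvalues of $\tilde T_c$, which are distinct by Theorem \ref{th: Tctildeoperator} (equivalently, they are the eigenvalues of the tridiagonal matrix $M_{k,m}^e$, whose spectrum is simple as its off-diagonal entries do not vanish). Then Theorem \ref{firstSL}, applied on $(0,1)$ with weight $r(t)=t^{k+\frac m2-1}$, gives $\int_0^1 P_{N,m}^{k,c}P_{M,m}^{k,c}\,t^{k+\frac m2-1}\,dt=0$ for $N\ne M$, and one normalises so that the diagonal integrals equal $1$.

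The substantial part is completeness. The map $F\mapsto F(|x|^2)Y_k(x)$ is, up to a positive constant, an isometry of $L^2([0,1],t^{k+\frac m2-1}\,dt)$ onto the closed subspace $\calH_k:=\overline{X_k^e}\subset L^2(B(1),{\mathbb C}_m)$ (this is the computation already carried out in the proof of Lemma \ref{orthogonality_of_even_and_even_CPSWFs}), so it suffices to prove that the $\psi_{2N,m}^{k,c}$ form an orthonormal basis of $\calH_k$. On $\calH_k$ we have $L_c=L_0+M_c$, where $L_0$ has eigenfunctions $\barC_{2N,m}^0(Y_k)$ with eigenvalues $C(0,2N,m,k)=2N(2N+m+2k)\to\infty$; these span a dense subspace of $\calH_k$, since their radial parts $P_{N,k,m}$ are polynomials of degree exactly $N$ (from \cite{propertiesofcliffordlegendre}) and polynomials are dense in $L^2([0,1],w)$ for the integrable weight $w(t)=t^{k+\frac m2-1}$. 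Hence $L_0$ is self-adjoint on $\calH_k$ with compact resolvent; as $M_c$ is multiplication by $4\pi^2c^2|x|^2$ with $0\le|x|^2\le 1$ on $B(1)$, it is bounded and self-adjoint, so $L_c=L_0+M_c$ is self-adjoint with compact resolvent as well (from $(L_c-z)^{-1}=(L_0-z)^{-1}-(L_0-z)^{-1}M_c(L_c-z)^{-1}$, a product involving the compact operator $(L_0-z)^{-1}$). The spectral theorem for self-adjoint operators with compact resolvent then produces an orthonormal eigenbasis of $\calH_k$, which by the simplicity of the spectrum consists exactly of the normalised $\psi_{2N,m}^{k,c}$; transporting back along the isometry gives the asserted completeness in $L^2([0,1],t^{k+\frac m2-1}\,dt)$.

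Equivalently, and without leaving $\ell^2$, one can argue that $M_{k,m}^e$ is a symmetric tridiagonal matrix whose diagonal entries grow like $4i^2$ while its off-diagonal entries remain bounded; hence $M_{k,m}^e$ is self-adjoint on $\ell^2$ with compact resolvent, its eigenvectors $\{\alpha_N\}$ form an orthonormal basis of $\ell^2$, and $\{\psi_{2N,m}^{k,c}\}=\{\sum_i\barC_{2i,m}^0(Y_k)(\alpha_N)_i\}$ is then an orthonormal basis of $\calH_k$. I expect the one genuinely non-formal step --- the main obstacle --- to be exactly this passage from the formal orthonormality of the eigenvectors of the \emph{unbounded} operator $M_{k,m}^e$ (equivalently of $L_c$ on $\calH_k$) to their completeness, which requires the compact-resolvent/perturbation argument above, or else an appeal to the spectral theory of singular Sturm--Liouville operators.
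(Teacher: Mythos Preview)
The paper does not give a self-contained proof of this lemma: it is stated immediately after the remark ``Here, we mention that we can obtain a Sturm--Liouville problem for the functions $P_{N,m}^{k,c}$ $(N\geq 0)$ without the change of variable,'' and the content is treated as a restatement of the preceding analysis (the ODE displayed just before \eqref{nonSL_for_CPSWFs}, multiplied by $4$, together with the Sturm--Liouville package of Theorem~\ref{th: Tctildeoperator}). Your derivation of \eqref{SL even}, the reduction to Sturm--Liouville form via multiplication by $t^{k+\frac m2-1}$, and the orthogonality via Theorem~\ref{firstSL} are therefore essentially the paper's argument.

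Where you go beyond the paper is the completeness claim. The paper simply asserts completeness as part of the singular Sturm--Liouville conclusion, without justification; you supply a genuine proof, via the isometry $F\mapsto F(|x|^2)Y_k(x)$ onto $\calH_k$ and the observation that $L_c=L_0+M_c$ is a bounded self-adjoint perturbation of an operator with compact resolvent (the Clifford--Legendre radial polynomials being dense since they have all degrees). The equivalent $\ell^2$ argument via the symmetric tridiagonal matrix $M_{k,m}^e$ with quadratically growing diagonal and bounded off-diagonal is also correct and is perhaps the cleanest route, since it stays entirely within the Galerkin framework the paper has already set up. Either way, you have correctly identified that completeness is the only non-formal step and have closed it; the paper leaves this step implicit.
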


\begin{Lemma}\label{span_even_CPSWFs}
If $f$ is as in \eqref{Representation_f_all_monogenics_equation} and is supported on ${B(1)}$ and $(\psi_{2N,m}^{k,c,i},f)=0$ for all $N\geq 0,\; k\geq 0,\; 1\leq j\leq d_{k},$ where $d_{k}$ is the dimension of $M_{l}^{+}(k).$ Then $f_{k}^{(i)}=0,$ for all $k\geq 0,\; 1\leq j\leq d_{k}.$ 
\end{Lemma}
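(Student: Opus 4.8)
The plan is to peel off the angular variables and reduce everything to the completeness of the radial system $\{P_{N,m}^{k,c}\}_{N\ge 0}$ supplied by Lemma~\ref{even_radial_orthogonal_without_change_variable}. Fix $k\ge 0$ and an index $i$ with $1\le i\le d_k$; it suffices to show $f_k^{(i)}=0$. Recall that the even CPSWF has the form $\psi_{2N,m}^{k,c,i}(x)=P_{N,m}^{k,c}(|x|^2)Y_k^i(x)$ with $P_{N,m}^{k,c}$ real-valued, so that $\overline{\psi_{2N,m}^{k,c,i}(x)}=P_{N,m}^{k,c}(|x|^2)\,\overline{Y_k^i(x)}$. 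I would substitute the expansion \eqref{Representation_f_all_monogenics_equation} of $f$ into
$$(\psi_{2N,m}^{k,c,i},f)=\int\limits_{B(1)}\overline{\psi_{2N,m}^{k,c,i}(x)}\,f(x)\,dx,$$
interchanging the $L^2$-convergent sum over $k',l$ with the integral by continuity of the inner product, and split the result into the contributions of the inner summands $f_{k'}^{(l)}(|x|)Y_{k'}^{(l)}(x)$ and the outer summands $g_{k'}^{(l)}(|x|)\frac{x}{|x|^m}Y_{k'}^{(l)}(x/|x|^2)$.

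Passing to polar coordinates $x=r\omega$ and using homogeneity of the $Y$'s, the outer summands contribute the angular factor $\int_{S^{m-1}}\overline{Y_k^i(\omega)}\,\omega\,Y_{k'}^{(l)}(\omega)\,d\omega$, which vanishes by Lemma~\ref{property of two monogenic and x between} since $\overline{Y_k^i}$ is right monogenic and $Y_{k'}^{(l)}$ is left monogenic (equivalently, $\psi_{2N,m}^{k,c,i}\in X_k^e$, which is orthogonal to the spaces carrying the outer summands). The inner summands contribute $\int_{S^{m-1}}\overline{Y_k^i(\omega)}Y_{k'}^{(l)}(\omega)\,d\omega=\delta_{kk'}\delta_{il}$, so only the $(k',l)=(k,i)$ term survives; collecting the powers of $r$ ($r^k$ from each of $\overline{Y_k^i}$ and $Y_k^i$, and $r^{m-1}$ from $dx$) and then substituting $t=r^2$ gives, the integrand being already scalar-valued,
$$(\psi_{2N,m}^{k,c,i},f)=\int\limits_0^1 P_{N,m}^{k,c}(r^2)\,f_k^{(i)}(r)\,r^{2k+m-1}\,dr=\frac12\int\limits_0^1 P_{N,m}^{k,c}(t)\,f_k^{(i)}(\sqrt t)\,t^{k+\frac m2-1}\,dt.$$

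By hypothesis the left side vanishes for every $N\ge 0$. The norm condition on $f_k^{(i)}$ in Theorem~\ref{Representation_f_all_monogenics}, together with Lemma~\ref{support_Lemma_f_summand}, shows after the same substitution that $t\mapsto f_k^{(i)}(\sqrt t)$ lies in $L^2([0,1],t^{k+\frac m2-1})$; being orthogonal there to every $P_{N,m}^{k,c}$, it must vanish by the completeness in Lemma~\ref{even_radial_orthogonal_without_change_variable}. Hence $f_k^{(i)}(r)=0$ for a.e.\ $r\in[0,1]$, and Lemma~\ref{support_Lemma_f_summand} extends this to all of $[0,\infty)$. Since $k$ and $i$ were arbitrary, the lemma follows. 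The one step that needs genuine care is the vanishing of the outer-term contribution — correctly matching which of the two factors in Lemma~\ref{property of two monogenic and x between} is left and which is right monogenic, and tracking the $r$-powers through the polar computation; the remainder is routine bookkeeping together with a single appeal to completeness.
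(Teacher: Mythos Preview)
Your argument is correct and follows essentially the same route as the paper: expand $f$ via \eqref{Representation_f_all_monogenics_equation}, kill the outer summands with Lemma~\ref{property of two monogenic and x between}, use the orthonormality of the $Y_k^i$ on $S^{m-1}$ to isolate the $(k,i)$ term, change to the variable $t=r^2$, and invoke the completeness in Lemma~\ref{even_radial_orthogonal_without_change_variable}. Your write-up is in fact slightly more careful than the paper's, justifying the interchange of sum and integral and checking that $t\mapsto f_k^{(i)}(\sqrt t)$ lies in $L^2([0,1],t^{k+\frac m2-1})$; you also have the constant $\tfrac12$ correct where the paper writes $2$.
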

\begin{proof}
Suppose $f\in L^2({\mathbb R}^m,{\mathbb R}_m)$ admits the expansion  \eqref{Representation_f_all_monogenics_equation} and is supported on ${B(1)}.$ By  Lemmas \ref{property of two monogenic and x between} and \ref{support_Lemma_f_summand}, we have
\begin{align*}
0=(\psi_{2N,m}^{k,c,i},f)&=\int\limits_{{B(1)}}\overline{\psi_{2N,m}^{k,c,i}(x)}f(x)\, dx\\
&=\int\limits_{{B(1)}}P_{N,m}^{k,c}(\vert x\vert^{2})\overline{Y_{k}^{i}(x)}f(x)\, dx\\
&=\int\limits_{{B(1)}}P_{N,m}^{k,c}(\vert x\vert^{2})\overline{Y_{k}^{i}(x)}\sum_{k'=0}^{\infty}\sum_{l=0}^{d_{k'}}f_{k'}^{(l)}(\vert x\vert)Y_{k'}^{(l)}(x)\, dx\\
&=\sum_{k'=0}^{\infty}\sum_{l=1}^{d_{k'}}\int\limits_{0}^{1}P_{N,m}^{k,c}(r^{2})f_{k'}^{(l)}(r)r^{k+k'+m-1}\int\limits_{S^{m-1}}\overline{Y_{k}^{i}(\omega)}Y_{k'}^{(l)}(\omega)\, d\omega\, dr\\
&=\int\limits_{0}^{1}P_{N,m}^{k,c}(r^{2})f_{k}^{(i)}(r)r^{2k+m-1}\, dr\\
&=2\int\limits_{0}^{1}P_{N,m}^{k,c}(t)f_{k}^{(i)}(\sqrt{t})t^{k+\frac{m}{2}-1}\, dt.
\end{align*}
Hence by Lemma \ref{even_radial_orthogonal_without_change_variable} $f_{k}^{(i)}=0,$ for all $k$ and $i$.
\end{proof}
We may conclude similar results for the odd CPSWFs. 
\begin{Th}
	The radial parts $Q_{M,n}^{k,c}$ of the odd CPSWFs $C_{2N+1,m}^{k,c}(Y_k)$ satisfy the following Sturm-Liouville differential equation
\begin{align*}
&(1-s)^2\frac{d^2}{ds^2}Q_{N,m}^{k,c}(s)+\left[-s\left(k+\frac{m}{2}+2\right)+\left(k+\frac{m}{2}\right)\right]\frac{d}{ds}Q_{N,m}^{k,c}(s)\\
&\qquad\qquad +\left[-\left(\frac{m}{2}+k\right)-\frac{\pi^{2}c^2}{2}(1+s)+\frac{\chi_{2N+1,m}^{k}}{4}\right]Q_{N,m}^{k,c}(s)=0.\end{align*}
	In fact, the differential operator 
	$\tilde{U_{c}}$
	defined by
	\begin{equation*}
	\tilde{U_{c}}f(s)=\big(4(s+1)^{k+\frac{m}{2}+1}(1-s)f'(s)\big)'+4(s+1)^{k+\frac{m}{2}}(-\frac{\pi^{2}c^{2}}{2}s-\frac{\pi^{2}c^{2}}{2})f(s).
	\end{equation*}
	is self-adjoint and can be written,
\begin{equation}\label{SL_odd}
\tilde{T_{c}}Q_{N,m}^{k,c}(s)+\chi_{2N+1,m}^{k,c}g(s)Q_{N,m}^{k,c}(s)=0,
\end{equation}
	where
	$g(s)=(s+1)^{k+\frac{m}{2}}.$
	The differential equation in (\ref{SL_odd}) is singular Sturm-Liouville with 
	$p(s)=4(s+1)^{k+\frac{m}{2}+1}(1-s)$
	which takes the value $0$ when 
	$s=\pm1.$
	Therefore, the eigenvalues of the
	$\tilde{U_{c}}$
	are distinct  and may be ordered so that 
	$\chi_{1,m}^{k,c}<\chi_{3,m}^{k,c}<\dots$ 
Furthermore, the eigenfunctions $Q_{N,m}^{k,c}$ may be normalised so that 
	$$(Q_{N,m}^{k,c},Q_{N',m}^{k,c})_{g(s)}=\int\limits_{-1}^{1}\overline{Q_{N,m}^{k,c}(s)}Q_{N',m}^{k,c}(s)g(s)\, ds=\delta_{N,N'},$$
	and are complete in the weighted Lebesgue space $L^2([-1,1],g)$ of measurable functions $f:[-1,1]\to{\mathbb R}$ for which $\int\limits_0^1|f(s)|^2g(s)\, ds<\infty$.
\end{Th}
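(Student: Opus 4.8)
The plan is to follow the template used for the even CPSWFs in Theorem~\ref{th: Tctildeoperator}: first derive the ordinary differential equation satisfied by the radial part $Q_{N,m}^{k,c}$, then change variable to land on $(-1,1)$, then multiply by an integrating factor to reach self-adjoint form, and finally invoke the Sturm--Liouville facts recorded above. Since $\psi_{2N+1,m}^{k,c}$ lies in $X_k^o$, it can be written $\psi_{2N+1,m}^{k,c}(x)=xG(|x|^2)Y_k(x)$ for a smooth real $G$ on $[0,1]$, and the first step is to compute $L_c[xG(|x|^2)Y_k]$. Using $\partial_xY_k=0$, $\partial_x(|x|^2)=2x$, $x^2=-|x|^2$, the product rule $\partial_x(\phi W)=(\partial_x\phi)W+\phi\,\partial_xW$ for scalar-valued $\phi$, and Lemma~\ref{diracderivativelemma} with $s=1$ (so that $\partial_x[xY_k]=-(2k+m)Y_k$), I would show
$$\partial_x\big[xG(|x|^2)Y_k\big]=\big[-2|x|^2G'(|x|^2)-(2k+m)G(|x|^2)\big]Y_k(x)\in X_k^e,$$
and hence, writing $F(t)=(1-t)\big[-2tG'(t)-(2k+m)G(t)\big]$,
$$L_0\big[xG(|x|^2)Y_k\big]=\partial_x\big[(1-|x|^2)\,\partial_x\big(xG(|x|^2)Y_k\big)\big]=2F'(|x|^2)\,xY_k(x).$$
Since $M_c[xG(|x|^2)Y_k]=4\pi^2c^2|x|^2G(|x|^2)\,xY_k$, the eigenvalue equation $L_c\psi_{2N+1,m}^{k,c}=\chi_{2N+1,m}^{k,c}\psi_{2N+1,m}^{k,c}$ reduces, after cancelling the nonzero factor $xY_k$, to $2F'(t)+4\pi^2c^2tG(t)=\chi_{2N+1,m}^{k,c}G(t)$ for $t=|x|^2\in[0,1]$.

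Next I would expand $F'$ and substitute $s=2t-1$, using $t(1-t)=\tfrac14(1-s^2)$ and $\tfrac{d}{dt}=2\tfrac{d}{ds}$; this yields exactly the stated differential equation for $Q_{N,m}^{k,c}(s)=G\!\big(\tfrac{s+1}{2}\big)$ (with leading coefficient $1-s^2$). To put it in self-adjoint form I would solve $\mu'/\mu=(k+\tfrac m2)/(1+s)$ for the integrating factor, obtaining $\mu(s)=(1+s)^{k+m/2}=g(s)$; multiplying the equation by $4g(s)$ collapses the first two terms into $\big(4(s+1)^{k+m/2+1}(1-s)Q'\big)'$ and produces the Sturm--Liouville equation $\tilde U_cQ_{N,m}^{k,c}+\chi_{2N+1,m}^{k,c}\,g(s)\,Q_{N,m}^{k,c}=0$, with $p(s)=4(s+1)^{k+m/2+1}(1-s)$ and weight $r(s)=g(s)=(s+1)^{k+m/2}$. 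Here $p,q,r$ are continuous on $[-1,1]$, $p$ is differentiable and $p(\pm1)=0$, so $\tilde U_c$ is self-adjoint by an integration by parts identical to the one in Proposition~\ref{proposition3.3}.

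Given the self-adjoint singular Sturm--Liouville form, the remaining conclusions are quotations. Since $p(-1)=p(1)=0$ while $p>0$ and $r>0$ on $(-1,1)$, Theorem~\ref{firstSL} gives $(Q_{N,m}^{k,c},Q_{N',m}^{k,c})_{g}=0$ whenever $N\neq N'$. Because $p$ vanishes at both endpoints, this is a singular Sturm--Liouville problem of the type whose spectrum, by \cite{christensen2010functions,al2008sturm}, consists of a discrete increasing sequence of simple eigenvalues accumulating only at $+\infty$; labelling the odd ones gives $\chi_{1,m}^{k,c}<\chi_{3,m}^{k,c}<\cdots$, and the normalised eigenfunctions form a complete orthonormal system in $L^2([-1,1],g)$. (Completeness can alternatively be deduced from the completeness of the radial parts of the odd Clifford--Legendre polynomials $C_{2i+1,m}^0(Y_k)$ in $L^2([-1,1],g)$ together with the fact that the eigenvectors $\beta_N$ of the banded symmetric matrix $M_{k,m}^o$ effect an orthogonal change of basis among them.)

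I expect the genuinely delicate step to be the opening Dirac-operator computation of $L_c[xG(|x|^2)Y_k]$: because $\partial_x$ is noncommutative, the product rule has to be handled with care on the three factors $x$, $G(|x|^2)$ and $Y_k$, and tracking the coefficients exactly is precisely what fixes the form of $p$, $q$ and $g$. Everything afterwards---the substitution $s=2t-1$, the integrating factor $g(s)=(s+1)^{k+m/2}$, and the appeals to Theorem~\ref{firstSL} and to the cited singular Sturm--Liouville theory---runs in exact parallel with the even case in Theorem~\ref{th: Tctildeoperator}. As a consistency check, in dimension $m=2$ the statement also follows from Theorem~\ref{relation of two dim cl prolate with each other} and the even case with $k$ replaced by $k+1$, the even weight $(s+1)^{(k+1)+m/2-1}$ then coinciding with the weight $(s+1)^{k+m/2}$ that appears here.
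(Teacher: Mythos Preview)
Your proposal is correct and follows exactly the approach the paper intends: the paper does not give an explicit proof of this theorem but states it as the odd analogue of Theorem~\ref{th: Tctildeoperator}, obtained by the same sequence of steps (compute $L_c$ on $xG(|x|^2)Y_k$, extract the ODE in $t=|x|^2$, substitute $s=2t-1$, multiply by the integrating factor $(1+s)^{k+m/2}$, and invoke singular Sturm--Liouville theory). Your Dirac computation, the resulting ODE, and the integrating factor all agree with the paper's Lemma~\ref{odd_radial_orthogonal_without_change_variable} and the stated $\tilde U_c$; your parenthetical observation that the leading coefficient should be $1-s^2$ rather than the printed $(1-s)^2$ is also correct.
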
 
\begin{Lemma}\label{odd_radial_orthogonal_without_change_variable}
	The radial part $Q_{N,m}^{k,c}(|x|^2)$ of the odd CPSWFs $\psi_{2N+1,2}^{k,c,i}(x),$ satisfy 
\begin{align}
4t(1-t)\frac{d^{2}}{dt^{2}}Q_{N,m}^{k,c}(t)&+2(m+2k+2-t(4+m+2k))\frac{d}{dt}Q_{N,m}^{k,c}(t)\notag\\
&-(4\pi^2c^2t+2(m+2k))Q_{N,m}^{k,c}(t)+\chi_{2N+1,m}^{k,c}Q_{N,m}^{k,c}(t)=0,
\end{align}
	which is a Sturm-Liouville differential equation after multiplication of both sides by  $g(t)=t^{k+\frac{m}{2}}.$ Therefore,
	$$\int\limits_{0}^{1}Q_{N,m}^{k,c}(t)Q_{M,m}^{k,c}(t)t^{k+\frac{m}{2}}dt=(Q_{N,m}^{k,c},Q_{M,m}^{k,c})_{g}=\delta_{MN},$$
	and for each integer $k\geq 0$, the collection $\{Q_{N,m}^{k,c}:\, N\geq 0\}$ is complete in the weighted Lebesgue space $L^2([0,1], t^{k+\frac{m}{2}})$ of measurable functions $f:[0,1]\to{\mathbb R}$ for which $\int\limits_0^1|f(t)|^2t^{k+\frac{m}{2}}\, dt<\infty$.
\end{Lemma}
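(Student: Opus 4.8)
The plan is to produce the stated ODE by applying $L_c$ directly to the odd CPSWF written in the form $\psi_{2N+1,m}^{k,c}(x)=xQ_{N,m}^{k,c}(|x|^2)Y_k(x)$, matching coefficients, and then recognising the result as a singular Sturm--Liouville equation to which Theorem \ref{firstSL} applies. Recall that, by construction in \eqref{CPSWF def}, $\psi_{2N+1,m}^{k,c}$ is an eigenfunction of $L_c$ with eigenvalue $\chi_{2N+1,m}^{k,c}$, and that it lies in the class $X_k^o$ of functions $xG(|x|^2)Y_k(x)$ with $G\in C^\infty([0,1],{\mathbb R})$ and $Y_k\in M_l^+(k)$.

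First I would compute $L_c[xG(t)Y_k]$ where $t=|x|^2$, writing $L_c=L_0+M_c$ with $L_0f=(1-|x|^2)\partial_x^2f-2x\partial_xf$ and $M_cf=4\pi^2c^2|x|^2f$. Two facts make this manageable: the Leibniz rule $\partial_x(\phi h)=(\partial_x\phi)h+\phi\,\partial_xh$ is valid whenever $\phi$ is scalar-valued, with $\partial_x\phi(|x|^2)=2x\phi'(|x|^2)$ and $x^2=-|x|^2$; and $\partial_xY_k=0$ while $\partial_x(xY_k)=-(2k+m)Y_k$, the latter being the $s=1$ instance of Lemma \ref{diracderivativelemma}. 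These give in turn $\partial_x[G(t)(xY_k)]=-\bigl(2tG'(t)+(2k+m)G(t)\bigr)Y_k$, then $\partial_x^2[G(t)(xY_k)]=-2x\bigl(2tG''(t)+(2k+m+2)G'(t)\bigr)Y_k$, and finally
$$L_c[xG(t)Y_k]=2x\Bigl[-2t(1-t)G''+\bigl(t(m+2k+4)-(m+2k+2)\bigr)G'+\bigl((m+2k)+2\pi^2c^2t\bigr)G\Bigr]Y_k .$$
Setting this equal to $\chi_{2N+1,m}^{k,c}\,xG(t)Y_k$, cancelling the common left factor $2x$ (legitimate on ${\mathbb R}^m\setminus\{0\}$, and then at the origin by continuity) and multiplying through by $-2$ yields precisely the displayed differential equation for $G=Q_{N,m}^{k,c}$.

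Next I would put that ODE in self-adjoint form. Multiplying by $g(t)=t^{k+\frac{m}{2}}$ turns the leading term into $\bigl(p(t)Q'\bigr)'$ with $p(t)=4t^{k+\frac{m}{2}+1}(1-t)$: a direct check shows $p'(t)=2\bigl(m+2k+2-t(m+2k+4)\bigr)t^{k+\frac{m}{2}}$, i.e.\ $g$ times the first-order coefficient. Thus the equation becomes $\bigl(p(t)Q'\bigr)'+\bigl[q(t)+\chi_{2N+1,m}^{k,c}r(t)\bigr]Q=0$ with $r(t)=t^{k+\frac{m}{2}}$ and $q(t)=-\bigl(4\pi^2c^2t+2(m+2k)\bigr)t^{k+\frac{m}{2}}$, a Sturm--Liouville equation in the sense of Definition \ref{def_SL}; it is singular because $p(0)=p(1)=0$, while $p,r>0$ on $(0,1)$. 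Since the eigenvalues $\chi_{2N+1,m}^{k,c}$ are pairwise distinct (by the Sturm--Liouville theorem for $\tilde U_c$ established above), Theorem \ref{firstSL} gives $\int_0^1Q_{N,m}^{k,c}(t)Q_{M,m}^{k,c}(t)t^{k+\frac{m}{2}}\,dt=0$ for $N\neq M$, and a normalisation makes it $\delta_{MN}$. Completeness of $\{Q_{N,m}^{k,c}\}_{N\geq0}$ in $L^2([0,1],t^{k+\frac{m}{2}})$ I would establish exactly as in the even case (Lemma \ref{even_radial_orthogonal_without_change_variable}): either by invoking the spectral theory of singular Sturm--Liouville operators from \cite{christensen2010functions,al2008sturm}, or by reducing to $c=0$, where $Q_{N,m}^{k,0}$ is (up to scaling) the radial part of $C_{2N+1,m}^0(Y_k)$, a polynomial of degree $N$ in $t$ orthogonal for $t^{k+\frac{m}{2}}\,dt$ on $[0,1]$, so that these exhaust the polynomials and are therefore total, and then transferring completeness to general $c$ via the $\ell^2$-completeness of the eigenvectors of the Jacobi matrix $M_{k,m}^{o}$.

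The main source of error in the first step is the non-commutative calculus: the Leibniz rule must not be applied across non-scalar factors, and one must keep careful track of the minus sign in $x^2=-|x|^2$ and of the shift $2k+m$ produced by $\partial_x(xY_k)$. The only genuinely non-routine point is the completeness claim, since for a singular Sturm--Liouville problem completeness is not automatic (it depends on the limit-point/limit-circle classification at the endpoints); rather than cite a black-box result I would prefer the reduction to the polynomial case $c=0$ sketched above, where density of polynomials in the weighted $L^2$ space is elementary, followed by the passage to $c>0$.
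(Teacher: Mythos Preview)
Your derivation is correct and follows exactly the route the paper intends: the paper states this lemma without proof, treating it as the odd analogue of Lemma \ref{even_radial_orthogonal_without_change_variable} (whose ODE is likewise obtained by applying $L_c$ to the radial--angular decomposition of the CPSWF and then multiplying by the appropriate power of $t$). Your explicit computation of $L_c[xG(|x|^2)Y_k]$ via Lemma \ref{diracderivativelemma} and the verification that $p(t)=4t^{k+\frac{m}{2}+1}(1-t)$ yields the self-adjoint form simply fill in the details the paper omits; the orthogonality and completeness arguments you sketch are also in line with what the paper invokes (distinctness of $\chi_{2N+1,m}^{k,c}$ from the preceding theorem on $\tilde U_c$, and Sturm--Liouville completeness from the cited references).
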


\begin{Th}\label{orthogonality_of_odd_and_odd_CPSWFs}
	The odd eigenfunctions of
	$L_{c},$ i.e, $\{\psi_{2N+1,m}^{k,c,i}:\, N\geq 0,\ k\geq 0,\ 1\leq i\leq d_k\}$ are orthogonal in $L^2(B(1), {\mathbb R}_m)$.
\end{Th}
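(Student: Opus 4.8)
The plan is to mirror the proof of Lemma~\ref{orthogonality_of_even_and_even_CPSWFs}, the only new ingredient being the single vector factor $x$ carried by each odd CPSWF. Recall from \eqref{CPSWF def} that $\psi_{2N+1,m}^{k,c,i}$ is a linear combination of the odd Clifford--Legendre polynomials $C_{2i+1,m}^0(Y_k^i)$, and that Lemma~\ref{diracderivativelemma} (applied to the expansion of $(1+x^2)^{n}Y_k^i$ in powers of $x^2=-|x|^2$) forces each such polynomial, and hence $\psi_{2N+1,m}^{k,c,i}$ itself, to have the shape
\begin{equation*}
\psi_{2N+1,m}^{k,c,i}(x)=Q_{N,m}^{k,c}(|x|^2)\,x\,Y_k^i(x),
\end{equation*}
where $Q_{N,m}^{k,c}$ is the real-valued radial part appearing in Lemma~\ref{odd_radial_orthogonal_without_change_variable} and $\{Y_k^i\}_{i=1}^{d_k}$ is the chosen orthonormal basis of $M_l^+(k)$. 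First I would compute the integrand of the inner product $\langle\psi_{2N+1,m}^{k,c,i},\psi_{2N'+1,m}^{k',c,i'}\rangle$ explicitly. Since $Q_{N,m}^{k,c}$ is real-valued and Clifford conjugation reverses products with $\overline{x}=-x$, one gets $\overline{\psi_{2N+1,m}^{k,c,i}(x)}=-Q_{N,m}^{k,c}(|x|^2)\,\overline{Y_k^i(x)}\,x$; multiplying by $\psi_{2N'+1,m}^{k',c,i'}(x)$ and using $x\cdot x=-|x|^2$, the two sign changes cancel and the middle factor collapses to a scalar, leaving
\begin{equation*}
\overline{\psi_{2N+1,m}^{k,c,i}(x)}\,\psi_{2N'+1,m}^{k',c,i'}(x)=|x|^2\,Q_{N,m}^{k,c}(|x|^2)\,Q_{N',m}^{k',c}(|x|^2)\,\overline{Y_k^i(x)}\,Y_{k'}^{i'}(x).
\end{equation*}

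Next I would integrate over $B(1)$ in polar coordinates $x=r\omega$, using the homogeneity $Y_k^i(r\omega)=r^kY_k^i(\omega)$ to split the integral as
\begin{equation*}
\langle\psi_{2N+1,m}^{k,c,i},\psi_{2N'+1,m}^{k',c,i'}\rangle=\int_0^1 r^{m+k+k'+1}Q_{N,m}^{k,c}(r^2)Q_{N',m}^{k',c}(r^2)\,dr\int_{S^{m-1}}\overline{Y_k^i(\omega)}Y_{k'}^{i'}(\omega)\,d\omega.
\end{equation*}
The angular integral vanishes for $k\neq k'$ by orthogonality of spherical monogenics of distinct degrees and equals $\delta_{i,i'}$ for $k=k'$ by orthonormality of $\{Y_k^i\}$, which kills all cross terms with $(k,i)\neq(k',i')$. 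For $k=k'$, $i=i'$ the substitution $t=r^2$ converts the radial integral into $\tfrac12\int_0^1 Q_{N,m}^{k,c}(t)Q_{N',m}^{k,c}(t)\,t^{k+\frac{m}{2}}\,dt$, which is exactly the weighted inner product of Lemma~\ref{odd_radial_orthogonal_without_change_variable} and therefore equals $\tfrac12\delta_{N,N'}$. Thus $\langle\psi_{2N+1,m}^{k,c,i},\psi_{2N'+1,m}^{k',c,i'}\rangle=\tfrac12\,\delta_{N,N'}\delta_{k,k'}\delta_{i,i'}$, which proves the orthogonality and records the normalising constant.

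I expect the only delicate step to be the first one, namely the Clifford-algebra bookkeeping: one must keep the vector factor $x$ to the \emph{left} of $Y_k^i(x)$ (which is what Lemma~\ref{diracderivativelemma} produces, since $\partial_x$ acts on the left), observe that $\overline{x}=-x$ and $x\cdot x=-|x|^2$ each contribute a sign so that the surviving radial weight $|x|^2$ is positive, and remember that $Q_{N,m}^{k,c}$ and $|x|^2$ are real scalars that commute freely past all Clifford-valued factors. Once the integrand has been brought to the displayed product form, the remainder is a routine reduction identical in structure to the even case, with Lemma~\ref{odd_radial_orthogonal_without_change_variable} playing the role that Lemma~\ref{even_radial_orthogonal_without_change_variable} played there.
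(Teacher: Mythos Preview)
Your proposal is correct and follows precisely the route the paper intends: the paper states this theorem without proof, having set up Lemma~\ref{odd_radial_orthogonal_without_change_variable} immediately beforehand so that the argument of Lemma~\ref{orthogonality_of_even_and_even_CPSWFs} carries over verbatim once the extra factor of $x$ is absorbed into the radial weight. Your handling of the Clifford bookkeeping (the two signs from $\overline{x}=-x$ and $x^2=-|x|^2$ cancelling to leave the positive weight $|x|^2$, which then combines with the Jacobian to produce the $t^{k+\frac{m}{2}}$ weight of Lemma~\ref{odd_radial_orthogonal_without_change_variable}) is exactly what is needed, and your final constant $\tfrac12\,\delta_{N,N'}\delta_{k,k'}\delta_{i,i'}$ is consistent with the normalisation in that lemma.
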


\begin{Lemma}\label{span_odd_CPSWFs}
	If $f$ is as in \eqref{Representation_f_all_monogenics_equation} and is supported on ${B(1)}$ and $(\psi_{2N+1,m}^{k,c,i},f)=0$ for all $N\geq 0,\; k\geq 0,\; 1\leq j\leq d_{k}$, then $g_{k}^{(i)}=0,$ for all $k\geq 0,\; 1\leq i\leq d_{k}.$ 	
\end{Lemma}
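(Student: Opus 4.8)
The plan is to run the argument of Lemma~\ref{span_even_CPSWFs} with the roles of the ``inner'' and ``outer'' summands in the expansion \eqref{Representation_f_all_monogenics_equation} exchanged, the exchange being forced by the extra factor $x$ that the odd CPSWFs carry. Writing, in the notation of Lemma~\ref{odd_radial_orthogonal_without_change_variable}, $\psi_{2N+1,m}^{k,c,i}(x)=xQ_{N,m}^{k,c}(|x|^2)Y_k^i(x)$ with $Q_{N,m}^{k,c}$ real-valued, and using $\bar x=-x$, one has $\overline{\psi_{2N+1,m}^{k,c,i}(x)}=-Q_{N,m}^{k,c}(|x|^2)\,\overline{Y_k^i(x)}\,x$. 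I would substitute \eqref{Representation_f_all_monogenics_equation} into
$$(\psi_{2N+1,m}^{k,c,i},f)=-\int_{B(1)}Q_{N,m}^{k,c}(|x|^2)\,\overline{Y_k^i(x)}\,x\,f(x)\,dx$$
and split the resulting sum into the contributions of the inner terms $f_{k'}^{(l)}(|x|)Y_{k'}^{(l)}(x)$ and the outer terms $g_{k'}^{(l)}(|x|)\frac{x}{|x|^m}Y_{k'}^{(l)}(x/|x|^2)$.

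For the inner terms, passing to polar coordinates $x=r\omega$ and using homogeneity produces the angular integral $\int_{S^{m-1}}\overline{Y_k^i(\omega)}\,\omega\,Y_{k'}^{(l)}(\omega)\,d\omega$, which vanishes by Lemma~\ref{property of two monogenic and x between} since $\overline{Y_k^i}$ is right monogenic and $Y_{k'}^{(l)}$ is left monogenic; so the inner terms contribute nothing. For the outer terms, the decisive simplification is $x\cdot\frac{x}{|x|^m}=\frac{x^2}{|x|^m}=-|x|^{2-m}$, which is scalar, so each such term collapses to $|x|^{2-m}g_{k'}^{(l)}(|x|)\,\overline{Y_k^i(x)}\,Y_{k'}^{(l)}(x/|x|^2)$; in polar coordinates its angular factor is $\int_{S^{m-1}}\overline{Y_k^i(\omega)}Y_{k'}^{(l)}(\omega)\,d\omega=\delta_{kk'}\delta_{il}$ by orthonormality of $\{Y_k^i\}$, so the double sum collapses to the single term $k'=k$, $l=i$. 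Bookkeeping the powers of $r$ (the net exponent being $2-m+k-k'+(m-1)=k-k'+1$, equal to $1$ once $k'=k$) and setting $t=r^2$ leaves
$$(\psi_{2N+1,m}^{k,c,i},f)=\tfrac12\int_0^1 Q_{N,m}^{k,c}(t)\,g_k^{(i)}(\sqrt t)\,dt .$$

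It then remains to deduce $g_k^{(i)}\equiv 0$ from the vanishing of this integral for every $N\ge 0$. I would set $h(t)=t^{-(k+m/2)}g_k^{(i)}(\sqrt t)$, so that $\int_0^1 Q_{N,m}^{k,c}(t)\,h(t)\,t^{k+\frac m2}\,dt=0$ for all $N$, and check that $h$ lies in the weighted space $L^2([0,1],t^{k+\frac m2})$: changing variables back to $r$ gives $\int_0^1|h(t)|^2t^{k+\frac m2}\,dt=2\int_0^1|g_k^{(i)}(r)|^2 r^{1-m-2k}\,dr$, which is finite by the integrability condition on $g_k^{(i)}$ in Theorem~\ref{Representation_f_all_monogenics} together with Lemma~\ref{support_Lemma_f_summand}. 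Since $\{Q_{N,m}^{k,c}:N\ge 0\}$ is complete in $L^2([0,1],t^{k+\frac m2})$ by Lemma~\ref{odd_radial_orthogonal_without_change_variable}, it follows that $h=0$, hence $g_k^{(i)}=0$, for every $k\ge 0$ and $1\le i\le d_k$.

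The computation is essentially routine; the one place that needs genuine (if modest) care is the final paragraph, matching the Sturm--Liouville weight $t^{k+\frac m2}$ attached to the $Q_{N,m}^{k,c}$ against the natural $r^{1-m-2k}$-integrability of the outer radial coefficients $g_k^{(i)}$, so that the completeness half of Lemma~\ref{odd_radial_orthogonal_without_change_variable} can legitimately be invoked; one should also note, exactly as in the even case, the harmless justification of interchanging the sum over $(k',l)$ with the integral over $B(1)$.
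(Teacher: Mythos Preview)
Your argument is correct and is precisely the odd analogue of the proof of Lemma~\ref{span_even_CPSWFs}, which is exactly what the paper intends: the paper does not give a separate proof of Lemma~\ref{span_odd_CPSWFs} but simply states it after the phrase ``We may conclude similar results for the odd CPSWFs.'' Your weight-matching step in the final paragraph (checking that $h(t)=t^{-(k+m/2)}g_k^{(i)}(\sqrt t)$ actually lies in $L^2([0,1],t^{k+m/2})$ via the $r^{1-m-2k}$ integrability condition from Theorem~\ref{Representation_f_all_monogenics}) is a point of care that the paper's even-case proof itself glosses over, so if anything you are being slightly more thorough than the source.
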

Using the fact that $\psi_{2N,m}^{k,c,i}=P_{N,m}^{k,c}(\vert x\vert^{2})Y_{k}^{i}(x)$ and $\psi_{2N+1,m}^{k,c,i}=Q_{N,m}^{k,c}(\vert x\vert^{2})xY_{k}^{i}(x)$, we conclude form Lemma \ref{property of two monogenic and x between}, we can conclude that the  the even and odd CPSWFs span orthogonal spaces in $L^2(B(1),{\mathbb R}_m)$.

\begin{Proposition}\label{orthogonality_of_even_and_odd_CPSWFs}
The even and odd eigenfunctions of the $L_{c},$ are orthogonal, i.e.,
$$\int\limits_{B(1)}\overline{\psi_{2N,m}^{k,c,i}(x)}\psi_{2N'+1,m}^{k',c,i'}(x)\, dx=0,$$
for all $N,N'\geq 0$, $k,k'\geq 0$, $1\leq i\leq d_k$, $1\leq i'\leq d_{k'}$.
\end{Proposition}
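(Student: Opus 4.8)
The plan is to reduce the multidimensional integral to a product of a radial integral and an angular integral over $S^{m-1}$, and then kill the angular integral using Lemma \ref{property of two monogenic and x between}. First I would recall the structural form of the two families: by the discussion following \eqref{CPSWF def} (and as stated just before the Proposition), $\psi_{2N,m}^{k,c,i}(x)=P_{N,m}^{k,c}(|x|^2)Y_k^i(x)$ and $\psi_{2N'+1,m}^{k',c,i'}(x)=Q_{N',m}^{k',c}(|x|^2)\,x\,Y_{k'}^{i'}(x)$, where the radial parts $P_{N,m}^{k,c}$ and $Q_{N',m}^{k',c}$ are real-valued. Since a real scalar commutes with every Clifford number and is fixed by Clifford conjugation, $\overline{\psi_{2N,m}^{k,c,i}(x)}=P_{N,m}^{k,c}(|x|^2)\,\overline{Y_k^i(x)}$, so the integrand of the inner product is
$$\overline{\psi_{2N,m}^{k,c,i}(x)}\,\psi_{2N'+1,m}^{k',c,i'}(x)=P_{N,m}^{k,c}(|x|^2)\,Q_{N',m}^{k',c}(|x|^2)\,\overline{Y_k^i(x)}\,x\,Y_{k'}^{i'}(x).$$
Note in particular that no factor of $x$ appears inside the conjugation, so no spurious sign or vector factor is introduced.

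Next I would pass to polar coordinates $x=r\omega$ with $r\in[0,1]$, $\omega\in S^{m-1}$ and $dx=r^{m-1}\,dr\,d\omega$. Using the homogeneity $Y_k^i(r\omega)=r^kY_k^i(\omega)$ and $Y_{k'}^{i'}(r\omega)=r^{k'}Y_{k'}^{i'}(\omega)$ (and $x=r\omega$), the scalar powers of $r$ pull out and the integral factors as
$$\int\limits_{B(1)}\overline{\psi_{2N,m}^{k,c,i}(x)}\,\psi_{2N'+1,m}^{k',c,i'}(x)\,dx
=\left(\int\limits_0^1 P_{N,m}^{k,c}(r^2)Q_{N',m}^{k',c}(r^2)\,r^{k+k'+m}\,dr\right)\int\limits_{S^{m-1}}\overline{Y_k^i(\omega)}\,\omega\,Y_{k'}^{i'}(\omega)\,d\omega .$$
Then I would invoke Lemma \ref{property of two monogenic and x between}: since $Y_k^i$ is left monogenic on all of ${\mathbb R}^m$, its Clifford conjugate $\overline{Y_k^i}$ is right monogenic there (this is the identity $\overline{\partial f}=-\overline{f}\partial$ recorded in Section 2), while $Y_{k'}^{i'}$ is left monogenic; both are defined on a neighbourhood of $B(1)$ because they are polynomials. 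Hence $\int_{S^{m-1}}\overline{Y_k^i(\omega)}\,\omega\,Y_{k'}^{i'}(\omega)\,d\omega=0$, and the whole integral vanishes for all admissible $N,N',k,k',i,i'$, which is the claim.

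There is no genuine obstacle here; the only points requiring care are bookkeeping ones — keeping the Clifford factors in the correct left-to-right order throughout (the radial parts are real and commute, but $\overline{Y_k^i}$, $\omega$ and $Y_{k'}^{i'}$ must not be permuted), and confirming that conjugating the even CPSWF produces a right-monogenic angular factor with no hidden vector part, so that Lemma \ref{property of two monogenic and x between} applies verbatim. I would also remark that this argument is exactly the "same fact" alluded to in the sentence preceding the Proposition, now made precise.
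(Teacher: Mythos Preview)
Your proof is correct and follows precisely the approach indicated by the paper, which merely states that the result follows from the structural forms $\psi_{2N,m}^{k,c,i}=P_{N,m}^{k,c}(|x|^{2})Y_{k}^{i}$ and $\psi_{2N'+1,m}^{k',c,i'}=Q_{N',m}^{k',c}(|x|^{2})xY_{k'}^{i'}$ together with Lemma~\ref{property of two monogenic and x between}. You have supplied exactly the details the paper omits: the polar factorisation and the observation that $\overline{Y_k^i}$ is right monogenic so that the Lemma applies.
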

Now we prove that the CPSWFs are not only orthonormal but also a basis for $L^{2}({B(1)},\mathbb{R}_{m}).$

\begin{Th}\label{CPSWFs_are_basis_using_SL}
	The set $\{\psi_{2N,m}^{k,c,i}:\  N\geq 0,\, k\geq 0, \, 1\leq j\leq  d_k \}\cup \{\psi_{2N+1,m}^{k,c,i}:\,  N\geq 0,\, k\geq 0, \, 1\leq j\leq  d_k \},$ is an orthonormal basis for $L^{2}({B(1)},\mathbb{R}_{m}).$
\end{Th}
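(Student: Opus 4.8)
The plan is to prove the two halves of the assertion separately: first that the indicated family is orthonormal (after the appropriate $L^2(B(1))$-normalisation), and then that it is complete in $L^2(B(1),{\mathbb R}_m)$; together these give an orthonormal basis.

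For orthonormality there is essentially nothing new to do. Lemma~\ref{orthogonality_of_even_and_even_CPSWFs} gives orthogonality of the even CPSWFs among themselves, Theorem~\ref{orthogonality_of_odd_and_odd_CPSWFs} gives orthogonality of the odd CPSWFs among themselves, and Proposition~\ref{orthogonality_of_even_and_odd_CPSWFs} gives orthogonality between an even and an odd CPSWF. The computation in the proof of Lemma~\ref{orthogonality_of_even_and_even_CPSWFs} shows that $\langle\psi_{2N,m}^{k,c,i},\psi_{2N,m}^{k,c,i}\rangle$ equals a positive constant depending only on $k$ and $m$ (arising from the weight $(s+1)^{k+\frac{m}{2}-1}$ in Theorem~\ref{th: Tctildeoperator} and the change of variables), and the analogous statement for the odd functions follows from Lemma~\ref{odd_radial_orthogonal_without_change_variable}. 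Dividing each $\psi$ by the square root of the relevant constant produces a genuinely orthonormal system, which I henceforth denote by the same symbols.

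For completeness, let $f\in L^2(B(1),{\mathbb R}_m)$ and suppose $(\psi_{2N,m}^{k,c,i},f)=0$ and $(\psi_{2N+1,m}^{k,c,i},f)=0$ for every admissible $N,k,i$. Extend $f$ by zero to all of ${\mathbb R}^m$; the extension, still called $f$, lies in $L^2({\mathbb R}^m,{\mathbb R}_m)$ and is supported on ${B(1)}$, so Theorem~\ref{Representation_f_all_monogenics} applies and writes $f$ in the form \eqref{Representation_f_all_monogenics_equation} with radial coefficients $f_k^{(i)}$ and $g_k^{(i)}$ that, by Lemma~\ref{support_Lemma_f_summand}, are supported on $[0,1]$. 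Orthogonality of $f$ to all even CPSWFs lets us invoke Lemma~\ref{span_even_CPSWFs} to conclude $f_k^{(i)}=0$ for every $k\ge 0$ and $1\le i\le d_k$; orthogonality of $f$ to all odd CPSWFs lets us invoke Lemma~\ref{span_odd_CPSWFs} to conclude $g_k^{(i)}=0$ for every $k\ge 0$ and $1\le i\le d_k$. Substituting back into \eqref{Representation_f_all_monogenics_equation} gives $f=0$ in $L^2({\mathbb R}^m,{\mathbb R}_m)$, hence $f=0$ in $L^2(B(1),{\mathbb R}_m)$. Thus the orthogonal complement of the closed span of the CPSWFs (taken with respect to the scalar part $[\langle\cdot,\cdot\rangle]_0$ of the module inner product) is trivial, and an orthonormal system with trivial orthogonal complement is an orthonormal basis.

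The substance of the argument has already been absorbed into the earlier lemmas, so the only points requiring care are (i) the normalisation bookkeeping, since the radial Sturm--Liouville normalisations of Theorem~\ref{th: Tctildeoperator} and its odd analogue differ from the $L^2(B(1))$-norm by the explicit $k,m$-dependent constants just mentioned, and these must be tracked so that the final family is genuinely orthonormal; and (ii) verifying that the zero-extension of $f$ legitimately belongs to $L^2({\mathbb R}^m,{\mathbb R}_m)$ so that Theorem~\ref{Representation_f_all_monogenics} and Lemma~\ref{support_Lemma_f_summand} may be applied --- this is immediate but is the hinge that allows Lemmas~\ref{span_even_CPSWFs} and \ref{span_odd_CPSWFs} to be used together. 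I anticipate no deeper obstacle.
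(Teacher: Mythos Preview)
Your proposal is correct and follows essentially the same approach as the paper: orthogonality is deduced from Lemma~\ref{orthogonality_of_even_and_even_CPSWFs}, Theorem~\ref{orthogonality_of_odd_and_odd_CPSWFs}, and Proposition~\ref{orthogonality_of_even_and_odd_CPSWFs}, while completeness is obtained from Lemmas~\ref{span_even_CPSWFs} and~\ref{span_odd_CPSWFs}. Your write-up is in fact more explicit than the paper's (which just cites these results without elaboration), and your attention to the normalisation constants and the zero-extension step is appropriate, though the paper glosses over both.
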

\begin{proof}
From Proposition \ref{orthogonality_of_even_and_odd_CPSWFs} and Lemmas \ref{orthogonality_of_even_and_even_CPSWFs} and \ref{orthogonality_of_odd_and_odd_CPSWFs}, we see that $\{\psi_{n,m}^{k,c,i}:\, n\geq 0,\ k\geq 0,\ 1\leq i\leq d_k\}$ is orthogonal in $L^2(B,{\mathbb R}_m)$. Completeness is a consequence of  Lemmas \ref{span_even_CPSWFs} and \ref{span_odd_CPSWFs}.
\end{proof}

\begin{Th}
	The radial part $P_{N,m}^{k,c}(|x|^2)$ of 
	$\psi_{2N,m}^{k,c,i}(x)$ and the radial part $Q_{N,m}^{k,c}(|x|^2)$ of $\psi_{2N+1,m}^{k,c,i}(x)$ satisfy 
	$$P_{N,m}^{k,c}(1)\neq 0\neq Q_{N,m}^{k,c}(1).$$
\end{Th}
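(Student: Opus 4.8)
The plan is to read off, from the singular Sturm--Liouville equations already established, a Frobenius analysis at the boundary point $t=1$ and to show that a radial part vanishing there would be forced to vanish identically. First I would recall that by Lemma~\ref{even_radial_orthogonal_without_change_variable} the even radial part $P=P_{N,m}^{k,c}$ satisfies, on $[0,1]$,
\[
4t(1-t)P''(t)+2\bigl(m+2k-t(2+m+2k)\bigr)P'(t)+\bigl(\chi_{2N,m}^{k,c}-4\pi^2c^2t\bigr)P(t)=0,
\]
and by Lemma~\ref{odd_radial_orthogonal_without_change_variable} the odd radial part $Q=Q_{N,m}^{k,c}$ satisfies
\[
4t(1-t)Q''(t)+2\bigl(m+2k+2-t(4+m+2k)\bigr)Q'(t)+\bigl(\chi_{2N+1,m}^{k,c}-4\pi^2c^2t-2(m+2k)\bigr)Q(t)=0 .
\]
In each equation the coefficient of the top-order derivative vanishes simply at $t=1$, so $t=1$ is a regular singular point.

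Next I would carry out the indicial computation. Substituting $t=1-\tau$ in either equation, the leading behaviour near $\tau=0$ is $4\tau\,u_{\tau\tau}+4\,u_\tau+(\text{lower order})=0$, whose indicial polynomial is $4r^2$, so $r=0$ is a \emph{double} exponent. Hence, near $t=1$, each equation has a one–dimensional space of solutions that remain bounded, spanned by the analytic Frobenius solution $u_{\mathrm{an}}(\tau)=\sum_{j\ge 0}a_j\tau^j$ with $a_0\neq 0$; the second, linearly independent solution contains a term proportional to $u_{\mathrm{an}}(\tau)\log\tau$ and blows up as $t\to 1^-$. Since $\psi_{2N,m}^{k,c,i}$ and $\psi_{2N+1,m}^{k,c,i}$ lie in $C^2(B(1),{\mathbb R}_m)$ and are therefore bounded on the closed ball, their radial parts are bounded near $t=1$, so the logarithmic branch is excluded and $P$, $Q$ must each be a scalar multiple of the corresponding analytic Frobenius solution. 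The key point is then the Frobenius recursion: writing $u=\sum_{j\ge 0}a_j\tau^j$ and collecting the coefficient of $\tau^j$, the block $4\tau\,u_{\tau\tau}+4\,u_\tau$ contributes $4(j+1)^2 a_{j+1}$, and since $(j+1)^2\neq 0$ for every $j\ge 0$, each $a_{j+1}$ is determined by $a_j$ and $a_{j-1}$; thus the whole series is determined by $a_0=u(1)$. Consequently, if $P_{N,m}^{k,c}(1)=0$ then $a_0=0$, forcing all $a_j=0$, so $P\equiv 0$ in a neighbourhood of $t=1$; by uniqueness for the (now nonsingular) linear ODE on $(0,1)$, $P\equiv 0$ on $[0,1]$, hence $\psi_{2N,m}^{k,c,i}\equiv 0$, contradicting that it is an eigenfunction of $L_c$. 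The identical argument applied to the $Q$-equation gives $Q_{N,m}^{k,c}(1)\neq 0$.

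The two endpoint substitutions and the recursion bookkeeping are routine. The step that I expect to require the most care — indeed the only genuine obstacle — is the justification that the radial parts select the analytic branch at $t=1$: one must argue cleanly that a CPSWF, being $C^2$ (a fortiori bounded) up to the boundary sphere $S^{m-1}$, cannot have a radial part exhibiting the $\log$-type singularity that the singular Sturm--Liouville equation otherwise permits, so that $P$ and $Q$ are genuinely given, up to a nonzero scalar, by the convergent Frobenius series to which the ``$a_0$ determines everything'' recursion applies.
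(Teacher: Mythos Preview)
Your argument is correct and follows essentially the same route as the paper: show that if the radial part vanishes at $t=1$ then the ODE forces all higher Taylor coefficients there to vanish, so the (analytic) solution is identically zero and the eigenfunction is trivial. The paper carries this out by repeatedly differentiating the equation and evaluating at $s=1$ rather than invoking the Frobenius indicial analysis, and it simply asserts analyticity at the endpoint instead of justifying it by excluding the logarithmic branch as you do; your handling of that point is in fact more careful.
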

\begin{proof}
With $Q=Q_{N,m}^{k,c}$, we assume that
	$Q(1)=0$. Putting $s=1$ in (\ref{nonSL_for_CPSWFs}) gives $-2Q'(1)=0$ from which we conclude that $Q'(1)=0$.
	Taking the $n$-th derivative of (\ref{nonSL_for_CPSWFs}) and evaluating both sides at $s=1$ gives $-(2n+2)Q^{(n+1)}(1)=0$, from which we conclude that $Q^{(n+1)}(1)=0$. Hence, by induction we have that $Q^{(\ell )}(1)=0$ for all $\ell \geq 0$. 
But  
	$Q$
	is a solution of (\ref{nonSL_for_CPSWFs}) which has the analytic solution,
	$$Q(s)=\sum_{n=0}^{\infty}\frac{Q^{(n)}(1)}{n!}(s-1)^{n},$$
	on $(-1,1)$. Since,
	$Q^{(n)}(1)=0$
	for all $n\geq 0$, we have 
	$Q(s)\equiv 0$.
	We conclude that
	$Q(1)\neq 0$.
	The proof that $P(1)=P_{N,m}^{k,c}(1)\neq 0$ is similar.
\end{proof}
\begin{Proposition}
	The even eigenvalues of the operator 
	$L_{c},$
	i.e.,
	$\chi_{2N,m}^{k,c}$
	are also the eigenvalues of
	$\tilde{T_{c}},$
	which is defined on 
	\eqref{Tctildeoperator}. This is true for the odd case too.
\end{Proposition}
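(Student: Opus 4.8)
The plan is to prove both assertions (even and odd) by showing that the two relevant spectra are contained in one another; I will carry out the even case in detail, the odd case being identical in structure with $\tilde{T_{c}}$, $P_{N,m}^{k,c}$ and the weight $g(s)=(s+1)^{k+\frac m2-1}$ replaced by $\tilde{U_{c}}$, $Q_{N,m}^{k,c}$ and $g(s)=(s+1)^{k+\frac m2}$, and with \eqref{SL_odd} playing the role of \eqref{nonSL_for_CPSWFs}.

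First I would record the inclusion ``every even eigenvalue of $L_c$ is an eigenvalue of $\tilde{T_{c}}$''. This is essentially already contained in the passage preceding \eqref{nonSL_for_CPSWFs}: starting from an eigenfunction $\psi_{2N,m}^{k,c,i}(x)=P_{N,m}^{k,c}(|x|^2)Y_k^i(x)$ of $L_c$ with eigenvalue $\chi_{2N,m}^{k,c}$, one substitutes into $L_c\psi=\chi_{2N,m}^{k,c}\psi$, uses the explicit formula for $L_0[F(|x|^2)Y_k^j(x)]$ derived above, and passes to the variable $t=|x|^2$ and then $s=2t-1$ to reach \eqref{nonSL_for_CPSWFs}; multiplying through by the integrating factor $y(s)=(s+1)^{k+\frac m2-1}$ turns this into the Sturm--Liouville identity $\tilde{T_{c}}\tilde P_{N,m}^{k,c}+\chi_{2N,m}^{k,c}\,g\,\tilde P_{N,m}^{k,c}=0$ of Theorem \ref{th: Tctildeoperator}. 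Hence each $\chi_{2N,m}^{k,c}$ is a (weighted) eigenvalue of $\tilde{T_{c}}$, with eigenfunction $\tilde P_{N,m}^{k,c}$.

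For the reverse inclusion I would proceed as follows. Let $\mu$ be an eigenvalue of $\tilde{T_{c}}$ with nonzero eigenfunction $h\in L^2([-1,1],g)$. By Theorem \ref{th: Tctildeoperator}, $\tilde{T_{c}}$ is a singular Sturm--Liouville operator with simple spectrum whose eigenfunctions form a complete orthogonal system in $L^2([-1,1],g)$; but the previous paragraph exhibits $\{\tilde P_{N,m}^{k,c}\}_{N\ge 0}$ as exactly such a system, so it must already be all of the eigenfunctions, and therefore $\mu=\chi_{2N,m}^{k,c}$ for some $N$. As a backup to this short argument I would instead undo the change of variable and set $\Psi(x)=h(2|x|^2-1)\,Y_k^i(x)$: analysis of the indicial equation at the two regular singular points $s=\pm1$ shows $h$ is regular enough there that $\Psi\in C^2(B(1),{\mathbb R}_m)$, reversing the substitutions gives $L_c\Psi=\mu\Psi$, and then Theorem \ref{CPSWFs_are_basis_using_SL} together with the orthogonality of the even CPSWFs forces $\mu$ to coincide with one of the $\chi_{2N,m}^{k,c}$. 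Either way the even spectra coincide, and the odd case follows by the same steps.

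The hard part will be this reverse inclusion — ruling out an eigenvalue of the singular operator $\tilde{T_{c}}$ that does not occur as some $\chi_{2N,m}^{k,c}$. Making this rigorous rests on the spectral theory of singular Sturm--Liouville problems whose leading coefficient $p$ vanishes at both endpoints (the limit-circle analysis at $s=\pm1$), which is precisely what is packaged into Theorem \ref{th: Tctildeoperator}; the alternative reconstitution argument instead requires a careful verification that an $L^2([-1,1],g)$-eigenfunction of $\tilde{T_{c}}$ automatically lies in the domain $C^2(B(1),{\mathbb R}_m)$ of $L_c$, which is again an indicial-equation computation at the regular singular points rather than anything substantial.
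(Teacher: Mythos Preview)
Your proposal is correct, and for the direction actually asserted in the Proposition it coincides with the paper's argument: the paper's entire proof is the single sentence ``This is essentially the content of Theorem~\ref{th: Tctildeoperator}'', i.e.\ exactly your first paragraph, where the radial reduction and change of variable already recorded before \eqref{nonSL_for_CPSWFs} turn $L_c\psi_{2N,m}^{k,c,i}=\chi_{2N,m}^{k,c}\psi_{2N,m}^{k,c,i}$ into $\tilde{T_{c}}\tilde P_{N,m}^{k,c}+\chi_{2N,m}^{k,c}\,g\,\tilde P_{N,m}^{k,c}=0$.

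Where you differ is in scope. You read the Proposition as asserting equality of spectra and therefore supply a reverse inclusion (every eigenvalue of $\tilde{T_{c}}$ arises as some $\chi_{2N,m}^{k,c}$), invoking either the completeness of $\{\tilde P_{N,m}^{k,c}\}_N$ in $L^2([-1,1],g)$ from Theorem~\ref{th: Tctildeoperator} or an explicit reconstitution $\Psi(x)=h(2|x|^2-1)Y_k^i(x)$. The paper does not address this direction at all; on the natural one-directional reading (``are \emph{also} eigenvalues of $\tilde{T_{c}}$'') it is not needed. Your extra argument is sound and buys a genuinely stronger statement, at the cost of leaning on the singular Sturm--Liouville machinery (limit-circle analysis at $s=\pm1$) that you correctly flag as the nontrivial ingredient.
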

\begin{proof} 
This is essentially the content of Theorem \ref{th: Tctildeoperator}.	
\end{proof}

According to the \ref{def_SL}, \ref{firstSL}, and, \ref{secondSL}, we may conclude some of the features of the SL for the radial parts of the even and odd CPSWFs.
\begin{Corollary}
For any fixed $k$, the eigenvalues $\{\chi_{2N,m}^{k,c}\}_{N=0}^\infty$ are distinct, as are the eigenvalues $\{\chi_{2N+1,m}^{k,c}\}_{N=0}^\infty$.
\end{Corollary}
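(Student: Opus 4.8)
The plan is to deduce the statement directly from the Sturm--Liouville structure established earlier in this section. First I would invoke the Proposition immediately above (equivalently Theorem~\ref{th: Tctildeoperator} combined with the substitution $s=2|x|^2-1$): for each fixed $k$, every even eigenvalue $\chi_{2N,m}^{k,c}$ of $L_c$ is an eigenvalue of the singular Sturm--Liouville operator $\tilde{T_{c}}$ on the weighted space $L^2([-1,1],g)$ with $g(s)=(s+1)^{k+\frac m2-1}$, the corresponding eigenfunction being the radial part $\tilde P_{N,m}^{k,c}$ of $\psi_{2N,m}^{k,c,i}$; and by Theorem~\ref{th: Tctildeoperator} the family $\{\tilde P_{N,m}^{k,c}\}_{N\ge 0}$ is orthonormal in $L^2([-1,1],g)$, hence for $N\neq N'$ the eigenfunctions $\tilde P_{N,m}^{k,c}$ and $\tilde P_{N',m}^{k,c}$ are linearly independent. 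The same reduction, with $\tilde{U_{c}}$ in place of $\tilde{T_{c}}$ and $g(s)=(s+1)^{k+\frac m2}$, applies to the odd eigenvalues $\chi_{2N+1,m}^{k,c}$. Thus the Corollary reduces to the assertion that $\tilde{T_{c}}$ and $\tilde{U_{c}}$ have \emph{simple} spectrum, i.e.\ each eigenvalue has a one-dimensional eigenspace.

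To prove simplicity I would run the argument already used in the proof of the theorem preceding this Corollary (the one asserting $P_{N,m}^{k,c}(1)\neq 0\neq Q_{N,m}^{k,c}(1)$). Evaluating the radial differential equation (\ref{nonSL_for_CPSWFs}) at the singular endpoint $s=1$, where the leading coefficient $p(s)=4(s+1)^{k+\frac m2}(1-s)$ vanishes, produces a relation that determines the first derivative of any solution analytic at $s=1$ in terms of its value at $s=1$; differentiating (\ref{nonSL_for_CPSWFs}) $n$ times and again setting $s=1$ recursively expresses every Taylor coefficient at $s=1$ in terms of that same single value. Consequently the space of solutions of the eigenvalue equation that are analytic at $s=1$ is at most one-dimensional. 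Since an eigenfunction $\tilde P_{N,m}^{k,c}$ (or the odd analogue $Q_{N,m}^{k,c}$) is precisely such an analytic solution, every eigenspace of $\tilde{T_{c}}$ and of $\tilde{U_{c}}$ is one-dimensional, so two distinct orthonormal eigenfunctions cannot correspond to the same eigenvalue. Hence the orderings $\chi_{0,m}^{k,c}<\chi_{2,m}^{k,c}<\cdots$ and $\chi_{1,m}^{k,c}<\chi_{3,m}^{k,c}<\cdots$ of Theorem~\ref{th: Tctildeoperator} are strict, which is exactly the claim.

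The step I expect to be the main obstacle is justifying that an eigenfunction is forced to be the analytic solution branch at $s=1$ (and, symmetrically, at $s=-1$), so that the second, non-analytic local solution is genuinely excluded rather than accidentally admissible; this is the limit-point/limit-circle bookkeeping at the regular singular endpoints, where one must track, for the relevant values of $k+\frac m2$, whether the excluded solution carries a true negative-power singularity or only a logarithmic one, and check in either case that it fails to lie in $L^2([-1,1],g)$. This verification, however, is exactly what the singular Sturm--Liouville framework cited from \cite{christensen2010functions,al2008sturm} and already invoked in Theorem~\ref{th: Tctildeoperator} provides, so once it is granted the distinctness of the eigenvalues is formal; in effect the Corollary is a repackaging of the distinctness clauses of the two Sturm--Liouville theorems of this section.
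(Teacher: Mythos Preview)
Your proposal is correct and takes essentially the same approach as the paper: the Corollary is stated without its own proof, preceded only by the sentence ``According to [Definition~\ref{def_SL}, Theorems~\ref{firstSL} and \ref{secondSL}], we may conclude some of the features of the SL for the radial parts of the even and odd CPSWFs,'' so the paper treats distinctness as already contained in the Sturm--Liouville clauses of Theorem~\ref{th: Tctildeoperator} and its odd analogue. Your explicit simplicity argument via the Taylor recursion at $s=1$ and the endpoint analysis supplies the detail that the paper delegates to the cited references, but the underlying route is the same.
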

As a consequence of Sturm-Liouville theory
\begin{Corollary}
For any fixed $k$, the zeros of the $\{\psi_{2N,m}^{k,c,i}\}_{N=0}^\infty$ are interlacing as are the zeroes of $\{\psi_{2N+1,m}^{k,c,i}\}_{N=0}^\infty$.
\end{Corollary}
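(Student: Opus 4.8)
The plan is to reduce the assertion to a classical oscillation statement for the one-dimensional singular Sturm--Liouville problems already attached to the radial parts of the CPSWFs, and then transfer the conclusion back to the ball. Since $\psi_{2N,m}^{k,c,i}(x)=P_{N,m}^{k,c}(|x|^2)Y_k^i(x)$ with $P_{N,m}^{k,c}(1)\neq0$ by the theorem preceding this corollary, the zero set of $\psi_{2N,m}^{k,c,i}$ consists of the $N$-independent cone $\{Y_k^i=0\}$ together with the concentric spheres $|x|^2=\frac{s+1}{2}$, where $s$ runs over the zeros of the radial factor $\tilde P_{N,m}^{k,c}$ in $(-1,1)$; so the statement is exactly the interlacing of the zeros of these radial factors as $N$ varies (and the odd case is handled identically via $\psi_{2N+1,m}^{k,c,i}(x)=Q_{N,m}^{k,c}(|x|^2)xY_k^i(x)$). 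First I would record, using Theorem~\ref{th: Tctildeoperator} and Lemma~\ref{even_radial_orthogonal_without_change_variable}, that $\tilde P_{N,m}^{k,c}$ solves a singular Sturm--Liouville equation $[p(s)u']'+[q(s)+\chi_{2N,m}^{k,c}g(s)]u=0$ on $(-1,1)$, with $p(s)=4(s+1)^{k+\frac{m}{2}}(1-s)$ positive on the open interval and vanishing at $s=\pm1$, with $q(s)=-2\pi^2c^2(s+1)^{k+\frac{m}{2}}$ independent of $N$, with weight $g(s)=(s+1)^{k+\frac{m}{2}-1}>0$, and with eigenvalues that are simple and strictly increasing, $\chi_{0,m}^{k,c}<\chi_{2,m}^{k,c}<\cdots$. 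By the oscillation theorem for such problems, the eigenfunction belonging to the $(N+1)$-st eigenvalue has exactly $N$ simple zeros $s_1^{(N)}<\cdots<s_N^{(N)}$ in $(-1,1)$.

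Next I would apply the Sturm comparison theorem, Theorem~\ref{secondSL}, to the pair $u=\tilde P_{N,m}^{k,c}$, $v=\tilde P_{N+1,m}^{k,c}$: both solve self-adjoint equations with the same leading coefficient $p$, while $q(s)+\chi_{2N,m}^{k,c}g(s)\le q(s)+\chi_{2N+2,m}^{k,c}g(s)$ on $(-1,1)$, strictly in the interior, since $\chi_{2N,m}^{k,c}<\chi_{2N+2,m}^{k,c}$ and $g>0$. Hence each gap $(s_j^{(N)},s_{j+1}^{(N)})$ contains a zero of $\tilde P_{N+1,m}^{k,c}$, which accounts for $N-1$ of its $N+1$ zeros. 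The remaining step is to place the last two: a comparison/Wronskian argument near the singular endpoints $s=\pm1$ (where $p$ vanishes), together with the exact zero count, forces one zero of $\tilde P_{N+1,m}^{k,c}$ into $(-1,s_1^{(N)})$ and one into $(s_N^{(N)},1)$; counting then gives exactly one zero of $\tilde P_{N+1,m}^{k,c}$ in each of the $N+1$ subintervals determined by the zeros of $\tilde P_{N,m}^{k,c}$ and the endpoints, i.e.\ strict interlacing. Undoing the substitution $|x|^2=\frac{s+1}{2}$ then gives the claim for $\{\psi_{2N,m}^{k,c,i}\}_{N\ge0}$, and the odd case follows verbatim from the Sturm--Liouville equation \eqref{SL_odd} of Lemma~\ref{odd_radial_orthogonal_without_change_variable}, with $p(s)=4(s+1)^{k+\frac{m}{2}+1}(1-s)$, $g(s)=(s+1)^{k+\frac{m}{2}}$, and ordered eigenvalues $\chi_{1,m}^{k,c}<\chi_{3,m}^{k,c}<\cdots$.

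The main obstacle is exactly this endpoint bookkeeping: Theorem~\ref{secondSL} as stated only yields a zero of $v$ strictly between two consecutive zeros of $u$, so it does not by itself locate the two outermost zeros of $\tilde P_{N+1,m}^{k,c}$ lying beyond the extreme zeros of $\tilde P_{N,m}^{k,c}$. Supplying them requires the full oscillation theory for singular Sturm--Liouville problems, namely the precise count of $N$ interior zeros for the $(N+1)$-st eigenfunction together with the behaviour at $s=\pm1$ dictated by $p(\pm1)=0$, after which a pigeonhole count promotes the one-sided comparison statement to full strict interlacing.
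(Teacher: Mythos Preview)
Your approach is exactly what the paper intends: the paper gives no proof at all for this corollary, merely stating ``As a consequence of Sturm--Liouville theory'' and citing Definition~\ref{def_SL} and Theorems~\ref{firstSL}, \ref{secondSL}. Your reduction to the radial Sturm--Liouville problems of Theorem~\ref{th: Tctildeoperator} and equation~\eqref{SL_odd}, followed by the oscillation theorem for the zero count and Sturm comparison for interlacing, is precisely the standard argument being invoked, and you have supplied far more detail than the paper does; your honest identification of the endpoint bookkeeping as the place where Theorem~\ref{secondSL} alone is insufficient, and where one must appeal to the full oscillation theory for singular problems, is accurate and is a gap the paper simply glosses over.
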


\section{CPSWFs as the eigenfunctions of the Finite Fourier Transformation}
In this section, we will present the finite Fourier transformation $\mathcal{G}_{c}$ and then prove that the CPSWFs are the eigenfunctions of this operator.
\begin{Definition}
	We define
	$\mathcal{G}_{c}$
	from 
	$L^{2}({B}(1),\mathbb{C}_{m})$
	to
	$L^{2}({\mathbb R}^m,\mathbb{R}_{m})$
	by
	\begin{equation}\label{Definition_of_gc}
	\mathcal{G}_{c}f(x)=\chi_{{B(1)}}(x)\int\limits_{{B(1)}}e^{2\pi ic\langle x,y\rangle}f(y)\, dy,
	\end{equation}
	where $ \chi_{{B(1)}} $ is the characteristic function of $B(1)$.
The adjoint ${\mathcal G}_c^*$ of $\mathcal{G}_{c}$ is given by
	$$\mathcal{G}_{c}^{\ast}f(x)=\chi_{{B}(1)}(x)\int\limits_{{\mathbb R}^m}e^{-2\pi ic \langle x,y\rangle}f(y)\, dy.$$
\end{Definition}
\begin{Definition}
	The ``space-limiting'' operator
	$Q:L^{2}(\mathbb{R}^{m},\mathbb{R}_{m})\to L^{2}({B(1)},\mathbb{R}_{m})$
	is given by
	$$Qf(x)=\chi_{B(1)}(x)f(x),$$
	and the ``bandlimiting'' operator 
	$P_{c}: L^{2}(\mathbb{R}^{m},\mathbb{R}_{m})\to PW_{c}$
	is given by
	$$P_{c}f(x)=\int\limits_{{B(1)}}\mathcal{F}f(\xi )e^{2\pi i\langle \xi, x\rangle}d\xi .$$
	Here, $PW_{c}$ is the Paley-Wiener space of functions with bandlimit $c$, i.e.,
	$$PW_c=\{f\in L^{2}(\mathbb{R}^{m},\mathbb{R}_{m}):\, \hat f(\xi )=0\text{ if }|\xi |>c\}.$$
\end{Definition}
\begin{Th}
	The operators of 
	$L_{c},$
	and
	$\mathcal{G}_{c}$ 
	commute, i.e., 
	$L_{c}\mathcal{G}_{c}=\mathcal{G}_{c}L_{c}.$
\end{Th}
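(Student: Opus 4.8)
The plan is to verify $L_c\mathcal{G}_cf=\mathcal{G}_cL_cf$ by showing that both sides are integral operators against one and the same ${\mathbb C}_m$-valued kernel. It suffices to check this for $f$ smooth on a neighbourhood of $\overline{B(1)}$ — so that the double integration by parts below is legitimate up to the boundary — which is the relevant class, since the eigenfunctions of $L_c$ and the Clifford--Legendre polynomials are smooth. Throughout write $\phi(x,y)=e^{2\pi ic\langle x,y\rangle}$ and note that $\phi$ is complex-scalar-valued, hence commutes with every Clifford number; in particular it passes freely through $f(y)$ and through the Dirac operator.

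First I would treat the left-hand side. Since $\phi(\cdot,y)$ is entire in $x$, differentiation under the integral sign is valid on the interior of $B(1)$, giving
$$L_c\mathcal{G}_cf(x)=\chi_{B(1)}(x)\int_{B(1)}\big(L_c^{x}\phi(x,y)\big)f(y)\,dy,$$
where $L_c^{x}$ is $L_c$ acting in the variable $x$. Writing $\underline y=\sum_je_jy_j$ and letting $\lambda$ denote the constant (equal to $\pm2\pi ic$, so that $\lambda^2=-4\pi^2c^2$) for which $\partial_x\phi=\lambda\,\underline y\,\phi$, a direct computation using $\partial_x\big[(1-|x|^2)\partial_x\phi\big]=-2\underline x(\partial_x\phi)-(1-|x|^2)\Delta_m\phi$ and $\Delta_m\phi=\lambda^2|y|^2\phi$ yields
$$L_c^{x}\phi(x,y)=\Big[4\pi^2c^2\big(|x|^2+|y|^2-|x|^2|y|^2\big)-2\lambda\,\underline x\,\underline y\Big]\phi(x,y)=:\kappa(x,y).$$

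Next I would treat the right-hand side. Split $L_c=L_0+M_c$ with $M_cf(y)=4\pi^2c^2|y|^2f(y)$; the $M_c$-contribution to $\mathcal{G}_cL_cf$ is at once $4\pi^2c^2\int_{B(1)}|y|^2\phi(x,y)f(y)\,dy$. For the $L_0$-part I would apply the Clifford--Stokes theorem (Theorem \ref{Clifford_Stokes}) twice in the $y$-variable: first to peel the outer $\partial_y$ off $(1-|y|^2)\partial_yf$ onto $\phi$, and then to peel the remaining $\partial_y$ off $f$ onto $(1-|y|^2)(\partial_y\phi)$. In both steps the boundary integral over $S^{m-1}$ carries the factor $1-|\omega|^2=0$ and therefore vanishes, leaving
$$\int_{B(1)}\phi(x,y)\,\partial_y\big[(1-|y|^2)\partial_yf(y)\big]\,dy=\int_{B(1)}\Big(\big[(1-|y|^2)(\partial_y\phi)\big]\partial_y\Big)f(y)\,dy.$$
Evaluating the right action of $\partial_y$ on $(1-|y|^2)(\partial_y\phi)$ and using $\partial_y\phi=\lambda\,\underline x\,\phi$ together with $\lambda^2=-4\pi^2c^2$ reproduces, term by term, exactly the kernel $\kappa(x,y)$; adding back the $M_c$-contribution gives $\mathcal{G}_cL_cf(x)=\chi_{B(1)}(x)\int_{B(1)}\kappa(x,y)f(y)\,dy=L_c\mathcal{G}_cf(x)$.

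The one genuinely delicate point — and the only place the argument departs from the classical one-dimensional calculation — is the Clifford-bivector term $-2\lambda\,\underline x\,\underline y$. In dimension one $\underline x\,\underline y=\underline y\,\underline x$ and one merely invokes $L_c^{x}\phi=L_c^{y}\phi$; for $m\ge2$ this fails, since $\underline x\,\underline y-\underline y\,\underline x=2(x\wedge y)$ is a nonzero bivector, antisymmetric under $x\leftrightarrow y$, so neither the symmetry of $\phi$ nor a formal self-adjointness of $L_0$ with respect to the unconjugated bilinear pairing (which also fails, as $[V\partial_y]\neq\partial_yV$ for vector-valued $V$) is available. The substance of the proof is thus the bookkeeping of the order of Clifford factors through the two integrations by parts: one must check that the transferred Dirac operator multiplies the gradient $\partial_y(1-|y|^2)=-2\underline y$ \emph{on the left} by the vector $\underline x$ coming from $\partial_y\phi$, producing the ordered product $\underline x\,\underline y$ with coefficient $-2\lambda$ — exactly the bivector term thrown off by $L_c^{x}\phi$, where instead $\underline x$ originates from the $(1-|x|^2)$-gradient and $\underline y$ from differentiating $\phi$. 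Once these factors are tracked carefully the two kernels coincide and the identity follows.
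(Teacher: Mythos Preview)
Your argument is correct. Both proofs proceed by reducing the commutation to a kernel identity: you compute $L_c^x\phi(x,y)=\kappa(x,y)$ directly, and then show that $\mathcal{G}_cL_cf$ is integration against the same $\kappa$. The paper packages the second step differently: rather than redoing the two Clifford--Stokes integrations by parts here, it invokes the self-adjointness of $L_c$ with respect to the \emph{conjugated} inner product (already established in Proposition~\ref{proposition3.3}) to write $\mathcal{G}_cL_cf(x)=\int_{B(1)}\overline{L_c(e^{-2\pi ic\langle x,\cdot\rangle})(y)}\,f(y)\,dy$, and then checks the single pointwise identity $L_c^x\phi=\overline{L_c^y\bar\phi}$. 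The bivector bookkeeping you highlight is handled in the paper by Clifford conjugation, which reverses the product $\underline{y}\,\underline{x}$ to $\underline{x}\,\underline{y}$; in your version the correct order emerges instead from the left/right placement of $e_j$ in the two applications of Stokes. So the approaches coincide in spirit and in the final computation, but the paper's route is a little shorter since the integration-by-parts work has already been done, while yours is self-contained and makes the noncommutative bookkeeping fully explicit.
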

\begin{proof} Since $L_c$ is self-adjoint,
\begin{align}{\mathcal G}_cL_cf(x)&=\int\limits_{B(1)}L_cf(y)e^{2\pi ic\langle cx,y\rangle}\, dy\notag\\
&=\int\limits_{B(1)}\overline{e^{-2\pi ic\langle x,y\rangle}}L_cf(y)\, dy=\int\limits_{B(1)}\overline{L_c(e^{-2\pi ic\langle x,\cdot})(y)}f(y)\, dy.\label{comm1}
\end{align}
On the other hand,
\begin{equation}L_c{\mathcal G}_cf(x)=L_c\int\limits_{B(1)}e^{2\pi ic\langle x,y\rangle}f(y)\, dy=\int\limits_{B(1)}L_c(e^{2\pi ic\langle\cdot ,y\rangle})(x)f(y)\,dy.\label{comm2}
\end{equation}
By comparing (\ref{comm1}) and (\ref{comm2}) we see that it is sufficient to show that 
$$L_c(e^{2\pi ic\langle\cdot ,y\rangle})(x)=\overline{L_c(e^{-2\pi ic\langle x,\cdot\rangle})(y)}.$$
By direct calculation, we find that 
\begin{align*}
L_c(e^{2\pi ic\langle\cdot ,y\rangle})(x)&=[-4\pi icxy+4\pi^2c^2(|x|^2+|y|^2-|x|^2|y|^2)]e^{2\pi ic\langle x,y\rangle}\\
&=\overline{L_c(e^{-2\pi ic\langle x,\cdot\rangle})(y)},
\end{align*}
and the proof is complete.
\end{proof}
\begin{Corollary}
	$L_{c},\; \mathcal{G}_{c}$
	and
	$\mathcal{G}_{c}^{\ast}$
	satisfy  the following commutation relations:
	\begin{itemize}
		\item[(a)]
		$\mathcal{G}_{c}^{\ast}L_{c}=L_{c} \mathcal{G}_{c}^{\ast}$
		\item[(b)]
		$L_{c}(\mathcal{G}_{c}^{\ast}\mathcal{G}_{c})=(\mathcal{G}_{c}^{\ast}\mathcal{G}_{c})L_{c}$
		is self-adjoint. 
	\end{itemize}
\end{Corollary}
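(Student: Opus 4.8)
The plan is to obtain both statements purely formally from the commutation relation $L_c\mathcal{G}_c=\mathcal{G}_cL_c$ just established, together with the self-adjointness of $L_c$ proved in Proposition \ref{proposition3.3}; no new analysis is required. The one preliminary point to settle is that $\mathcal{G}_c$ sends $L^2(B(1),\mathbb{C}_m)$ into functions supported on $B(1)$, so $L_c$, $\mathcal{G}_c$ and $\mathcal{G}_c^{\ast}$ may all be regarded as operators on the single right Hilbert Clifford-module $L^2(B(1),\mathbb{C}_m)$. On that module the elementary identities $(AB)^{\ast}=B^{\ast}A^{\ast}$ and $(A^{\ast})^{\ast}=A$ are available: they follow at once from the defining relation $\langle ABf,g\rangle=\langle f,(AB)^{\ast}g\rangle$ applied twice and from the symmetry axiom $\langle f,g\rangle=\overline{\langle g,f\rangle}$ of the $\mathbb{C}_m$-valued inner product recorded in Section 2.

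For part (a) I would take adjoints of $L_c\mathcal{G}_c=\mathcal{G}_cL_c$ and use $L_c^{\ast}=L_c$, obtaining
$$\mathcal{G}_c^{\ast}L_c=(L_c\mathcal{G}_c)^{\ast}=(\mathcal{G}_cL_c)^{\ast}=L_c\mathcal{G}_c^{\ast}.$$
If one prefers to bypass the abstract adjoint calculus (which is slightly delicate because $L_c$ is an unbounded differential operator), the rigorous alternative is the direct kernel computation: unravelling $\mathcal{G}_c^{\ast}L_cf$ and $L_c\mathcal{G}_c^{\ast}f$ by Fubini and differentiation under the integral sign reduces (a) to the pointwise identity $L_c\big(e^{-2\pi ic\langle\cdot,y\rangle}\big)(x)=\overline{L_c\big(e^{2\pi ic\langle x,\cdot\rangle}\big)(y)}$, which is the very computation carried out in the proof of the preceding theorem, only with the sign of the exponent reversed.

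For part (b) I would simply concatenate: using (a), the preceding theorem and associativity,
$$L_c(\mathcal{G}_c^{\ast}\mathcal{G}_c)=(L_c\mathcal{G}_c^{\ast})\mathcal{G}_c=(\mathcal{G}_c^{\ast}L_c)\mathcal{G}_c=\mathcal{G}_c^{\ast}(L_c\mathcal{G}_c)=\mathcal{G}_c^{\ast}(\mathcal{G}_cL_c)=(\mathcal{G}_c^{\ast}\mathcal{G}_c)L_c.$$
Self-adjointness of $\mathcal{G}_c^{\ast}\mathcal{G}_c$ is then automatic, either abstractly via $(\mathcal{G}_c^{\ast}\mathcal{G}_c)^{\ast}=\mathcal{G}_c^{\ast}(\mathcal{G}_c^{\ast})^{\ast}=\mathcal{G}_c^{\ast}\mathcal{G}_c$, or concretely by noting that $\mathcal{G}_c^{\ast}\mathcal{G}_c$ is the integral operator on $B(1)$ with complex-scalar kernel $K(x,y)=\int_{B(1)}e^{2\pi ic\langle z,y-x\rangle}\,dz$, which satisfies $\overline{K(x,y)}=K(y,x)$; Hermiticity of the kernel, together with the definition $\langle Tf,g\rangle=\langle f,Tg\rangle$ of self-adjointness, gives the claim. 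I would also remark in passing that $K$ is bounded on $B(1)\times B(1)$, so $\mathcal{G}_c^{\ast}\mathcal{G}_c$ is Hilbert--Schmidt and hence compact — not needed for this statement, but this is precisely what makes $\mathcal{G}_c^{\ast}\mathcal{G}_c$ useful later.

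The main obstacle is essentially bookkeeping rather than mathematics: one must keep track of the fact that $\mathcal{G}_c$ and $\mathcal{G}_c^{\ast}$ are a priori defined on different spaces ($L^2(B(1),\mathbb{C}_m)$ versus $L^2(\mathbb{R}^m,\mathbb{R}_m)$), so the clean formulation restricts everything to $L^2(B(1),\mathbb{C}_m)$, where all the compositions above are meaningful; and one must confirm that $(AB)^{\ast}=B^{\ast}A^{\ast}$ and $(A^{\ast})^{\ast}=A$ hold in the right Clifford-module setting, which they do. Modulo these remarks the argument is a two-line formal manipulation.
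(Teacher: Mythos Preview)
Your proposal is correct and matches the paper's intent: the paper states this Corollary immediately after the theorem $L_c\mathcal{G}_c=\mathcal{G}_cL_c$ and provides no proof at all, so it is treating the result as the formal consequence of that theorem together with $L_c^{\ast}=L_c$ from Proposition~\ref{proposition3.3}, exactly as you argue. Your additional remarks on domains, on the explicit kernel verification, and on Hilbert--Schmidt compactness go beyond what the paper records here (the compactness of $QP_c=c^m\mathcal{G}_c^{\ast}\mathcal{G}_c$ is established separately a few paragraphs later), but they are all correct and helpful.
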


The operators 
$Q$
and 
$P_{c}$
are self-adjoint on
$L^{2}(\mathbb{R}^{m},\mathbb{C}_{m})$
An equivalent representation of  
$P_{c}$
may be achieved as follows:
\begin{align*}
P_{c}f(x)&=\int\limits_{B(c)}{\mathcal F}f(\xi )e^{2\pi i\langle \xi ,x \rangle}\, d\xi\\
&=\int\limits_{B(c)}\bigg(\int\limits_{\mathbb{R}^{m}}f(s)e^{-2\pi i\langle s,\xi\rangle}\, ds\bigg)e^{2\pi i \langle \xi ,x\rangle}\, d\xi\\
&=c^{m}\int\limits_{\mathbb{R}^{m}}f(s)\bigg(\int\limits_{{B(1)}}e^{2\pi ic\langle \xi ,x-s\rangle}\, d\xi\bigg)\, ds=c^{m}\int\limits_{\mathbb{R}^{m}}f(s)K_{c}(x-s)ds,
\end{align*}
where
\begin{equation}\label{definition_Besinc}
K_{c}(x)=\int\limits_{{B(1)}}e^{2\pi ic\langle \xi ,x\rangle}\, d\xi=\left(\frac{2\pi}{c\vert x\vert}\right)^{\frac{m}{2}}J_{\frac{m}{2}}(c\vert x\vert), 
\end{equation}
where $J_{m}(x)$ is the Bessel function. Also, we see that
$$QP_{c}f(x)=c^{m}\chi_{{B(1)}}(x)\int\limits_{\mathbb{R}^{m}}f(t)K_{c}(x-t)dt.$$
$K_{c}$
is real-valued and even. Now we show that the operator 
$QP_{c}:L^{2}({B}(1),\mathbb{C}_{m})\to L^{2}({B}(1),\mathbb{C}_{m}),$
is self-adjoint. In fact, if $f,g\in L^2(B(1),{\mathbb C}_m)$ then 
$$(f,(QP_{c})^{\ast}g)=(QP_cf,g)=(P_{c}f,Qg)=(P_{c}f,g)=(f,P_{c}g)=(Qf,P_{c}g)=(f,QP_{c}g).$$
Furthermore,
\begin{align}
\mathcal{G}_{c}^{\ast}\mathcal{G}_{c}f(x)&=\chi_{B(1)}(x)\int\limits_{{B(1)}}e^{-2\pi ic\langle y,x\rangle}\mathcal{G}_{c}f(y)\, dy\nonumber\\
&=\chi_{{B(1)}}(x)\int\limits_{{B(1)}}e^{-2\pi ic\langle y,x\rangle}\int\limits_{{B(1)}}e^{2\pi ic\langle y,t\rangle}f(t)\, dt\, dy\nonumber\\
&=\chi_{{B(1)}}(x)\int\limits_{{B(1)}}f(t)\int\limits_{{B(1)}}e^{2\pi ic\langle y,t-x\rangle}\, dy\, dt\nonumber\\
&=\chi_{{B(1)}}(x)\int\limits_{{B(1)}}f(t)K_{c}(x-t)\, dt=\frac{1}{c^{m}}QP_{c}f(x).
\end{align}
The operator
$P_{c}Q$ has the representation
$$
P_{c}Qf(x)=c^{m}\int\limits_{\mathbb{R}^{m}}(Qf)(s)K_{c}(x-s)\, ds=c^{m}\int\limits_{{B(1)}}f(s)K_{c}(x-s)\, ds.$$
We have also that
$$QP_{c}f(x)=\chi_{{B(1)}}(x)\int\limits_{{B(1)}}\chi_{{B(1)}}^{\vee}(x-y)f(y)\, dy=\int\limits_{\mathbb{R}^{m}}M_{c}(x,y)f(y)dy,$$
where
$M_{c}(x,y)=c^{m}\chi_{B(1)}(x)\chi_{{B(1)}}(y){\mathcal F}^{-1}\chi_{{B(1)}}(x-y).$
We therefore have
\begin{align*}
\int\limits_{\mathbb{R}^{m}}\int\limits_{\mathbb{R}^{m}}\vert L_{c}(x,y)\vert^{2}\, dx\, dy&\leq c^{2m}\int\limits_{{B(1)}}\int\limits_{\mathbb{R}^{m}}\vert {\mathcal F}^{-1}\chi_{{B}(1)}(x-y)\vert^{2}\, dy\, dx\\
&=c^{2m}\int\limits_{{B(1)}}\int\limits_{\mathbb{R}^{m}}\vert {\mathcal F}^{-1}\chi_{{B(1)}}(x)\vert^{2}\, dx\, dy\\
&=c^{2m}\vert \chi_{{B(1)}}\vert \int\limits_{\mathbb{R}^{m}}\vert \chi_{{B(1)}}(x)\vert^{2}\, dx=c^{2m}\vert {B(1)}\vert^{2}<\infty.
\end{align*}
Therefore, 
$QP_{c}$
is a Hilbert-Schmidt operator, and therefore compact.


\begin{Proposition}\label{radial_times_monogenic_for_even}
Let $n=2N$ be an even integer, and $\psi_{n,m}^{k,c,i}(x)$ be a CPSWF. Then, 
\begin{itemize}
\item[{(i)}]$\psi_{2N,m}^{k,c,i}(x)=P_{N,m}^{k,c}(\vert x\vert^{2})Y_{k}^i(x),$ where 
$P_{N,m}^{k,c}(\vert x\vert^{2})$
is a radial function.
\item[{(ii)}] $\mathcal{G}_{c}\psi_{2N,m}^{k,c,i}(x)=R_{2N,m}^{k,c}(\vert x\vert)Y_{k}^i(x),$ where 
$R_{2N,m}^{k,c}(\vert x\vert)$
is a radial function.
\item[{(iii)}]
$QP_{c}\psi_{2N,m}^{k,c,i}(x)=q_{2N,m}^{k,c}(\vert x\vert)Y_{k}^i(x),$ where
$q_{2N,m}^{k,c}(\vert x\vert)$
is a radial function.
\end{itemize}
\end{Proposition}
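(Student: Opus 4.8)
The plan is to exploit the rotational structure of each operator. For part (i), this is just a restatement of what we already established in the Sturm--Liouville section (the even CPSWF $\psi_{2N,m}^{k,c,i}$ lies in $X_k^e$, hence has the form $P_{N,m}^{k,c}(|x|^2)Y_k^i(x)$); we simply cite equation~\eqref{CPSWF def} together with the preceding discussion that $X_k^e$ is $L_c$-invariant. For (ii), I would compute $\mathcal{G}_c$ applied to a function of the form $F(|y|^2)Y_k^i(y)$ directly from the definition \eqref{Definition_of_gc}, using the plane-wave expansion of $e^{2\pi ic\langle x,y\rangle}$ into spherical monogenics (the Clifford analogue of the Jacobi--Anger expansion). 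Alternatively, and more cleanly, I would pass to polar coordinates $y=r\omega$ with $r\in[0,1]$, $\omega\in S^{m-1}$, so that
$$\mathcal{G}_c\psi_{2N,m}^{k,c,i}(x)=\int_0^1 r^{m-1}P_{N,m}^{k,c}(r^2)\left(\int_{S^{m-1}}e^{2\pi icr\langle x,\omega\rangle}Y_k^i(r\omega)\,d\omega\right)dr,$$
and use homogeneity $Y_k^i(r\omega)=r^kY_k^i(\omega)$ together with the fact that the inner spherical integral $\int_{S^{m-1}}e^{2\pi i t\langle x,\omega\rangle}Y_k^i(\omega)\,d\omega$ is, by the Funk--Hecke theorem for spherical monogenics, a scalar radial function of $t|x|$ times $Y_k^i(x/|x|)$ — indeed it is a Bessel-type kernel $c_{m,k}\,|x|^{-(m/2+k-1)}J_{k+m/2-1}(2\pi t|x|)\,Y_k^i(x)/|x|^k$ up to normalization. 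Substituting this back and collecting the $|x|$-dependence shows $\mathcal{G}_c\psi_{2N,m}^{k,c,i}(x)=R_{2N,m}^{k,c}(|x|)Y_k^i(x)$ for an explicit radial $R$.

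For part (iii), I would proceed similarly using the representation $QP_cf(x)=c^m\chi_{B(1)}(x)\int_{B(1)}f(s)K_c(x-s)\,ds$ established earlier, where $K_c(x)=(2\pi/(c|x|))^{m/2}J_{m/2}(c|x|)$ is radial. Writing $s=r\omega$ and expanding the radial convolution kernel $K_c(x-r\omega)$ in spherical monogenics (Gegenbauer addition theorem / plane-wave expansion), only the degree-$k$ component survives the integration against $Y_k^i(\omega)$ over $S^{m-1}$, again by orthogonality of spherical monogenics of different degrees. Collecting terms yields $QP_c\psi_{2N,m}^{k,c,i}(x)=q_{2N,m}^{k,c}(|x|)Y_k^i(x)$. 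A slicker route to (iii): we showed $\mathcal{G}_c^*\mathcal{G}_c=c^{-m}QP_c$, and $\mathcal{G}_c^*$ has the same structure as $\mathcal{G}_c$ with $c\mapsto -c$ inside the exponential, so applying the computation in (ii) twice — once for $\mathcal{G}_c$ and once for $\mathcal{G}_c^*$ — immediately gives that $QP_c$ maps $R(|x|)Y_k^i(x)$ to a radial multiple of $Y_k^i(x)$.

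The main obstacle is the spherical-integral identity in (ii): one must verify carefully that $\int_{S^{m-1}}e^{2\pi i t\langle x,\omega\rangle}Y_k^i(\omega)\,d\omega$ is genuinely of the form (radial scalar in $t|x|$)$\,\cdot\,Y_k^i(x/|x|)$, i.e.\ that integrating a plane wave against a degree-$k$ spherical monogenic reproduces that same monogenic up to a radial factor. This is the Clifford/Funk--Hecke phenomenon and follows because the plane-wave kernel $e^{2\pi i t\langle x,\omega\rangle}$ depends on $\omega$ only through $\langle x,\omega\rangle$, so its expansion in the $L^2(S^{m-1})$ decomposition into spaces of spherical monogenics has a single nonzero component in each degree, proportional to the reproducing kernel of that degree evaluated at $x/|x|$; applying the reproducing property to $Y_k^i$ then leaves $Y_k^i(x/|x|)$ times a Bessel function in $t|x|$. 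Once this identity is in hand, the rest of (ii) and (iii) is bookkeeping with homogeneity and changes of variable, and the orthogonality of spherical monogenics of distinct degrees (Proposition on $\langle Y_k,Y_\ell\rangle_{L^2(S^{m-1})}=0$ for $k\neq\ell$) does the rest.
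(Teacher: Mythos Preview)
Your proposal is correct and follows essentially the same route as the paper. For (i) you cite the expansion in even Clifford--Legendre polynomials, for (ii) you pass to polar coordinates and invoke the Funk--Hecke/Bessel identity for the spherical integral (which the paper cites as Lemma 9.10.2 of \cite{andrews1999special}), and for (iii) your ``slicker route'' via $QP_c=c^m\mathcal{G}_c^*\mathcal{G}_c$ together with $\mathcal{G}_c^*f(x)=\mathcal{G}_cf(-x)$ and the parity $Y_k^i(-x)=(-1)^kY_k^i(x)$ is exactly what the paper does.
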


\begin{proof}
	\item[{(i)}]
	We've seen that
	$\psi_{2N,m}^{k,c,i}(x)=\sum\limits_{i=0}^{\infty}b_{i}^{k}C_{2i,m}^{0}(Y_{k}^i)(x),$
	but we know that
	$C_{2N,m}^{0}Y_{k}(x)=P_{N,m}^{k}(\vert x\vert^{2})Y_{k}(x)$ with $P_{N,m}^k$ a polynomial. Consequently, 
	$\psi_{2N,m}^{k,c,i}(x)=P_{N,m}^{k,c}(\vert x\vert^{2})Y_{k}^i(x).$
	\item[{(ii)}]
Since
	$\psi_{2N,m}^{k,c,i}(x)=P_{N,m}^{k,c}(\vert x\vert^{2})Y_{k}^i(x),$
	 we have 
	\begin{align*}
	\mathcal{G}_{c}\psi_{n,m}^{k,c,i}(x)
	&=\chi_{{B(1)}}(x)\int\limits_{{B(1)}}e^{2\pi ic\langle x,y\rangle}P_{N,m}^{k,c}(\vert y\vert^{2})Y_{k}(y)\, dy\\
	&=\chi_{B(1)}(x)\int\limits_{0}^{1}r^{m-1}\int\limits_{S^{m-1}}e^{2\pi ic\langle  r'\hat{x},r\theta\rangle}P_{N,m}^{k,c}(r^{2})Y_{k}(r\theta)\, d\theta \, dr\\
	&=\chi_{B(1)}(x)\int\limits_{0}^{1}r^{k+m-1}\int\limits_{S^{m-1}}e^{2\pi ic\langle  r'\hat{x},r\theta\rangle}P_{N,m}^{k,c}(r^2)Y_{k}(\theta)\, d\theta\,  dr\\
	&=\chi_{B(1)}(x)\int\limits_{0}^{1}r^{k+m-1}P_{N,m}^{k,c}(r^{2})\dfrac{(2\pi)^{\frac{m}{2}}(-i)^{k}}{(2\pi crr')^{\frac{m-2}{2}}}J_{k+\frac{m}{2}-1}(2\pi crr')Y_{k}(-\hat{x})\, dr\\
	&=\chi_{B(1)}(x)\dfrac{(2\pi)^{\frac{m}{2}}(-i)^{k}Y_{k}(-\hat{x})}{(2\pi cr')^{\frac{m-2}{2}}}\int\limits_{0}^{1}r^{k+\frac{m}{2}}P_{N,m}^{k,c}(r^{2})J_{k+\frac{m}{2}-1}(2\pi crr')\, dr\\
	&=R_{2N,m}^{k,c}(\vert x\vert)Y_{k}(x),
	\end{align*}
	where we have used Lemma 9.10.2 in \cite{andrews1999special}.
	\item[{(iii)}]
	Since ${\mathcal G}_c^*f(x)={\mathcal G}_cf(-x)$,
	we have
	\begin{align*}
	\mathcal{G}_{c}^{\ast}\mathcal{G}_{c}(\psi_{2N,m}^{k,c,i})(x)&=\mathcal{G}_{c}^{\ast}\mathcal{G}_{c}\big(P_{N,m}^{k,c}(\vert x\vert^{2})Y_{k}^i(x)\big)\\
	&=\mathcal{G}_{c}^{\ast}(R_{2N,m}^{k,c}(\vert x\vert)Y_{k}^i(-x))\\
	&=\mathcal{G}_{c}^{\ast}(R_{2N,m}^{k,c}(\vert x\vert)(-1)^{k}Y_{k}^i(x))=q_{2N,m}^{k,c}(\vert x\vert)Y_{k}^i(x).
	\end{align*}
\end{proof}
In the similar way, we may conclude analogous results for the odd CPSWFs.

\begin{Proposition}\label{radial_times_monogenic_for_odd}
	Let
	$n=2N+1$
	be odd. Then	
	\begin{itemize}
		\item[\textbf{(i)}]
		$\psi_{2N+1,m}^{k,c,i}(x)=Q_{N,m}^{k,c}(\vert x\vert^{2})xY_{k}^i(x),$
		where
		$Q_{N,m}^{k,c}(\vert x\vert^{2})$
		is a radial function.
		\item[\textbf{(ii)}]  $\mathcal{G}_{c}\psi_{2N+1,m}^{k,c,i}(x)=S_{2N,m}^{k,c}(\vert x\vert)xY_{k}^i(x),$
		where
		$S_{N,m}^{k,c}(\vert x\vert)$
		is a radial function.
		\item[\textbf{(iii)}]
		$QP_{c}\psi_{2N+1,m}^{k,c,i}(x)=T_{2N,m}^{k,c}(\vert x\vert)xY_{k}^i(x),$ 
		where
		$T_{2N,m}^{k,c}(\vert x\vert)$
		is a radial function.
	\end{itemize}
\end{Proposition}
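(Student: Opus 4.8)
The plan is to mirror the proof of Proposition~\ref{radial_times_monogenic_for_even} line by line, the one new ingredient being a structural fact about $xY_k^i(x)$. First I would observe that, for $Y_k^i\in M_l^+(k)$, the Clifford product $xY_k^i(x)$ is componentwise a solid harmonic of degree $k+1$: since $Y_k^i$ is left monogenic and harmonic, $\Delta_m(xY_k^i(x))=2\,\partial_xY_k^i(x)+x\,\Delta_mY_k^i(x)=0$. Hence $\theta Y_k^i(\theta)$ ($\theta\in S^{m-1}$) is the sphere restriction of the degree-$(k+1)$ solid harmonic $xY_k^i(x)$, and the Hecke--Bochner identity already used in the even case (Lemma~9.10.2 of \cite{andrews1999special}), applied to each component and reassembled, gives
\[\int_{S^{m-1}}e^{i\alpha\langle \hat x,\theta\rangle}\,\theta Y_k^i(\theta)\,d\theta=\nu_{k+1}(\alpha)\,\hat x\,Y_k^i(\hat x)=\nu_{k+1}(\alpha)\,|x|^{-(k+1)}xY_k^i(x),\]
where $\hat x=x/|x|$ and $\nu_{k+1}$ is the Bessel-type factor for harmonics of degree $k+1$ (the same shape as in the even case, with the index raised by one). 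This identity drives all three parts. For (i) I would combine \eqref{CPSWF def} with the fact --- from \cite{propertiesofcliffordlegendre}, or by expanding $(1+x^2)^{2\ell+1}$ in the definition of $C_{2\ell+1,m}^0$ and applying Lemma~\ref{diracderivativelemma} --- that each odd Clifford--Legendre polynomial has the form $C_{2\ell+1,m}^0(Y_k^i)(x)=q_{\ell,k,m}(|x|^2)\,xY_k^i(x)$ with $q_{\ell,k,m}$ a real polynomial, and factor $xY_k^i(x)$ out of the (convergent) series to get $\psi_{2N+1,m}^{k,c,i}(x)=Q_{N,m}^{k,c}(|x|^2)xY_k^i(x)$; equivalently, this is immediate from the $L_c$-invariance of $X_k^o$ recorded earlier.

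For (ii) I would substitute the form from (i) into \eqref{Definition_of_gc}, pass to polar coordinates $y=r\theta$ so that $yY_k^i(y)=r^{k+1}\theta Y_k^i(\theta)$, and apply the displayed Hecke--Bochner identity with $\alpha=2\pi cr|x|$. This yields
\[\mathcal G_c\psi_{2N+1,m}^{k,c,i}(x)=\chi_{B(1)}(x)\Big(\int_0^1 r^{m+k}Q_{N,m}^{k,c}(r^2)\,\nu_{k+1}(2\pi cr|x|)\,dr\Big)|x|^{-(k+1)}xY_k^i(x),\]
and collecting every $|x|$-dependent factor into a single radial function $S_{2N,m}^{k,c}(|x|)$ gives the assertion $\mathcal G_c\psi_{2N+1,m}^{k,c,i}(x)=S_{2N,m}^{k,c}(|x|)xY_k^i(x)$. (Alternatively, from the stated Fourier transform of $C_{n,m}^0(Y_k)$ and the identity $\xi^{2\ell+1}=(-1)^\ell|\xi|^{2\ell}\xi$, each $\mathcal F(C_{2\ell+1,m}^0(Y_k^i)\chi_{B(1)})(\xi)$ is a radial function of $|\xi|$ times $\xi Y_k^i(\xi)$; since $\mathcal G_cf(x)=\chi_{B(1)}(x)\mathcal F(f\chi_{B(1)})(-cx)$ and $(-cx)Y_k^i(-x)=c(-1)^{k+1}xY_k^i(x)$, the same conclusion follows after interchanging the sum with $\mathcal F$.)

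For (iii) I would use the identities $QP_c=c^m\mathcal G_c^{\ast}\mathcal G_c$ and $\mathcal G_c^{\ast}g(x)=\mathcal G_cg(-x)$ for $g$ supported in $B(1)$, both already in the text. By (ii) the function $\mathcal G_c\psi_{2N+1,m}^{k,c,i}$ is supported in $B(1)$ and is again of the form (radial function of $|\cdot|$) times $(\cdot)Y_k^i$; applying $\mathcal G_c$ to it once more --- the same polar-coordinate/Hecke--Bochner computation, now with the opposite sign in the exponent, which only alters constants --- produces a function $\widetilde S(|x|)xY_k^i(x)$, and evaluating at $-x$ multiplies it by $(-1)^{k+1}$ while preserving the form, so $QP_c\psi_{2N+1,m}^{k,c,i}(x)=T_{2N,m}^{k,c}(|x|)xY_k^i(x)$ with $T_{2N,m}^{k,c}=c^m(-1)^{k+1}\widetilde S$. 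The only genuine obstacle is the angular integral in (ii): one has to recognise $\theta Y_k^i(\theta)$ as a degree-$(k+1)$ spherical harmonic, i.e.\ prove $\Delta_m(xY_k^i)=0$ --- after that the argument is a routine repetition of the even case, the only remaining care being the bookkeeping of the powers of $r$ and $|x|$ contributed by the polar Jacobian $r^{m-1}$, the factor $r^{k+1}$ from $yY_k^i(y)$, and the $|x|^{-(k+1)}$ and $(2\pi cr|x|)^{-(m/2-1)}$ coming out of the Hecke--Bochner factor when assembling $S_{2N,m}^{k,c}$ and $T_{2N,m}^{k,c}$.
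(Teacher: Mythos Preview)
Your proof is correct, and for parts (i) and (iii) it is essentially identical to the paper's. The one genuine divergence is in part (ii). You compute the angular integral $\int_{S^{m-1}}\theta Y_k^i(\theta)e^{i\alpha\langle\hat x,\theta\rangle}\,d\theta$ by first observing that $xY_k^i(x)$ is a solid harmonic of degree $k+1$ (your computation $\Delta_m(xY_k^i)=2\partial_xY_k^i=0$ is correct), and then applying the Hecke--Bochner identity componentwise at degree $k+1$. The paper instead sets $F(x)=\int_{S^{m-1}}Y_k^i(\omega)e^{2\pi icr\langle\omega,x\rangle}\,d\omega$, notes that $\partial_xF(x)=2\pi icr\int_{S^{m-1}}\omega Y_k^i(\omega)e^{2\pi icr\langle\omega,x\rangle}\,d\omega$, and then applies the Dirac operator $\partial_x$ directly to the known Bessel expression for $F$ coming from \eqref{HF}. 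Both routes produce $J_{k+m/2}$ in place of $J_{k+m/2-1}$ and an extra factor of $x$: yours because the harmonic degree has gone up by one, the paper's because differentiating the Bessel--radial expression shifts the index and pulls out an $x$. Your argument is a touch more conceptual and avoids differentiating the Bessel formula by hand; the paper's is more explicit about the exact constants and makes the appearance of $J_{k+m/2}$ transparent from the recursion $\partial_x[|x|^{-\nu}J_\nu(|x|)]\propto x|x|^{-\nu-2}J_{\nu+1}(|x|)$. Your alternative route via the stated Fourier transform of $C_{2\ell+1,m}^0(Y_k^i)$ is also valid and would be the shortest of the three.
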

\begin{proof}
	The proof of part (i) and (iii) are similar to the proof of Proposition \ref{radial_times_monogenic_for_even} so we prove only part (ii). From part (i), we know that 
	$\psi_{2N+1,m}^{k,c,i}(x)=Q_{N,m}^{k,c}(\vert x\vert^{2})xY_{k}^{i}(x).$ Therefore, 
	\begin{align*}
	\mathcal{G}_{c}\psi_{2N+1,m}^{k,c,i}(x)
	&=\int\limits_{B(1)}Q_{N,m}^{k,c}(y)(\vert y\vert^{2})yY_{k}(y)e^{2\pi i c \langle x,y\rangle}\, dy\\
	&=\int\limits_{0}^{1}\int\limits_{S^{m-1}}Q_{N,m}^{k,c}(r^{2})r\omega Y_{k}(r\omega)e^{2\pi i c rs \langle \omega,\xi\rangle}r^{m-1} \, d\omega\, dr.
	\end{align*}
	Now we want to calculate 
	$\int\limits_{S^{m-1}}\omega Y_{k}(\omega)e^{2\pi i c rs \langle \omega,\xi\rangle} \, d\omega$. For $x\in{\mathbb R}^m$, we put
	$$F(x):=\int\limits_{S^{m-1}} Y_{k}(\omega)e^{2\pi i c r \langle \omega,x \rangle }d\omega.$$
	Applying the Dirac operator from the left on both sides of this equation yields
$$\partial_{x}F(x):=2\pi i c r\int\limits_{S^{m-1}} \omega Y_{k}(\omega)e^{2\pi i c r \langle \omega,x \rangle }d\omega.$$
	From Lemma 9.10.2 in \cite{andrews1999special} we have
\begin{equation}
\int\limits_{S^{m-1}}Y_{k}(\omega)e^{2\pi i c r\langle \omega,x \rangle}d\omega=\frac{(2\pi)(-i)^{k}}{r^{\frac{m}{2}-1}}\vert x\vert^{-(k+\frac{m}{2}+1)}J_{k+\frac{m}{2}-1}(2\pi c r\vert x\vert)Y_{k}(x),\label{HF}
\end{equation}
	and therefore,
	\begin{align*}
\int\limits_{S^{m-1}} \omega Y_{k}(\omega)e^{2\pi i c r \langle \omega,x \rangle }d\omega&=\frac{(-1)^k(-i)^{k}}{cr^{\frac{m}{2}}}\partial_{x}[\vert x\vert^{-(k+\frac{m}{2}+1)}J_{k+\frac{m}{2}-1}(2\pi c r\vert x\vert)Y_{k}(x)]\hspace*{4cm}\\
&=\frac{(2\pi)(-1)^{k}i^{k-1}}{r^{\frac{m}{2}-1}}xJ_{k+\frac{m}{2}}(2\pi c r\vert x\vert)\vert x\vert^{-(k+\frac{m}{2}+2)}Y_{k}(x)
	\end{align*}
from which we conclude
	$$\mathcal{G}_{c}\psi_{2N+1,m}^{k,c,i}(x)=2\pi(-1)^ki^{k-1}\left(\int\limits_{0}^{1}Q_{N,m}^{k,c}(r^{2})r^{\frac{m}{2}+1}J_{k+\frac{m}{2}}(2\pi c r\vert x\vert)\, dr\right) \frac{x}{|x|^{k+\frac{m}{2}+2}}Y_{k}^i(x).$$
\end{proof}

\begin{Th}\label{CPSWF eig of QPc}
	The even CPSWFs
	$\psi_{2N,2}^{k,c,i}(x)$
	are eigenfunctions of
	$QP_{c}.$ The eigenvalues are positive.
\end{Th}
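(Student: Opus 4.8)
The plan is to combine two ingredients that are already available: the fact that $QP_{c}=c^{m}{\mathcal G}_{c}^{\ast}{\mathcal G}_{c}$ commutes with the self-adjoint operator $L_{c}$ (Corollary to the commutation theorem above, together with the identity ${\mathcal G}_{c}^{\ast}{\mathcal G}_{c}=\tfrac{1}{c^{m}}QP_{c}$), and the fact, from the Sturm--Liouville analysis of the radial equation, that the eigenvalues of $L_{c}$ on each angular sector are simple. The first fact will force each $L_{c}$-eigenfunction to be a $QP_{c}$-eigenfunction; the positivity of the resulting eigenvalue will then drop out of the factorization $QP_{c}=c^{m}{\mathcal G}_{c}^{\ast}{\mathcal G}_{c}$ together with injectivity of ${\mathcal G}_{c}$.

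In detail, I would first note that $QP_{c}$ preserves the angular type of $\psi_{2N,2}^{k,c,i}$: by Proposition~\ref{radial_times_monogenic_for_even}(iii) we have $QP_{c}\psi_{2N,2}^{k,c,i}(x)=q_{2N,2}^{k,c}(|x|)\,Y_{k}^{i}(x)$ with $q_{2N,2}^{k,c}$ radial, and moreover this function is real-analytic on the interior of $B(1)$, being band-limited. Since $L_{c}QP_{c}=QP_{c}L_{c}$ and $L_{c}\psi_{2N,2}^{k,c,i}=\chi_{2N,2}^{k,c}\psi_{2N,2}^{k,c,i}$, the function $QP_{c}\psi_{2N,2}^{k,c,i}$ is again an eigenfunction of the differential operator $L_{c}$ on the open ball with the same eigenvalue $\chi_{2N,2}^{k,c}$. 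But by Theorem~\ref{th: Tctildeoperator} the radial Sturm--Liouville operator has a simple, strictly increasing spectrum $\chi_{0,2}^{k,c}<\chi_{2,2}^{k,c}<\cdots$, so the space of $L_{c}$-eigenfunctions of the form $R(|x|^{2})Y_{k}^{i}(x)$ with eigenvalue $\chi_{2N,2}^{k,c}$ is one-dimensional, spanned by $\psi_{2N,2}^{k,c,i}=P_{N,2}^{k,c}(|x|^{2})Y_{k}^{i}(x)$. The two observations together force $QP_{c}\psi_{2N,2}^{k,c,i}=\lambda_{2N,2}^{k,c}\,\psi_{2N,2}^{k,c,i}$ for a real scalar $\lambda_{2N,2}^{k,c}$, which is the first assertion.

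For the sign of the eigenvalue I would pair $QP_{c}\psi$ with $\psi=\psi_{2N,2}^{k,c,i}$: using $QP_{c}=c^{m}{\mathcal G}_{c}^{\ast}{\mathcal G}_{c}$ one gets $\lambda_{2N,2}^{k,c}\Vert\psi\Vert_{2}^{2}=[(QP_{c}\psi,\psi)]_{0}=c^{m}[\langle{\mathcal G}_{c}\psi,{\mathcal G}_{c}\psi\rangle]_{0}=c^{m}\Vert{\mathcal G}_{c}\psi\Vert_{2}^{2}\geq 0$, hence $\lambda_{2N,2}^{k,c}\geq 0$. To upgrade this to strict positivity it suffices to show ${\mathcal G}_{c}\psi\neq 0$, and this follows from analyticity: $x\mapsto\int_{B(1)}e^{2\pi ic\langle x,y\rangle}\psi(y)\,dy$ extends to an entire function on ${\mathbb C}^{m}$ (Paley--Wiener), so if ${\mathcal G}_{c}\psi$ vanished on the positive-measure set $B(1)$ it would vanish identically, whence $\psi\equiv 0$ by Fourier uniqueness (Theorem~\ref{planchereltheorem}); this contradicts $\psi\neq 0$, so $\lambda_{2N,2}^{k,c}>0$.

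All the ODE-reduction computations invoked here are exactly those already carried out in the Sturm--Liouville section, so the only point that really needs care is the passage from the statement that $QP_{c}\psi$ is an $L^{2}$ function to the statement that it is an interior eigenfunction of the differential operator $L_{c}$ to which simplicity of the Sturm--Liouville spectrum applies; this is legitimate because $L_{c}$ is treated throughout as a differential operator on $C^{2}$ of the open ball, on which ${\mathcal G}_{c}\psi$ is real-analytic. The one genuinely new ingredient beyond the earlier sections is the injectivity of ${\mathcal G}_{c}$ needed for strict positivity, and this is the step I would expect to be the main (though modest) obstacle; it is dispatched by the Paley--Wiener argument above.
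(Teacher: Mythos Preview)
Your proposal is correct and follows essentially the same route as the paper: both use the commutation $L_{c}QP_{c}=QP_{c}L_{c}$ together with the preservation of the angular type (Proposition~\ref{radial_times_monogenic_for_even}(iii)) and simplicity of the radial Sturm--Liouville spectrum to force $QP_{c}\psi_{2N}^{k,c,i}$ into the one-dimensional eigenspace, and then both obtain positivity from the factorization $QP_{c}=c^{m}{\mathcal G}_{c}^{\ast}{\mathcal G}_{c}$ (equivalently, $(\psi,QP_{c}\psi)=(P_{c}\psi,P_{c}\psi)$) plus an analyticity argument ruling out the zero eigenvalue. The only substantive difference is that the paper computes ${\mathcal F}\psi_{2N}^{k,c,i}$ and $P_{c}\psi_{2N}^{k,c,i}$ explicitly via the Funk--Hecke formula to verify that the radial part of $QP_{c}\psi_{2N}^{k,c,i}$ is \emph{real}-valued before invoking one-dimensionality, which cleanly justifies that the resulting $\lambda$ is a real scalar rather than an a~priori Clifford constant; you assert ``real scalar'' directly, which is ultimately recoverable from your positivity computation but is not quite immediate from Proposition~\ref{radial_times_monogenic_for_even}(iii) alone.
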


\begin{proof} Note that $\psi_{2N,m}^{k,c.i}(x)=P_{N,m}^k(|x|^2)Y_k^i(x)$ with $P_{N,m}^k$ real-valued. Also,
\begin{equation}
(\psi_{2N,m}^{k,c,i},\psi_{N,m}^{k,c,i})=\int\limits_{B(1)}\overline{\psi_{N,m}^{k,c,i}(x)}\psi_{N,m}^{k,c,i}(x)\, dx=\int\limits_0^1r^{m+2k-1}P_{N,m}^k(r^2)\, dr >0.\label{pos1}
\end{equation}
Further,
\begin{align*}
{\mathcal F}\psi_{2N,m}^{k,c,i}(y)&=\int\limits_{B(1)}e^{-2\pi i\langle x,y\rangle}\psi_{2N,m}^{k,c,i}(x)\, dx\\
&=\int\limits_0^1P_{N,m}^k(r^2)r^{m+k-1}\int\limits_{S^{m-1}}e^{-2\pi ir|y|\langle\omega ,y/|y|\rangle}Y_k^i(\omega )\, d\omega\\
&=\frac{2\pi (-i)^k}{|y|^{\frac{m}{2}+k-1}}\left(\int\limits_0^1P_{N,m}^k(r^2)r^{\frac{m}{2}+k}J_{k+\frac{m}{2}-1}(2\pi r|y|)\, dy\right)Y_k^i(y)\\
&=(-i)^kS_{N,m}^k(|y|)Y_k^i(y),
\end{align*}
where $S_{N,m}^k$ is real-valued. Therefore,
\begin{align*}
P_c\psi_{2N,m}^{k,c,i}(x)&=\int\limits_{B(c)}{\mathcal F}\psi_{2N,m}^{k,c,i}(y)e^{2\pi i\langle x,y\rangle}\, dy\\
&=(-i)^k\int\limits_{B(c)}S_{N,m}^k(|y|)Y_k^i(y)e^{2\pi i\langle x,y\rangle}\, dy\\
&=(-i)^k\int\limits_0^cr^{m+k-1}S_{N,m}^k(r)\int\limits_{S^{m-1}}Y_k^i(\omega )e^{2\pi ir|x|\langle\frac{x}{|x|},\omega\rangle}\, d\omega\, dr\\
&=\frac{2\pi}{|x|^{\frac{m}{2}+k-1}}\left(\int\limits_0^cr^{\frac{m}{2}+k}S_{N,m}^k(r)J_{k+\frac{m}{2}-1}(2\pi r|x|)\, dr\right) Y_k^i(x)\\
&=T_{N,m}^k(|x|)Y_k^i(x),
\end{align*}
with $T_{N,m}^k$ real-valued. Hence,
\begin{align*}
(P_c\psi_{2N,m}^{k,c,i},P_c\psi_{2N,m}^{k,c,i})&=\int\limits_{{\mathbb R}^m}T_{N,m}^k(|x|)^2\overline{Y_k^i(x)}Y_k^i(x)\, dx\\
&=\int\limits_0^1r^{2k+m-1}T_{N,m}^k(r)^2\, dr\geq 0 .
\end{align*}
On the other hand,
\begin{align*}
(P_c\psi_{2N,m}^{k,c,i},P_c\psi_{2N,m}^{k,c,i})=0&\iff T_{N,m}^k\equiv 0\\
&\iff P_c\psi_{2N,m}^{k,c,i}\equiv 0\\
&\iff{\mathcal F}\psi_{2N,m}^{k,c,i}(y)=0\text{ for }|y|\leq c .
\end{align*}
But $\psi_{2N,m}^{k,c,i}$ is non-trivial and compactly supported, so its Fourier transform cannot vanish on $B(c)$. We conclude that 
\begin{equation}
(P_c\psi_{2N,m}^{k,c,i},P_c\psi_{2N,m}^{k,c,i})>0.\label{pos2}
\end{equation}
Note that 
$$L_c(QP_c)\psi_{2N,m}^{k,c,i}=(QP_c)L_c\psi_{2N,m}^{k,c,i}=\chi_{2N}^{k,c}(QP_c)\psi_{2N,m}^{k,c,i},$$
and therefore $QP_c\psi_{2N,m}^{k,c,i}\in E_{\chi_{2N}^{k,c}}(L_c)$ (the $\chi_{2N}^{k,c}$ eigenspace of $L_c$). Note also that $\psi_{2N,m}^{k,c,i}\in{\mathcal H}_k^i$ and
$$QP_c\psi_{2N,m}^{k,c,i}(x)=\chi_{B(1)}T_{N,m}^k(|x|)Y_k^i(x)\in{\mathcal H}_k^i,$$
so that $QP_c\psi_{2N,m}^{k,c,i}\in E_{\chi_{2N}^{k,c}}(L_c\big|_{{\mathcal H}_k^i})$ which is one-dimensional and contains $\psi_{2N,m}^{k,c,i}$. We conclude that there exists $\lambda\in{\mathbb C}_m$ for which
$$QP_c\psi_{2N,m}^{k,c,i}=\psi_{2N,m}^{k,c,i}\lambda,$$
i.e., each CPSWF $\psi_{2N,m}^{k,c,i}$ is an eigenfunction of $QP_c$. Also,
\begin{equation}(P_c\psi_{2N,m}^{k,c,i},P_c\psi_{2N,m}^{k,c,i})=(\psi_{2N,m}^{k,c,i},QP_c\psi_{2N,m}^{k,c,i})=(\psi_{2N,m}^{k,c,i},\psi_{2N,m}^{k,c,i})\lambda\label{pos3}
\end{equation}
Applying (\ref{pos1}) and (\ref{pos2}) to (\ref{pos3}) then gives $\lambda >0$

\end{proof}

\begin{Th}
	The even CPSWFs
	$\psi_{2N,m}^{k,c,i}$
	are eigenfunctions of
	$\mathcal{G}_{c}.$
\end{Th}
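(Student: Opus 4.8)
The plan is to mirror the proof of Theorem \ref{CPSWF eig of QPc}, using the commutation relation $L_c\mathcal{G}_c=\mathcal{G}_cL_c$ established above in place of the commutation of $L_c$ with $QP_c$. Abbreviate $\psi=\psi_{2N,m}^{k,c,i}$, so $L_c\psi=\chi_{2N,m}^{k,c}\psi$. First I would use the commutation relation to write
$$L_c(\mathcal{G}_c\psi)=\mathcal{G}_c(L_c\psi)=\chi_{2N,m}^{k,c}\,\mathcal{G}_c\psi ,$$
which shows that $\mathcal{G}_c\psi$ lies in the $\chi_{2N,m}^{k,c}$-eigenspace of $L_c$. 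Next, Proposition \ref{radial_times_monogenic_for_even}(ii) gives $\mathcal{G}_c\psi(x)=R_{2N,m}^{k,c}(|x|)\,Y_k^i(x)$, so $\mathcal{G}_c\psi$ belongs to the subspace ${\mathcal H}_k^i$ of functions of the form (radial)$\,\times Y_k^i$. Combining these two facts, $\mathcal{G}_c\psi\in E_{\chi_{2N,m}^{k,c}}\big(L_c\big|_{{\mathcal H}_k^i}\big)$.

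The decisive step is to observe that, after the substitution $t=|x|^2$, the operator $L_c$ restricted to ${\mathcal H}_k^i$ is exactly the radial singular Sturm--Liouville operator of Lemma \ref{even_radial_orthogonal_without_change_variable} (equivalently $\tilde T_c$ of Theorem \ref{th: Tctildeoperator}), whose eigenvalues are simple. Hence $E_{\chi_{2N,m}^{k,c}}\big(L_c\big|_{{\mathcal H}_k^i}\big)$ is one-dimensional and is spanned by $\psi$, so there is a scalar $\gamma_{2N,m}^{k,c}$ with
$$\mathcal{G}_c\psi_{2N,m}^{k,c,i}=\psi_{2N,m}^{k,c,i}\,\gamma_{2N,m}^{k,c},$$
which is precisely the assertion that each even CPSWF is an eigenfunction of $\mathcal{G}_c$.

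To round things off I would identify the eigenvalue using the already-proven Theorem \ref{CPSWF eig of QPc}. Since $\psi$ is supported on $B(1)$ one has $\mathcal{G}_c^*f(x)=\mathcal{G}_cf(-x)$, and since $Y_k^i$ is homogeneous of degree $k$ this gives $\mathcal{G}_c^*\psi(x)=(-1)^k\mathcal{G}_c\psi(x)=(-1)^k\psi(x)\gamma_{2N,m}^{k,c}$. Feeding this together with the identity $\mathcal{G}_c^*\mathcal{G}_c=c^{-m}QP_c$ derived above, and with Theorem \ref{CPSWF eig of QPc} (the $QP_c$-eigenvalue $\lambda$ of $\psi$ is a positive real number), yields $(-1)^k\psi(\gamma_{2N,m}^{k,c})^2=c^{-m}\lambda\,\psi$, hence $(\gamma_{2N,m}^{k,c})^2=(-1)^kc^{-m}\lambda$, i.e.\ $\gamma_{2N,m}^{k,c}=\pm i^k\sqrt{\lambda/c^m}$. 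The only point that really needs care is the simplicity of the radial eigenvalues used in the decisive step, but that is exactly the singular Sturm--Liouville fact already invoked in Theorem \ref{th: Tctildeoperator}; everything else is formal manipulation with the commutation relation and Proposition \ref{radial_times_monogenic_for_even}.
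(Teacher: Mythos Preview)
Your proposal is correct and follows essentially the same approach as the paper: the paper's proof is the single sentence ``similar to that of Theorem \ref{CPSWF eig of QPc}, using the commutation relation $\mathcal{G}_cL_c=L_c\mathcal{G}_c$,'' and you have spelled out precisely that argument (commutation places $\mathcal{G}_c\psi$ in the $\chi_{2N,m}^{k,c}$-eigenspace, Proposition \ref{radial_times_monogenic_for_even}(ii) places it in $\mathcal{H}_k^i$, Sturm--Liouville simplicity forces it to be a scalar multiple of $\psi$). Your closing paragraph identifying $\gamma_{2N,m}^{k,c}$ up to sign is correct but goes beyond the theorem; the paper treats that later, around \eqref{relation_eigenvalues}.
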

\begin{proof}
	The proof is similar to that of Theorem \ref{CPSWF eig of QPc}, and uses the commutation relation
	$\mathcal{G}_{c}L_{c}=L_{c}\mathcal{G}_{c}.$
\end{proof}
Similarly, we have the following result.
\begin{Th}\label{odd CPSWFs eigenfunctions of QP_c}
	The odd CPSWFs
	$\psi_{2N+1,m}^{k,c,i}(x)$
	are the eigenfunction of both	$QP_{c}$ and ${\mathcal G}_c$.
\end{Th}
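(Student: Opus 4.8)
The plan is to follow the argument of Theorem \ref{CPSWF eig of QPc} almost verbatim, with the odd invariant subspace playing the role of the even one. First I would record, from Proposition \ref{radial_times_monogenic_for_odd}, that $\psi_{2N+1,m}^{k,c,i}(x)=Q_{N,m}^{k,c}(|x|^2)xY_k^i(x)$ and that both $\mathcal{G}_c\psi_{2N+1,m}^{k,c,i}(x)=S_{2N,m}^{k,c}(|x|)xY_k^i(x)$ and $QP_c\psi_{2N+1,m}^{k,c,i}(x)=T_{2N,m}^{k,c}(|x|)xY_k^i(x)$ are again of the form (real radial function) times $xY_k^i(x)$. Let $\mathcal{H}_k^{o,i}$ denote the closed subspace of $L^2(B(1),{\mathbb C}_m)$ consisting of all functions $G(|x|^2)xY_k^i(x)$ with $G$ real-valued; by Lemma \ref{property of two monogenic and x between} together with the orthonormality of $\{Y_k^i\}$, the subspaces $\mathcal{H}_k^{o,i}$ are mutually orthogonal, and $\{\barC_{2i+1,m}^0(Y_k^i)\}_{i=0}^{\infty}$ is a complete orthogonal system for $\mathcal{H}_k^{o,i}$.

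Next I would invoke the commutation relations $L_c(QP_c)=(QP_c)L_c$ and $L_c\mathcal{G}_c=\mathcal{G}_cL_c$. Since $L_c\psi_{2N+1,m}^{k,c,i}=\chi_{2N+1,m}^{k,c}\psi_{2N+1,m}^{k,c,i}$, the functions $QP_c\psi_{2N+1,m}^{k,c,i}$ and $\mathcal{G}_c\psi_{2N+1,m}^{k,c,i}$ lie in the $\chi_{2N+1,m}^{k,c}$-eigenspace of $L_c$, and by the previous paragraph they also lie in $\mathcal{H}_k^{o,i}$. The restriction of $L_c$ to $\mathcal{H}_k^{o,i}$ is, after passing to radial parts via $t=|x|^2$, precisely the singular Sturm--Liouville operator of Lemma \ref{odd_radial_orthogonal_without_change_variable}, whose eigenvalues $\chi_{2N+1,m}^{k,c}$ $(N\geq 0)$ are distinct; hence each eigenspace $E_{\chi_{2N+1,m}^{k,c}}\big(L_c\big|_{\mathcal{H}_k^{o,i}}\big)$ is one-dimensional and is spanned by $\psi_{2N+1,m}^{k,c,i}$. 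Therefore there are constants $\lambda,\mu$ with $QP_c\psi_{2N+1,m}^{k,c,i}=\psi_{2N+1,m}^{k,c,i}\lambda$ and $\mathcal{G}_c\psi_{2N+1,m}^{k,c,i}=\psi_{2N+1,m}^{k,c,i}\mu$; since $\psi_{2N+1,m}^{k,c,i}$ and $QP_c\psi_{2N+1,m}^{k,c,i}$ are both a real radial function times $xY_k^i(x)$, the constant $\lambda$ is real, and similarly for $\mu$. Thus each odd CPSWF is an eigenfunction of both $QP_c$ and $\mathcal{G}_c$.

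To finish I would identify the $QP_c$-eigenvalue as positive, exactly as in the even case: writing $\psi=\psi_{2N+1,m}^{k,c,i}$ and using that $Q$ and $P_c$ are self-adjoint idempotents, one has $(P_c\psi,P_c\psi)=(\psi,QP_c\psi)=(\psi,\psi)\lambda=\|\psi\|_2^2\,\lambda$, so $\lambda\geq 0$; strict positivity follows because $\psi$ is a non-trivial compactly supported function, so $\mathcal{F}\psi$ cannot vanish identically on $B(c)$, whence $P_c\psi\neq 0$ and $(P_c\psi,P_c\psi)>0$.

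I expect the main obstacle to be the verification that $\mathcal{H}_k^{o,i}$ is genuinely $L_c$-invariant and that the radial equation obtained on it is indeed the singular Sturm--Liouville problem of Lemma \ref{odd_radial_orthogonal_without_change_variable} (so that its spectrum is simple); this rests on the explicit action of $L_0$ on $xG(|x|^2)Y_k^i$ computed in Section 3 and on the Clifford--Stokes-theorem identity behind Lemma \ref{property of two monogenic and x between}, which is what forces the subspaces $\mathcal{H}_k^{o,i}$ to be mutually orthogonal and isolates a one-dimensional eigenspace.
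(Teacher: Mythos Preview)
Your proof is correct and follows essentially the same route as the paper, which simply declares the odd case to be proved analogously to Theorem \ref{CPSWF eig of QPc} and its $\mathcal{G}_c$ counterpart via the commutation relations and the one-dimensionality of the Sturm--Liouville eigenspaces on the odd invariant subspace. One small slip: the $\mathcal{G}_c$-eigenvalue $\mu$ is not real in general---the computation in the proof of Proposition \ref{radial_times_monogenic_for_odd} shows $\mathcal{G}_c\psi_{2N+1,m}^{k,c,i}$ carries a factor $i^{k-1}$, and the paper's later explicit formula for $\mu_{2N+1,m}^{k,c}$ confirms it is $i^{k+1}$ times a real number---but this does not affect your argument, since the one-dimensional eigenspace step already yields $\mathcal{G}_c\psi_{2N+1,m}^{k,c,i}=\psi_{2N+1,m}^{k,c,i}\mu$ for some scalar $\mu$ without needing $\mu\in\mathbb{R}$.
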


Combining Theorems \ref{CPSWFs_are_basis_using_SL}, \ref{CPSWF eig of QPc}and \ref{odd CPSWFs eigenfunctions of QP_c} gives the following result.
\begin{Corollary}
	The CPSWFs $\{\psi_{n,m}^{k,c,i}:\, n\geq 0,\ k\geq 0,\ 1\leq i\leq d_k\}$ is an orthonormal basis for $L^2(B(1),{\mathbb R}_m)$ consisting of eigenfunctions of $QP_{c}$.
\end{Corollary}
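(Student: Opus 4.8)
The plan is simply to assemble the three results just established. First I would note that Theorem~\ref{CPSWFs_are_basis_using_SL} already gives that the union of the even family $\{\psi_{2N,m}^{k,c,i}\}$ and the odd family $\{\psi_{2N+1,m}^{k,c,i}\}$ (over $N\geq 0$, $k\geq 0$, $1\leq i\leq d_k$) is an orthonormal basis for $L^2(B(1),{\mathbb R}_m)$; thus orthonormality and completeness require no further argument, and only the eigenfunction property remains to be addressed.

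Next I would check that $QP_c$ really is an operator on $L^2(B(1),{\mathbb R}_m)$, so that the statement makes sense: the kernel $K_c$ in \eqref{definition_Besinc} is real-valued and even, whence $QP_c$ preserves ${\mathbb R}_m$-valued functions, and we have already recorded that $QP_c$ is self-adjoint and Hilbert--Schmidt (in particular compact) on this module. Then, since every index $n\geq 0$ is of the form $n=2N$ or $n=2N+1$, I would invoke Theorem~\ref{CPSWF eig of QPc} in the even case and Theorem~\ref{odd CPSWFs eigenfunctions of QP_c} in the odd case to conclude that each basis element $\psi_{n,m}^{k,c,i}$ is an eigenfunction of $QP_c$. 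Combining the two cases covers the whole basis, which is exactly the assertion.

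I do not expect any genuine obstacle here: the substantive work was carried out in the three theorems being combined, and what remains is only the bookkeeping of splitting the index $n$ into its even and odd parts and checking that $QP_c$ respects the real module structure. The one point worth keeping in mind is that the eigenfunction property of $QP_c$ on each CPSWF is inherited from $L_c$ via the commutation $L_c(QP_c)=(QP_c)L_c$ together with the one-dimensionality of each eigenspace of $L_c$ restricted to ${\mathcal H}_k^i$, exactly as in the proof of Theorem~\ref{CPSWF eig of QPc}; so no new spectral-theoretic input is required.
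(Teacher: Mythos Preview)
Your proposal is correct and follows exactly the paper's approach: the corollary is stated immediately after the three theorems and is proved simply by combining Theorem~\ref{CPSWFs_are_basis_using_SL} (orthonormal basis) with Theorems~\ref{CPSWF eig of QPc} and~\ref{odd CPSWFs eigenfunctions of QP_c} (eigenfunction property in the even and odd cases). Your additional remarks about $QP_c$ preserving the real module structure and being compact are accurate but go slightly beyond what the paper records at this point.
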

\section{Some Features of the CPSWFs}
Let's assume that 
$\mathcal{G}_{c}\psi_{j}(x)=\mu_{j}\psi_{j}(x)$
where 
$\psi_{j}(x)$
is a time-limited Clifford Prolate Spheroidal Wave Functions. We also know that 
$QP_{c}\psi_{j}(x)=\lambda_{j}\psi_{j}(x).$
We also see that
$$P_{c}Q(P_{c}\psi_{j})(x)=\lambda_{j}(P_{c}\psi_{j})(x),$$
showing that the 
$QP_{c},$
and
$P_{c}Q$
have the same eigenvalue 
$\lambda_{j}$
with eigenfunctions
$\psi_{j}$
and
$P_{c}\psi_{j}$
respectively. Now, we are going to show a relationship between the eigenvalues of 
$\mathcal{G}_{c},$
and
$QP_{c}.$
Now we have that 
\begin{align*}
\vert\mu_{j}\vert^{2}=\vert\mu\vert^{2}\langle \psi_{j},\psi_{j}\rangle&=\langle \mu_{j}\psi_{j}, \mu_{j}\psi_{j} \rangle\\
&=\langle \mathcal{G}_{c}\psi_{j}, \mathcal{G}_{c}\psi_{j} \rangle\\
&=\langle \mathcal{G}_{c}^{\ast}\mathcal{G}_{c}\psi_{j}, \psi_{j} \rangle\\
&=\langle \frac{1}{c^{m}}QP_{c}\psi_{j}, \psi_{j} \rangle
=\frac{1}{c^{m}}\langle \lambda_{j}\psi_{j}, \psi_{j} \rangle=\frac{1}{c^{m}}\lambda_{j}.
\end{align*}
So 
$\vert \mu_{j}\vert^{2}=\frac{1}{c^{m}}\lambda_{j}.$
We know that 
$\mu_{j}$
is a complex-valued number. So
\begin{equation}\label{relation_eigenvalues}
\mu_{j}=\alpha_{j}\frac{\sqrt{\lambda_{j}}}{c^{\frac{m}{2}}},
\end{equation}
where 
$\alpha_{j}$
is a complex-valued and 
$\vert \alpha_{j}\vert=1.$
Now, we have already seen that
\begin{align*}
\mathcal{G}_{c}\psi_{2N,m}^{k,c,j}(x)&=i^{k}G_{2N}(\vert x\vert)Y_{k}^{j}(x)=\mu_{2N,m}^{k,c}\psi_{2N,m}^{k,c,j}(x),\\
\mathcal{G}_{c}\psi_{2N+1,m}^{k,c,j'}(x)&=i^{k+1}F_{2N+1}(\vert x\vert)xY_{k}^{j'}(x)=\mu_{2N+1,m}^{k,c}\psi_{2N+1,m}^{k,c,j'}(x),\\
\end{align*}
as a result, I can write
\begin{equation}
\mu_{j}=\epsilon_{j}i^{k+n}\frac{\sqrt{\lambda_{j}}}{c^{\frac{m}{2}}}.
\end{equation}
Therefore, we have found the relationship between the eigenvalues of the 
$\mathcal{G}_{c},\; QP_{c},$
and,
$P_{c}Q.$
At this part of the section, we are going to investigate some features of the CPSWFs related to spectral concentration problem. For this, we firstly are going to prove the non-degeneracy of the eigenvalues of the operator $P_c Q$ for a fixed $k$. 

\begin{Th}\label{Eigenvalues_Distinct}
For a fixed $k$ the eigenvalues $\{\lambda_{n,m}^{k,c}\}$ are distinct.
\end{Th}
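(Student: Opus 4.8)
The plan is to reduce, for a fixed $k$, to a one-dimensional radial eigenvalue problem and there diagonalise $QP_c$ simultaneously with a Sturm-Liouville operator whose spectrum is already known to be simple. First, $L_c$ commutes with $QP_c$ (from the commutation of $L_c$ and $\mathcal G_c$ together with $\mathcal G_c^{\ast}\mathcal G_c=\tfrac1{c^m}QP_c$), and both operators preserve the mutually orthogonal subspaces $X_k^e$ and $X_k^o$, so it suffices to study $QP_c$ on each of these. Identifying $X_k^e$ with the weighted radial space of Lemma \ref{even_radial_orthogonal_without_change_variable} (sending $P(|x|^2)Y_k^i(x)$ to $P$), that lemma turns $L_c$ into the singular Sturm-Liouville operator with simple eigenvalues $\chi_{0,m}^{k,c}<\chi_{2,m}^{k,c}<\cdots$ and complete eigenfunction system $\{P_{N,m}^{k,c}\}_{N\ge 0}$, while Proposition \ref{radial_times_monogenic_for_even} turns $QP_c$ into a compact self-adjoint integral operator $\mathcal K_k^e$ whose kernel is built from the Bessel kernel of order $\nu=k+\tfrac m2-1$; the odd sector $X_k^o$ behaves in the same way with Bessel order $\nu+1$ (Proposition \ref{radial_times_monogenic_for_odd}).

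Since $\mathcal K_k^e$ commutes with a Sturm-Liouville operator whose eigenspaces are all one-dimensional, each $P_{N,m}^{k,c}$ is an eigenfunction of $\mathcal K_k^e$, say with eigenvalue $\lambda_{2N,m}^{k,c}$ (positive, by Theorem \ref{CPSWF eig of QPc}), and by completeness these exhaust the eigenfunctions of $\mathcal K_k^e$; likewise on $X_k^o$. As $QP_c$ also commutes with the reflection $f(x)\mapsto f(-x)$, whose $\pm 1$-eigenspaces inside the $k$-sector are precisely $X_k^e$ and $X_k^o$, we get $QP_c=\mathcal K_k^e\oplus\mathcal K_k^o$ on the $k$-sector. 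Hence ``$\{\lambda_{n,m}^{k,c}\}_{n\ge 0}$ are distinct'' splits into three sub-claims: the eigenvalues of $\mathcal K_k^e$ are pairwise distinct; those of $\mathcal K_k^o$ are pairwise distinct; and the two spectra meet only at $0$.

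For the first two I would pass to the ``half'' operator: by Proposition \ref{radial_times_monogenic_for_even}, $\mathcal G_c$ restricts on $X_k^e$ to the finite Hankel transform $\mathcal H_\nu$ of order $\nu$, with $\mathcal H_\nu P_{N,m}^{k,c}=\mu_{2N,m}^{k,c}P_{N,m}^{k,c}$ and $\lambda_{2N,m}^{k,c}=c^m|\mu_{2N,m}^{k,c}|^2$ (see \eqref{relation_eigenvalues}); so it is enough that $N\mapsto |\mu_{2N,m}^{k,c}|$ be strictly decreasing. This is the crux, and I would derive it from the classical fact that the kernel $(\rho r)^{-\nu}J_\nu(2\pi c\rho r)$ is, for $\nu>-\tfrac12$, an oscillation kernel on $(0,1)$ in the sense of Gantmakher and Krein: the theory of oscillation kernels then gives that $\mathcal H_\nu$ has simple eigenvalues with strictly decreasing moduli and that its $N$-th eigenfunction changes sign exactly $N$ times on $(0,1)$, which is consistent with the zero count for $P_{N,m}^{k,c}$ obtained from the Sturm comparison theorem \ref{secondSL}. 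The same argument on $X_k^o$ uses $\mathcal H_{\nu+1}$.

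The third sub-claim, disjointness of the spectra of $\mathcal K_k^e$ and $\mathcal K_k^o$, is the most delicate point and the one I expect to be the main obstacle, since it compares genuinely different operators (Bessel orders $\nu$ and $\nu+1$). I would begin by noting that $\mathcal K_k^o$ is nothing but $\mathcal K_{k+1}^e$ -- they are the finite Hankel transform of the same order $\nu+1=k+\tfrac m2$ on the same weighted space -- which is the operator-theoretic shadow of the identity $\psi_{2N+1,2}^{k,c}=-e_1\psi_{2N,2}^{k+1,c}$ of Theorem \ref{relation of two dim cl prolate with each other} and, in general $m$, of the relation $M_k^o=M_{k+1}^e+bI$ of Remark \ref{Relationship_Coefficients_CPSWFs}. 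Thus disjointness reduces to the statement that the finite Hankel transforms of orders $\nu$ and $\nu+1$ have no common nonzero eigenvalue, which I would obtain from a strict interlacing of their eigenvalue moduli, again via the oscillation-kernel structure together with the Bessel recurrence linking $J_\nu$ and $J_{\nu+1}$. Everything outside this oscillation-kernel input is routine bookkeeping on the commutation relations and the Sturm-Liouville facts already established.
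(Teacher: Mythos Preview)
Your reduction to the radial pieces and the commutation structure is fine, but your route diverges sharply from the paper's, and the hardest step of your plan is not actually carried out.

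The paper does not invoke oscillation kernels or any monotonicity of $|\mu_{n,m}^{k,c}|$; in fact the monotonicity theorem for $\lambda_{n,m}^{k,c}$ comes \emph{after} Theorem~\ref{Eigenvalues_Distinct} and uses it. Instead the paper argues by contradiction in each of the three cases directly from the integral equation $\lambda\,\psi = QP_c\psi = \int_{B(1)}K_c(x-y)\psi(y)\,dy$. For two even eigenfunctions with the same $\lambda$, one applies $\Delta$ under the integral, subtracts the symmetric expression, and uses Stokes on $B(1)$ to reduce to a Wronskian-type identity on $S^{m-1}$, yielding $(P_{N}^{k,c})'(1)/P_{N}^{k,c}(1)=(P_{M}^{k,c})'(1)/P_{M}^{k,c}(1)$. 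Evaluating the Sturm--Liouville equation \eqref{SL even} at $t=1$ turns this ratio into $\chi_{2N,m}^{k,c}/4-\pi^2c^2$, forcing $\chi_{2N,m}^{k,c}=\chi_{2M,m}^{k,c}$, which contradicts the simplicity of the Sturm--Liouville spectrum. The odd--odd case is identical with $Q_{N}^{k,c}$ in place of $P_{N}^{k,c}$. For the mixed case the paper applies $\partial_x$ (not $\Delta$) to the kernel, uses Stokes once, and after cancellation obtains $\int_{S^{m-1}}\overline{\psi_{2M+1}^{k,c,j}(y)}\,y\,\psi_{2N}^{k,c,i}(y)\,d\sigma(y)=0$, which evaluates to $Q_{M}^{k,c}(1)P_{N}^{k,c}(1)\,\delta_{ij}$ and contradicts the already-proved nonvanishing $P_{N}^{k,c}(1)\neq 0\neq Q_{M}^{k,c}(1)$. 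Everything is elementary and internal to the paper.

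By contrast, your even--even and odd--odd arguments outsource the work to Gantmakher--Krein theory; even granting that $(rs)^{-\nu}J_\nu(2\pi c\,rs)$ is an oscillation kernel for $\nu>-\tfrac12$ (which you assert but do not verify), this is a substantial external input the paper never needs. The real gap is your mixed case. You correctly observe that $\mathcal K_k^{o}$ coincides with $\mathcal K_{k+1}^{e}$, but the conclusion you need --- that the spectra of the finite Hankel transforms of orders $\nu$ and $\nu+1$ are disjoint --- does not follow from the oscillation-kernel machinery, which controls eigenvalues of a \emph{single} kernel, not the relative position of eigenvalues of two different kernels. ``Strict interlacing via the Bessel recurrence linking $J_\nu$ and $J_{\nu+1}$'' is a hope, not an argument: the three-term recurrence relates the kernels pointwise but gives no direct operator inequality or min--max comparison that would force interlacing of their eigenvalue sequences. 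Without this, your proof does not close, whereas the paper disposes of the mixed case in a few lines using only Stokes and the boundary values $P_N^{k,c}(1),\,Q_M^{k,c}(1)$.
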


\begin{proof}
First suppose that $\lambda_{2M,m}^{k,c}=\lambda_{2N,m}^{k,c}$ with $M\neq N$. Then
$$\lambda_{2N,m}^{k,c}\psi_{2M,m}^{k,c,j}(x)=\int\limits_{B(1)}K_c(x-y)\psi_{2M,m}^{k,c,j}(y)\, dy,$$
so that
$$\lambda_{2N,m}^{k,c}\overline{\Delta\psi_{2M,m}^{k,c,j}(x)}=\int\limits_{B(1)}\Delta K_c(x-y)\overline{\psi_{2M,m}^{k,c,j}(y)}\, dy,$$
and as a consequence we have
\begin{equation}
\lambda_{2N,m}^{k,c}\int\limits_{B(1)}\overline{\Delta\Psi_{2M,m}^{k,c,j}(x)}\psi_{2N,m}^{k,c,i}(x)\, dx=\int\limits_{B(1)}\int\limits_{B(1)}\Delta K_c(x-y)\overline{\psi_{2M,m}^{k,c,j}(y)}\psi_{2N,m}^{k,c,i}(x)\, dy\, dx.\label{non-deg 1}
\end{equation}
Similarly, we have
\begin{equation}
\lambda_{2N,m}^{k,c}\int\limits_{B(1)}\overline{\psi_{2M,m}^{k,c,j}(x)}\Delta\psi_{2N,m}^{k,c,i}(x)\, dx=\int\limits_{B(1)}\int\limits_{B(1)}\overline{\psi_{2M,m}^{k,c,j}(x)}\Delta K_c(x-y)\psi_{2N,m}^{k,c,i}(y)\, dy\, dx .\label{non-deg 2}
\end{equation}
Subtracting (\ref{non-deg 2}) from (\ref{non-deg 1}) and applying Stokes' theorem gives
\begin{align}
0&=\lambda_{2N,m}^{k,c}\int\limits_{B(1)}\overline{\Delta\psi_{2M,m}^{k,c,j}(x)}\psi_{2N,m}^{k,c,i}(x)-\overline{\psi_{2M,m}^{k,c,j}(x)}\Delta\psi_{2N,m}^{k,c,i}(x)\, dx\notag\\
&=\lambda_{2N,m}^{k,c}\int\limits_{B(1)}\overline{\psi_{2M,m}^{k,c,j}(x)}\partial_x^2\psi_{2N,m}^{k,c,i}(x)-\overline{\partial_x^2(\psi_{2M,m}^{k,c,j}(x))}\psi_{2N,m}^{k,c,i}(x)\, dx\notag\\
&=\int\limits_{S^{m-1}}\overline{\psi_{2M,m}^{k,c,j}(x)}x(\partial_{x}\psi_{2N,m}^{k,c,i}(x))+\partial_{x}\overline{\psi_{2M,m}^{k,c,j}(x)}x\psi_{2N,m}^{k,c,i}(x)\, d\sigma (x).\label{non-deg 3}
\end{align}
However, since $\psi_{2M,m}^{k,c,j}(x)=P_{M,m}^{k,c}(|x|^2)Y_k^j(x)$, we have $\partial_{x}\psi_{2M}^{k,j}(x)=2x(P_{M,m}^{k,c})'(|x|^2)Y_k^j(x)$ and from (\ref{non-deg 3}) and the orthonormality of $\{Y_k^i\}_{i=1}^{d_k}$ in $L^2(S^{m-1},{\mathbb R}^m)$, we have
$$0=(P_{M,m}^{k,c})'(1)P_{N,m}^{k,c}(1)-P_{M,m}^{k,c}(1)(P_{N,m}^{k,c})'(1),$$
or equivalently,
\begin{equation}
\frac{(P_{N,m}^{k,c})'(1)}{P_{N,m}^{k,c}(1)}=\frac{(P_{M,m}^{k,c})'(1)}{P_{M,m}^{k,c}(1)}.\label{non-deg 4}
\end{equation}
However, from (\ref{SL even}), $(P_{N,m}^{k,c})'(1)=\left(\dfrac{\chi_{2N}^k}{4}-\pi^2c^2\right)P_{N,m}^{k,c}(1)$ and we conclude from (\ref{non-deg 4}) that $\chi_{2N,m}^{k,c}=\chi_{2M,m}^{k,c}$, which contradicts the eigenvalues $\{\chi_{2N,m}^{k,c}\}_{N=0}^\infty$ of $L_c$ are distinct. We conclude that the eigenvalues $\{\lambda_{2N,m}^{k,c}\}_{N=0}^\infty$ of $QP_c$  are distinct.
\vskip0.2in
Suppose now that $\lambda_{2N+1,m}^{k,c}=\lambda_{2M+1,m}^{k,c}$. Then we have
\begin{equation}
0=\int\limits_{S^{m-1}}\overline{\psi_{2M+1,m}^{k,c,j}(x)}x(\partial_{x}\psi_{2N+1,m}^{k,c,i}(x)+\overline{\partial_{x}\psi_{2M+1,m}^{k,c,j}(x)}x\psi_{2N+1,m}^{k,c,i}(x))\, d\sigma (x).\label{non-deg 5}
\end{equation}
However, since $\psi_{2N+1,m}^{k,c,i}(x)=xQ_{N,m}^{k,c}(|x|^2)Y_k^i(x)$, by the monogenicity and homogeneity of $Y_k^i$ we have
\begin{equation}
\partial_{x}\psi_{2N+1,m}^{k,c,i}(x)=[(m+2k)Q_{N,m}^{k,c}(|x|^2)-2|x|^2(Q_{N,m}^{k,c})'(|x|^2)]Y_k^i(x).\label{non-deg 6}
\end{equation}
Substituting (\ref{non-deg 6}) into (\ref{non-deg 5}), the orthonormality of $\{Y_k^i\}_{i=1}^{d_k}$ in $L^2(S^{m-1})$ yields
$$0=Q_{M,m}^{k,c}(1)[-(m+2k)Q_{N,m}^{k,c}(1)+2(Q_{N,m}^{k,c})'(1)]-Q_{N,m}^{k,c}(1)[-(m+2k)Q_{M,m}^{k,c}(1)+2(Q_{N,m}^{k,c})'(1)]$$
which we rearrange to find
\begin{equation}
\frac{(m+2k)Q_{M,m}^{k,c}(1)+2(Q_{M,m}^{k,c})'(1)}{Q_{M,m}^{k,c}(1)}=\frac{(m+2k)Q_{N,m}^{k,c}(1)+2(Q_{N,m}^{k,c})'(1)}{Q_{N,m}^{k,c}(1)}.\label{non-deg 7}
\end{equation}
The functions $\{Q_{N,m}^{k,c}\}_{N=0}^\infty$ satisfy (\ref{SL_odd}) and therefore,
\begin{equation}
2(Q_{N,m}^{k,c})'(1)+(m+2k)Q_{N,m}^{k,c}(1)=\left(\frac{m}{2}+k-\frac{\pi^2c^2}{2}-\frac{\chi_{2N+1}^k}{4}\right),\label{non-deg 8}
\end{equation}
and applying (\ref{non-deg 8}) to (\ref{non-deg 7}) yields $\chi_{2N+1,m}^{k,c}=\chi_{2M+1,m}^{k,c}$, which contradicts the fact that the eigenvalues $\{\chi_{2N+1,m}^{k,c}\}_{N=0}^\infty$ are distinct. We conclude that $\lambda_{2N+1}^k\neq\lambda_{2M+1,m}^{k,c}$ unless $N=M$.
\vskip0.2in

Finally, we suppose $\lambda_{2N+1}^k=\lambda_{2M}^k$. Then with an application of Stokes' theorem,  we have
\begin{align}
\lambda_{2N,m}^{k,c}\partial_{x}\psi_{2N,m}^{k,c,i}(x)&=\int\limits_{B(1)}\partial_{x}(K_c(x-y))\psi_{2N,m}^{k,c,i}(y)\, dy\notag\\
&=-\int\limits_{B(1)}\partial_{y}(K_c(x-y))\psi_{2N,m}^{k,c,i}(y)\, dy\notag\\
&=-\left\{\int\limits_{S^{m-1}}K_c(x-y)y\psi_{2N,m}^{k,c,i}(y)\, d\sigma (y)-\int\limits_{B(1)}K_c(x-y)\partial_{y}\psi_{2N,m}^{k,c,i}(y)\, dy\right\}.\label{non-deg 9}
\end{align}
Multiplying both sides of (\ref{non-deg 9}) on the left by $\overline{\psi_{2M+1,m}^{k,c,j}(x)}$ and integrating over $B(1)$ gives
\begin{align}
\lambda_{2N,m}^{k,c}\int\limits_{B(1)}\overline{\psi_{2M+1,m}^{k,c,j}(x)}(\partial_{x}\psi_{2N,m}^{k,c,i})(x)\, dx&=\int\limits_{B(1)}\int\limits_{B(1)}\overline{\psi_{2M+1,m}^{k,c,j}(x)}K_c(x-y)(\partial_{x}\psi_{2N,m}^{k,c,i})(y)\, dy\, dx\notag\\
&-\lambda_{2M+1,m}^{k,c}\int\limits_{S^{m-1}}\overline{\psi_{2M+1,m}^{k,c,j}(y)}y\psi_{2N,m}^{k,c,i}(y)\,d\sigma (y),\label{non-deg 10}
\end{align}
where we have used the fact that 
\begin{equation}
\int\limits_{B(1)}K_c(x-y)\psi_{2M+1,m}^{k,c,j}(x)\, dx=\lambda_{2M+1,m}^{k,c}\psi_{2M+1,m}^{k,c,j}(y).\label{non-deg 11}
\end{equation}
On the other hand, conjugating both sides of (\ref{non-deg 11}), multiplying on the right by $\partial_{x}\psi_{2N,m}^{k,c,i}(x)$ and integrating over $B(1)$ gives
\begin{equation}
\lambda_{2M+1}^k\int\limits_{B(1)}\overline{\psi_{2M+1,m}^{k,c,j}(x)}(\partial_{x}\psi_{2N,m}^{k,c,i})(x)
=\int\limits_{B(1)}\int\limits_{B(1)}\overline{\psi_{2M+1,m}^{k,c,j}(y)}K_c(x-y)(\partial_{x}\psi_{2N,m}^{k,c,i})(x)\, dy\, dx .\label{non-deg 12}
\end{equation}
If $\lambda_{2n,m}^k=\lambda_{2M+1}^k$, then subtracting (\ref{non-deg 12}) from (\ref{non-deg 10}) and using the orthonormality of $\{Y_k^i\}_{i=1}^{d_k}$ gives
$$0=\int\limits_{S^{m-1}}\overline{\psi_{2M+1,m}^{k,c,j}(y)}y\psi_{2N,m}^{k,c,i}(y)\, d\sigma (y)=Q_M^{k}(1)P_N^k(1)\delta_{ij},$$
which contradicts the fact that $P_{N,m}^{k,c}(1)Q_{N,m}^{k,c}(1)\neq 0$. We conclude that $\lambda_{2N,m}^{k,c}\neq\lambda_{2M+1,m}^{k,c}$ for all $N,M\geq 0$.
\end{proof}

\begin{Th}\label{Continuity_Eigenvalues_c}
	The eigenvalues  $\lambda_{n,m}^{k,c}$ of $QP_c$ are continuous functions of $c$.
\end{Th}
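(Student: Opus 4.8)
The plan is to realise all the eigenvalues $\lambda_{n,m}^{k,c}$ at once as the spectrum of a single compact self-adjoint operator depending continuously on the parameter $c$, and then to invoke the min--max stability of eigenvalues under norm perturbations. First I would use the representation, established in the discussion preceding this section, of $QP_c$ on $L^2(B(1),{\mathbb C}_m)$ as the integral operator with kernel $c^mK_c(x-y)$, where $K_c(z)=\int_{B(1)}e^{2\pi ic\langle\xi,z\rangle}\,d\xi$ is the Bessel kernel of \eqref{definition_Besinc}. Since $|K_c(z)|\le|B(1)|$ for all $z$ and all $c\ge 0$, and $|K_c(z)-K_{c'}(z)|\le 2\pi|c-c'|\int_{B(1)}|\langle\xi,z\rangle|\,d\xi\le 4\pi|B(1)|\,|c-c'|$ for $z\in B(2)$, the map $c\mapsto c^mK_c$ is locally Lipschitz in the supremum norm over $B(2)$. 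I would then conclude that
$$\|QP_c-QP_{c'}\|_{\mathrm{op}}\le\|QP_c-QP_{c'}\|_{\mathrm{HS}}=\Big(\int_{B(1)}\int_{B(1)}\big|c^mK_c(x-y)-(c')^mK_{c'}(x-y)\big|^2\,dx\,dy\Big)^{1/2}\le C_R\,|c-c'|$$
for $c,c'$ in any fixed compact set, so that $c\mapsto QP_c$ is (locally Lipschitz) continuous in operator norm.

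Next I would pass to a fixed angular sector. For fixed $k\ge 0$ and $i$ with $1\le i\le d_k$, let $V_k^i$ denote the closed complex span in $L^2(B(1),{\mathbb C}_m)$ of $\{C_{n,m}^0(Y_k^i):\ n\ge 0\}$, equivalently the space of functions $F(|x|^2)Y_k^i(x)+xG(|x|^2)Y_k^i(x)$ lying in $L^2(B(1),{\mathbb C}_m)$; this space does not depend on $c$. The same computations used in the proofs of Propositions \ref{radial_times_monogenic_for_even} and \ref{radial_times_monogenic_for_odd} show that $QP_c$ maps $V_k^i$ into itself, and on $V_k^i$, regarded as a complex Hilbert space under the inner product $(f,g)=[\langle f,g\rangle]_0$, it is compact and self-adjoint; by Theorems \ref{CPSWF eig of QPc}, \ref{odd CPSWFs eigenfunctions of QP_c} and \ref{CPSWFs_are_basis_using_SL} it is diagonalised there by the orthonormal system $\{\psi_{n,m}^{k,c,i}:\ n\ge 0\}$, with positive eigenvalues $\{\lambda_{n,m}^{k,c}:\ n\ge 0\}$ that are pairwise distinct by Theorem \ref{Eigenvalues_Distinct}. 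Enumerating the eigenvalues of $QP_c\big|_{V_k^i}$ in decreasing order as $\mu_0(c)>\mu_1(c)>\cdots>0$, the min--max principle gives $|\mu_j(c)-\mu_j(c')|\le\|QP_c-QP_{c'}\|_{\mathrm{op}}$, so each branch $\mu_j$ is a continuous (indeed locally Lipschitz) function of $c$.

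The last step is to identify the intrinsic labelling $n\mapsto\lambda_{n,m}^{k,c}$ with the monotone enumeration $\{\mu_j\}$. Because the eigenvalues stay pairwise distinct for every $c>0$, the branches $\mu_j(\cdot)$ never meet, so the decreasing order is preserved as $c$ varies, and the bijection between $\{\psi_{n,m}^{k,c,i}\}_n$ and $\{\mu_j(c)\}_j$ — which is pinned down $c$-independently through the commuting operator $L_c$ and the Sturm--Liouville ordering $\chi_{0,m}^{k,c}<\chi_{2,m}^{k,c}<\cdots$ of its eigenvalues on $V_k^i$ (Theorems \ref{th: Tctildeoperator} and \ref{CPSWF eig of QPc}) — does not change with $c$. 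This yields $\lambda_{n,m}^{k,c}=\mu_{\sigma(n)}(c)$ for a permutation $\sigma$ independent of $c$, whence each $\lambda_{n,m}^{k,c}$ is continuous in $c$. I expect this bookkeeping step to be the main obstacle: one has to rule out that the eigenfunction carrying the label $n$ ever jumps from one $QP_c$-eigenbranch to another as $c$ moves, and it is precisely the non-degeneracy established in Theorem \ref{Eigenvalues_Distinct} that prevents such a crossing.
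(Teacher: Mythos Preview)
Your proof is correct and takes a genuinely different route from the paper's. The paper reduces $\mathcal{G}_c^*\mathcal{G}_c=c^{-m}QP_c$, acting on functions of the form $P(|x|^2)Y_k^i(x)$, to a one-dimensional integral operator on $[0,1]$: after two applications of the Funk--Hecke formula it finds that $r^{\frac{m}{4}+\frac{k}{2}-\frac{1}{2}}P_{N,m}^{k,c}(r)$ is an eigenfunction of integration against the explicit symmetric Bessel kernel $N_c(s,r)=cM_c(\sqrt{s},\sqrt{r})$, where
\[
M_c(r,s)=\frac{sJ_{k+\frac{m}{2}-2}(2\pi cs)J_{k+\frac{m}{2}-1}(2\pi cr)-rJ_{k+\frac{m}{2}-2}(2\pi cr)J_{k+\frac{m}{2}-1}(2\pi cs)}{r^2-s^2},
\]
and then invokes Courant--Hilbert on the continuous dependence of eigenvalues of a symmetric kernel on a parameter. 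Your argument sidesteps the radial reduction entirely by bounding the Hilbert--Schmidt norm of $QP_c-QP_{c'}$ directly in terms of the full kernel $c^mK_c$, which is shorter and treats the even and odd CPSWFs uniformly (the paper's computation, as written, only covers $\lambda_{2N,m}^{k,c}$). The paper's route, in return, yields an explicit scalar kernel on $[0,1]$ that could be useful for finer eigenvalue asymptotics.

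One remark on your bookkeeping step. Invoking Theorem \ref{Eigenvalues_Distinct} to rule out branch crossings is the right idea, but the claim that the permutation $\sigma$ is $c$-independent is, as stated, slightly circular: you would need continuity of $c\mapsto\lambda_{n,m}^{k,c}$ to know it stays on a single continuous $\mu_j$-branch. The cleanest fix --- which in fact makes the min--max detour unnecessary --- is to observe that the coefficient vectors $\alpha_N^k$, $\beta_N^k$ of \eqref{CPSWF def} are eigenvectors for \emph{simple} eigenvalues of the tridiagonal matrices $M_{k,m}^e$, $M_{k,m}^o$, whose entries are polynomials in $c$; hence $c\mapsto\psi_{n,m}^{k,c,i}$ is continuous in $L^2(B(1))$, and then $\lambda_{n,m}^{k,c}=(QP_c\psi_{n,m}^{k,c,i},\psi_{n,m}^{k,c,i})$ is continuous directly. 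The paper's proof faces exactly the same labelling issue when it cites Courant--Hilbert, since that reference also delivers continuity of the \emph{ordered} eigenvalues rather than the $L_c$-labelled ones.
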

\begin{proof}
	Since $\psi_{2N,m}^{k,c,i}(x)=P_{N,m}^{k,c}(|x|^2)Y_k^i(x)$, we have
	\begin{align*}
		{\mathcal G}_c\psi_{2N,m}^{k,c,i}(x)&=\int\limits_{B(1)}e^{2\pi ic\langle x,y\rangle}P_{N,m}^{k,c}(|y|^2)Y_k^i(y)\, dy\\
		&=\int\limits_0^1r^{m+k-1}P_{N,m}^{k,c}(r^2)\int\limits_{S^{m-1}}e^{2\pi icr\langle x,\omega\rangle}Y_k^i(\omega )\, d\omega\, dr\\
		&=\frac{2\pi i^k}{(c|x|)^{\frac{m}{2}-1}}\int\limits_0^1r^{\frac{m}{2}+k}P_{N,m}^{k,c}(r^2)J_{k+\frac{m}{2}-1}(2\pi cr|x|)\, dr\, Y_k^i\left(\frac{x}{|x|}\right),
	\end{align*}
	where we have used (\ref{HF}). Similarly,
	\begin{align}
		{\mathcal G}_c^*{\mathcal G}_c\psi_{2N,m}^{k,c,i}(y)&=\int\limits_{B(1)}e^{-2\pi ic\langle x,y}{\mathcal G}_c\psi_{2N,m}^{k,c,i}(x)\, dx\notag\\
		&=\frac{2\pi i^k}{c^{\frac{m}{2}-1}}\int\limits_0^1s^{\frac{m}{2}}\int\limits_0^1r^{\frac{m}{2}+k}J_{k+\frac{m}{2}-1}(2\pi crs)\int\limits_{S^{m-1}}e^{-2\pi ics\langle y,\omega\rangle}Y_k^i(\omega )\, d\omega\, dr\, ds\notag\\
		&=\frac{4\pi^2}{c^{m-2}|y|^{\frac{m}{2}-1}}\int\limits_0^1\int\limits_0^1sr^{\frac{m}{2}+k}J_{k+\frac{m}{2}-1}(2\pi crs)J_{k+\frac{m}{2}-1}(2\pi cs|y|)\, dr\, ds\, Y_k^i\left(\frac{y}{|y|}\right)\notag\\
		&=\frac{2\pi}{c^{m-1}|y|^{\frac{m}{2}+k-1}}\int\limits_0^1r^{\frac{m}{2}+k}P_n^k(r^2)M_c(|y|,r)\, dr\, Y_k^i(y),\label{G^*G}
	\end{align}
	where $M_c(r,s)$ is the symmetric kernel
	\begin{align}
		M_c(r,s)&=2\pi c\int\limits_0^1sJ_{k+\frac{m}{2}-1}(2\pi crs)J_{k+\frac{m}{2}-1}(2\pi cs|y|)\, ds\notag\\
		&=\frac{sJ_{k+\frac{m}{2}-2}(2\pi cs)J_{k+\frac{m}{2}-1}(2\pi c r)-rJ_{k+\frac{m}{2}-2}(2\pi cr )J_{k+\frac{m}{2}-1}(2\pi cs)}{r^2-s^2}.\label{M kernel}
	\end{align}
	In calculating (\ref{M kernel}), we have used (6.521) from \cite{gradshteyn2007ryzhik}. 
	The (finite) diagonal values $M_c(s,s)$ of the kernel $M_c$ may be obtained by taking the limit $\lim_{r\to s}M_c(s,s)$. Since ${\mathcal G^*}_c{\mathcal G}_c\psi_{2N,m}^{k,c,i}(x)=c^{-m}QP_c\psi_{2N}^{k,i}(x)=\lambda_{2N,m}^{k,c}\psi_{2N,m}^{k,c,i}(x)$, equation (\ref{G^*G}) may be written as
	$$\frac{\lambda_{2N,m}^{k,c}(c)}{4\pi^2}s^{\frac{m}{2}+k-1}P_{N,m}^{k,c}(s^2)=c\int\limits_0^1r^{\frac{m}{2}+k}P_{N,m}^{k,c}(r^2)M_c(s,r)\, dr,$$
	or equivalently
	$$\frac{\lambda_{2N,m}^{k,c}(c)}{2\pi^2}s^{\frac{m}{4}+\frac{k}{2}-\frac{1}{2}}P_{N,m}^{k,c}(s)=\int\limits_0^1r^{\frac{m}{4}+\frac{k}{2}-\frac{1}{2}}P_{N,m}^{k,c}(r)N_c(s,r)\, dr,$$
	i.e., $r^{\frac{m}{4}+\frac{k}{2}-\frac{1}{2}}P_{N,m}^{k,c}(r)$ is an eigenfunction of the integral operator on $[0,1]$ which acts by integration against the kernel $N_c(s,r)=cM_c(\sqrt s,\sqrt r)$. Since $N$ is symmetric and continuous in $r,\ s$ and $c$, we conclude from Chapter III, Section 8.4 of \cite{courant1954methods} that the eigenvalues $\lambda_{2N,m}^{k,c}$ vary continuously with $c$.
\end{proof}

\begin{Th}
\label{thm: SL asymptotics} Let the real constants $\chi_{n,m}^{k,0}$, $\chi_{n,m}^{k,c}$, $\alpha_{n,k}$, $\beta_{n,k}$, $\gamma_{n,k}$ be given by
\begin{align*}
L_0\bar C_{n,m}^0(Y_k)&=\chi_{n,m}^{k,0}\barC_{n,m}^{k,0},\\
L_c\psi_{n,m}^{k,c}&=\chi_{n,m}^{k,c}\psi_{n,m}^{k,c},\\
x^2\barC_{n,m}^0(Y_k)&=\alpha_{n,k,m}\barC_{n+2,m}^0(Y_k)+\beta_{n,k,m}\barC_{n,m}^0(Y_k)+\gamma_{n,k,m}\barC_{n-2,m}^0(Y_k).
\end{align*}
Then the asymptotic behaviours of $\chi_{n,m}^{k,c}$ and $\psi_{n,m}^{k,c}$ are as follows:
\begin{equation}
\chi_{n,m}^{k,c}=\chi_{n,m}^{k,0}-4\pi^2c^2\beta_{n,k,m}+O(c^4).\label{chi asympt}
\end{equation}
and
\begin{equation}
\psi_{n,m}^{k,c,i}=\barC_{n,m}^0(Y_k^i)-4\pi^2c^2\left(\frac{\alpha_{n,k,m}}{\chi_{n,m}^{k,0}-\chi_{n+2,m}^{k,0}}\barC_{n+2,m}^0(Y_k^i)+\frac{\gamma_{n,k,m}}{\chi_{n,m}^{k,0}-\chi_{n-2,m}^{k,0}}\barC_{n-2,m}^0(Y_k^i)\right)+O(c^4).\label{psi asympt}
\end{equation}
\end{Th}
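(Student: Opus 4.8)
The plan is to read Theorem~\ref{thm: SL asymptotics} as an instance of analytic (Rayleigh--Schr\"odinger) first--order perturbation theory, applied not to the unbounded Clifford differential operator $L_c$ directly but to the banded real symmetric matrices $M_{k,m}^{e}$ and $M_{k,m}^{o}$ that were shown in the construction of the CPSWFs to govern the eigenvalue problem for $L_c$. Fix $k\ge 0$ and one parity. Since $L_c=L_0+M_c$ with $M_cf(x)=4\pi^2c^2|x|^2f(x)=-4\pi^2c^2x^2f(x)$, and since $\{\barC_{2i,m}^0(Y_k^i)\}_{i\ge0}$ (resp.\ $\{\barC_{2i+1,m}^0(Y_k^i)\}_{i\ge0}$) is an $L^2$-orthonormal basis of the invariant space $X_k^e$ (resp.\ $X_k^o$), the decomposition of the matrix entries displayed after \eqref{twice_application_bonnet} amounts to writing, with $t=4\pi^2c^2$,
$$M_{k,m}^{e}=D^{e}+tB^{e},\qquad M_{k,m}^{o}=D^{o}+tB^{o},$$
where $D^{e}=\mathrm{diag}(\chi_{2i,m}^{k,0})_{i\ge0}$ and $D^{o}=\mathrm{diag}(\chi_{2i+1,m}^{k,0})_{i\ge0}$ are the diagonal matrices of unperturbed eigenvalues, and $B^{e},B^{o}$ are the symmetric tridiagonal matrices of multiplication by $|x|^2=-x^2$ in the respective orthonormal bases. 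By the three-term recurrence in the statement of the theorem, $B^{e}$ has $(i,i)$-entry $-\beta_{2i,k,m}$ and $(i,i+1)$-entry $-\alpha_{2i,k,m}=-\gamma_{2i+2,k,m}$ (and similarly for $B^{o}$ with odd indices); in particular all entries are bounded in $i$, so $B^{e},B^{o}$ are bounded operators on $\ell^2$.

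Next I would invoke analytic perturbation theory. The spectrum of $D^{e}$ is $\{4i(k+i+\tfrac m2):i\ge0\}$, which is strictly increasing in $i$, so every eigenvalue is simple and isolated; likewise for $D^{o}$. Since $t\mapsto D+tB$ is an affine (hence entire) family of self-adjoint operators on $\ell^2$ with $B$ bounded, Kato--Rellich perturbation theory applies to the isolated simple eigenvalue emanating from $\chi_{n,m}^{k,0}$: for $|c|$ small, the eigenvalue $\chi_{n,m}^{k,c}$ and a suitably normalised eigenvector $v_n(c)$ depend analytically on $t=4\pi^2c^2$, with Taylor coefficients given by the Rayleigh--Schr\"odinger formulas, and the eigenvalue branches are consistently indexed by the ordering guaranteed by Theorems~\ref{Eigenvalues_Distinct} and~\ref{Continuity_Eigenvalues_c}. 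Because the dependence on $c$ is through $t=4\pi^2c^2$ only, all expansions proceed in powers of $c^2$, so the remainder after the linear term is $O(c^4)$.

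The conclusion is then bookkeeping with the tridiagonal structure. The first-order eigenvalue correction equals the diagonal matrix element $t\,\langle e_N,B^{e}e_N\rangle=-4\pi^2c^2\beta_{n,k,m}$, which is exactly \eqref{chi asympt}. The first-order eigenvector correction is $t\sum_{j\neq N}\frac{\langle e_j,B^{e}e_N\rangle}{\chi_{n,m}^{k,0}-\chi_{2j,m}^{k,0}}\,e_j$; since $B^{e}$ is tridiagonal, only $j=N\pm1$ survive, contributing the coefficient $-4\pi^2c^2\alpha_{n,k,m}/(\chi_{n,m}^{k,0}-\chi_{n+2,m}^{k,0})$ on $e_{N+1}$ and $-4\pi^2c^2\gamma_{n,k,m}/(\chi_{n,m}^{k,0}-\chi_{n-2,m}^{k,0})$ on $e_{N-1}$ (both denominators are nonzero by the simplicity of the spectrum of $D^{e}$). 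Reassembling the eigenfunction via $\psi_{n,m}^{k,c,i}=\sum_i(v_n(c))_i\,\barC_{\,\cdot\,,m}^0(Y_k^i)$ converts these into the two displayed correction terms of \eqref{psi asympt}, with error $O(c^4)$; the odd case is identical with $M_{k,m}^{o}$ and odd-indexed polynomials. Note finally that the first-order eigenvector correction is orthogonal to $\barC_{n,m}^0(Y_k^i)$, so $L^2$-normalisation and intermediate normalisation of $\psi_{n,m}^{k,c,i}$ agree up to $O(c^4)$, making the statement independent of this choice.

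The main obstacle is the second paragraph: passing rigorously from the formal perturbation series to an actual asymptotic expansion in this infinite-dimensional, unbounded-operator setting, i.e.\ verifying the hypotheses of Kato's theorem (boundedness of $B^{e},B^{o}$ and simplicity/isolation of the unperturbed eigenvalue). If one prefers to avoid abstract perturbation theory, the alternative is to run finite-dimensional analytic perturbation theory on the principal $N\times N$ sections of $M_{k,m}^{e},M_{k,m}^{o}$ and then pass to the limit, using Theorems~\ref{Eigenvalues_Distinct} and~\ref{Continuity_Eigenvalues_c} to control the truncation; the obstacle then becomes justifying the interchange of the section limit with the Taylor expansion in $c$. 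Everything else --- identifying $M_{k,m}^{e,o}=D+tB$, reading the entries of $B$ off the three-term recurrence, and extracting the linear corrections --- is routine.
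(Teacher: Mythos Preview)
Your proposal is correct and takes a genuinely different, more rigorous route than the paper. The paper simply \emph{posits} an ansatz $\psi_{n,m}^{k,c,i}=\barC_{n,m}^0(Y_k^i)+c^2f+O(c^4)$, applies $L_c$ to both sides, and reads off the coefficients by matching powers of $c^2$ and using the three--term recurrence and orthogonality of the Clifford--Legendre polynomials; it then solves the resulting equation $L_0f=\chi_{n,m}^{k,0}f-4\pi^2[\alpha\,\barC_{n+2}+\gamma\,\barC_{n-2}]$ for $f$ by inspection. This is the classical formal Rayleigh--Schr\"odinger derivation, done by hand, with no justification that the expansion exists. Your argument instead transports the problem to the $\ell^2$ eigenproblem for the tridiagonal matrices $M_{k,m}^{e,o}=D+tB$ (with $t=4\pi^2c^2$) and invokes Kato--Rellich to obtain genuine analyticity in $t$, from which the $O(c^4)$ remainder follows rather than being assumed; the first--order formulas then coincide with the paper's by the same tridiagonal bookkeeping. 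What your approach buys is an honest proof of the asymptotic statement; what the paper's approach buys is brevity and the avoidance of any operator--theoretic machinery.

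Two small points. First, your appeals to Theorems~\ref{Eigenvalues_Distinct} and~\ref{Continuity_Eigenvalues_c} are misplaced: those concern the eigenvalues $\lambda_{n,m}^{k,c}$ of $QP_c$, not the $\chi_{n,m}^{k,c}$ of $L_c$, and in any case your Kato--Rellich argument already delivers both the simplicity and the analytic indexing of the $\chi$--branches without external input (simplicity at $c=0$ is immediate from the explicit formula $C(0,n,m,k)$, and the gaps of $D$ grow, so each eigenvalue stays isolated for small $t$). Second, your check that $B^{e},B^{o}$ are bounded is correct --- the Bonnet coefficients $A_{N,k,m},A'_{N,k,m},B_{N,k,m},B'_{N,k,m}$ all tend to $\pm\tfrac12$ as $N\to\infty$, so the tridiagonal entries are uniformly bounded --- and this is the only nontrivial hypothesis you need to verify for Kato's theorem to apply. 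With those corrections the obstacle you flag in your last paragraph is fully resolved, and the finite--section alternative is unnecessary.
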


\begin{proof}
We assume an asymptotic expansion of the form
\begin{equation}
\psi_{n,m}^{k,c,i}(x)=\barC_{n,m}^0(Y_k^i)(x)+c^2f(x)+O(c^4).\label{asymp 0}
\end{equation}
Since $\|\psi_{n,m}^{k,c,i}\|_2=\|\barC_{n,m}^0(Y_k^i)\|_2=1$, we have $\langle C_n^0(Y_k^i),f\rangle=0$.
We then have
\begin{align}
&\chi_{n,m}^{k,c}\psi_{n,m}^{k,c,i}=L_c\barC_{n,m}^0(Y_k^i)+c^2L_cf+O(c^4)\\&=L_0\barC_{n,m}(Y_k^i)+4\pi^2c^2|x|^2\barC_{n,m}^0(Y_k^i) +c^2L_0f+O(c^4)\notag\\
&=[\chi_{n,m}^{k,0}-4\pi^2c^2\beta_{n,k}]\barC_{n,m}^0(Y_k^i)-4\pi^2c^2[\alpha_{n,k}\barC_{n+2,m}^0(Y_k^i)+\gamma_{n,k}\barC_{n-2,m}^0(Y_k^i)]\notag\\
&\qquad\qquad+c^2L_0f+O(c^4).\label{asymp 1}
\end{align}
On the other hand,
\begin{equation}
\chi_{n,m}^{k,c}\psi_{n,m}^{k,c,i}=\chi_{n,m}^{k,c}[\barC_{n,m}^0(Y_k^i)+c^2f]+O(c^4).\label{asymp 2}
\end{equation}
Subtracting (\ref{asymp 2}) and (\ref{asymp 1}) gives
\begin{align}
&(\chi_{n,m}^{k,0}-4\pi^2c^2\beta_{n,k}-\chi_{n,m}^{k,c})\barC_{n,m}^0(Y_k^i)+c^2(L_0f-\chi_{n,m}^{k,c}f)\notag\\
&\qquad \qquad\qquad -4\pi^2c^2[\alpha_{nk}\barC_{n+2,m}^0(Y_k^i)+\gamma_{nk}\barC_{n-2,m}^0(Y_k^i)]=O(c^4).\label{asymp 3}
\end{align}
Since $\langle L_0f,\barC_{n,m}^0(Y_k^i)\rangle=\langle f,L_0\barC_n^0(Y_k^i)\rangle = \chi_{n,m}^{k,0}\langle f,\barC_{n,m}^0(Y_k^i)\rangle=0$, taking the inner product of both sides on (\ref{asymp 3}) against $\barC_{n,m}^0(Y_k^i)$ gives (\ref{chi asympt}).

We now aim to determine the function $f$ in (\ref{asymp 0}). Combining (\ref{chi asympt}), (\ref{asymp 0}) and (\ref{asymp 1}) gives
\begin{equation}
L_0f=\chi_{n,m}^{k,0}f-4\pi^2[\alpha_{n,k,m}\barC_{n+2,m}^0(Y_k^i)+\gamma_{n,k,m}\barC_{n-2,m}^0(Y_k^i)],\label{asymp 4}
\end{equation}
which has solutions of the form 
\begin{equation}
f=\barC_{n,m}^0(Y_k^i)A+4\pi^2\left[\frac{\alpha_{nk}}{\chi_{n+2,m}^{k,0}-\chi_{n,m}^{k,0}}\barC_{n+2,m}^0(Y_k^i)+\frac{\gamma_{nk}}{\chi_{n-2,m}^{k,0}-\chi_n^{k,0}}\barC_{n-2,m}^0(Y_k^i)\right], \label{asymp 5}
\end{equation}
where $A$ is an arbitrary Clifford constant. Substituting (\ref{asymp 5}) into (\ref{asymp 0}) gives
\begin{align}
\psi_{n,m}^{k,c,i}=\barC_{n,m}^0(Y_k^i)(1+Ac^2)&-4\pi^2c^2\bigg(\frac{\alpha_{n,k}}{\chi_{n,m}^{k,0}-\chi_{n+2,m}^{k,0}}\barC_{n+2,m}^0(Y_k^i)\nonumber\\
&+\frac{\gamma_{n,k}}{\chi_{n,m}^{k,0}-\chi_{n-2,m}^{k,0}}\barC_{n-2,m}^0(Y_k^i)\bigg)+O(c^4),\label{asymp 6}
\end{align}
However, applying $L_c$ to both sides of (\ref{asymp 6}) and applying (\ref{chi asympt}) gives
$$L_c\psi_{n,m}^{k,c,i}-\chi_{n,m}^{k,c}\psi_{n,m}^{k,c,i}=\barC_{n,m}^0(Y_k^i)Ac^2+O(c^4),$$
from which we conclude that $A=0$. Putting $A=0$ in (\ref{asymp 6}) gives (\ref{psi asympt}).
\end{proof}

For all $n\geq 0$ we say $R_{n,m}^{k,c}$ is the radial part of $\psi_{n,m}^{k,c,i}$, i.e., $R_{2N,m}^{k,c}=P_{N,m}^{k,c}$ and $R_{2N+1,m}^{k,c}=Q_{N,m}^{k,c}$.

\begin{Th}	
For fixed $k\geq 0$, the eigenvalues $\{\lambda_n^k\}_{n=0}^\infty$ of $P_{c}Q$ satisfy
$\lambda_{n+1,m}^{k,c}<\lambda_{n,m}^{k,c}$.
\end{Th}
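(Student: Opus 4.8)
The plan is to combine the continuity of the eigenvalues in $c$ (Theorem \ref{Continuity_Eigenvalues_c}) with their pairwise distinctness for fixed $k$ (Theorem \ref{Eigenvalues_Distinct}), so that the strict inequality need only be checked in the limit $c\downarrow0$. First I would record that, since each $\psi_{n,m}^{k,c,i}$ is supported in $B(1)$ (so $Q\psi_{n,m}^{k,c,i}=\psi_{n,m}^{k,c,i}$) and $P_c$ is the self-adjoint projection onto $PW_c$ (so $P_c^2=P_c$),
$$\lambda_{n,m}^{k,c}\,\|\psi_{n,m}^{k,c,i}\|_2^2=(QP_c\psi_{n,m}^{k,c,i},\psi_{n,m}^{k,c,i})=(P_c\psi_{n,m}^{k,c,i},P_c\psi_{n,m}^{k,c,i})=\int\limits_{B(c)}|{\mathcal F}\psi_{n,m}^{k,c,i}(\xi)|^2\,d\xi,$$
using Theorem \ref{planchereltheorem}. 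For fixed $k$ the function $c\mapsto\lambda_{n,m}^{k,c}$ is continuous on $(0,\infty)$, and by Theorem \ref{Eigenvalues_Distinct} we have $\lambda_{n,m}^{k,c}\neq\lambda_{n+1,m}^{k,c}$ for every $c>0$; hence $c\mapsto\lambda_{n,m}^{k,c}-\lambda_{n+1,m}^{k,c}$ is continuous and nowhere zero on $(0,\infty)$, so it has constant sign there. It therefore suffices to prove $\lambda_{n+1,m}^{k,c}<\lambda_{n,m}^{k,c}$ for all sufficiently small $c>0$.

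To this end I would compute the leading order of $\lambda_{n,m}^{k,c}$ as $c\to0$. Writing $\psi_{2N,m}^{k,c,i}=\sum_{\ell\geq0}\barC_{2\ell,m}^0(Y_k^i)\alpha_{\ell,N,m}^k$ (and similarly with $\barC_{2\ell+1,m}^0$ in the odd case), perturbation theory for the tridiagonal matrices $M_{k,m}^{e}$, $M_{k,m}^{o}$ — whose diagonals are the distinct numbers $C(0,2\ell,m,k)$ plus an $O(c^2)$ term and whose off-diagonals are $O(c^2)$ — gives $\alpha_{\ell,N,m}^k=O(c^{2|\ell-N|})$ with $\alpha_{N,N,m}^k=1+O(c^2)$ (equivalently, this is what one gets by iterating Theorem \ref{thm: SL asymptotics}). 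On the Fourier side, the transform of $\barC_{n',m}^0(Y_k^i)$ is a nonzero constant multiple of $\xi^{n'}|\xi|^{-(m/2+n'+k)}J_{k+\frac m2+n'}(2\pi|\xi|)\,Y_k^i(\xi)$, and since $J_\nu(z)\sim(z/2)^\nu/\Gamma(\nu+1)$ as $z\to0$ this behaves near the origin like a nonzero constant times $|\xi|^{n'+k}Y_k^i(\xi/|\xi|)$. Substituting $\xi=c\eta$ and using the bounds on the $\alpha_{\ell,N,m}^k$, every summand of ${\mathcal F}\psi_{n,m}^{k,c,i}(c\eta)$ coming from an index $\ell\leq N$ contributes at the common order $c^{n+k}$ while all others are of strictly higher order in $c$, so
$${\mathcal F}\psi_{n,m}^{k,c,i}(c\eta)=c^{\,n+k}\,U_{n,k}(|\eta|)\,Y_k^i(\eta/|\eta|)\,(1+o(1)),\qquad|\eta|\leq1,$$
where $U_{n,k}$ is a polynomial whose top coefficient equals the (nonzero) leading term of $\alpha_{N,N,m}^k$ times the nonzero Bessel constant, hence $U_{n,k}\not\equiv0$. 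Integrating $|{\mathcal F}\psi_{n,m}^{k,c,i}|^2$ over $B(c)$ in polar coordinates, using $\|\psi_{n,m}^{k,c,i}\|_2=1$ and $\int_{S^{m-1}}|Y_k^i|^2=1$, gives
$$\lambda_{n,m}^{k,c}=\left(\int\limits_0^1U_{n,k}(t)^2\,t^{2k+m-1}\,dt\right)c^{\,2n+2k+m}\,(1+o(1))\qquad(c\to0),$$
with strictly positive leading coefficient.

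Finally, since the exponent $2n+2k+m$ increases by $2$ when $n$ is replaced by $n+1$, we obtain $\lambda_{n+1,m}^{k,c}/\lambda_{n,m}^{k,c}=O(c^2)\to0$, so $\lambda_{n+1,m}^{k,c}<\lambda_{n,m}^{k,c}$ for all small $c>0$; combined with the constancy of sign established in the first step, this yields $\lambda_{n+1,m}^{k,c}<\lambda_{n,m}^{k,c}$ for every $c>0$. I expect the main obstacle to be the bookkeeping in the middle step: verifying rigorously that the $O(c^2)$ and higher perturbations of the eigenfunction do not disturb the leading power $c^{2n+2k+m}$ and, in particular, that the leading coefficient is the positive number $\int_0^1U_{n,k}(t)^2t^{2k+m-1}\,dt$ rather than zero. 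This comes down to the uniform validity of the expansion of ${\mathcal F}\psi_{n,m}^{k,c,i}$ on the shrinking ball $B(c)$ (which follows from the continuity built into the proof of Theorem \ref{Continuity_Eigenvalues_c}) together with the analyticity of $J_\nu$ at the origin.
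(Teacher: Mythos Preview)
Your argument is correct and shares the paper's high-level strategy: establish the strict inequality for small $c$, then use the continuity of $c\mapsto\lambda_{n,m}^{k,c}$ (Theorem \ref{Continuity_Eigenvalues_c}) together with the distinctness of the $\lambda_{n,m}^{k,c}$ for fixed $k$ (Theorem \ref{Eigenvalues_Distinct}) and the intermediate value theorem to extend to all $c>0$. The difference lies entirely in how the small-$c$ inequality is obtained.

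The paper does \emph{not} compute the leading asymptotics of $\lambda_{n,m}^{k,c}$. Instead it derives, via the Clifford--Stokes theorem applied to the kernel $K_c$, an exact identity
\[
\lambda_{n,m}^{k,c}-\lambda_{n+1,m}^{k,c}=\lambda_{n,m}^{k,c}\Bigg[1+\frac{\int_{B(1)}\overline{\partial_y\psi_{n,m}^{k,c,i}}\,\psi_{n+1,m}^{k,c,i}\,dy}{\int_{B(1)}\overline{\psi_{n,m}^{k,c,i}}\,\partial_y\psi_{n+1,m}^{k,c,i}\,dy}\Bigg],
\]
and then uses only the \emph{first-order} expansion of Theorem \ref{thm: SL asymptotics} together with Lemma \ref{partial_Cl_sum_of_Cl} (the expansion of $\partial_x C_{n,m}^0(Y_k)$ in lower Clifford--Legendre polynomials) to show that the denominator tends to $4(n+1)(n+k+\tfrac m2)\neq0$ while the numerator tends to $0$ as $c\to0$. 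Hence the bracket exceeds $\tfrac12$ for small $c$ and the inequality follows.

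Your route, by contrast, produces the sharper statement $\lambda_{n,m}^{k,c}\sim C_n\,c^{\,2n+2k+m}$ with $C_n>0$, from which $\lambda_{n+1,m}^{k,c}/\lambda_{n,m}^{k,c}=O(c^2)$; this is more informative and avoids the Stokes-type manipulation entirely. The price is exactly the bookkeeping you flag: one needs the tridiagonal perturbation bound $\alpha_{\ell,N,m}^{k}=O(c^{2|\ell-N|})$ for \emph{all} $\ell$ (not just $|\ell-N|\le1$, which is what Theorem \ref{thm: SL asymptotics} literally gives), plus uniformity of the Bessel expansion on $B(c)$ and control of the tail of the infinite sum. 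These are standard but are not stated in the paper, whereas the paper's argument runs on results already proved there. Your observation that the top-degree coefficient of $U_{n,k}$ comes from $\ell=N$ alone and is nonzero is the right way to secure $C_n>0$ without computing the lower coefficients.
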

\begin{proof}
Let's assume
$$\lambda_{n,m}^{k,c}\psi_{n,m}^{k,c,i}(x)=\int\limits_{B(1)}K_{c}(x-y)\psi_{n,m}^{k,c,i}(y)dy,$$
and,
$$\lambda_{n+1,m}^{k,c}\psi_{n+1,m}^{k,c,i}(y)=\int\limits_{B(1)}K_{c}(y-x)\psi_{n+1,m}^{k,c,i}(x)dx.$$
Therefore,
\begin{equation}\label{nth prolate dirac}
\lambda_{n,m}^{k,c}\overline{\partial_{x}\psi_{n,m}^{k,c,i}(x)}=\int\limits_{B(1)}\partial_{x}K_{c}(x-y)\overline{\psi_{n,m}^{k,c,i}(y)}dy,
\end{equation}
and,
\begin{equation}\label{(n+1)th prolate dirac}
\lambda_{n+1,m}^{k,c}\partial_{y}\psi_{n+1,m}^{k,c,i}(y)=\int\limits_{B(1)}\partial_{y}(K_{c}(y-x))\psi_{n+1,m}^{k,c,i}(x)dx.
\end{equation}
multiply
\eqref{nth prolate dirac}
by
$\lambda_{n+1,m}^{k,c}\psi_{n+1,m}^{k,c,i}(x)$
from the right so we have
\begin{align}\label{multiply nth prolate dirac}
&\lambda_{n+1,m}^{k,c}\lambda_{n,m}^{k,c}\int\limits_{B(1)}\overline{\partial_{x}\psi_{n,m}^{k,c,i}(x)}\psi_{n+1,m}^{k,c,i}(x)dx\nonumber\\
&\qquad=\lambda_{n+1,m}^{k,c}\int\limits_{B(1)}\int\limits_{B(1)}(\overline{\psi_{n,m}^{k,c,i}(y)})(\overline{\partial_{x}(K_{c}(x-y))})\psi_{n+1,m}^{k,c,i}(x)dy\; dx\nonumber\\
&\qquad=\lambda_{n+1,m}^{k,c}\int\limits_{B(1)}\int\limits_{B(1)}(\overline{\psi_{n,m}^{k,c,i}(y)})(-\partial_{x}(K_{c}(x-y)))\psi_{n+1,m}^{k,c,i}(x)dy\; dx\nonumber\\
&\qquad=\lambda_{n+1,m}^{k,c}\int\limits_{B(1)}\int\limits_{B(1)}(\overline{\psi_{n,m}^{k,c,i}(y)})(\partial_{y}(K_{c}(x-y)))\psi_{n+1,m}^{k,c,i}(x)dy\; dx\nonumber\\
&\qquad=-\lambda_{n+1,m}^{k,c}\int\limits_{B(1)}\int\limits_{B(1)}(\overline{\psi_{n,m}^{k,c,i}(y)})(\partial_{y}(K_{c}(y-x)))\psi_{n+1,m}^{k,c,i}(x)dy\; dx.\;\;\;\;\;\;\;\;\;\;\;\;   
\end{align}
where we've used the fact that
$\partial_{x}S(-x)=-\partial_{x}S(x)$.
Now, we multiply 
\eqref{(n+1)th prolate dirac}
by
$\lambda_{n,m}^{k,c}\psi_{n,m}^{k,c,i}(y)$
from the left side. So
\begin{equation}\label{multiply (n+1)th prolate dirac}
\lambda_{n+1,m}^{k,c}\lambda_{n,m}^{k,c}\int\limits_{B(1)}\overline{\psi_{n,m}^{k,c,i}(y)}(\partial_{y}\psi_{n+1,m}^{k,c,i}(y))\,dy=\lambda_{n,m}^{k,c}\int\limits_{B(1)}\int\limits_{B(1)}(\overline{\psi_{n,m}^{k,c}(y)})(\partial_{y}(K_{c}(y-x)))\psi_{n+1,m}^{k,c}(x)dx\,dy
\end{equation}
Now we use \eqref{multiply (n+1)th prolate dirac} and \eqref{multiply nth prolate dirac},
\begin{align*}
\lambda_{n+1,m}^{k,c}\lambda_{n,m}^{k,c}&\int\limits_{B(1)}\big[\overline{\psi_{n,m}^{k,c,i}(y)}(\partial_{y}\psi_{n+1,m}^{k,c,i}(y))+(\overline{\partial_{y}\psi_{n,m}^{k,c,i}(y)})\psi_{n+1,m}^{k,c,i}(y)\,dy\big]\hspace*{4cm}\\
&=[\lambda_{n,m}^{k,c}-\lambda_{n+1,m}^{k,c}]\int\limits_{B(1)}\int\limits_{B(1)}(\overline{\psi_{n,m}^{k,c,i}(y)})(\partial_{y}(K_{c}(y-x)))\psi_{n+1,m}^{k,c,i}(x)dx\,dy\\
&=[\lambda_{n,m}^{k,c}-\lambda_{n+1,m}^{k,c}]\lambda_{n+1,m}^{k,c}\int\limits_{B(1)}(\overline{\psi_{n,m}^{k,c,i}(y)})(\partial_{y}\psi_{n+1,m}^{k,c,i}(y))dy,
\end{align*}
By \eqref{psi asympt}, lemma \ref{partial_Cl_sum_of_Cl}, and the orthogonality of Clifford Legendre polynomials, we can see that 
\begin{equation}\label{invertible_partial_psi_psi}
\int\limits_{B(1)}(\overline{\psi_{n,m}^{k,c,i}(y)})(\partial_{y}\psi_{n+1,m}^{k,c,i}(y))dy=4(n+1)(n+k+\frac{m}{2})+O(c^2),
\end{equation}
The integral on the left hand side of \eqref{invertible_partial_psi_psi} is real, and is nonzero for sufficiently small $c$. Therefore we may write
\begin{equation}\label{equation_lambda_decrease_fraction_Cl}
\lambda_{n,m}^{k,c}-\lambda_{n+1,m}^{k,c}=\lambda_{n,m}^{k,c}\bigg[1+\frac{\int\limits_{B(1)}(\overline{\partial_{y}\psi_{n,m}^{k,c,i}(y)})\psi_{n+1,m}^{k,c,i}(y)\;dy}{\int\limits_{B(1)}(\overline{\psi_{n,m}^{k,c,i}(y)})(\partial_{y}\psi_{n+1,m}^{k,c,i}(y))dy}\bigg].
\end{equation}
Applying  Lemma \ref{partial_Cl_sum_of_Cl} and the orthogonality of the Clifford-Legendre polynomials on $B(1)$, we see that the numerator in the fraction on the right hand side of (\ref{equation_lambda_decrease_fraction_Cl}) goes to zero as $c\to 0$. Hence, for $c$ sufficiently small, from (\ref{equation_lambda_decrease_fraction_Cl}) we have 
$$\lambda_{n,m}^{k,c}-\lambda_{n+1,m}^{k,c}>\frac{\lambda_{m,n}^{k,c}}{2}>0,$$ 
and we conclude that for $c$ sufficiently small, \begin{equation}\label{Eigenvalue_QPc_Decrease}
\lambda_{n,m}^{k,c}>\lambda_{n+1,m}^{k,c}>0.
\end{equation}
Suppose now that for some $c_1>0$ and some non-negative integer $n$,  $\lambda_{n,m}^{k,c_1}>\lambda_{n+1,m}^{k,c_1}$. We know that there is a (small) values $c_0>0$ for which $\lambda_{n,m}^{k,c_0}>\lambda_{n+1,m}^{k,c_0}$.  Since the eigenvalues $\lambda_{n,m}^{k,c}$ are continuous functions of $c$, by the Intermediate Value Theorem, there exists $c_2\in (c_0,c_1)$ for which $\lambda_{n,m}^{k,c_1}=\lambda_{n+1,m}^{k,c_1}$, which contradicts Theorem \ref{Eigenvalues_Distinct}. This completes the proof.
\end{proof}
Because of \eqref{relation_eigenvalues}, we have a similar conclusion for the absolute values of eigenvalues of $\mathcal{G}_{c}.$
\begin{Corollary}\label{mu_decreases_as_n_increases}
	For a fixed $k$ the eigenvalues of the $\mathcal{G}_{c},$ $\mu_{N,m}^{k,c}$ are non-degenerate, and 
	$$\vert \mu_{0,m}^{k,c}\vert>\vert \mu_{1,m}^{k,c}\vert>\cdots>\vert\mu_{n,m}^{k,c}\vert>\vert\mu_{n+1,m}^{k,c}\vert>\cdots .$$
\end{Corollary}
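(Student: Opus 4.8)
The plan is to deduce the statement directly from the relation \eqref{relation_eigenvalues} between the eigenvalues of $\mathcal{G}_c$ and those of $QP_c$, together with the monotonicity and distinctness results already established for the latter. Recall that each CPSWF $\psi_{n,m}^{k,c,i}$ is simultaneously an eigenfunction of $L_c$, of $QP_c$ (equivalently $P_cQ$) and of $\mathcal{G}_c$, so the eigenvalue families $\{\chi_{n,m}^{k,c}\}$, $\{\lambda_{n,m}^{k,c}\}$ and $\{\mu_{n,m}^{k,c}\}$ are indexed consistently by $(n,m,k,c)$; and from the computation that produced \eqref{relation_eigenvalues} one has $|\mu_{n,m}^{k,c}|^2 = c^{-m}\lambda_{n,m}^{k,c}$, i.e. $|\mu_{n,m}^{k,c}| = c^{-m/2}\sqrt{\lambda_{n,m}^{k,c}}$.

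First I would handle non-degeneracy. Fix $k\geq 0$ and a basis monogenic $Y_k^i$ for $M_l^+(k)$. By Theorem \ref{Eigenvalues_Distinct} the reals $\{\lambda_{n,m}^{k,c}\}_{n=0}^\infty$ are pairwise distinct, hence the nonnegative reals $|\mu_{n,m}^{k,c}| = c^{-m/2}\sqrt{\lambda_{n,m}^{k,c}}$ are pairwise distinct, and therefore the complex numbers $\{\mu_{n,m}^{k,c}\}_{n=0}^\infty$ are pairwise distinct; that is, for fixed $k$ the map $n\mapsto\mu_{n,m}^{k,c}$ is injective, which is exactly the asserted non-degeneracy.

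Next I would establish the chain of strict inequalities. The preceding theorem gives $\lambda_{n,m}^{k,c} > \lambda_{n+1,m}^{k,c} > 0$ for all $n\geq 0$. Since $t\mapsto c^{-m/2}\sqrt{t}$ is strictly increasing on $[0,\infty)$, applying it preserves these inequalities and yields $|\mu_{n,m}^{k,c}| > |\mu_{n+1,m}^{k,c}| > 0$; concatenating over $n = 0,1,2,\dots$ then produces the displayed chain $|\mu_{0,m}^{k,c}| > |\mu_{1,m}^{k,c}| > \cdots > |\mu_{n,m}^{k,c}| > |\mu_{n+1,m}^{k,c}| > \cdots$.

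I do not anticipate a genuine obstacle, since this is essentially a corollary of the previous results; the only point requiring a modicum of care is the bookkeeping — verifying that the index $n$ attached to $\mu_{n,m}^{k,c}$ is the same $n$ that labels $\lambda_{n,m}^{k,c}$. This is guaranteed because both eigenvalues are read off from the single function $\psi_{n,m}^{k,c,i}$, which is an eigenfunction of $L_c$, of $QP_c$ and of $\mathcal{G}_c$ at once, so the ordering of the $\lambda$'s transfers verbatim to the $|\mu|$'s.
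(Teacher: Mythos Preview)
Your proposal is correct and matches the paper's own argument: the corollary is stated immediately after the monotonicity theorem for $\{\lambda_{n,m}^{k,c}\}$ with the one-line justification ``Because of \eqref{relation_eigenvalues}'', i.e.\ exactly the transfer via $|\mu_{n,m}^{k,c}|^2=c^{-m}\lambda_{n,m}^{k,c}$ that you spell out. Your additional remark invoking Theorem~\ref{Eigenvalues_Distinct} for non-degeneracy and your bookkeeping check on the shared index $n$ are appropriate and do not deviate from the intended route.
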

\begin{Th}
The eigenvalues of $\mathcal{G}_{c}$ are given by
\begin{equation}\label{calculation_eigenvalue_even_CPSWFs}
\mu_{2N,m}^{k,c}=\frac{\alpha_{0,N}^{k}(-1)^{m-1}\sqrt{2k+m}\,i^{k}\pi^{k+\frac{m}{2}}c^{k}}{\Gamma(k+\frac{m}{2}+1)P_{N,m}^{k,c}(0)},
\end{equation}
and
\begin{equation}\label{calculation_eigenvalue_odd_CPSWFs}
\mu_{2N+1,m}^{k,c}=\frac{\beta_{0,N}^{k}(-1)^{m}\sqrt{2k+2+m}\,i^{k+1}\pi^{k+\frac{m}{2}+1}c^{k+1}}{\Gamma(k+\frac{m}{2}+2)Q_{N,m}^{k,c}(0)}.
\end{equation}
where $\alpha_{0,N}^{k}$ and $\beta_{0,N}^{k}$ have been introduced in \eqref{eigenfunction form} and $P_{N,m}^{k,c}(x),$ and $Q_{N,m}^{k,c}(x)$ are the radial parts of the even and odd CPSWFs respectively, at proposition \ref{radial_times_monogenic_for_even}.
\end{Th}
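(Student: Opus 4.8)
The plan is to read off $\mu_{2N,m}^{k,c}$ (and $\mu_{2N+1,m}^{k,c}$) by evaluating the eigenvalue identity $\mathcal{G}_{c}\psi_{2N,m}^{k,c,i}=\mu_{2N,m}^{k,c}\psi_{2N,m}^{k,c,i}$ near the origin, using the Clifford--Legendre expansion of the CPSWF together with the Fourier transform of the Clifford--Legendre polynomials stated above.

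First I would note that, since every CPSWF is supported on $B(1)$, the finite Fourier transform acts as a rescaled ordinary Fourier transform: for $f$ supported in $B(1)$,
$$\mathcal{G}_{c}f(x)=\chi_{B(1)}(x)\int\limits_{\mathbb{R}^{m}}e^{2\pi ic\langle x,y\rangle}f(y)\,dy=\chi_{B(1)}(x)\,\mathcal{F}f(-cx).$$
Writing $\psi_{2N,m}^{k,c,i}=\sum_{j\geq 0}\barC_{2j,m}^{0}(Y_{k}^{i})\,\alpha_{j,N}^{k}$ in terms of the normalised Clifford--Legendre polynomials (so that the constant $\tfrac{\sqrt{2k+4j+m}}{2^{2j}(2j)!}$ enters) and applying $\mathcal{G}_{c}$ termwise, the quoted formula for $\mathcal{F}(C_{n,m}^{0}(Y_{k}))$, together with the reductions $(-cx)^{2j}=(-1)^{j}c^{2j}|x|^{2j}$ and $Y_{k}^{i}(-cx)=(-1)^{k}c^{k}Y_{k}^{i}(x)$, gives
$$\mathcal{G}_{c}\psi_{2N,m}^{k,c,i}(x)=\frac{i^{k}}{c^{m/2}}\,\frac{Y_{k}^{i}(x)}{|x|^{k+m/2}}\sum_{j\geq 0}\sqrt{2k+4j+m}\;J_{k+\frac{m}{2}+2j}(2\pi c|x|)\,\alpha_{j,N}^{k}$$
on $B(1)$, the odd case being identical except for a trailing factor $x$, the exponent $|x|^{-(k+m/2+1)}$, coefficients $\beta_{j,N}^{k}$, and Bessel orders $k+\frac{m}{2}+2j+1$. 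Interchanging $\mathcal{G}_{c}$ with the infinite sum (and, below, passing to the pointwise limit at $x=0$) is legitimate because the coefficients are square-summable and the Bessel functions are uniformly bounded near the origin, giving locally uniform convergence.

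Next I would equate both sides of the eigenvalue identity, cancel $Y_{k}^{i}(x)$ (resp. $xY_{k}^{i}(x)$), and let $x\to 0$. On the right one gets $\mu_{2N,m}^{k,c}P_{N,m}^{k,c}(0)$ (resp. $\mu_{2N+1,m}^{k,c}Q_{N,m}^{k,c}(0)$), using $\psi_{2N,m}^{k,c,i}=P_{N,m}^{k,c}(|x|^{2})Y_{k}^{i}(x)$ and $\psi_{2N+1,m}^{k,c,i}=Q_{N,m}^{k,c}(|x|^{2})xY_{k}^{i}(x)$ from Propositions \ref{radial_times_monogenic_for_even} and \ref{radial_times_monogenic_for_odd}. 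On the left, the small-argument asymptotics $J_{\nu}(z)\sim (z/2)^{\nu}/\Gamma(\nu+1)$ show that the $j$-th summand is $O(|x|^{2j})$, so only $j=0$ survives; evaluating it yields $\dfrac{\sqrt{2k+m}\,i^{k}\pi^{k+m/2}c^{k}}{\Gamma(k+\frac{m}{2}+1)}\alpha_{0,N}^{k}$ in the even case and the analogous quantity (with $k$ replaced by $k+1$ in the $\pi$-power and the $\Gamma$-factor, and $\alpha$ replaced by $\beta$) in the odd case. Solving for $\mu$ then gives (\ref{calculation_eigenvalue_even_CPSWFs}) and (\ref{calculation_eigenvalue_odd_CPSWFs}).

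The main obstacle is the constant and sign bookkeeping: one must track the powers of $i$ coming from $i^{n+k}$ in the Fourier-transform formula, the Clifford-algebra signs arising from $x^{n}$ and $Y_{k}(-cx)$, the normalisation constant relating $C_{n,m}^{0}$ and $\barC_{n,m}^{0}$, and the sign produced by the plane-wave (Funk--Hecke) expansion on $S^{m-1}$ hidden in the Fourier-transform formula; these sign contributions are what ultimately yield the factors $(-1)^{m-1}$ and $(-1)^{m}$ in the statement. Everything else reduces to routine Bessel-function manipulation.
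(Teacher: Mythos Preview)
Your proposal is correct and follows essentially the same route as the paper: expand the CPSWF in the normalised Clifford--Legendre basis, apply the finite Fourier transform termwise to obtain a Bessel-function series (the paper writes this as equation \eqref{finite_Fourier_Cl_Polynomial}, citing the analogue of Theorem~4.6 in \cite{propertiesofcliffordlegendre}), and then read off $\mu$ by letting $x\to 0$ so that only the $j=0$ term survives, dividing by $P_{N,m}^{k,c}(0)$ (resp.\ $Q_{N,m}^{k,c}(0)$). The paper, like you, leaves the detailed sign bookkeeping that produces $(-1)^{m-1}$ and $(-1)^{m}$ to the cited reference rather than tracking it explicitly in the proof.
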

\begin{proof}
Using the same steps in Theorem 4.6 in \cite{propertiesofcliffordlegendre}, we can see that
\begin{equation}\label{finite_Fourier_Cl_Polynomial}
\mathcal{G}_{c}(\overline{C_{2N,m}^{0}(Y_{k}^{j})})(\xi)=(-1)^{m-1}i^{k}\sqrt{2k+4N+m}\frac{J_{k+\frac{m}{2}+2N}(2\pi c\vert\xi\vert)}{\vert \xi\vert^{\frac{m}{2}+k}c^{\frac{m}{2}}}Y_{k}^{j}(\xi).
\end{equation}
Since $\mathcal{G}_{c}(\psi_{2N,m}^{k,c,j})(\xi)=\mu_{2N,m}^{k,c} \psi_{2N,m}^{k,c,j}(\xi)$ for all $\xi\in B(1)$ 
so we compute $\mu_{2N,m}^{k,c}=\lim\limits_{\xi\to 0}\frac{\mathcal{G}_{c}(\psi_{2N,m}^{k,c,j})(\xi)}{\psi_{2N,m}^{k,c,j}(\xi)}.$ From \eqref{finite_Fourier_Cl_Polynomial} and the fact that 
$$J_{\nu}(x)=\sum\limits_{j=0}^{\infty}\frac{(-1)^{j}}{j!\Gamma(j+\nu+1)}(\frac{x}{2})^{2j+\nu}, $$
we see that
$\mathcal{G}_{c}(\psi_{2N,m}^{k,c,j})(\xi)=\big[\frac{\alpha_{0,N}^{k}(-1)^{m-1}\sqrt{2k+m}i^{k}\pi^{k+\frac{m}{2}}c^{k}}{\Gamma(k+\frac{m}{2}+1)}+O(\vert \xi\vert^{2})\big]Y_{k}^{i}(\xi)$
and
$\psi_{2N,m}^{k,c,j}(\xi)=P_{N,m}^{k,c}(\vert \xi\vert^{2})Y_{k}^{i}(\xi).$ So
\begin{align*}
\frac{\mathcal{G}_{c}(\psi_{2N,m}^{k,c,j})(\xi)}{\psi_{2N,m}^{k,c,j}(\xi)}&=\frac{\big[\alpha_{0,N}^{k}(-1)^{m-1}\sqrt{2k+m}i^{k}\pi^{k+\frac{m}{2}}c^{k}{\Gamma(k+\frac{m}{2}+1)}+O(\vert \xi\vert^{2})\big]Y_{k}^{i}(\xi)}{P_{N,m}^{k,c}(\vert \xi\vert^{2})Y_{k}^{i}(\xi)}\\
&\to \frac{\alpha_{0,N}^{k}(-1)^{m-1}\sqrt{2k+m}\,i^{k}\pi^{k+\frac{m}{2}}c^{k}}{\Gamma(k+\frac{m}{2}+1)P_{N,m}^{k,c}(0)}.
\end{align*}
as $\xi\to 0.$ Hence we can get \eqref{calculation_eigenvalue_even_CPSWFs}. The calculation of the $\mu_{2N+1,m}^{k,c}$ is similar.

\end{proof}
\begin{Th}\label{mu_k_even_equal_k_minus_one_odd}
The eigenvalues of the $\mathcal{G}_{c},$ i.e, $\mu_{n,m}^{k,c}$ enjoy the following relationship
\begin{equation}\label{Relationship_Eigenvalues_even_odd}
\mu_{2N,m}^{k,c}=\mu_{2N+1,m}^{k-1,c}.
\end{equation}
\end{Th}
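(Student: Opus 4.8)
The plan is to read the identity directly off the explicit eigenvalue formulas (\ref{calculation_eigenvalue_even_CPSWFs}) and (\ref{calculation_eigenvalue_odd_CPSWFs}) just obtained. First I substitute $k\mapsto k-1$ in (\ref{calculation_eigenvalue_odd_CPSWFs}): since $2(k-1)+2+m=2k+m$, $i^{(k-1)+1}=i^{k}$, $\pi^{(k-1)+\frac m2+1}=\pi^{k+\frac m2}$, $c^{(k-1)+1}=c^{k}$ and $\Gamma((k-1)+\frac m2+2)=\Gamma(k+\frac m2+1)$, one obtains
\[
\mu_{2N+1,m}^{k-1,c}=\frac{\beta_{0,N}^{k-1}\,(-1)^{m}\sqrt{2k+m}\,i^{k}\pi^{k+\frac m2}c^{k}}{\Gamma(k+\frac m2+1)\,Q_{N,m}^{k-1,c}(0)},
\]
which carries exactly the same $c$-, $\pi$-, $i$-, $\Gamma$- and radical factors as $\mu_{2N,m}^{k,c}$ in (\ref{calculation_eigenvalue_even_CPSWFs}). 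Using $(-1)^{m-1}=-(-1)^{m}$ (and that $P_{N,m}^{k,c}(0)$, $Q_{N,m}^{k-1,c}(0)$ are nonzero, as both eigenvalue formulas require), the asserted equality $\mu_{2N,m}^{k,c}=\mu_{2N+1,m}^{k-1,c}$ is therefore equivalent to
\[
\alpha_{0,N}^{k}\,Q_{N,m}^{k-1,c}(0)=-\,\beta_{0,N}^{k-1}\,P_{N,m}^{k,c}(0).
\]

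Next I eliminate the coefficients. By Remark \ref{Relationship_Coefficients_CPSWFs} the banded matrices satisfy $M_{k-1,m}^{o}=M_{k,m}^{e}+(4(k-1)+2m)I$, so they share eigenvectors; hence the eigenvector $\beta_{N}^{k-1}$ of $M_{k-1,m}^{o}$ coincides with the eigenvector $\alpha_{N}^{k}$ of $M_{k,m}^{e}$, i.e.\ $\beta_{i,N,m}^{k-1}=\alpha_{i,N,m}^{k}$ for all $i\geq 0$ (and correspondingly $\chi_{2N+1,m}^{k-1,c}=\chi_{2N,m}^{k,c}+2(m+2k-2)$). Taking $i=0$ reduces the displayed identity to $Q_{N,m}^{k-1,c}(0)=-P_{N,m}^{k,c}(0)$. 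Expanding $\psi_{2N,m}^{k,c,i}$ and $\psi_{2N+1,m}^{k-1,c,i}$ in the Clifford--Legendre basis as in (\ref{CPSWF def}), writing $C_{2i,m}^{0}(Y_k)(x)=P_{i,k,m}(|x|^{2})Y_k(x)$ and $C_{2i+1,m}^{0}(Y_{k-1})(x)=Q_{i,k-1,m}(|x|^{2})\,xY_{k-1}(x)$, and invoking $\beta_{i,N,m}^{k-1}=\alpha_{i,N,m}^{k}$ once more, it suffices (with the normalisations of (\ref{normalized C_L}) used consistently) to prove the purely polynomial identity
\[
Q_{i,k-1,m}(0)=-P_{i,k,m}(0)\qquad\text{for every }i\geq 0 .
\]

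This last identity is the computational heart of the argument and the step I expect to be the main obstacle. I would prove it from the Rodrigues formula $C_{n,m}^{0}(Y_k)(x)=\partial_x^{n}\big[(1+x^{2})^{n}Y_k(x)\big]$ by isolating the lowest-degree homogeneous part of each Clifford--Legendre polynomial: expanding $(1+x^{2})^{n}Y_k=\sum_{j}\binom{n}{j}x^{2j}Y_k$ and applying $\partial_x^{n}$ term by term via Lemma \ref{diracderivativelemma} (recall $\partial_xY_k=0$), the unique surviving term of minimal degree comes from $j=\lceil n/2\rceil$, equalling $P_{i,k,m}(0)\,Y_k(x)$ when $n=2i$ and $Q_{i,k-1,m}(0)\,xY_{k-1}(x)$ when $n=2i+1$; the two constants are telescoping products of the factors supplied by Lemma \ref{diracderivativelemma}, and after dividing by the normalising factors of (\ref{normalized C_L}) both reduce to $\sqrt{2k+4i+m}\;\Gamma(i+k+\frac m2)/\big(i!\,\Gamma(k+\frac m2)\big)$ up to sign, with opposite signs. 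This yields the claim, hence the theorem. As an independent check (and an alternative to the coefficient bookkeeping) one may observe, via Lemmas \ref{even_radial_orthogonal_without_change_variable} and \ref{odd_radial_orthogonal_without_change_variable} together with the shift $\chi_{2N+1,m}^{k-1,c}=\chi_{2N,m}^{k,c}+2(m+2k-2)$, that $P_{N,m}^{k,c}$ and $Q_{N,m}^{k-1,c}$ satisfy one and the same singular Sturm--Liouville equation on $[0,1]$ with common weight $t^{k+\frac m2-1}$; being its eigenfunction regular at both endpoints they are proportional, the constant has modulus $1$ by the shared $L^{2}$-normalisation, and the lowest-degree computation above pins its sign to $-1$.
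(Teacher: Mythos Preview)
Your approach is essentially the paper's: compare the explicit formulas (\ref{calculation_eigenvalue_even_CPSWFs}) and (\ref{calculation_eigenvalue_odd_CPSWFs}) after the shift $k\mapsto k-1$, invoke Remark \ref{Relationship_Coefficients_CPSWFs} to get $\alpha_{0,N}^{k}=\beta_{0,N}^{k-1}$, and then appeal to the identity between the radial parts at the origin. The only real difference is that the paper obtains the last step by citing the normalized version of Theorem~3.12 of \cite{propertiesofcliffordlegendre}, whereas you supply a self-contained derivation via the Rodrigues formula and Lemma~\ref{diracderivativelemma} (and an alternative Sturm--Liouville check); you also track the $(-1)^{m-1}$ versus $(-1)^{m}$ sign explicitly, which the paper glosses over.
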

\begin{proof}
For the proof, we need to say that \eqref{calculation_eigenvalue_even_CPSWFs}, and \eqref{calculation_eigenvalue_odd_CPSWFs} for $k-1,$ are equal. From the Remark \ref{Relationship_Coefficients_CPSWFs}, we can see that $\alpha_{0,N}^{k}=\beta_{0,N}^{k-1}.$ Also, by the normalized version of Theorem 3.12 in \cite{propertiesofcliffordlegendre}, we can see that $P_{N,m}^{k,c}(0)=Q_{N,m}^{k-1,c}(0).$ Therefore, we can conclude the \eqref{Relationship_Eigenvalues_even_odd}.
\end{proof}
\begin{Th}\label{Theorem_eigenvalues_decreases_as_k_increases}
	For a fixed 
	$n,$
	the absolute value of the eigenvalues of the 
	$\mathcal{G}_{c},$i.e.,
	$\mu_{n,m}^{k,c}$
	are decreasing
	in terms of 
	$k=0,1,2,3,\cdots.$i.e,
	\begin{equation}\label{eigenvalues_decreases_as_k_increases}
	\vert\mu_{n,m}^{0,c}\vert>\vert\mu_{n,m}^{1,c}\vert>\vert\mu_{n,m}^{2,c}\vert>\cdots .
	\end{equation}
\end{Th}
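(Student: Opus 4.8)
The plan is to deduce \eqref{eigenvalues_decreases_as_k_increases} directly from two facts already in hand: the index-shift identity of Theorem \ref{mu_k_even_equal_k_minus_one_odd}, namely $\mu_{2N,m}^{k,c}=\mu_{2N+1,m}^{k-1,c}$ (equivalently, after replacing $k$ by $k+1$, $\mu_{2N+1,m}^{k,c}=\mu_{2N,m}^{k+1,c}$), and the strict decay in the order $n$ for fixed $k$ given by Corollary \ref{mu_decreases_as_n_increases}, i.e.\ $|\mu_{n,m}^{k,c}|>|\mu_{n+1,m}^{k,c}|$ for every $n\ge 0$. No new analytic input is needed; the argument is just a careful bookkeeping of indices, handled separately for $n$ even and $n$ odd. (Strict positivity of each $|\mu_{n,m}^{k,c}|$, which is implicit throughout, follows from $|\mu_{n,m}^{k,c}|^2=c^{-m}\lambda_{n,m}^{k,c}$ together with the positivity of the eigenvalues of $QP_c$ established earlier.)

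First I would treat the even case $n=2N$. For any $k\ge 1$, apply Corollary \ref{mu_decreases_as_n_increases} at angular index $k-1$ with the consecutive orders $2N<2N+1$ to obtain $|\mu_{2N,m}^{k-1,c}|>|\mu_{2N+1,m}^{k-1,c}|$, and then invoke Theorem \ref{mu_k_even_equal_k_minus_one_odd} to rewrite the right-hand side as $|\mu_{2N,m}^{k,c}|$. This gives $|\mu_{2N,m}^{k-1,c}|>|\mu_{2N,m}^{k,c}|$ for every $k\ge 1$, i.e.\ the sequence $k\mapsto|\mu_{2N,m}^{k,c}|$ is strictly decreasing, starting from $k=0$.

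Next I would handle the odd case $n=2N+1$. Using the shifted form $\mu_{2N+1,m}^{k,c}=\mu_{2N,m}^{k+1,c}$ (valid for all $k\ge 0$) we get $|\mu_{2N+1,m}^{k,c}|=|\mu_{2N,m}^{k+1,c}|$, and since $j\mapsto|\mu_{2N,m}^{j,c}|$ has just been shown to be strictly decreasing, the chain
$$|\mu_{2N+1,m}^{k,c}|=|\mu_{2N,m}^{k+1,c}|>|\mu_{2N,m}^{k+2,c}|=|\mu_{2N+1,m}^{k+1,c}|$$
shows that $k\mapsto|\mu_{2N+1,m}^{k,c}|$ is strictly decreasing as well. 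Combining the two cases, for every fixed $n$ the sequence $\big(|\mu_{n,m}^{k,c}|\big)_{k\ge 0}$ is strictly decreasing, which is exactly \eqref{eigenvalues_decreases_as_k_increases}.

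The only point requiring care — and the closest thing to an obstacle — is the edge behaviour of the index shift: Theorem \ref{mu_k_even_equal_k_minus_one_odd} links $\mu_{2N,m}^{k,c}$ to $\mu_{2N+1,m}^{k-1,c}$, so in the even case it may be invoked only for $k\ge 1$, which is precisely enough to run the monotonicity of $k\mapsto|\mu_{2N,m}^{k,c}|$ from $k=0$ onward, while in the odd case the shifted identity holds for all $k\ge 0$. (The statement could alternatively be read off from the explicit formulas \eqref{calculation_eigenvalue_even_CPSWFs}--\eqref{calculation_eigenvalue_odd_CPSWFs}, but that route forces one to control the $k$- and $c$-dependence hidden in $P_{N,m}^{k,c}(0)$ and $Q_{N,m}^{k,c}(0)$, so the argument via Theorem \ref{mu_k_even_equal_k_minus_one_odd} and Corollary \ref{mu_decreases_as_n_increases} is preferable.)
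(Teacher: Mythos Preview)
Your proposal is correct and essentially mirrors the paper's own proof: both split into even and odd $n$, and both rely on exactly the two ingredients you name, Theorem \ref{mu_k_even_equal_k_minus_one_odd} and Corollary \ref{mu_decreases_as_n_increases}. The only cosmetic difference is in the odd case: the paper argues directly via $|\mu_{2N+1,m}^{k-1,c}|=|\mu_{2N,m}^{k,c}|>|\mu_{2N+1,m}^{k,c}|$, whereas you route through the already-established even-case monotonicity; the two are logically equivalent and equally short.
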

\begin{proof}
	Let's assume $n=2N.$ From \eqref{Relationship_Eigenvalues_even_odd} we know that for any $k=1,2,3,\dots$ 
	$$\mu_{2N,m}^{k,c}=\mu_{2N+1,m}^{k-1,c},$$
	On the other side, from corollary \ref{mu_decreases_as_n_increases} we know that $\vert\mu_{2N,m}^{k-1,c}\vert>\vert\mu_{2N+1,m}^{k-1,c}\vert,$ Therefore we can conclude that for $k=1,2,3,\dots .$
	\begin{equation}\label{even_eigenvalues_decreases_as_k_increases}
	\vert\mu_{2N,m}^{k-1,c}\vert > \vert\mu_{2N,m}^{k,c}\vert .
	\end{equation}
	Now, let's assume that $n=2N+1.$ From corollary \ref{mu_decreases_as_n_increases} we know that $\vert\mu_{2N,m}^{k,c}\vert > \vert\mu_{2N+1,m}^{k,c}\vert,$ on the other side from \eqref{Relationship_Eigenvalues_even_odd} we know that for any $k=1,2,3,\dots$
	$$\mu_{2N+1,m}^{k-1,c}=\mu_{2N,m}^{k,c},$$
	Therefore we can conclude that for $k=1,2,3,\dots .$
	\begin{equation}\label{odd_eigenvalues_decreases_as_k_increases}
	\vert\mu_{2N+1,m}^{k-1,c}\vert > \vert\mu_{2N+1,m}^{k,c}\vert .
	\end{equation}
	So from \eqref{even_eigenvalues_decreases_as_k_increases}, and, \eqref{odd_eigenvalues_decreases_as_k_increases}, we can conclude \eqref{eigenvalues_decreases_as_k_increases}.
\end{proof}
Because of corollary \ref{mu_decreases_as_n_increases} and theorem \ref{Theorem_eigenvalues_decreases_as_k_increases}, we can see that for $n\leq n',$ and, $ k\leq k',$
$$ \vert \mu_{n,m}^{k,c}\vert \geq \vert \mu_{n',m}^{k',c}\vert. $$ 
This means 
$$ \vert \mu_{0,m}^{0,c}\vert \geq \vert \mu_{n',m}^{k',c}\vert, $$
for all $k'\geq 0,$ and $n'\geq 0.$ From \eqref{relation_eigenvalues} we can conclude that
$$ \lambda_{0,m}^{0,c} \geq \lambda_{n,m}^{k,c}, $$
if  $(n,k)\neq (0,0)$.

This will be helpful in proving the Spectral Concentration problem completely.
\begin{Remark}
As in one dimension PSWFs, CPSWFs enjoys the dual orthogonality feature and also they are the solution of Spectral Concentration problem. In fact, the way that we constructed $\psi_{n,m}^{k,c,i}(x),$ they are orthonormal spatial-limited functions in $B(1)$. By defining, $\varphi_{n,m}^{k,c}(x)=P_{c}\psi_{n,m}^{k,c,i}(x),$ we obtain the band-limited version of CPSWFs which are orthogonal in $\mathbb{R}^{m}.$ If we assume that $\varphi_{n,m}^{k,c,i}(x)$ are band-limited CPSWFs which are orthogonal on $\mathbb{R}^{m}$ then we can see that 
$\tilde{\varphi}_{n,mw}^{k,c,i}(x)=\frac{1}{\sqrt{\lambda_{n,m}^{k,c}}}\varphi_{n,m}^{k,c,i}(x)$ are orthonormal in $\mathbb{R}^{m}$ and orthogonal in $B(1).$ In fact we have that
\begin{align*}
\langle \tilde{\varphi}_{n,m}^{k,c,i},\tilde{\varphi}_{n',m}^{k,c,i} \rangle_{L^{2}(\mathbb{R}^{m})}
& = \langle ({\lambda_{n,m}^{k,c}})^{-1/2}P_{c}\psi_{n,m}^{k,c,i} , ({\lambda_{n,m}^{k,c}})^{-1/2} P_{c}\psi_{n',m}^{k,c,i} \rangle_{L^{2}(\mathbb{R}^{m})} \\
& = ({\lambda_{n,m}^{k,c}})^{-1/2}({\lambda_{n',m}^{k,c}})^{-1/2}\langle P_{c}\psi_{n,m}^{k,c,i} ,  \psi_{n',m}^{k,c,i} \rangle_{L^{2}(\mathbb{R}^{m})}\\
& = ({\lambda_{n,m}^{k,c}})^{-1/2}({\lambda_{n',m}^{k,c}})^{-1/2}\langle QP_{c}\psi_{n,m}^{k,c,i} ,  \psi_{n',m}^{k,c,i} \rangle_{L^{2}(B(1))} \\
& =(\lambda_{n,m}^{k,c})^{1/2}({\lambda_{n',m}^{k,c}})^{-1/2}\langle \psi_{n,m}^{k,c,i} ,  \psi_{n',m}^{k,c,i} \rangle_{L^{2}(B(1))} =\delta_{nn'}.
\end{align*}
Also,
\begin{align*}
\langle \tilde{\varphi}_{n,m}^{k,c,i},\tilde{\varphi}_{n',m}^{k,c,i} \rangle_{L^{2}(B(1))} & = \langle Q\tilde{\varphi}_{n,m}^{k,c,i},P_{c}\tilde{\varphi}_{n',m}^{k,c,i} \rangle_{L^{2}(\mathbb{R}^{m})}\\
& = \langle P_{c} Q\tilde{\varphi}_{n,m}^{k,c,i},\tilde{\varphi}_{n',m}^{k,c,i} \rangle_{L^{2}(\mathbb{R}^{m})}\\
& = \lambda_{n,m}^{k,c} \langle \tilde{\varphi}_{n,m}^{k,c,i},\tilde{\varphi}_{n',m}^{k,c,i} \rangle_{L^{2}(\mathbb{R}^{m})} = \lambda_{n,m}^{k,c} \delta_{nn'}.
\end{align*}
So $\psi_{n,m}^{k,c,i}=\frac{1}{\sqrt{\lambda_{n,m}^{k,c}}}Q\tilde{\varphi}_{n,m}^{k,c,i}$ are orthonormal spatial-limited CPSWFs basis in $L^{2}(B(1)).$
Now let's assume that $f\in PW_{c}(\mathbb{R}^m),$ and $\langle f,\tilde{\varphi}_{n',m}^{k,c,i} \rangle_{L^{2}(\mathbb{R}^{m})}=0,$
for all $n.$ We will prove that $f=0.$ So we have
\begin{align*}
0=\langle f,\tilde{\varphi}_{n',m}^{k,c,i} \rangle_{L^{2}(\mathbb{R}^{m})}&=(\lambda_{n,m}^{k,c})^{-1}\langle f,P_{c}Q\tilde{\varphi}_{n',m}^{k,c,i} \rangle_{L^{2}(\mathbb{R}^{m})}\\
&=(\lambda_{n,m}^{k,c})^{-1}\langle f,Q\tilde{\varphi}_{n,m}^{k,c,i} \rangle_{L^{2}(\mathbb{R}^{m})}\\
&=(\lambda_{n,m}^{k,c})^{-\frac{1}{2}}\langle Qf,\psi_{n,m}^{k,c,i} \rangle_{L^{2}(\mathbb{R}^{m})}.
\end{align*}
Since $\{\psi_{n,m}^{k,c,i};\, n\geq 0,\, k\geq 0, 1\leq i\leq d_{k} \}$ is an orthonormal basis  for $L^{2}(B(1))$, we have $Qf=0$. 
Since $f\in PW_{c}(\mathbb{R}^{m}),$ it is an analytic function therefore $f=0$ in $\mathbb{R}^{m}$. This means that the collection $\{\tilde{\varphi}_{n,m}^{k,c,i}:\, n,k\geq 0,\ 1\leq i\leq d_k\}$ is an orthonormal basis for $PW_{c}(\mathbb{R}^m)$.

The spectral concentration problem refers to the search for the function in $PW_{c}(\mathbb{R}^m)$ which keeps most energy in a unit ball, i.e, we would like to find a function, $f \in PW_{c}(\mathbb{R}^m)$ (with $\|f\|_{L^2({\mathbb R}^m)}=1$ for which the energy concentration
$$\frac{\int\limits_{B(1)}\vert f(x)\vert^{2}dx}{\int\limits_{\mathbb{R}^{m}}\vert f(x)\vert^{2}dx},$$
is the maximised. Since $f\in PW_{c},$ it admits an expansion of the form $ f(x)=\sum\limits_{n=0}^{\infty}\sum\limits_{k=0}^{\infty}\sum\limits_{i=1}^{d_{k}}c_{n,m}^{k,i}\tilde{\varphi}_{n,m}^{k,c,i}(x).$ Then 
\begin{align*}
\int\limits_{B(1)}\vert f(x)\vert^{2} dx 
&=\sum\limits_{n=0}^{\infty}\sum\limits_{k=0}^{\infty}\sum\limits_{i=1}^{d_{k}}\vert c_{n,m}^{k,i}\vert^{2}\lambda_{n,m}^{k,c}\\
&\leq \lambda_{0,m}^{0,c}\sum\limits_{n=0}^{\infty}\sum\limits_{k=0}^{\infty}\sum\limits_{i=1}^{d_{k}}\vert c_{n,m}^{k,i}\vert^{2}=\lambda_{0,m}^{0,c}\Vert f\Vert_{L^{2}(\mathbb{R}^{m})}.
\end{align*}
Therefore,
$$\frac{\int\limits_{B(1)}\vert f(x)\vert^{2}dx}{\int\limits_{\mathbb{R}^{m}}\vert f(x)\vert^{2}dx}\leq \lambda_{0,m}^{0,c}= \frac{\int\limits_{B(1)}\vert \tilde{\varphi}_{0,m}^{0,c,1}(x)\vert^{2}dx}{\int\limits_{\mathbb{R}^{m}}\vert \tilde{\varphi}_{0,m}^{0,c,1}(x)\vert^{2}dx}.$$
\end{Remark}
\begin{figure}
	\centering
	\includegraphics[width=.5\linewidth]{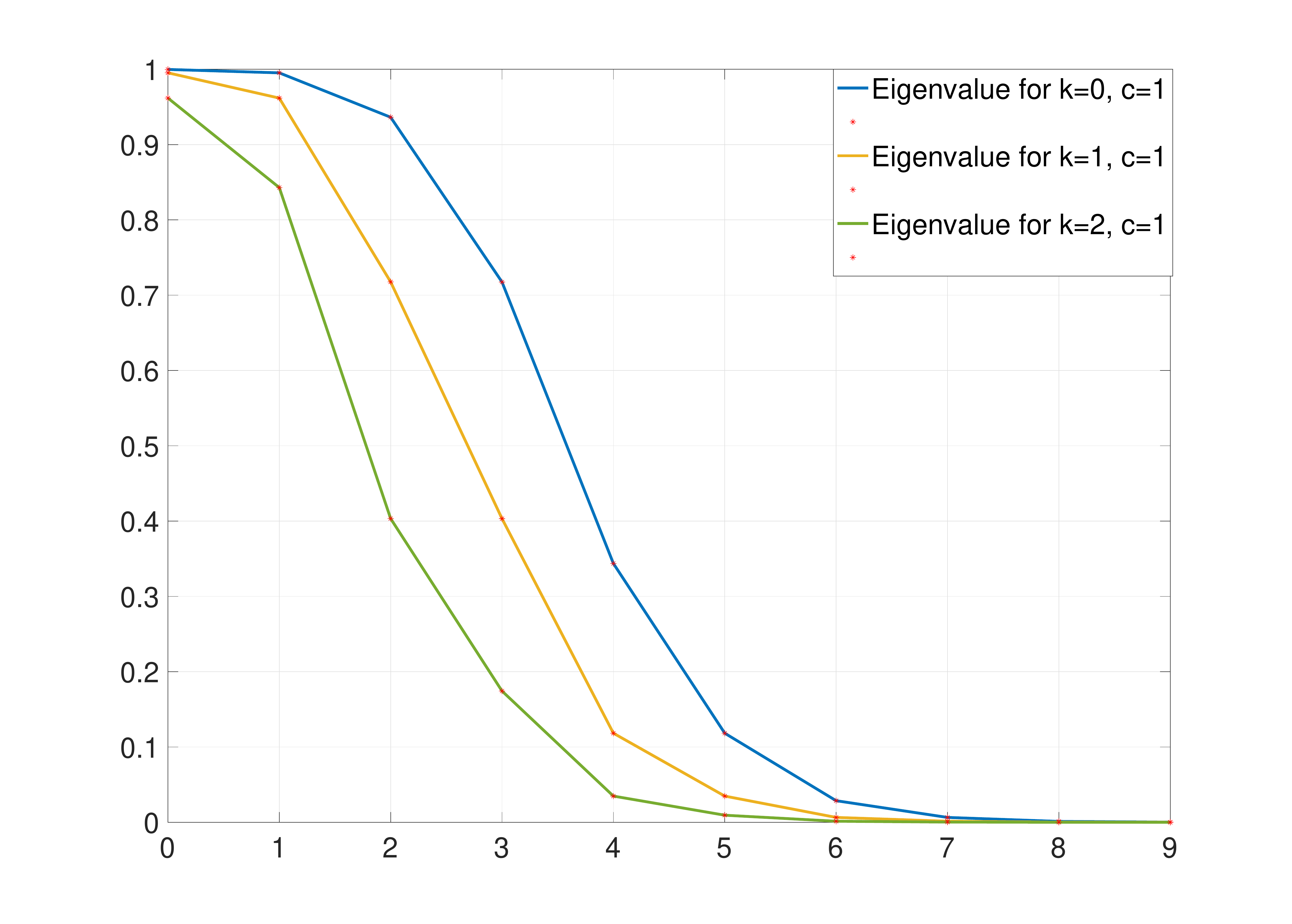}
	\caption{The Graph of Changes of the $\mu_{n,2}^{k,c}$ eigenvalues of the 2-dimension CPSWFs for $k=0,1,2,\; c=1$}
\end{figure}
\begin{figure}
	\centering
	\includegraphics[width=.5\linewidth]{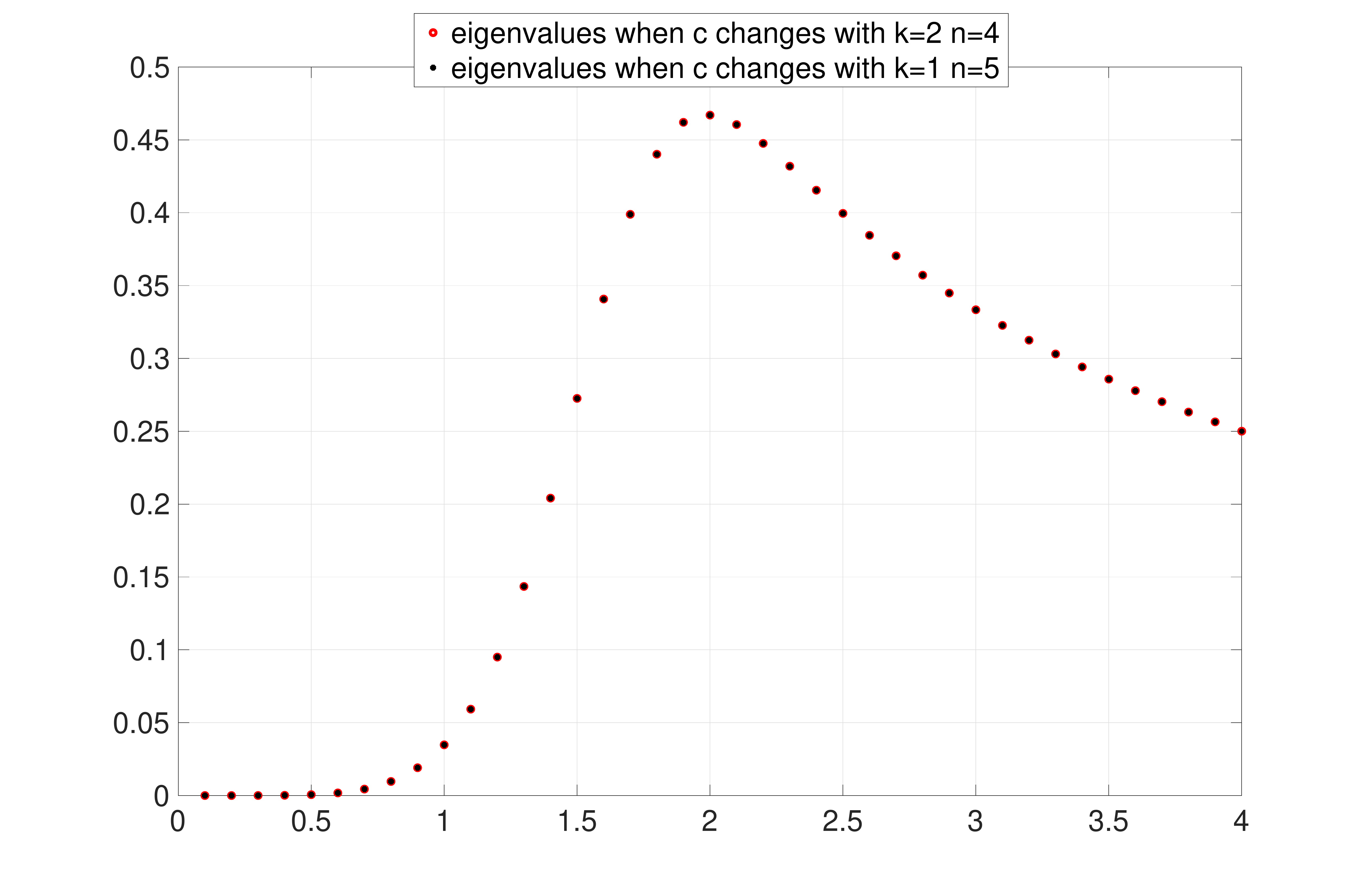}
	\caption{The Graph of Changes of the eigenvalues of the 2-dimension CPSWFs as $c$ changes for different values of the $k=2,1$ and $n=4,5$}
\end{figure}
\section{Spectrum Accumulation}
In this short section, we  present the spectrum accumulation properties of $m-$dimensional CPSWFs. For one-dimensional PSWFs, this property was investigated in \cite{hogan2015frame} and has important applications in the construction of multi-taper channel  estimation algorithms \cite{thomson82}, \cite{hogan2011duration}. 

Recall that $\lambda_{n,m}^{k,c}\psi_{n,m}^{k,c,i}(x)=\int\limits_{B(1)}K_{c}(x-y)\psi_{n,m}^{k,c,i}(y)dy$ where $K_{c}(x)=c^{m}\int\limits_{B(1)}e^{2\pi ic\langle \omega,x\rangle}d\omega.$ Then
\begin{align*}
\sum\limits_{k=0}^{\infty}\sum_{n=0}^{\infty}\sum_{i=1}^{d_{k}} \lambda_{n,m}^{k,c}\vert \psi_{n,m}^{k,c,i}(x)\vert^{2}&=\sum\limits_{k=0 }^{\infty}\sum_{n=0}^{\infty}\sum_{i=1}^{d_{k}} \lambda_{n,m}^{k,c}\big[\overline{\psi_{n,m}^{k,c,i}(x)}\,\psi_{n,m}^{k,c,i}(x)\big]_{0}\hspace*{10cm}\\
&=\big[\sum\limits_{k=0}^{\infty}\sum_{n=0}^{\infty}\sum_{i=1}^{d_{k}}(\int\limits_{B(1)}\overline{\psi_{n,m}^{k,c,i}(y)}K_{c}(x-y)dy)\psi_{n,,m}^{k,c,i}(x)\big]_{0}\\
&=\big[\sum\limits_{k=0}^{\infty}\sum_{n=0}^{\infty}\sum_{i=1}^{d_{k}}\langle \psi_{n,m}^{k,c,i},K_{c}(x-\cdot) \rangle\psi_{n,m}^{k,c,i}(x)\big]_{0}=K_c(0)=c^m \vert B(1)\vert,
\end{align*} 
On the other hand, if instead of summing over all $n,k\geq 0$ in the above calculation, we instead perform a truncated sum by restricting the values of $n$ and $k$ so that $0\leq n\leq 2N+1$ and $0\leq k\leq K$, then we have
\begin{align*}
&\sum\limits_{k=0}^{K}\sum_{n=0}^{2N+1}\sum_{i=1}^{d_{k}} \lambda_{n,m}^{k,c}\vert \psi_{n,m}^{k,c,i}(x)\vert^{2}\\
	&\qquad =\sum\limits_{k=0}^{K}\sum_{n=0}^{N}\sum_{i=1}^{d_{k}} \lambda_{2n,m}^{k,c}\vert \psi_{2n,m}^{k,c,i}(x)\vert^{2}+\sum\limits_{k=0}^{K}\sum_{n=0}^{N}\sum_{i=1}^{d_{k}} \lambda_{2n+1,m}^{k,c}\vert \psi_{2n+1,m}^{k,c,i}(x)\vert^{2}\\
&\qquad =\sum\limits_{k=0}^{K}\sum_{n=0}^{N} \lambda_{2n,m}^{k,c}\vert P_{n,m}^{k,c}(\vert x\vert^2)\vert^{2} \vert x\vert^{2k}\sum_{i=1}^{d_{k}}\vert Y_{k}^{i}(\frac{x}{\vert x\vert})\vert^{2}\\
&\qquad +\sum\limits_{k=0}^{K}\sum_{n=0}^{N} \lambda_{2n+1,m}^{k,c}\vert Q_{n,m}^{k,c}(\vert x\vert^{2})\vert^{2}\vert x\vert^{2k+2}\sum_{i=1}^{d_{k}}\vert Y_{k}^{i}(\frac{x}{\vert x\vert})\vert^{2}\\
&\qquad =\sum\limits_{k=0}^{K}\sum_{n=0}^{N} \lambda_{2n,m}^{k,c}\vert P_{n,m}^{k,c}(\vert x\vert^2)\vert^{2} \vert x\vert^{2k}\frac{(k+m-2)}{\vert S^{m-1}\vert (m-2)}\\
&\qquad +\sum\limits_{k=0}^{K}\sum_{n=0}^{N} \lambda_{2n+1,m}^{k,c}\vert Q_{n,m}^{k,c}(\vert x\vert^{2})\vert^{2}\vert x\vert^{2k+2}\frac{(k+m-2)}{\vert S^{m-1}\vert (m-2)}=G(\vert x\vert^{2}),
\end{align*}
where we have used theorem 3.3 from \cite{de2016reproducing}:
$$\sum_{i=1}^{d_{k}}\overline{Y_{k}^{i}(x)} Y_{k}^{i}(y)=\frac{(k+m-2)}{\vert S^{m-1}\vert (m-2)}\vert x\vert^{k}\vert y\vert^{k}C_{k}^{\mu}(t)+(x\wedge y)\vert x\vert ^{k-1}\vert y\vert^{k-1}C_{k-1}^{\mu+1}(t),$$
in which $t=\frac{\langle x,y\rangle}{\vert x\vert \vert y\vert},$ $C_{k}^{\mu}(t)$ is the Gegenbauer polynomials defined on the line, and $\vert S^{m-1}\vert$ is the Lebesgue measure of $S^{m-1}.$
In the figures \ref{spectrumtwodimcone}, and \ref{spectrumtwodimctwo}, we  see numerical computations of teh partial sums in $2$ and $3$ dimensions, demonstrating the convergence of the partial sums to the constant $c^m|B(1)|$.
\begin{figure}
	\centering
	\includegraphics[width=.5\linewidth]{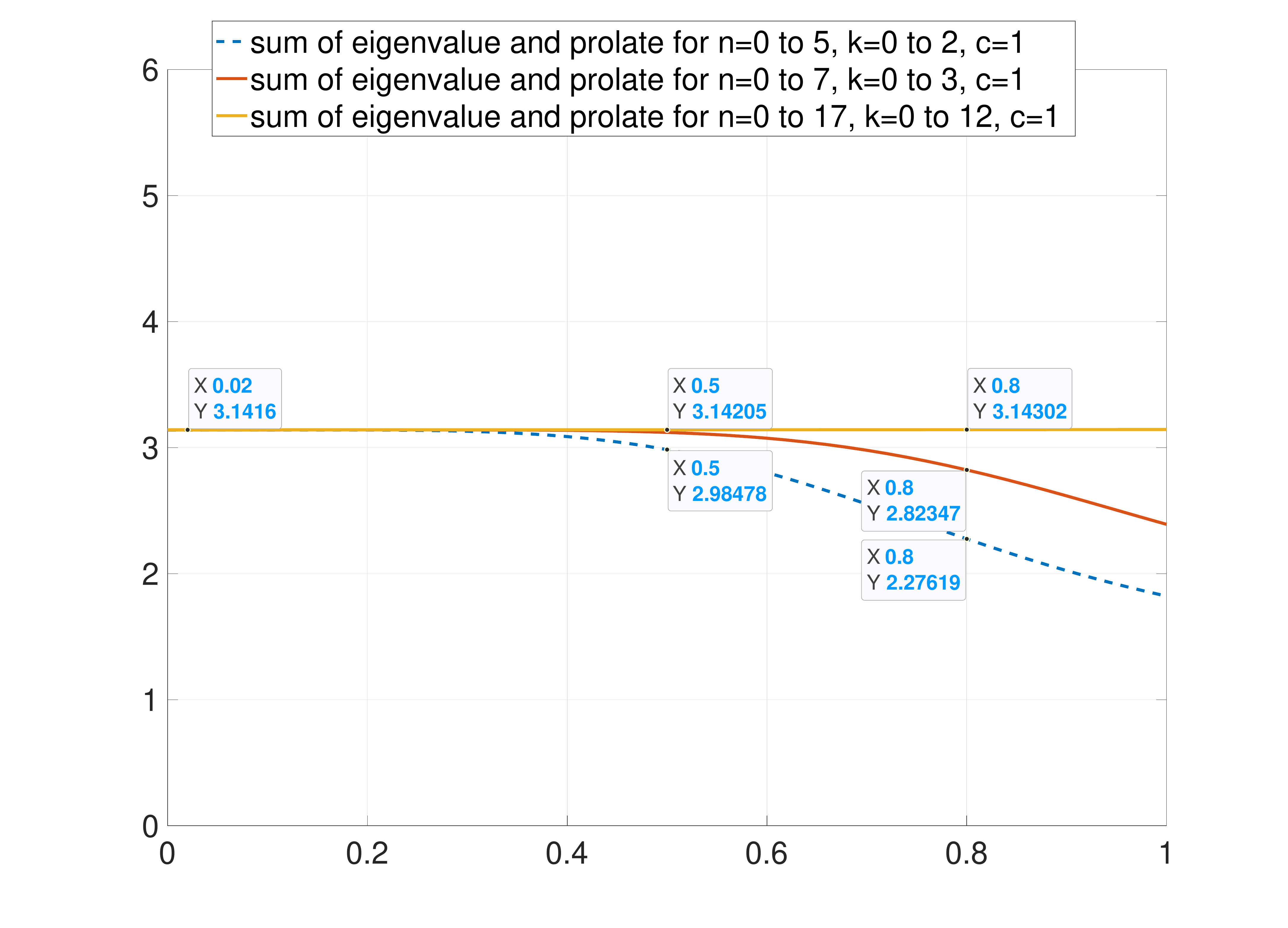}
	\caption{The Radial Graph of numerical computations of spectrum accumulation properties of $2-$ dimension CPSWFs for $c=1.$}\label{spectrumtwodimcone}
\end{figure}
\begin{figure}
	\centering
	\includegraphics[width=.5\linewidth]{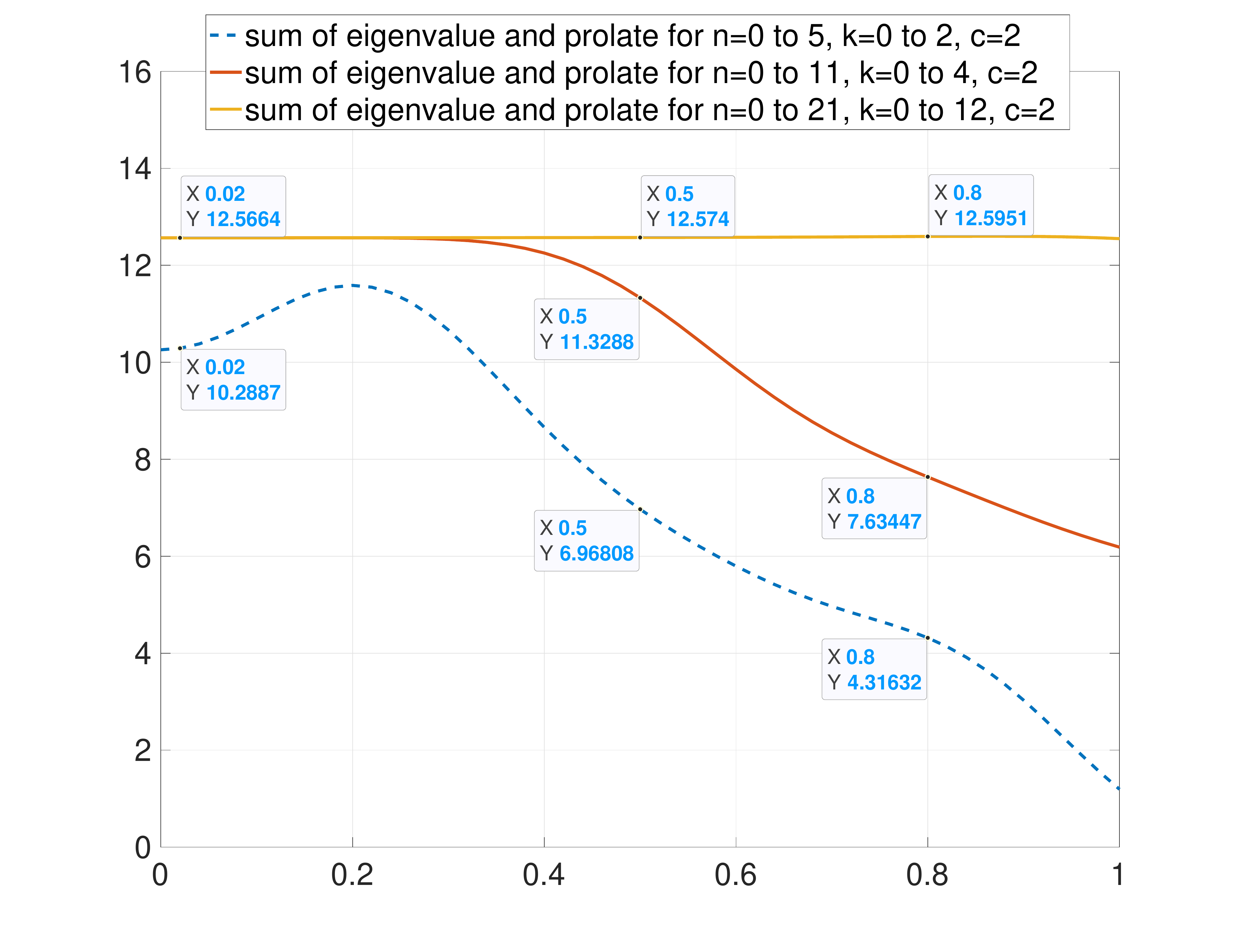}
	\caption{The Radial Graph of numerical computations of spectrum accumulation properties of $2-$ dimension CPSWFs for $c=2.$}\label{spectrumtwodimctwo}
\end{figure}
\begin{figure}
	\centering
	\includegraphics[width=.5\linewidth]{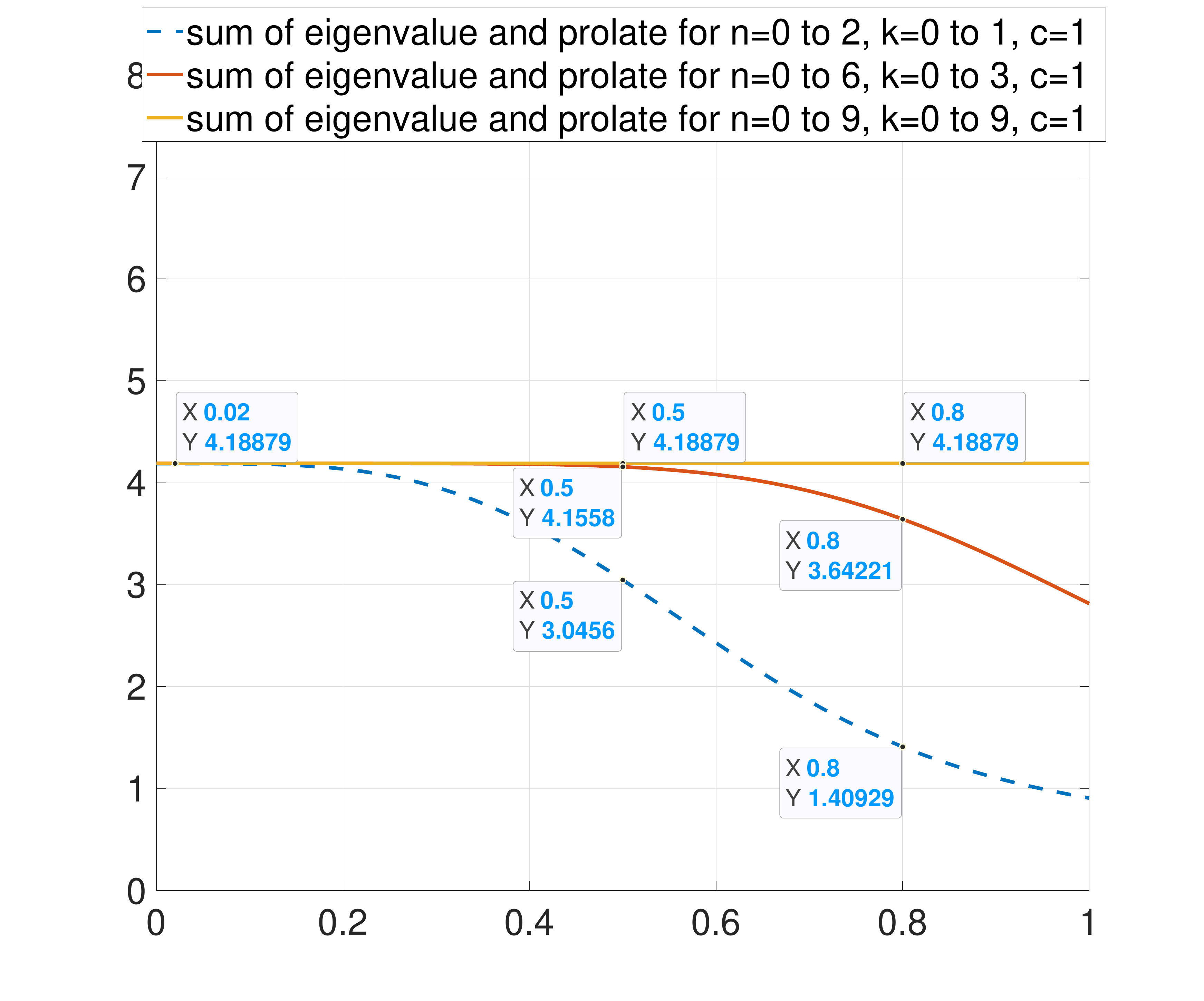}
	\caption{The Radial Graph of numerical computations of spectrum accumulation properties of $3-$ dimension CPSWFs for $c=1.$}\label{spectrumthreedimcone}
\end{figure}
\begin{figure}
	\centering
	\includegraphics[width=.5\linewidth]{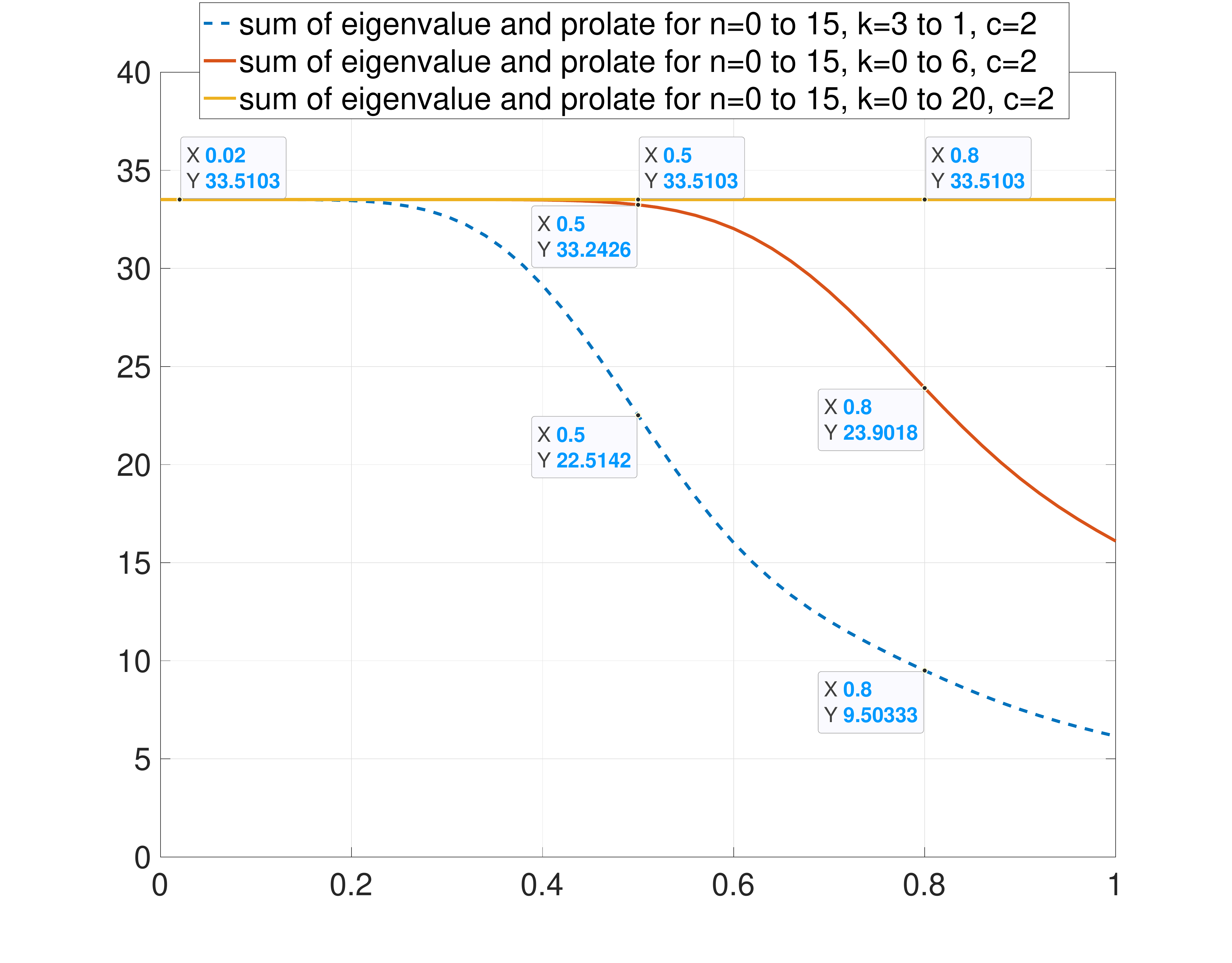}
	\caption{The Radial Graph of numerical computations of spectrum accumulation properties of $3-$ dimension CPSWFs for $c=2.$}\label{spectrumthreedimctwo}
\end{figure}

\begin{Remark}
	All codes related to the calculations of CPSWFs, their plots, and their eigenvalues are available in Github. The codes are written in different platforms, namely Matlab, Maple, Mathematica, Python, Julia, Sagemath. 
\end{Remark}

\section*{Acknowledgment}
\noindent The authors would like to thank the Center for Computer-Assisted Research in Mathematics and its Applications at the University of Newcastle for its continued support. JAH is supported by the Australian Research Council through Discovery Grant DP160101537. Thanks Roy. Thanks HG. Special thanks also to Dr.  AmirHosein Sadeghimanesh for his useful comments about using different programming languages. 
\bibliographystyle{siam}
\bibliography{Clifford_Prolate_Spheroidal_Wave_Functions}

\begin{thebibliography}{10}

\bibitem{al2008sturm}
{\sc M.~A. Al-Gwaiz}, {\em Sturm-Liouville theory and its applications},
  vol.~7, Springer, 2008.

\bibitem{andrews1999special}
{\sc G.~E. Andrews, R.~Askey, and R.~Roy}, {\em Special functions}, vol.~71,
  Cambridge university press, 1999.

\bibitem{propertiesofcliffordlegendre}
{\sc H.~Baghal~Ghaffari, J.~A. Hogan, and J.~D. Lakey}, {\em Properties of
  clifford legendre polynomials}, arXiev, 251 (2020), pp.~1--23.

\bibitem{beylkin2002generalized}
{\sc G.~Beylkin and L.~Monzon}, {\em On generalized gaussian quadratures for
  exponentials and their applications}, Applied and Computational Harmonic
  Analysis, 12 (2002), pp.~332--373.

\bibitem{boyd2003large}
{\sc J.~P. Boyd}, {\em Large mode number eigenvalues of the prolate spheroidal
  differential equation}, Applied mathematics and computation, 145 (2003),
  pp.~881--886.

\bibitem{boyd2005algorithm}
\leavevmode\vrule height 2pt depth -1.6pt width 23pt, {\em Algorithm 840:
  computation of grid points, quadrature weights and derivatives for spectral
  element methods using prolate spheroidal wave functions---prolate elements},
  ACM Transactions on Mathematical Software (TOMS), 31 (2005), pp.~149--165.

\bibitem{chen2008mimo}
{\sc C.-Y. Chen and P.~P. Vaidyanathan}, {\em Mimo radar space--time adaptive
  processing using prolate spheroidal wave functions}, IEEE Transactions on
  Signal Processing, 56 (2008), pp.~623--635.

\bibitem{christensen2010functions}
{\sc O.~Christensen}, {\em Functions, spaces, and expansions: mathematical
  tools in physics and engineering}, Springer Science \& Business Media, 2010.

\bibitem{courant1954methods}
{\sc R.~Courant and D.~Hilbert}, {\em Methods of mathematical physics},
  Bulletin of the American Mathematical Society, 60 (1954), pp.~578--579.

\bibitem{de2016reproducing}
{\sc H.~De~Bie, F.~Sommen, and M.~Wutzig}, {\em Reproducing kernels for
  polynomial null-solutions of dirac operators}, Constructive Approximation, 44
  (2016), pp.~339--383.

\bibitem{delanghe2012clifford}
{\sc R.~Delanghe, F.~Sommen, and V.~Soucek}, {\em Clifford algebra and
  spinor-valued functions: a function theory for the Dirac operator}, vol.~53,
  Springer Science \& Business Media, 2012.

\bibitem{dilmaghani2003novel}
{\sc R.~S. Dilmaghani, M.~Ghavami, B.~Allen, and H.~Aghvami}, {\em Novel uwb
  pulse shaping using prolate spheroidal wave functions}, in 14th IEEE
  Proceedings on Personal, Indoor and Mobile Radio Communications, 2003. PIMRC
  2003., vol.~1, IEEE, 2003, pp.~602--606.

\bibitem{glaser2007fast}
{\sc A.~Glaser, X.~Liu, and V.~Rokhlin}, {\em A fast algorithm for the
  calculation of the roots of special functions}, SIAM Journal on Scientific
  Computing, 29 (2007), pp.~1420--1438.

\bibitem{gosse2013compressed}
{\sc L.~Gosse}, {\em Compressed sensing with preconditioning for sparse
  recovery with subsampled matrices of slepian prolate functions}, ANNALI
  DELL'UNIVERSITA'DI FERRARA, 59 (2013), pp.~81--116.

\bibitem{gradshteyn2007ryzhik}
{\sc I.~Gradshteyn, A.~Jeffrey, and D.~Zwillinger}, {\em Im ryzhik table of
  integrals}, Series, and Products, Alan Jeffrey and Daniel Zwillinger (eds.),
  Seventh edition (Feb 2007), 885 (2007).

\bibitem{hogan2010sampling}
{\sc J.~A. Hogan, S.~Izu, J.~D. Lakey, et~al.}, {\em Sampling approximations
  for time-and bandlimiting}, Sampl. Theory Signal Image Process, 9 (2010),
  pp.~91--117.

\bibitem{hogan2011duration}
{\sc J.~A. Hogan and J.~D. Lakey}, {\em Duration and bandwidth limiting:
  prolate functions, sampling, and applications}, Springer Science \& Business
  Media, 2011.

\bibitem{hogan2015frame}
\leavevmode\vrule height 2pt depth -1.6pt width 23pt, {\em Frame properties of
  shifts of prolate spheroidal wave functions}, Applied and Computational
  Harmonic Analysis, 39 (2015), pp.~21--32.

\bibitem{hu2014doa}
{\sc N.~Hu, X.~Xu, and Z.~Ye}, {\em Doa estimation for wideband signals based
  on sparse signal reconstruction using prolate spheroidal wave functions},
  Signal Processing, 96 (2014), pp.~395--400.

\bibitem{khare2003sampling}
{\sc K.~Khare and N.~George}, {\em Sampling theory approach to prolate
  spheroidal wavefunctions}, Journal of Physics A: Mathematical and General, 36
  (2003), p.~10011.

\bibitem{lederman2017numerical}
{\sc R.~R. Lederman}, {\em Numerical algorithms for the computation of
  generalized prolate spheroidal functions}, arXiv preprint arXiv:1710.02874,
  (2017).

\bibitem{lindquist2008spatial}
{\sc M.~A. Lindquist and T.~D. Wager}, {\em Spatial smoothing in fmri using
  prolate spheroidal wave functions}, Human Brain Mapping, 29 (2008),
  pp.~1276--1287.

\bibitem{moore2004prolate}
{\sc I.~C. Moore and M.~Cada}, {\em Prolate spheroidal wave functions, an
  introduction to the slepian series and its properties}, Applied and
  Computational Harmonic Analysis, 16 (2004), pp.~208--230.

\bibitem{osipov2013prolate}
{\sc A.~Osipov, V.~Rokhlin, and H.~Xiao}, {\em Prolate spheroidal wave
  functions of order zero}, Springer Ser. Appl. Math. Sci, 187 (2013).

\bibitem{schmutzhard2015numerical}
{\sc S.~Schmutzhard, T.~Hrycak, and H.~G. Feichtinger}, {\em A numerical study
  of the legendre-galerkin method for the evaluation of the prolate spheroidal
  wave functions}, Numerical Algorithms, 68 (2015), pp.~691--710.

\bibitem{senay2009reconstruction}
{\sc S.~Senay, L.~F. Chaparro, and L.~Durak}, {\em Reconstruction of
  nonuniformly sampled time-limited signals using prolate spheroidal wave
  functions}, Signal Processing, 89 (2009), pp.~2585--2595.

\bibitem{slepian1964prolate}
{\sc D.~Slepian}, {\em Prolate spheroidal wave functions, fourier analysis and
  uncertainty—iv: extensions to many dimensions; generalized prolate
  spheroidal functions}, Bell System Technical Journal, 43 (1964),
  pp.~3009--3057.

\bibitem{slepian1961prolate}
{\sc D.~Slepian and H.~O. Pollak}, {\em Prolate spheroidal wave functions,
  fourier analysis and uncertainty—i}, Bell System Technical Journal, 40
  (1961), pp.~43--63.

\bibitem{thomson82}
{\sc D.~J. Thomson}, {\em Spectrum estimation and harmonic analysis}, Proc
  IEEE, 70 (1982), pp.~1055--1096.

\bibitem{thomson2007jackknifing}
\leavevmode\vrule height 2pt depth -1.6pt width 23pt, {\em Jackknifing
  multitaper spectrum estimates}, IEEE Signal Processing Magazine, 24 (2007),
  pp.~20--30.

\bibitem{xiao2001prolate}
{\sc H.~Xiao, V.~Rokhlin, and N.~Yarvin}, {\em Prolate spheroidal
  wavefunctions, quadrature and interpolation}, Inverse problems, 17 (2001),
  p.~805.

\end{thebibliography}

\end{document}